\newtheorem{lemma}{Lemma}
\newtheorem{theorem}{Theorem}
\newtheorem*{corollary}{Corollary}
\newtheorem{defin}{Definition}
\newtheorem*{note}{Note}
\long\def\symbolfootnotetext[#1]#2{\begingroup%
\def\thefootnote{\fnsymbol{footnote}}\footnotetext[#1]{#2}\endgroup}
\begin{document}
\doublespacing

\title{Generalized Prime Ideals of the Rings $\mathbf{C(X,Y)}$ and the Quasi-Components of $\mathbf{X}$} 
\author{H. J. Charlton}
\address{Department of Mathematics, North Carolina State University, Raleigh, North Carolina}
\dedicatory{Dedicated in the memory of Dr. Kwangil Koh}

\begin{abstract}
In the set of continuous functions C(X,Y) where Y has a topology close to being discrete, there is an equivalence relation on X which characterizes the quasi-components of X.  If Y satisfies weak algebraic conditions with a single binary operation then a stable set of functions forms an object generalizing an ideal of a ring.  Calling such sets ideals there is a concept of a  prime ideal.  The ideal of functions vanishing on a quasi-component are prime ideals of C(X,Y).  If Y has a zero set that is open then these prime ideals are min-max implying that when Y is a ring all of the prime ideals of C(X,Y) are of this form and min-max.  However this is a study of C(X,Y) and its ideals beginning with a few algebraic hypothesis on Y and adding to them as needed.   So there are conditions when a prime ideal is minimal, when the set of quasi-components is in bijective correspondence with the set of prime ideals of functions which vanish on them,  when C(X,Y) can not be like a local ring, when the prime (nill) radical is trivial, and when the corresponding Spect(C(X,Y)) is a disconnected topological space.  Example of results are; if a quasi-component of x is open then the prime ideal of functions vanishing on it is minimal independent of the zero set of Y being open;  if the zero set of Y is open then the set of all ideals of functions vanishing on quasi-components is the set of minimal prime ideals irrespective of any quasi-component being open in X.   These results imply the same for the ideals of the ring C(X,Y) when Y is a ring.  The techniques developed give a method of generating unlimited number of rings with prescribed sets of prime ideals and minimal prime ideals.
\end{abstract}

\maketitle

Let X and Y be a topological spaces and C(X,Y) the set of all continuous functions from X to Y.  Assume that X and Y both are of cardinality two or greater, $\Vert X \Vert >1$ and $\Vert Y\Vert >1$.  Let $\mathbb{Z}$, $\mathbb{Z}_{n}$, $\mathbb{Q}$, and $\mathbb{R}$ be respectively the integers, the integers mod n, the rational numbers, and the reals.  The question whether there is a relationship between the connectivity of X and the ring C(X,Y), and the observation that the rings $C(X,\mathbb{Z})$ and $C(X,\mathbb{Q})$ need not be isomorphic, motivated this study.

\begin{theorem}
If Y has a continuous binary operation denoted by $\cdot$  which may satisfy additional algebraic axioms, then the point-wise composition of $f$ and $g$, $f\cdot g$, defined by $(f \cdot g)(x)=f(x)\cdot g(x)$,  is a continuous binary operation on C(X,Y) satisfying those axioms.  When appropriate we will have additional unitary and binary operations which must be continuous.
\end{theorem}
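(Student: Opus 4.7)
The plan is to exhibit $f\cdot g$ as a composition of continuous maps and then to transfer each algebraic axiom from $Y$ to $C(X,Y)$ by pointwise evaluation.

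First I would check that $f\cdot g\in C(X,Y)$. Given $f,g\in C(X,Y)$, the diagonal-type map $\Delta_{f,g}\colon X\to Y\times Y$ sending $x\mapsto (f(x),g(x))$ is continuous by the universal property of the product topology, because its two coordinate projections are $f$ and $g$. Since the binary operation $\cdot\colon Y\times Y\to Y$ is assumed continuous, the composition $\cdot\circ\Delta_{f,g}\colon X\to Y$ is continuous, and by construction this composition sends $x$ to $f(x)\cdot g(x)$. Hence $f\cdot g\in C(X,Y)$, so pointwise composition is indeed a binary operation on $C(X,Y)$.

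Next I would verify that any axiom satisfied by $\cdot$ on $Y$ is inherited by $\cdot$ on $C(X,Y)$. The crucial observation is that equalities of elements of $C(X,Y)$ can be checked pointwise. For associativity, $(f\cdot g)\cdot h$ and $f\cdot(g\cdot h)$ agree at every $x$ because $(f(x)\cdot g(x))\cdot h(x)=f(x)\cdot(g(x)\cdot h(x))$ in $Y$. Commutativity transfers in the same way. For a two-sided identity $e\in Y$, the constant function $\hat e\colon X\to Y$ with value $e$ is continuous, lies in $C(X,Y)$, and satisfies $\hat e\cdot f=f\cdot\hat e=f$ pointwise. If $Y$ admits a continuous unary inversion $y\mapsto y^{-1}$, then $x\mapsto f(x)^{-1}$ is the composition of $f$ with this continuous inversion and is therefore continuous, giving a two-sided inverse of $f$ in $C(X,Y)$. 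An analogous argument handles any additional continuous unary or binary operations and the axioms (distributivity, etc.) linking them.

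Since nothing in the argument is subtle, there is essentially no ``main obstacle''; the only care needed is to insist on the correct hypotheses of continuity each time an operation is introduced, so that the pointwise construction stays inside $C(X,Y)$. This is exactly why the final sentence of the statement stipulates that subsequent unary and binary operations be continuous: without this, the pointwise construction might leave $C(X,Y)$, and the argument above would break at its very first step.
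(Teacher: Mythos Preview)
Your proof is correct and follows the same approach as the paper: continuity of $f\cdot g$ is obtained by writing it as the composite of the pairing map $x\mapsto(f(x),g(x))$ with the continuous operation $Y\times Y\to Y$. You actually give more detail than the paper does, since the paper only sketches the continuity argument and leaves the pointwise transfer of axioms implicit.
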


\begin{proof}
For example the continuity of the defined binary operation in C(X,Y) would follow because the product map $<f,g>(x)=(f(x),g(x))$ from $X$ to $Y\mathsf xY$ is continuous and the binary operation in $Y$ which is a map from $Y\mathsf xY$ to Y is continuous.
\end{proof}

\begin{note}
As continuity of any operation in Y is not always needed and is evident when used, reference to continuity will usually be omitted.  Further assume the least number of algebraic operations and properties that are needed.  As a consequence most of the results do not use associativity, commutativity, or distributivity.
\end{note}

For want of a peer the presentation here is idiosyncratic.  It follows the conversation that the author was having with himself as it was being written and revised.  Consequently employing elementary ideas from several areas, the development is often tedious and elementary.  Principally these ares are algebra, algebraic geometry, analysis, and point set topology but alas not algebraic topology.

If Y is an algebraic structure then C(X,Y) has a similar algebraic structure with the usual operations on functions induced from Y.  We will be interested in a particular equivalence relation  
on X relative to its quasi-connectivity as a topological space and how this relates to the prime ideals of C(X,Y) when Y has special topological and algebraic properties. 
The equivalence classes relate to the ideals of another ring of continuous functions isomorphic to C(X,Y) which is easier to study.  These ideals have a geometric origin which can be manipulated.  The prime ideals of these C(X,Y) are of fundamental importance.  However to develop a clear insight into the origin of these ideals it was necessary to assume a strong topology on Y ("almost discrete") and start with weak algebraic properties such as a magna that need not be associative or commutative.  

First, for any equivalence relation on X, let $\Pi$ denote the set of equivalence classes, and $X/\Pi$ the topological quotient space.  Use [x] for a point in $X/\Pi$ and for its corresponding subset of X.  For practical purposes use $\Pi$ for both $\Pi$ and $X/\Pi$.

Further consider
$$\xymatrix{X\ar[rr]^f\ar[dr]_p&&Y\\&\Pi\ar[ur]_c&}$$
where p is the projection, $p(x)=p([x])=[x]$, and $c\in C(\Pi,Y)$.  Set Fcn$(\Pi,Y)=\{h:\Pi\rightarrow Y\}$, continuity not assumed.    

\begin{lemma}
If Y is a magna then C(X,Y) is a magna.  If f and g are constant on an equivalence class of X then $f\cdot g$ is constant on that equivalence class.  This also holds for induced unitary operations on C(X,Y).
\end{lemma}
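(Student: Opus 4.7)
The plan is to break the lemma into its three assertions and verify each directly from the pointwise definition of the induced operation, leaning on Theorem~1 for continuity.

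First I would dispatch the opening claim: since $Y$ is a magna it has a (continuous) binary operation $\cdot$, and Theorem~1 gives that the pointwise composition $f\cdot g$ is again continuous, hence lies in $C(X,Y)$. Thus $C(X,Y)$ is closed under a binary operation and so is itself a magna. No axioms beyond closure are needed, which matches the note that associativity, commutativity, and distributivity are not being assumed.

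Next I would handle the constancy statement. Fix an equivalence class $[x_0]\in\Pi$ and suppose $f,g\in C(X,Y)$ satisfy $f|_{[x_0]}\equiv a$ and $g|_{[x_0]}\equiv b$ for some $a,b\in Y$. For any $x\in[x_0]$, unwinding the definition gives
\[
(f\cdot g)(x)=f(x)\cdot g(x)=a\cdot b,
\]
so $f\cdot g$ takes the single value $a\cdot b$ on $[x_0]$, which is precisely the desired constancy.

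Finally, for any unitary operation $u$ on $Y$ inducing $\tilde u$ on $C(X,Y)$ by $(\tilde u f)(x)=u(f(x))$, the same argument works: continuity of $\tilde u f$ is the composition of $u$ with the continuous $f$, and if $f|_{[x_0]}\equiv a$ then $(\tilde u f)(x)=u(a)$ for all $x\in[x_0]$.

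There is no real obstacle here; the lemma is essentially a bookkeeping observation that pointwise-defined operations respect pointwise constancy on any subset of $X$, and in particular on equivalence classes. The content, and the reason the lemma is stated at this point, is that it sets up the later identification of functions that factor through the quotient $\Pi$ with those arising from $c\in\mathrm{Fcn}(\Pi,Y)$ in the diagram above.
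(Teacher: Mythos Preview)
Your proposal is correct; the paper itself states this lemma without proof, treating the three assertions as immediate from the pointwise definitions and Theorem~1. Your write-up is exactly the unpacking the paper leaves implicit, so there is nothing to compare.
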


\begin{lemma}
 If the functions of C(X,Y) are constant on the equivalence classes of X and if Y is a magma there is an injective magma homorphism $\mathtt{G}:C(X,Y)\rightarrow$ Fcn$(\Pi,Y)$ which is a bijection if the equivalence classes are open in X.
\end{lemma}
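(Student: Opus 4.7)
The plan is to define $\mathtt{G}$ in the obvious way and then check the four required properties (well-defined, magma homomorphism, injective, and bijective under the extra hypothesis) in turn. Since each $f \in C(X,Y)$ is constant on every $[x] \in \Pi$, the assignment
\[
\mathtt{G}(f)([x]) \;=\; f(x)
\]
does not depend on the representative $x$, so $\mathtt{G}(f) \in \text{Fcn}(\Pi,Y)$ is well-defined. Using $c = \mathtt{G}(f)$ in the diagram of the paper, this is simply the factorization $f = c \circ p$ coming from $p$ being constant on classes.

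To see that $\mathtt{G}$ is a magma homomorphism, I would pick $f,g \in C(X,Y)$ and any $[x] \in \Pi$ and compute directly from the pointwise definition of the operation:
\[
\mathtt{G}(f \cdot g)([x]) \;=\; (f \cdot g)(x) \;=\; f(x)\cdot g(x) \;=\; \mathtt{G}(f)([x]) \cdot \mathtt{G}(g)([x]),
\]
so $\mathtt{G}(f\cdot g) = \mathtt{G}(f)\cdot \mathtt{G}(g)$ in Fcn$(\Pi,Y)$, where the latter inherits the pointwise magma structure of $Y$. Injectivity is immediate: if $\mathtt{G}(f) = \mathtt{G}(g)$, then for every $x \in X$ we have $f(x) = \mathtt{G}(f)([x]) = \mathtt{G}(g)([x]) = g(x)$, hence $f = g$.

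The only step with any content is the surjectivity when the equivalence classes are open in $X$. Given an arbitrary $h \in \text{Fcn}(\Pi,Y)$, I would define $f \colon X \to Y$ by $f(x) = h([x])$; then $f$ is constant on each equivalence class and $\mathtt{G}(f) = h$, so the only thing to verify is that $f \in C(X,Y)$. For any open $U \subseteq Y$,
\[
f^{-1}(U) \;=\; \bigcup_{\,[x]\,:\, h([x]) \in U\,} [x],
\]
a union of equivalence classes; since each class is assumed open in $X$, this preimage is open, and $f$ is continuous. (Equivalently, each point of $X$ lies in an open neighborhood, its own class, on which $f$ is constant, so $f$ is locally constant and hence continuous.) I expect this openness-gives-continuity step to be the only place where the hypothesis is really used; everything else is formal, and no obstacle of substance is anticipated.
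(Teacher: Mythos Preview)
Your proof is correct and follows essentially the same route as the paper: the same definition $\mathtt{G}(f)([x])=f(x)$, the same observation that injectivity and the homomorphism property are immediate, and the same surjectivity argument via $f(x)=h([x])$ with continuity checked by writing $f^{-1}(U)$ as a union of open equivalence classes. You supply more detail than the paper does, but the argument is identical in substance.
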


\begin{proof}
  Define $\mathtt{G}(f)$ by $\mathtt{G}(f)([x])=f(x)$. That $\mathtt{G}$ is an injective algebraic homomorphism follows directly from its definition.
  
  Suppose that the [x] are open in X.  To see the surjectivity let $h:\Pi\rightarrow Y$ and define f by  
  $f(x)=h([x])$.  To see that f is continuous let $U\subset Y$ be open.  The set of points of $\Pi$ that G(f) map into U correspond to the union in X of the equivalence classes that f map into U.  As the equivalence classes are open so is this union.   
\end{proof}
 
Let $M(\Pi)$ be the free Magma generated by $\Pi$.  Consider $$\xymatrix{\Pi\ar[rr]^i\ar[dr]&&M(\Pi)\ar@{-->}[dl]\\&Y&}$$ 
and the universal defining property of $M(\Pi)$.

\begin{lemma} 
  If $i^*:$Hom$(M(\Pi),Y)\rightarrow$ Fcn$(\Pi,Y)$ is the morphism induced by i then it is an isomorphism.
\end{lemma}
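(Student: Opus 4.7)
The plan is to appeal directly to the universal property of the free magma $M(\Pi)$; the lemma is essentially a restatement of that property. Concretely, $i^*$ sends a magma homomorphism $\varphi \colon M(\Pi) \to Y$ to the set function $\varphi \circ i \in \text{Fcn}(\Pi, Y)$, and my task is to produce a two-sided inverse.

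For surjectivity I would construct, given $h \in \text{Fcn}(\Pi, Y)$, a homomorphism $\varphi_h \colon M(\Pi) \to Y$ by recursion on the parse-tree depth of elements of $M(\Pi)$: on generators set $\varphi_h(i(x)) = h(x)$, and on a formal product set $\varphi_h(u \cdot v) = \varphi_h(u) \cdot \varphi_h(v)$. Because every element of $M(\Pi)$ has a unique expression as either a generator or as a product of two strictly shorter words---this is exactly what freeness of $M(\Pi)$ provides---the recursion is unambiguous, and $\varphi_h$ is by construction a magma homomorphism with $i^*(\varphi_h) = h$.

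For injectivity, suppose $\varphi_1, \varphi_2 \colon M(\Pi) \to Y$ are magma homomorphisms with $\varphi_1 \circ i = \varphi_2 \circ i$. They agree on $i(\Pi)$; if they agree on $u$ and on $v$ then they agree on $u \cdot v$ since both preserve the product, so an induction on parse-tree depth yields $\varphi_1 = \varphi_2$. Hence the extension $\varphi_h$ is unique and $i^*$ is a bijection. Finally, one checks that $i^*$ respects the pointwise operation that both sides inherit from $Y$, upgrading the bijection to the claimed isomorphism.

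The only delicate point, and really the main thing to be careful about, is the unique-parse-tree property built into the free magma; everything else is formal induction. In later applications where $Y$ carries additional continuous unitary or binary operations, the same recursion extended to those operations will yield the analogous statement.
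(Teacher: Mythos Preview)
Your argument is correct: the bijection is precisely the universal property of the free magma, and your recursion on parse-tree depth is the standard way to make that property explicit. The paper does not actually supply a proof of this lemma; it simply states it, relying on the diagram and the phrase ``universal defining property of $M(\Pi)$'' in the preceding display. So your write-up is not a different route but rather the explicit unpacking of what the paper leaves implicit.

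One small point worth flagging for yourself: when you say $i^*$ ``respects the pointwise operation,'' note that $\mathrm{Hom}(M(\Pi),Y)$ is not in general closed under the pointwise product unless $Y$ has enough commutativity/associativity, since $(\varphi_1\cdot\varphi_2)(u\cdot v)$ need not equal $(\varphi_1\cdot\varphi_2)(u)\cdot(\varphi_1\cdot\varphi_2)(v)$ for an arbitrary magma $Y$. The paper is equally loose on this point (it speaks of ``the magma of homomorphisms'' in Theorem~2 without justification), so this is not a gap in your argument relative to the paper, but it is worth being aware of. The bijection, which is the substantive content, is exactly as you describe.
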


\begin{theorem}  
 If the functions of C(X,Y) are constant on equivalence classes and Y is a magma then $(i^*)^{-1}\circ\mathtt{G}: C(X,Y)\rightarrow$ Hom $(M(\Pi),Y)$ is an injective representation of C(X,Y) as a sub-magma of the magma of homorphisms of the free Magma into Y.
\end{theorem}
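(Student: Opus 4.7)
The plan is to observe that the theorem is an almost immediate consequence of composing Lemma 2 and Lemma 3, once the magma structures involved are properly aligned. By Lemma 2 the map $\mathtt{G}: C(X,Y) \to \text{Fcn}(\Pi,Y)$ is an injective magma homomorphism (the hypothesis that functions in $C(X,Y)$ are constant on equivalence classes is what makes $\mathtt{G}(f)([x]) := f(x)$ well defined). By Lemma 3 the restriction map $i^*: \text{Hom}(M(\Pi),Y) \to \text{Fcn}(\Pi,Y)$ is an isomorphism. Since the composition of an injective magma homomorphism with the inverse of a magma isomorphism is again an injective magma homomorphism, the composite $(i^*)^{-1}\circ \mathtt{G}$ realizes $C(X,Y)$ as a sub-magma of $\text{Hom}(M(\Pi),Y)$.

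In more detail, first I would unwind the map on an element $f\in C(X,Y)$: the function $\mathtt{G}(f)\in\text{Fcn}(\Pi,Y)$ is well defined by hypothesis, and $(i^*)^{-1}(\mathtt{G}(f))$ is the unique magma homomorphism $M(\Pi)\to Y$ whose pullback along $i$ equals $\mathtt{G}(f)$, supplied by the universal property of the free magma. Second, I would verify the homomorphism property directly: for $f,g\in C(X,Y)$, Lemma 2 gives $\mathtt{G}(f\cdot g) = \mathtt{G}(f)\cdot \mathtt{G}(g)$, and Lemma 3 upgrades $i^*$ to a magma isomorphism, so applying $(i^*)^{-1}$ to both sides preserves the binary operation. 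Third, injectivity is inherited: $(i^*)^{-1}$ is bijective, and $\mathtt{G}$ is injective by Lemma 2, so their composite is injective as a set map, hence as a magma map.

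The main subtlety, and the only substantive point in the argument, concerns the magma structure on $\text{Hom}(M(\Pi),Y)$. For a general magma $Y$, lacking associativity and commutativity, the naive pointwise product of two magma homomorphisms $M(\Pi)\to Y$ need not itself be a homomorphism, so Hom cannot simply inherit the pointwise magma structure of $Y$ in the literal sense. The consistent reading required by Lemma 3 is that the magma structure on $\text{Hom}(M(\Pi),Y)$ is the one transported along the set-theoretic bijection $i^*$ from the pointwise magma structure on $\text{Fcn}(\Pi,Y)$; with this convention $i^*$ is by construction a magma isomorphism. Once this identification is in place, the theorem itself requires no further computation beyond the formal composition described above.
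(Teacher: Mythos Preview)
Your proposal is correct and matches the paper's approach: the paper gives no explicit proof of this theorem, treating it as an immediate consequence of composing Lemma 2 with the inverse of the isomorphism in Lemma 3. Your write-up is in fact more careful than the paper, since you flag and resolve the genuine issue that for a bare magma $Y$ the pointwise product of two magma homomorphisms $M(\Pi)\to Y$ need not be a homomorphism, so the only coherent reading of ``isomorphism'' in Lemma 3 is the transported structure you describe.
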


This result extends to C(X,Y) as a group or ring.

\begin{note}
Through out the paper we will expropriate the language of ring theory although most results require only one binary operation, $\cdot$.   A second binary operation, associativity, commutativity, or distributivity are not usually assumed being introduced in stages.  Modified definitions are given and  appropriate hypothesis are stated  as needed.  If Y has a unit for $\cdot$ (one sided or  two sided identity element), denoted 1,  then C(X,Y) has a unit of same type.  Denote this unit by Id or when convenient use I.
\end{note}

The topological information of X and algebraic information of C(X,Y) that we wish to relate is carried by $\Pi$ and $C(\Pi,Y)$. The projection p of X to $\Pi$ is clear. To compare C(X,Y) and $C(\Pi,Y)$ where X and Y are topological spaces and the functions of C(X,Y) are constant on equivalence classes, consider the following definitions.

\begin{defin} 
Define $G:C(X,Y)\rightarrow C(\Pi,Y)$ by $G(f)=f\circ p^{-1}$ and $H:C(\Pi,Y)\rightarrow C(X,Y)$ by $H(\hat{f})=\hat{f}\circ p$ where $\hat{f}\in C(\Pi,Y)$.
\end{defin}
 
Note: We are not assuming that equivalence classes are open in X.  Also remember that V is open in $\Pi$ if and only if $p^{-1}(V)$ is open in X.

\begin{lemma} 
 $H(\hat{f})$ is a continuous function.  As the functions of C(X,Y) are constant on equivalence classes, $G(f)=f\circ p^{-1}=\hat{f}$ is a continuous function. 
\end{lemma}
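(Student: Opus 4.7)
The plan is to use the universal property of the quotient topology on $\Pi$, which the preceding note explicitly records: the projection $p:X\to\Pi$ is continuous, and a set $V\subseteq\Pi$ is open if and only if $p^{-1}(V)$ is open in $X$. Both halves of the lemma will fall out of this property together with one small well-definedness check.

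For the first assertion, I would simply note that $H(\hat f)=\hat f\circ p$ is the composition of the continuous quotient map $p:X\to\Pi$ with the continuous function $\hat f\in C(\Pi,Y)$, so $H(\hat f):X\to Y$ is continuous. No algebraic hypothesis on $Y$ is used here.

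For the second assertion, the only real content is to make sense of the symbol $G(f)=f\circ p^{-1}$, since $p^{-1}([x])$ is the whole equivalence class $[x]\subseteq X$ and not a single point. This is where the standing hypothesis that functions of $C(X,Y)$ are constant on equivalence classes enters: for each $[x]\in\Pi$ the set $f(p^{-1}([x]))=f([x])$ is a singleton, so we can define $\hat f:\Pi\to Y$ unambiguously by $\hat f([x]):=f(x)$ for any representative $x\in[x]$. By construction $\hat f\circ p=f$.

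Continuity of $\hat f$ is then immediate from the quotient property. For any open $U\subseteq Y$, one has $p^{-1}\bigl(\hat f^{-1}(U)\bigr)=(\hat f\circ p)^{-1}(U)=f^{-1}(U)$, which is open in $X$ because $f\in C(X,Y)$; hence $\hat f^{-1}(U)$ is open in $\Pi$. The main (mild) obstacle is purely notational, namely justifying that ``$f\circ p^{-1}$'' really denotes a function rather than a multi-valued correspondence; once the constancy-on-classes hypothesis resolves this, both statements reduce to standard facts about quotient topologies.
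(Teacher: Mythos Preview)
Your proposal is correct and follows essentially the same route as the paper: both dispatch $H(\hat f)=\hat f\circ p$ as a composition of continuous maps, then verify that $G(f)=f\circ p^{-1}$ is well defined from constancy on classes and check continuity via the quotient-topology criterion by showing $p^{-1}(\hat f^{-1}(U))=f^{-1}(U)$. The paper does the last step by an explicit element chase, whereas you invoke $\hat f\circ p=f$ directly; this is only a cosmetic difference.
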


\begin{proof}
First $H(\hat{f})$ is a continuous function by its definition.  Second $G(f)=\hat{f}$ is a function as $f\circ p^{-1}(p(x))=f(x)$. To establish continuity let U is open in Y.   We need to show that 
$[G(f)]^{-1} (U)=(\hat{f})^{-1}(U)=(f\circ\ p^{-1})^{-1}(U)$ is open that is that $p^{-1}((f\circ\ p^{-1})^{-1}(U))$ is open.  This follows as $x\in p^{-1}((f\circ\ p^{-1})^{-1}(U))$ iff $p(x)\in (f\circ p^{-1})^{-1}(U)$ iff $(f\circ\ p^{-1})(p(x))\in U$ iff $f(p^{-1}(p(x)))\in U$ iff $f([x])\in U$ iff $f(x)\in U$ iff $x\in f^{-1}(U)$.
\end{proof} 

Henceforth assume that the functions of C(X,Y) are constant on the equivalence classes, [x].  This is a condition that clearly holds if the equivalence classes of X are connected sets and Y has the discrete topology.  Note $C(\mathbb{Z},\mathbb{Z}_{2})$ is extensively known.

\begin{lemma} 
For these sets of continuous functions both H and G are bijections which are inverses.
\end{lemma}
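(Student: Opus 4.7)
The plan is to verify the two composition identities $H \circ G = \mathrm{id}_{C(X,Y)}$ and $G \circ H = \mathrm{id}_{C(\Pi,Y)}$, since the previous lemma has already done the heavy lifting by showing that $G(f)$ and $H(\hat{f})$ genuinely land in the respective spaces of continuous functions. Everything beyond that reduces to unwinding the definitions and using the hypothesis that every $f \in C(X,Y)$ is constant on the equivalence classes $[x]$.

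First I would fix $f \in C(X,Y)$ and evaluate $H(G(f))$ at an arbitrary $x \in X$. By definition, $H(G(f))(x) = G(f)(p(x)) = (f \circ p^{-1})(p(x))$. The only subtle point is that $p^{-1}(p(x))$ is the whole equivalence class $[x]$, not a single point, so $f \circ p^{-1}$ has to be read as "the common value of $f$ on $[x]$." This value exists and equals $f(x)$ precisely because $f$ is constant on $[x]$, so $H(G(f))(x) = f(x)$, giving $H \circ G = \mathrm{id}_{C(X,Y)}$.

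Next I would take $\hat{f} \in C(\Pi,Y)$ and compute $G(H(\hat{f}))$ at a point $[x] \in \Pi$. Choose any representative $x$ of $[x]$; then $H(\hat{f})(x) = \hat{f}(p(x)) = \hat{f}([x])$, which is a single value and therefore automatically constant on the fiber $p^{-1}([x])$. Hence $G(H(\hat{f}))([x]) = (H(\hat{f}) \circ p^{-1})([x]) = \hat{f}([x])$, so $G \circ H = \mathrm{id}_{C(\Pi,Y)}$.

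Since both composites are identities, $G$ and $H$ are mutually inverse bijections. The only place any real content enters is the well-definedness of $f \circ p^{-1}$ in the first step, which is exactly the standing hypothesis that the functions of $C(X,Y)$ are constant on equivalence classes; I expect this to be the only subtlety, and once it is acknowledged the verification is purely symbolic.
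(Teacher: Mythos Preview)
Your proof is correct and essentially the same as the paper's. The paper states injectivity as straightforward and then establishes surjectivity by observing that the preimage of $\hat{f}$ under $G$ is $\hat{f}\circ p = H(\hat{f})$ and the preimage of $g$ under $H$ is $g\circ p^{-1} = G(g)$; this is precisely your computation of the two composites, just phrased in the language of finding preimages rather than verifying $H\circ G$ and $G\circ H$ are identities.
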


\begin{proof}
That they are injective is straight forward.  To see the surjection, for an f such that $G(f)=\hat{f}$ use $f=\hat{f}\circ p$ and for $\hat{g}$ such that $H(\hat{g})=g$ use $\hat{g}=g\circ p^{-1}$.
\end{proof}

For the next theorem assume that Y has a continuous binary operation as in Theorem 1.   Also introduce  operations in C(X,Y) and  $C(\Pi,Y)$ as in Theorem 1.  The operation need not be associative or commutative.

\begin{theorem} 
If Y has a continuous binary or unitary operation then C(X,Y) and $C(\Pi, Y)$ are corresponding isomorphic  structures with H and G inverse isomorphisms.
\end{theorem}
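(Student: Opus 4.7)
The plan is to leverage Lemma 5, which already gives that $G$ and $H$ are mutually inverse bijections between $C(X,Y)$ and $C(\Pi,Y)$. So the only remaining content is to check that $G$ (equivalently $H$) intertwines the pointwise operations induced on both sides by the operations on $Y$. Once that is done, bijectivity automatically upgrades the homomorphism to an isomorphism, and its two-sided inverse $H$ is then an isomorphism for free.

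First I would unpack what $G(f) = f\circ p^{-1}$ means. Since $f$ is constant on equivalence classes, the assignment $G(f)([x]) := f(x)$ does not depend on the choice of representative $x \in [x]$, and by the previous lemma the result is continuous. Next, fix a continuous binary operation $\cdot$ on $Y$ and equip $C(X,Y)$ and $C(\Pi,Y)$ with the pointwise operations coming from Theorem 1. For any $[x] \in \Pi$, a one-line calculation gives
\[
G(f \cdot g)([x]) = (f \cdot g)(x) = f(x)\cdot g(x) = G(f)([x])\cdot G(g)([x]),
\]
so $G(f\cdot g) = G(f)\cdot G(g)$. The analogous computation for any continuous unary operation $u$ on $Y$ with pointwise extension $(u(f))(x) = u(f(x))$ yields $G(u(f)) = u(G(f))$. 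Combined with Lemma 5, this makes $G$ an isomorphism of the relevant magmas (and, when more operations are present, of the richer algebraic structures), and $H = G^{-1}$ inherits the same property.

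There is essentially no obstacle in this argument: the theorem is saying that a relabeling of the domain via the quotient map $p$ commutes with every pointwise operation on the codomain, which is built into the definition of those operations. The only subtlety worth flagging is that "$f\circ p^{-1}$" is shorthand for evaluation at an arbitrary representative, and this shorthand is legitimate precisely because of the standing hypothesis that functions in $C(X,Y)$ are constant on equivalence classes; if that hypothesis were dropped, the whole construction would collapse.
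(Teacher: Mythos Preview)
Your proposal is correct and follows exactly the approach the paper has in mind: the paper's own proof simply reads ``The proof is straight forward as these are function spaces keeping in mind that functions in $C(X,Y)$ are constant on equivalent classes. However the proof is tedious.'' Your write-up supplies precisely the routine verification the author omits, invoking Lemma~5 for the bijection and then checking the pointwise preservation of operations.
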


\begin{proof}
The proof is straight forward as these are function spaces keeping in mind that functions in C(X,Y) are constant on equivalent classes.  However the proof is tedious.
\end{proof}

The following observations through the Comment are about relations in general and will be used later.  Let X and Y be sets and F(X,Y) the functions from X to Y. 
   
\begin{defin} 
Let $J\subseteq F(X,Y)$ and define $x\cong_{J} y$ iff for all $f\in J$, $f(x)=f(y)$.  Denote the equivalence class by $[x]_{J}$ and the corresponding partition of X by $\Pi_{J}$.
Also define $x\cong y$ iff for all $f\in F(X,Y)$, $f(x)=f(y)$ that is when $J=F(X,Y)$.  Denote this class by $[x]$ and the correspond partition of X by $\Pi$.
\end{defin}

\begin{lemma} 
If $\varnothing\ne J\subseteq A\subseteq F(X,Y)$ then $[x] \subseteq [x]_{A}\subseteq [x]_{J}$.
\end{lemma}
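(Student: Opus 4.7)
The plan is to observe that the equivalence relation $\cong_J$ is defined by \emph{more} constraints the larger the family $J$ is; hence shrinking $J$ can only enlarge each equivalence class. I would unwind this into two inclusions using only the definitions, with no extra hypotheses needed beyond $J \subseteq A \subseteq F(X,Y)$ and $J \ne \varnothing$ (the nonemptiness is only there so that $\cong_J$ is nontrivially defined; reflexivity and the inclusions themselves do not actually require it).

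First I would show $[x]_A \subseteq [x]_J$. Take $y \in [x]_A$, so $f(x) = f(y)$ for every $f \in A$. Since $J \subseteq A$, in particular $f(x) = f(y)$ for every $f \in J$, which gives $y \in [x]_J$. Next I would show $[x] \subseteq [x]_A$ by exactly the same argument applied to $A \subseteq F(X,Y)$: if $f(x) = f(y)$ for all $f \in F(X,Y)$, then in particular for all $f \in A$. Chaining the two inclusions gives the result.

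There is no real obstacle here; the only thing to be careful about is not to confuse the direction of the inclusion. The mnemonic is that $[x]_{F(X,Y)} = [x]$ is the most restrictive relation (needs agreement on every function), so it yields the smallest classes, while cutting down to $J$ demands agreement on fewer functions and therefore lumps more points together. Since the argument is a two-line unraveling of the definition, I would present it as such without further ceremony.
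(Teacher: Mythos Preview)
Your proof is correct and is precisely the intended argument: the paper states this lemma without proof, treating it as immediate from Definition~2, and your two-line unwinding of the definitions is exactly what is implicitly expected.
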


In diffidence to our intention we now assume that Y has a unique element denoted 0.  For conveniencece call this element zero which could be any fixed element.  Later it will become a null element and then an algebraic zero. 

\begin{defin} 
Let $\varnothing\ne A\subseteq F(X,Y)$ and $b\in Y$.  Define the b set of A to be $V(A,b)=\{x\mid \forall f\in A, f(x)=b\}=\bigcap_{f\in A} f^{-1}(b)$.  If Y has the element 0, let $V(A)=V(A,0)=\bigcap_{f\in A}f^{-1}(0)$ be 
called the zero set of A.  Let $V(f)=V(\{f\})$.  Note in the case of $C(X,Y)$ if $\{b\}$ is closed in Y then $V(A,b)$ is closed in X.
\end{defin}

\begin{lemma} 
$\varnothing\ne J\subseteq A\subseteq F(X,Y)=F$ then $V(F,b)\subseteq V(A,b)\subseteq V(J,b)$.
\end{lemma}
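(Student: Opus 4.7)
The plan is to prove both inclusions by unpacking the definition $V(S,b) = \bigcap_{f \in S} f^{-1}(b)$ and exploiting the elementary anti-monotonicity of intersections: enlarging the indexing family can only shrink the intersection. Since $J \subseteq A \subseteq F$ by hypothesis, the two required inclusions are the two instances of this principle applied to the pairs $(J,A)$ and $(A,F)$.

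First I would verify $V(A,b) \subseteq V(J,b)$. Suppose $x \in V(A,b)$, so $f(x)=b$ for every $f \in A$. Since $J \subseteq A$, in particular $f(x)=b$ for every $f \in J$, hence $x \in V(J,b)$. Next I would establish $V(F,b) \subseteq V(A,b)$ by the identical argument, using the hypothesis $A \subseteq F$: if $x \in V(F,b)$ then $f(x)=b$ for every $f \in F$, and in particular for every $f \in A$, so $x \in V(A,b)$. Chaining these two inclusions yields the claim.

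There is no real obstacle here; the lemma is a direct consequence of the set-theoretic fact that $S \subseteq T$ implies $\bigcap_{f \in T} f^{-1}(b) \subseteq \bigcap_{f \in S} f^{-1}(b)$. The only thing worth noting in the write-up is that the hypothesis $J \neq \varnothing$ is needed solely to guarantee that $V(J,b)$ is defined as an intersection of a nonempty family (rather than defaulting to all of $X$), so that the chain of inclusions is genuinely informative at every stage.
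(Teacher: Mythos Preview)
Your proof is correct and is exactly the natural argument; the paper itself states this lemma without proof, so there is nothing to compare against beyond noting that your direct verification via anti-monotonicity of intersections is the intended (and essentially only) route.
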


\begin{defin} 
If $b\in Y$, $\varnothing\ne U\subseteq X$, and $\varnothing\ne J\subseteq F(X,Y)$  define $I(U,b)_{J}=\{f\mid  f\in J, f(U)=b\}$.  Let $I(U,b)=I(U,b)_{F}$ and if $Y$ has a zero element as above, denote $I(U,0)_{J}$ by $I(U)_{J}$ and $I(U)_{F}$ by $I(U)$.
\end{defin}

\begin{lemma}
Via definition $I(U,b)_{J}\subseteq J$ and so $I(U)_{J}\subseteq J$.
\end{lemma}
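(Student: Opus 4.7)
The plan is essentially to unpack the definition of $I(U,b)_J$ and observe that the containment is built into it. Concretely, the defining expression $I(U,b)_{J}=\{f\mid f\in J,\; f(U)=b\}$ presents $I(U,b)_J$ as the subset of $J$ cut out by the single condition $f(U)=\{b\}$. So the first step is just to take an arbitrary $f\in I(U,b)_J$; by the definition, $f$ satisfies $f\in J$ (along with the vanishing-type condition), which already gives $f\in J$. This proves the first inclusion.

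The second statement, $I(U)_J\subseteq J$, is then obtained purely by specialization. Per the preceding definition, $I(U)_J$ is just shorthand for $I(U,0)_J$, which is the case $b=0$ of the definition, valid once the element $0\in Y$ has been fixed as in the paragraph preceding Definition 4. Hence one can simply apply the first inclusion with $b=0$ to conclude $I(U)_J = I(U,0)_J \subseteq J$.

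There is really no obstacle here; the lemma is a tautology recording a naming convention so that later results (about $I(U)_J$ being an ideal, being prime, corresponding to quasi-components, and so on) can tacitly use $I(U)_J\subseteq J$ without further comment. The only thing worth being careful about is making sure the notational specialization $I(U)_J := I(U,0)_J$ is invoked explicitly, since the reader has just met several closely related abbreviations ($I(U,b)$, $I(U)_J$, $I(U)$) in quick succession and the point of the lemma is really to pin those abbreviations down.
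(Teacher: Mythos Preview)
Your argument is correct and matches the paper's approach: the paper gives no separate proof for this lemma, as the phrase ``Via definition'' in the statement itself signals that the containment is immediate from Definition~4. Your unpacking of the definition and the specialization $I(U)_J = I(U,0)_J$ is exactly what is intended.
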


\begin{lemma}
$U\subseteq V(I(U,b)_{J},b)$ so $U\subseteq V(I(U,0))_{J},0)$ and therefore $U\subseteq V(I(U))$.
\end{lemma}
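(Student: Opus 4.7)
The plan is to unwind the two definitions directly. Given $x \in U$, I need to show that every function in $I(U,b)_{J}$ evaluates to $b$ at $x$. But by the very definition of $I(U,b)_{J}$, a function $f$ lies in this set precisely when $f\in J$ and $f(U)=\{b\}$, i.e., $f(u)=b$ for every $u\in U$. Since $x$ is one such $u$, we get $f(x)=b$ for every $f\in I(U,b)_{J}$, and this is exactly the defining condition for $x \in V(I(U,b)_{J},b) = \bigcap_{f\in I(U,b)_{J}} f^{-1}(b)$.

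After establishing the general inclusion $U\subseteq V(I(U,b)_{J},b)$, the two subsequent assertions are obtained by specialization. Setting $b=0$ (which is legitimate now that $Y$ has been assumed to possess a distinguished element $0$) yields $U\subseteq V(I(U,0)_{J},0)$, which under the notational convention just introduced reads $U\subseteq V(I(U)_{J})$. Then specializing further to $J=F=F(X,Y)$, the definitions give $I(U)_{F}=I(U)$ and $V(I(U)_{F})=V(I(U))$, so the containment becomes $U\subseteq V(I(U))$.

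I do not anticipate any real obstacle here: the entire argument is a one-line definition chase, and the only care required is bookkeeping to make sure the chain of specializations ($b\mapsto 0$, then $J\mapsto F$) is applied consistently with the notational abbreviations introduced in the preceding definition. In particular, no hypothesis on continuity, on the binary operation on $Y$, or on the structure of the equivalence classes is invoked, since the statement is purely set-theoretic once $I$ and $V$ are written out.
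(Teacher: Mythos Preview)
Your proposal is correct and follows essentially the same approach as the paper: both take $x\in U$, observe that by definition every $f\in I(U,b)_{J}$ satisfies $f(x)=b$, and conclude $x\in V(I(U,b)_{J},b)$; the specializations to $b=0$ and $J=F$ are handled identically.
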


\begin{proof}
If $x\in U$ then $f\in I(U,b)_{J}$ implies that $f(x)=b$, that is $\forall f\in I(U,b)_{J}, f(x)=b$, so $x\in V(I(U,b)_{J},b)$.
\end{proof} 

\begin{lemma}
If $J\subseteq A$ then $I(U,b)_{J}\subseteq I(U,b)_{A}\subseteq I(U,b)_{F}$.
\end{lemma}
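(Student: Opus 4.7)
The plan is to prove both inclusions by straight unpacking of the definition of $I(U,b)_J$, since the statement is essentially a monotonicity property of the construction $J \mapsto I(U,b)_J$ with respect to the ambient subset $J$.

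First I would take an arbitrary $f \in I(U,b)_J$. By the defining formula $I(U,b)_J = \{f \mid f \in J,\ f(U)=b\}$, this gives two facts: $f \in J$ and $f(U) = b$. Using the hypothesis $J \subseteq A$, the first fact upgrades to $f \in A$, while the second fact $f(U) = b$ is a statement about $f$ alone and persists unchanged. Hence $f \in A$ and $f(U) = b$, which is precisely the condition for membership in $I(U,b)_A$. This proves $I(U,b)_J \subseteq I(U,b)_A$.

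Next I would repeat the same argument with $A$ playing the role of $J$ and $F = F(X,Y)$ playing the role of $A$. The inclusion $A \subseteq F$ holds by the assumption $A \subseteq F(X,Y)$ coming from the ambient setup of the previous Definition, so any $f \in I(U,b)_A$ satisfies $f \in A \subseteq F$ and $f(U)=b$, whence $f \in I(U,b)_F$. Combining the two inclusions gives the chain $I(U,b)_J \subseteq I(U,b)_A \subseteq I(U,b)_F$.

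I do not anticipate any real obstacle here; the statement is formally dual to the earlier Lemma asserting $V(F,b) \subseteq V(A,b) \subseteq V(J,b)$, with the direction of the inclusions reversed because $I(U,b)_{(-)}$ is covariant in the subscript (more functions in the ambient set can only produce more functions that happen to send $U$ to $b$), whereas $V((-),b)$ is contravariant (intersecting over more functions can only shrink the common zero-like set). The only thing worth flagging is that $f(U) = b$ is used in the sense $f(x) = b$ for every $x \in U$, so it is a property of the function independent of which ambient set we view $f$ as living in — which is exactly what makes the inclusion trivial.
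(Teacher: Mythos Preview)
Your proposal is correct and is exactly the direct unpacking of the definition that the paper has in mind; indeed, the paper states this lemma without proof, treating it as immediate from the definition of $I(U,b)_J$.
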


\begin{lemma} 
$J\subseteq I(V(J,b),b)$ and therefore $J\subseteq I(V(J))$.
\end{lemma}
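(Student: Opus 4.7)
The plan is to unwind the definitions directly; there is no real content here beyond a symmetry between the defining conditions of $V$ and $I$. The inclusion $J \subseteq I(V(J,b),b)$ is the assertion that every $f \in J$ sends every point of $V(J,b)$ to $b$, and the defining property of $V(J,b)$ gives exactly this.

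More concretely, I would begin by fixing an arbitrary $f \in J$ and verifying the membership condition of $I(V(J,b),b)$. By the previous definition, this amounts to showing $f(x) = b$ for every $x \in V(J,b)$. So I would take such an $x$, apply the defining property of $V(J,b) = \{x \mid \forall g \in J,\ g(x) = b\}$ with the particular choice $g = f$ (permissible since $f \in J$), and read off $f(x) = b$. This gives $f \in I(V(J,b),b)$ and hence the claimed inclusion.

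For the second statement, I would simply specialize $b = 0$, using the notational convention introduced after Definition (of $V$): $V(J) = V(J,0)$ and $I(U) = I(U,0)$. No separate argument is required.

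The only potential obstacle is a notational one: the statement writes $I(V(J,b),b)$ without the subscript $J$, so the ambient set of functions is $F = F(X,Y)$ rather than $J$ itself. I would note explicitly that $J \subseteq F$ ensures $f \in F$ for every $f \in J$, so the candidate $f$ does lie in the codomain $I(V(J,b),b) = I(V(J,b),b)_F$. After that observation, the proof is a one-line quantifier swap.
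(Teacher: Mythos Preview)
Your proof is correct. The paper states this lemma without proof, treating it as an immediate consequence of the definitions, and your argument is exactly the routine verification one would supply: fix $f\in J$, take any $x\in V(J,b)$, and use the defining property of $V(J,b)$ with $g=f$ to get $f(x)=b$, hence $f\in I(V(J,b),b)$; then specialize $b=0$ for the second clause. Your remark about the missing subscript (so that $I(V(J,b),b)=I(V(J,b),b)_{F}$ and one needs $J\subseteq F$) is a fair point of hygiene but does not affect the argument.
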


\begin{lemma}
If $U_{1}\subseteq U_{2}\subseteq X$ then $I(X,b)_{J}\subseteq I(U_{2},b)_{J}\subseteq I(U_{1},b)_{J}$.
\end{lemma}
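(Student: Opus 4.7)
The plan is to unfold the definition $I(U,b)_{J}=\{f\in J\mid f(U)=\{b\}\}$ and exploit the trivial monotonicity that if a function is constantly equal to $b$ on a set $W$, then it is constantly equal to $b$ on every subset of $W$. In other words, $I(\,\cdot\,,b)_{J}$ is contravariant (order-reversing) in its set argument, which immediately gives the claimed reversed chain of inclusions.

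Concretely, for the first inclusion I would fix $f\in I(X,b)_{J}$, so $f\in J$ and $f(x)=b$ for every $x\in X$; since $U_{2}\subseteq X$, the same equation holds for every $x\in U_{2}$, whence $f(U_{2})=\{b\}$ and $f\in I(U_{2},b)_{J}$. For the second inclusion I would fix $f\in I(U_{2},b)_{J}$, use $U_{1}\subseteq U_{2}$ to restrict the condition $f(x)=b$ from $U_{2}$ down to $U_{1}$, and conclude $f\in I(U_{1},b)_{J}$.

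There is no real obstacle: the argument is a one-step definition chase, entirely parallel to (but in the opposite direction from) the earlier lemma asserting $I(U,b)_{J}\subseteq I(U,b)_{A}\subseteq I(U,b)_{F}$ for $J\subseteq A\subseteq F$. The only point worth flagging for the reader is the direction of the inclusions, namely that \emph{enlarging} the evaluation set \emph{shrinks} the ideal, whereas enlarging the ambient function family $J$ enlarges the ideal; this contrast is what makes the two monotonicity lemmas complementary rather than redundant.
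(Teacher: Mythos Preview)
Your proof is correct; the paper itself gives no proof for this lemma, treating the contravariance of $I(\,\cdot\,,b)_{J}$ in its set argument as immediate from Definition~4, which is precisely the definition-chase you carry out. Your observation contrasting this with the covariance in the function-family argument (Lemma~10) is a nice touch but not needed for the argument.
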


\begin{corollary}
If $\varnothing\ne J\subseteq A\subseteq F(X,Y)$ and $B\subseteq F(X,Y)$ then $I([x]_{J},b)_{B}\subseteq I([x]_{A},b)_{B}\subseteq I([x],b)_{B}$.
\end{corollary}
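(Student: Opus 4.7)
The plan is to assemble the corollary from two earlier ingredients already on the page. The first is the lemma right after the definition of $\cong_J$: whenever $\varnothing\ne J\subseteq A\subseteq F(X,Y)$, one has $[x]\subseteq [x]_A\subseteq [x]_J$. (A function equal at two points on all of $F(X,Y)$ is certainly equal at them on $A$, and equal on $A$ forces equal on $J$, so the equivalence classes shrink as the testing family grows.) The second is the antitone lemma just stated, which says $U_1\subseteq U_2$ forces $I(U_2,b)_B\subseteq I(U_1,b)_B$; although it is written there with $J$ in the subscript, the proof is purely set-theoretic in the ``evaluation on $U$'' constraint and works for an arbitrary testing family, so we may apply it with the subscript $B$.

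First I would set $U_1=[x]_A$ and $U_2=[x]_J$ in the antitone lemma to obtain
\[
I([x]_J,b)_B\ \subseteq\ I([x]_A,b)_B .
\]
Then, with $U_1=[x]$ and $U_2=[x]_A$, the same lemma yields
\[
I([x]_A,b)_B\ \subseteq\ I([x],b)_B .
\]
Concatenating the two inclusions gives exactly the chain asserted by the corollary.

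There is no serious obstacle. The only point that merits a word of care is the reversal of direction: enlarging the set $U$ places \emph{more} demands on a function $f\in B$ that must satisfy $f(U)=\{b\}$, hence \emph{shrinks} $I(U,b)_B$. Once this antitonicity is matched against the monotone nesting of the equivalence classes $[x]\subseteq[x]_A\subseteq[x]_J$, the two inclusions fall out immediately and no further hypothesis on $B$, on $Y$, or on the distinguished element $b$ is invoked.
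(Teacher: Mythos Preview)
Your argument is correct and is exactly the route the paper intends: the corollary is placed immediately after the antitone lemma $U_1\subseteq U_2\Rightarrow I(U_2,b)_J\subseteq I(U_1,b)_J$ precisely so that it can be combined with the earlier nesting $[x]\subseteq[x]_A\subseteq[x]_J$, and the paper offers no separate proof. Your remark that the subscript in the antitone lemma may be taken to be an arbitrary $B\subseteq F(X,Y)$ is the only point not made explicit in the paper, and it is straightforward.
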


\begin{theorem} 
If for every f in J, $f(x)=b$ then $V(J,b)=[x]_{J}$
\end{theorem}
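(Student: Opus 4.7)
The plan is to prove set equality by double inclusion, using the hypothesis that $f(x)=b$ for every $f\in J$ as the bridge connecting the two descriptions.

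First, I would write out what each side means. By definition of the $b$-set, $V(J,b)=\{y\in X \mid \forall f\in J,\ f(y)=b\}$, and by definition of the equivalence relation, $[x]_J=\{y\in X \mid \forall f\in J,\ f(y)=f(x)\}$. The hypothesis ``$f(x)=b$ for all $f\in J$'' immediately says $x\in V(J,b)$, which is what allows us to identify the constant value $b$ with the value $f(x)$ throughout the argument.

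For the inclusion $V(J,b)\subseteq [x]_J$, I would take $y\in V(J,b)$, so $f(y)=b$ for every $f\in J$; by the hypothesis $b=f(x)$, so $f(y)=f(x)$ for every $f\in J$, giving $y\in [x]_J$. For the reverse inclusion $[x]_J\subseteq V(J,b)$, take $y\in[x]_J$, so $f(y)=f(x)$ for every $f\in J$; applying the hypothesis $f(x)=b$ gives $f(y)=b$ for every $f\in J$, hence $y\in V(J,b)$.

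There is no real obstacle here: the statement is essentially an unpacking of definitions once the hypothesis pins down the common value shared on the equivalence class. The only thing to be careful about is that the hypothesis is needed in both directions, since without it the inclusion $[x]_J\subseteq V(J,b)$ would fail (the functions in $J$ might be constant on $[x]_J$ but with a value other than $b$). Since $J$ is assumed nonempty in the ambient definition of $V(J,b)$, the equivalence class $[x]_J$ is well defined and the argument is complete.
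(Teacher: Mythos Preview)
Your proof is correct and follows the same approach as the paper, which simply unpacks the definitions: $V(J,b)=\bigcap_{f\in J}f^{-1}(b)=\{y\mid \forall f\in J,\ f(y)=f(x)=b\}=[x]_J$. The paper compresses the double inclusion into a single chain of equalities, but the content is identical.
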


\begin{proof}
$V(J,b)=\bigcap_{f\in J}f^{-1}(b)=\{y\mid \forall f\in J, f(y)=f(x)=b\}=[x]_{J}$ where $f(x)=b$.
\end{proof}

\begin{corollary}
If for every $f\in J$, $f(x)=b$, and if $A\subseteq F(X,Y)$ then $I(V(J,b),c)_{A}=I([x]_{J},c)_{A}$.  In particular if $b=c=0$ then $I(V(J))=I([x]_{J})\subseteq I([x])$. 
\end{corollary}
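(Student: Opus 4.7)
The plan is to treat this as an immediate consequence of the preceding theorem plus one monotonicity lemma, so the writeup should emphasize chaining together definitions rather than doing new work.

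First I would apply the theorem just proved: under the standing hypothesis that $f(x)=b$ for every $f\in J$, we have $V(J,b)=[x]_{J}$. Substituting this equality directly into the first argument of $I(\,\cdot\,,c)_{A}$ yields
\[
I(V(J,b),c)_{A} \;=\; I([x]_{J},c)_{A},
\]
which is the main assertion. Nothing more than replacing one set by an equal one is needed.

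For the "in particular" clause I would set $b=c=0$. By our convention $I(U)$ abbreviates $I(U,0)_{F}$ and $V(J)$ abbreviates $V(J,0)$, so the displayed equality specializes to $I(V(J))=I([x]_{J})$. To get the final inclusion $I([x]_{J})\subseteq I([x])$, I would invoke the earlier lemma stating $[x]\subseteq [x]_{A}\subseteq [x]_{J}$ (with $A=F$), giving in particular $[x]\subseteq [x]_{J}$, together with the order-reversal lemma that if $U_{1}\subseteq U_{2}\subseteq X$ then $I(U_{2},b)_{J}\subseteq I(U_{1},b)_{J}$. Applied with $U_{1}=[x]$, $U_{2}=[x]_{J}$, $b=0$, $J=F$, this yields exactly $I([x]_{J})\subseteq I([x])$.

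There is no genuine obstacle here; the only thing to watch is bookkeeping with the abbreviated notations $V(J)$, $I(U)$, $I(U)_{J}$ so that the specialization $b=c=0$ is correctly unpacked. I would therefore keep the proof to two short sentences: one sentence quoting the theorem to obtain the equality, and one sentence combining $[x]\subseteq[x]_{J}$ with the order-reversal of $I(\,\cdot\,,b)_{J}$ to obtain the inclusion.
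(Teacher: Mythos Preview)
Your proposal is correct and matches the paper's approach: the paper states this as a corollary with no explicit proof, and your argument is precisely the intended one --- substitute $V(J,b)=[x]_J$ from Theorem~4 for the equality, then combine Lemma~6 (giving $[x]\subseteq[x]_J$) with the order-reversal of Lemma~12 for the inclusion. The only cosmetic point is that the inclusion is already packaged in the Corollary immediately following Lemma~12, so you could cite that directly rather than re-deriving it from Lemmas~6 and~12; also be careful that the ``in particular'' clause silently takes $A=F$ in addition to $b=c=0$, which you handle correctly but might state explicitly.
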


Observation: Let $F(X,Y)=A_{1}\supseteq A_{2}\supseteq ...... \supseteq A_{n}$.  Then $[x]=[x]_{A_{1}}\subseteq [x]_{A_{2}}\subseteq ...... \subseteq [x]_{A{n}}$.  Thus if $\{x\}\neq \varnothing$ and $[x]_{A_{i}}=\{x\}$ then they are all equal for $j\leq i$.  Moreover $I([x]_{A_{1}},b)_{J}\supseteq I([x]_{A_{2}},b)_{J}\supseteq.....\supseteq I([x]_{A_{n}},b)_{J}$.  Finally recall $C(\Pi,Y)\xrightarrow{H} C(X,Y)$ is an  
isomorphism.  If functions are continuous on these equivalence classes of $A_{n}$ then by Theorem 3, $C(\Pi,Y)$ and all of the $C(\Pi_{A_{i}},Y)$ are algebraic isomorphic.  We could consider $(X/\Pi_{1})/\Pi_{2}$, but not at this time.

\begin{note}
Use $U^{c}$ for $X\setminus U$ for any set $ U\subset X$.  $U^{c}$ is the complement of $U$ in X.
\end{note}

Now proceed with X as a topological space.
   
\begin{defin} 
 The quasi-component of $x\in X$ is the intersection of all clopen (closed and open) subsets of X which contain x, [5], page 246.   The quasi-component of x is denoted $Q_x$.
\end{defin}

\begin{corollary}
If $y\in Q_{x}$ then $Q_{x}=Q_{y}$ otherwise there is a clopen set U containing y that does not contain x but then $U^{c}$ is clopen, $x\in U^{c}$ and $y\notin U^{c}$.  Quasi-components are disjoint and clopen sets are unions of quasi-components.
\end{corollary}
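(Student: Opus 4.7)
The plan is to prove the three assertions separately: (a) if $y\in Q_{x}$ then $Q_{x}=Q_{y}$; (b) quasi-components are pairwise disjoint; (c) every clopen set is a union of quasi-components. The statement itself already sketches the argument for (a), so I would expand that hint into a clean contradiction, then derive (b) from (a) and (c) directly from the definition.

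For (a), I would argue by contradiction. Suppose $y\in Q_{x}$ but $Q_{x}\neq Q_{y}$. Then by symmetry, there exists a clopen set $U$ that contains one of $x,y$ and not the other. If $y\in U$ and $x\notin U$, then $U^{c}$ is clopen with $x\in U^{c}$, so $Q_{x}\subseteq U^{c}$ by definition of $Q_{x}$ as the intersection of all clopens containing $x$; but $y\in Q_{x}\cap U$, contradicting $Q_{x}\subseteq U^{c}$. If instead $x\in U$ and $y\notin U$, then $Q_{x}\subseteq U$ gives $y\in Q_{x}\subseteq U$, again a contradiction. Hence the two sets of clopens defining $Q_{x}$ and $Q_{y}$ coincide, forcing $Q_{x}=Q_{y}$.

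For (b), suppose two quasi-components meet, say $z\in Q_{x}\cap Q_{y}$. By (a), $z\in Q_{x}$ gives $Q_{z}=Q_{x}$, and $z\in Q_{y}$ gives $Q_{z}=Q_{y}$, so $Q_{x}=Q_{y}$. Therefore distinct quasi-components are disjoint. For (c), let $U$ be any clopen subset of $X$. For every $x\in U$, the set $U$ is itself a clopen set containing $x$, so by definition $Q_{x}\subseteq U$. Consequently $U=\bigcup_{x\in U}Q_{x}$, exhibiting $U$ as a union of quasi-components.

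There is no real obstacle here: the nontrivial content is the contrapositive step in (a), which is essentially given in the statement, and (b)--(c) are immediate formal consequences of (a) and the defining intersection, respectively. The only care needed is to handle both directions (i.e.\ $y\in U\setminus\{x\}$ and $x\in U\setminus\{y\}$) in (a) to avoid an asymmetric case analysis.
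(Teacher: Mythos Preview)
Your proof is correct and follows essentially the same line as the paper, which embeds the argument in the statement itself and gives no separate proof. One minor remark: your two-case split in (a) is slightly redundant, since the hypothesis $y\in Q_{x}$ already forces $y$ to lie in every clopen set containing $x$, so the case ``$x\in U$, $y\notin U$'' is excluded outright; this is why the paper's hint only mentions the case of a clopen $U$ with $y\in U$, $x\notin U$.
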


\begin{note}
A quasi component must be closed but need not be open.  For example consider $X=\{0,1,1/2,1/3,...,1/n,...\}\subseteq \mathbb{R}$ with the topology induced from $\mathbb{R}$.  Zero is a non-open quasi -component.
\end{note}

Hence forth unless otherwise indicated Y is a topological space such that for any two distinct points there is a clopen set containing one of the points but not the other.  Such a space is $T_{2}$ and totally separated.  Hence every point is a quasi-component, [10] page 32.  The rational numbers with the subspace topology inhereted from the reals is a prime example.  Note Y need not be zero dimensional that is it may not have a clopen set basis, for examples see [10].  

We are now interested in the equivalence relation $x\cong y$. 

\begin{note}
It is easy to create topological spaces, X, with any number of components and quasi-components with different topological properties.  Then choosing Y with algebraic properties, we can obtain algebraic structures on C(X,Y) which are different from Y and which reflect the quasi-components as situated in X.  Passing to $C(\Pi,Y)$ will facilitate proofs.
\end{note}

\begin{theorem} 
The elements, $[x]$,  of $\Pi$, the equivalence classes, are precisely the quasi-components of X.  That is $[x]=Q_x$ for all $x\in X$.
\end{theorem}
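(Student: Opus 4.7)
The plan is to prove both inclusions $Q_x \subseteq [x]$ and $[x] \subseteq Q_x$ separately, using in each direction one of the two standing hypotheses: that $Y$ is totally separated, and that $X$ admits enough continuous functions into $Y$ built from clopen subsets.

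For the inclusion $Q_x \subseteq [x]$, I would argue by contrapositive. Suppose $y \notin [x]$, so there exists $f \in C(X,Y)$ with $f(x)\neq f(y)$. By the totally separated hypothesis on $Y$, pick a clopen set $V \subseteq Y$ with $f(x)\in V$ and $f(y)\notin V$. Then $f^{-1}(V)$ is clopen in $X$, contains $x$, and misses $y$. Since $Q_x$ is the intersection of all such clopen neighborhoods of $x$, we get $Q_x \subseteq f^{-1}(V)$, so $y\notin Q_x$. This establishes $Q_x \subseteq [x]$.

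For the inclusion $[x]\subseteq Q_x$, again argue by contrapositive: assume $y\notin Q_x$. By definition there exists a clopen $U\subseteq X$ with $x\in U$ and $y\notin U$. Pick any two distinct points $a,b\in Y$ (possible since $\|Y\|>1$) and define $f:X\to Y$ by $f(z)=a$ for $z\in U$ and $f(z)=b$ for $z\in U^{c}$. For any open $W\subseteq Y$, the preimage $f^{-1}(W)$ is one of $\varnothing,U,U^{c},X$, all of which are open, so $f\in C(X,Y)$. Since $f(x)=a\neq b=f(y)$, we have $y\notin [x]$, giving $[x]\subseteq Q_x$.

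The argument is essentially a Galois-style correspondence between clopen sets in $X$ and separations in $Y$, and both directions are quite short once one identifies which hypothesis to use where. The step requiring the most care is the first, where one must remember that a single clopen separator in $Y$ lifts to a clopen separator in $X$ only because $f$ is continuous; no assumption of zero-dimensionality on $Y$ is needed, only total separation. The second direction uses nothing about the topology of $Y$ beyond the fact that two points form a discrete subspace, which is automatic from totally separated. I do not anticipate a substantive obstacle; the statement is essentially the ``right'' definition of quasi-component translated into the language of $C(X,Y)$.
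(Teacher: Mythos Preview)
Your proof is correct and matches the paper's argument essentially step for step: both directions use the same key ideas (pulling back a clopen separator from $Y$ for $Q_x\subseteq[x]$, and building a two-valued continuous function from a clopen set in $X$ for $[x]\subseteq Q_x$), with only cosmetic differences in phrasing (contrapositive versus direct contradiction).
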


\begin{proof}
  If $Q_x\not\subseteq [x]$ then there is an $f\in C(X,Y)$ that is not constant on $Q_x$.  So there exist a $y\in Q_x$ such that $f(x)\neq f(y)$.  Let $U\subseteq Y$ be a clopen set about f(x) such that $f(y)\notin U$.  Then $f^{-1}(U)$ is a clopen set about x not containing y.  This is a contradiction to the definition of $Q_x$.  Therefore $Q_x\subseteq [x]$.

  On the other hand, let $y\in[x]$ and let V be a clopen set about x.  If there is only one V then V=X and there is nothing to prove.  Choose an a and b in Y such that $a\neq b$.  
The characteristic function $\chi(z)=\left\{\begin{array}{ll}b&\text{if } z\in V\\a&\text{if } z\notin V\end{array}\right.$ is continuous on $X$ with $\chi(x)=b$.  Since 
$y\in[x]$ it follows $\chi(y)=b$ and hence $y\in V$.  Thus $[x]$ is contained in any clopen set containing x and thus $[x]\subseteq Q_x$.
\end{proof}   
 
\begin{note}
 This is a new characterizaton of quasi-components. 
 \end{note}
  
\begin{corollary}
 Functions are constant on the quasi-components.
 \end{corollary}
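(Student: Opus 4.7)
The plan is to read this off directly from the previous theorem together with the definition of the equivalence relation $\cong$ that $\Pi$ is built from. Recall from the definition that $x \cong y$ was set up to mean precisely that $f(x) = f(y)$ for every $f \in C(X,Y)$; so, unwinding the definition of the equivalence class $[x]$, the statement ``$f$ is constant on $[x]$'' is tautological for every $f \in C(X,Y)$.

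Accordingly, the only step with any content is to invoke the preceding theorem, which identifies the equivalence classes with the quasi-components: $[x] = Q_x$ for every $x \in X$. Substituting $Q_x$ for $[x]$ in the tautology above then gives exactly the claim that each $f \in C(X,Y)$ is constant on every quasi-component $Q_x$.

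So the full argument I would write is essentially one sentence: let $f \in C(X,Y)$ and $x \in X$; for any $y \in Q_x = [x]$, the definition of $\cong$ forces $f(x) = f(y)$, hence $f$ is constant on $Q_x$. There is no real obstacle here, since all the work has already been done in the theorem characterizing $[x]$ as $Q_x$; the corollary is just the consumer of that identification.
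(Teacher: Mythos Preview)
Your proposal is correct and matches the paper's intent: the corollary is stated without proof in the paper, as it follows immediately from Theorem~5 ($[x]=Q_x$) together with the definition of $\cong$, exactly as you unwind it. There is nothing to add.
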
  
 
 \begin{corollary}
  If $U$ is clopen and $x\in X$ then either $U\supseteq[x]$ or $U\cap[x]=\phi$.
\end{corollary}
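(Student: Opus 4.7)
The plan is to reduce the statement to the definition of a quasi-component by using the preceding theorem $[x]=Q_x$ together with the earlier corollary that $Q_y=Q_x$ whenever $y\in Q_x$. The dichotomy is the law of excluded middle applied to $U\cap[x]$: either it is empty (giving the second alternative) or it is nonempty, in which case I must show $[x]\subseteq U$.

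First I would handle the trivial case: if $U\cap[x]=\emptyset$, the statement holds. Otherwise choose $y\in U\cap[x]$. Since $[x]=Q_x$ by the theorem, we have $y\in Q_x$, and the earlier corollary then gives $Q_y=Q_x$, so $[x]=Q_y$. Now $U$ is a clopen set containing $y$, and $Q_y$ is by definition the intersection of all clopen sets containing $y$; hence $Q_y\subseteq U$, which yields $[x]\subseteq U$, i.e.\ $U\supseteq[x]$.

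An alternative route, perhaps more in the spirit of the surrounding material, is to argue by contradiction: if both $U\cap[x]$ and $U^c\cap[x]$ were nonempty, pick $y\in U\cap[x]$ and $z\in U^c\cap[x]$. Then $U$ and $U^c$ are both clopen, so the characteristic-function argument from the proof of the theorem (using any two distinct $a,b\in Y$) produces a continuous $\chi\in C(X,Y)$ with $\chi(y)\neq\chi(z)$, contradicting $y\cong z$. Either route works; I would write out the first since it is a clean one-line appeal to the definition of $Q_y$.

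I expect no real obstacle here: the content of the corollary is essentially a restatement of the defining property of quasi-components once the identification $[x]=Q_x$ is in hand. The only point requiring care is invoking the equality $Q_x=Q_y$ for the chosen representative $y\in U\cap[x]$, so that the clopen set $U$ is one of the sets in the intersection defining the quasi-component we are trying to bound.
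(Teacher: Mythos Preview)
Your argument is correct and uses the same underlying fact as the paper: a quasi-component is contained in every clopen set containing any of its points. The paper's proof is marginally more direct: it splits on whether $[x]\subseteq U$, and in the case $[x]\nsubseteq U$ observes that $x\notin U$ (since $x\in U$ would force $Q_x\subseteq U$), whence $x\in U^{c}$ clopen gives $[x]\subseteq U^{c}$ and so $U\cap[x]=\emptyset$. Your version splits on whether $U\cap[x]$ is empty and, when it is not, picks an arbitrary $y$ in the intersection and invokes $Q_y=Q_x$ before applying the defining containment; this is one small step longer but entirely sound, and the content is the same.
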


\begin{proof}
Either $[x]\subseteq U$ or $[x]\nsubseteq U$.  If $[x]\nsubseteq U$ then $x\notin U$.  So $x\in X\setminus U=U^{c}$ which is clopen.  Thus $U\cap [x]=\phi$.
\end{proof}

COMMENT:  If we assume Y has a binary operation with a zero 0 and unit 1 then quasi-components can be characterized using a slightly different equivalence relation as follows.  Consider the set of characteristic functions in C(X,Y) of the form $\chi_{u}$ which is 0 on U and 1 on $U^{c}$ where U is clopen.  Define $x\sim_{c} y$ if and only if for all $\chi_{u}\in C(X,Y)$, $\chi_{u}(x)=\chi_{u}(y)$.  Denoting the equivalence class of x by $[x]_{\chi}$ an easy adaptation of Theorem 5 shows that $Q_{x}=[x]_{\chi}$.

This establishes that the two equivalence classes are equal, that is $[x]=[x]_{\chi}$.  The set of ${\chi_{u}}$ is closed under multiplication in C(X,Y) but need not be closed under addition if Y is  ring.  Ultimately we are interested in C(X,Y) as a ring although multiplication will dominate through out most the development without addition or using associativity or commutativity for an operation.  

Convention:  When Y has a zero 0 and unit 1, $\chi_{u}$ will denote a continuous characteristic function as in the comment and $\chi_{u^{c}}$ analogously.  Let $\chi_{x}$ denote $\chi_{\{x\}}$.  Different characteristic functions when needed will be defined.

\begin{note}
As components are subsets of quasi-components, functions that are constant on a component are constant on its corresponding quasi-component.  Hence the interest in the quasi-components of X.
\end {note}

\begin{lemma} 
Any two quasi-components can be separated by a clopen set.
\end{lemma}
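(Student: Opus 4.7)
The plan is to use the definition of quasi-component together with the earlier corollary that any clopen set either contains a quasi-component or is disjoint from it. Given two distinct quasi-components $Q_x$ and $Q_y$, the corollary to Theorem 5 on disjointness tells us $Q_x \cap Q_y = \varnothing$, so in particular $y \notin Q_x$.

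Next I would invoke the definition of the quasi-component directly: since $Q_x$ is the intersection of all clopen sets containing $x$, and since $y \notin Q_x$, there must exist at least one clopen set $V \subseteq X$ with $x \in V$ and $y \notin V$. This is the candidate separating set.

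The final step is to upgrade this point-separation into quasi-component separation. Applying the corollary that says ``if $U$ is clopen and $x \in X$ then either $U \supseteq [x]$ or $U \cap [x] = \varnothing$'' (combined with Theorem 5 identifying $[x]$ with $Q_x$), the fact that $x \in V$ forces $Q_x \subseteq V$. Applying the same corollary to $V^c$, which is also clopen and contains $y$, forces $Q_y \subseteq V^c$. Hence $V$ separates the two quasi-components as required.

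There is no real obstacle here, since the heavy lifting was done in establishing Theorem 5 and its corollary; the only thing to watch is to make sure the argument handles the trivial case where $X$ itself is the only clopen set containing $x$, in which case there cannot be two distinct quasi-components, so the hypothesis of the lemma already excludes it.
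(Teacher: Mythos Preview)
Your proof is correct and follows the same overall strategy as the paper: obtain a clopen set separating the two representative points from the definition of quasi-component, then invoke the corollary to Theorem~5 to upgrade the point separation to quasi-component separation.

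There is one small difference worth noting. The paper's proof introduces \emph{two} clopen sets, one $U\supseteq [x]$ with $y\notin U$ and one $V\supseteq [y]$ with $x\notin V$, and then exhibits the disjoint clopen pair $U\cap V^{c}$ and $V\cap U^{c}$. Your argument is slightly leaner: you observe that a single clopen $V$ with $x\in V$ and $y\notin V$ already suffices, since $V^{c}$ is automatically clopen and the corollary forces $Q_{x}\subseteq V$ and $Q_{y}\subseteq V^{c}$. This avoids the extraneous intersection step, but the underlying idea is identical.
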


\begin{proof}
  Let [x] and [y] be two distinct quasi-components.  So there exist clopen sets U and V such that $U\supseteq [x]$, $V\supseteq [y]$, $x\notin V$ and $y\notin U$.  By the corollary $[x]\cap V=\phi$ and $[y]\cap U=\phi$.  So it follows that $[x]\subseteq U\cap V^{c}$ and $[y]\subseteq V\cap U^{c}$.  These are the desired 
  clopen sets.
\end{proof}

\begin{corollary}
Quasi-components are disjoint and every clopen set is a union of quasi-components.
\end{corollary}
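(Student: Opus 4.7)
The claim has two parts, and both are essentially immediate from material already established, so I would write a short proof that invokes the preceding results rather than building from scratch.

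For the disjointness half, I would appeal to Theorem 5 together with the earlier corollary showing $Q_x = Q_y$ whenever $y \in Q_x$: since the quasi-components coincide with the equivalence classes $[x]$ of the relation $\cong$, they partition $X$ and in particular are pairwise disjoint. No use of the separation lemma is required for this part; the plan is simply to cite Theorem 5 and the corollary that established $Q_x=Q_y$ for $y\in Q_x$.

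For the second half, given a clopen set $U \subseteq X$ the plan is to show $U = \bigcup_{x\in U}[x]$ by double containment. The inclusion $U \subseteq \bigcup_{x\in U}[x]$ is trivial because each $x\in U$ lies in its own quasi-component. For the reverse inclusion I would invoke the corollary stating that for clopen $U$ and any $x\in X$, either $[x]\subseteq U$ or $[x]\cap U = \emptyset$. If $x\in U$ then $[x]\cap U \neq \emptyset$, so the dichotomy forces $[x]\subseteq U$, and unioning over $x\in U$ yields $\bigcup_{x\in U}[x]\subseteq U$.

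There is really no main obstacle here: the separation lemma just proved is in fact stronger than what the corollary requires, and the essential work was already done when the dichotomy $[x]\subseteq U$ or $[x]\cap U=\emptyset$ was established. I would therefore keep the proof to two or three sentences, making clear that the corollary is recorded at this point only to emphasize that the separation lemma refines, and does not merely reprove, the earlier observation about unions of quasi-components.
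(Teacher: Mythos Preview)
Your proposal is correct, and the paper in fact gives no proof at all for this corollary---it is simply restated after Lemma 13, having already been noted in the corollary following Definition 5. Your argument via Theorem 5 and the dichotomy corollary (either $[x]\subseteq U$ or $[x]\cap U=\emptyset$) is exactly the intended one, and your observation that the separation lemma is stronger than what is needed here is precisely the point the paper seems to be making by recording the statement again at this location.
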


Now returning to $C(\Pi,Y)$ and the paragraph before Lemma 1) consider
$$\xymatrix{X\ar[rr]^f\ar[dr]_p&&Y\\&\Pi\ar[ur]_{c_{f}}&}$$
where p is the projection and the function $c_{f}$ is defined in the natural way as follows.  

\begin{defin} 
Let $z_x=p([x])$ and set $c_f(z_x)=f(p^{-1}(z_x))$ and again $\Pi$ denotes $X/\Pi$.
\end{defin}

\begin{lemma}
$c_{f}$ is continuous.
\end{lemma}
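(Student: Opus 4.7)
The plan is simply to observe that $c_f$ is, up to notation, the same map as $G(f) = f\circ p^{-1}$ introduced earlier, and then either invoke the already-proved continuity of $G(f)$ or reprove it directly from the quotient topology.

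First I would note that $c_f$ is well-defined: since the standing hypothesis (together with the identification $[x]=Q_x$ from Theorem 5) guarantees that every $f\in C(X,Y)$ is constant on equivalence classes, the value $f(p^{-1}(z_x))$ does not depend on which preimage point is chosen, so $c_f(z_x)$ is a single element of $Y$. In particular, $c_f = G(f)$ in the notation of the earlier definition, and consequently $c_f\circ p = f$ as functions on $X$.

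To establish continuity, let $U\subseteq Y$ be open. By the definition of the quotient topology on $\Pi = X/\Pi$, the set $c_f^{-1}(U)$ is open in $\Pi$ if and only if $p^{-1}(c_f^{-1}(U))$ is open in $X$. But
\[
p^{-1}(c_f^{-1}(U)) \;=\; (c_f\circ p)^{-1}(U) \;=\; f^{-1}(U),
\]
which is open in $X$ because $f\in C(X,Y)$ is continuous. Hence $c_f^{-1}(U)$ is open in $\Pi$, proving that $c_f$ is continuous.

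There is no real obstacle here; the only subtlety is making sure $c_f$ is genuinely a function, which is exactly where the standing hypothesis that elements of $C(X,Y)$ are constant on equivalence classes (and thus, via Theorem 5, on quasi-components) is used. Alternatively, one can skip the direct computation altogether and simply cite the previous lemma that established continuity of $G(f) = f\circ p^{-1}$, since $c_f$ is literally that function.
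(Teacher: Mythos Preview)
Your proof is correct and follows essentially the same route as the paper: both arguments reduce continuity of $c_f$ to the quotient-topology criterion and then verify $p^{-1}(c_f^{-1}(U)) = f^{-1}(U)$ via the identity $c_f\circ p = f$. Your additional observation that $c_f$ is literally $G(f)$, whose continuity was already established in the earlier lemma, is a legitimate shortcut the paper does not explicitly invoke at this point.
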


\begin{proof}
If U is open in Y we need to show that $c_f^{-1}(U)$ is open in $\Pi$, that is that $p^{-1}((c_f^{-1}(U))$ is open in X.  First note that $f^{-1}(U)$ is open and is the union of all of the equivalence classes of the x such that $f(x)\in U$, that is $f([x])=f(x)\in U$.  The elements of $\Pi$ that $c_f=f\circ p^{-1}$ maps into U  correspond to the union of the quasi-components of X that f maps into U.  This union is $f^{-1}(U)$ and is $p^{-1}(c^{-1}(U))$ that is $f^{-1}(U)=p^{-1}(c^{-1}(U))$.  Therefore $c_f^{-1}(U)$ is open.  
\end {proof}

\begin{lemma}
If U is clopen in X then $U=p^{-1}p(U)$.
\end{lemma}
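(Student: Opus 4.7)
The plan is to prove set equality by showing both inclusions, relying on the corollary (immediately after Theorem 5) that a clopen set either contains an entire equivalence class $[x]$ or is disjoint from it.

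The inclusion $U \subseteq p^{-1}p(U)$ is automatic for any function and any subset: every $x \in U$ satisfies $p(x) \in p(U)$, hence $x \in p^{-1}p(U)$. No clopenness is needed here.

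For the reverse inclusion $p^{-1}p(U) \subseteq U$, I would take $y \in p^{-1}p(U)$, so $p(y) \in p(U)$, which means $p(y) = p(x)$ for some $x \in U$. By the definition of $p$ as the projection onto equivalence classes, this forces $[y] = [x]$, and in particular $y \in [x]$. Since $U$ is clopen and $x \in U$, the corollary gives either $[x] \subseteq U$ or $[x] \cap U = \varnothing$; because $x \in [x] \cap U$, the second alternative is impossible, so $[x] \subseteq U$ and therefore $y \in U$.

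There is no serious obstacle here: the entire content of the lemma is the observation that clopen sets are saturated with respect to the equivalence relation whose classes are the quasi-components, and this saturation is exactly what the cited corollary expresses. The lemma is essentially a restatement of the corollary in the language of the projection $p$, setting up later arguments where one wants to pull clopen subsets of $\Pi$ back to clopen subsets of $X$ without losing information.
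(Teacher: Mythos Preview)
Your proof is correct and follows essentially the same approach as the paper: both rely on the corollary after Theorem~5 that a clopen set contains an equivalence class entirely or not at all. The paper compresses the argument into a chain of biconditionals ($x\in U$ iff $[x]\subset U$ iff $p(x)\in p(U)$ iff $x\in p^{-1}p(U)$), while you separate the two inclusions explicitly, but the logical content is identical.
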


\begin{proof}
$x\in U$ iff $[x]\subset U$ iff $p(x)=p([x])\in p(U)$ iff $x\in [x]\subset p^{-1}p(U)$.
\end{proof} 

\begin{corollary}
If W is clopen in $\Pi$ then $p^{-1}(W)$ is clopen in X and conversely if U is clopen in X then p(U) clopen in $\Pi$.  Thus the intersection of clopen sets defining $[x]$ in X correspond to the intersection of clopen sets containing p(x)=z in $\Pi$.  That is the quasi-components are in bijective correspondence.
\end{corollary}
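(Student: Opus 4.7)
The plan is to separate the corollary into three assertions and handle each in turn, leaning heavily on the defining property of the quotient topology on $\Pi$ together with the immediately preceding Lemma that $U = p^{-1}p(U)$ for clopen $U \subseteq X$.

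First I would handle the direction $W$ clopen in $\Pi$ implies $p^{-1}(W)$ clopen in $X$. This is the easy half: the quotient projection $p$ is continuous (indeed, $V$ is open in $\Pi$ iff $p^{-1}(V)$ is open in $X$, as noted after the definition of $G$ and $H$), so $p^{-1}$ of an open set is open and $p^{-1}$ of a closed set is closed, giving the conclusion immediately.

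Next I would treat the converse: if $U \subseteq X$ is clopen then $p(U)$ is clopen in $\Pi$. By the quotient topology, it suffices to check that both $p^{-1}(p(U))$ and $p^{-1}(\Pi \setminus p(U))$ are open in $X$. For the first, the preceding Lemma gives $p^{-1}(p(U)) = U$, which is open. For the second, note that $p^{-1}(\Pi \setminus p(U)) = X \setminus p^{-1}(p(U)) = X \setminus U$, again open because $U$ is closed. Therefore $p(U)$ is open and its complement is open, so $p(U)$ is clopen.

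Finally, for the bijective correspondence of quasi-components, I would argue that the two maps $U \mapsto p(U)$ and $W \mapsto p^{-1}(W)$ are mutually inverse bijections between the clopen sets of $X$ containing $x$ and the clopen sets of $\Pi$ containing $z_x = p(x)$: surjectivity of $p$ gives $p(p^{-1}(W)) = W$, and the previous Lemma gives $p^{-1}(p(U)) = U$. Intersecting both sides of this bijection yields
\[
p^{-1}\!\left(\bigcap_{z_x \in W \text{ clopen}} W\right) \;=\; \bigcap_{x \in U \text{ clopen}} U \;=\; [x] \;=\; Q_x,
\]
and then applying $p$ and using surjectivity gives $p(Q_x) = Q_{z_x}$, so $x \mapsto z_x$ induces a bijection between the quasi-components of $X$ and those of $\Pi$ (the latter being singletons). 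The only place any care is needed is the step $p^{-1}(p(U)) = U$, which is not true for arbitrary $U$ but is true for clopen $U$ by the cited Lemma, so that is the one spot worth flagging; everything else is formal manipulation of the quotient topology.
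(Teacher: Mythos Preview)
Your proposal is correct and follows exactly the route the paper intends: the paper states this as a corollary with no explicit proof, relying on the immediately preceding Lemma ($U = p^{-1}p(U)$ for clopen $U$) together with the definition of the quotient topology, and you have simply written out those details carefully. The only remark is that the paper leaves everything implicit, so there is nothing to compare against beyond noting that your argument is the natural unpacking of the corollary from the cited Lemma.
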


\begin{theorem} 
Each point of $\Pi$ is a quasi-component of $\Pi$.  Thus $\Pi$ is totally separated and hence $T_2$, [10], pg. 32.
\end{theorem}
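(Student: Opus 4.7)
The plan is to lift the question back to $X$ via the projection $p$ and use the preceding corollary, which tells us that the clopen sets of $\Pi$ are exactly the images under $p$ of clopen sets of $X$, and conversely that taking $p^{-1}$ sends clopen sets of $\Pi$ to clopen sets of $X$. Since quasi-components are defined as intersections of clopen sets, this bijective correspondence on clopen sets should transport the quasi-component structure directly.

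First I would fix $z \in \Pi$ and choose $x \in X$ with $p(x) = z$, so that $p^{-1}(z) = [x] = Q_x$. Let $Q_z$ denote the quasi-component of $z$ in $\Pi$, i.e.\ the intersection of all clopen neighborhoods of $z$. For any clopen $W \subseteq \Pi$ with $z \in W$, the preceding corollary gives that $p^{-1}(W)$ is clopen in $X$ and contains $x$, hence contains $Q_x = [x]$; therefore $W \supseteq p([x]) = \{z\}$, confirming $z \in Q_z$. For the reverse inclusion, suppose $z' \in Q_z$ with $z' \neq z$ and pick $x' \in p^{-1}(z')$, so $[x'] \neq [x]$. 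By the lemma that any two quasi-components of $X$ can be separated by a clopen set, there exists a clopen $U \subseteq X$ with $[x] \subseteq U$ and $[x'] \cap U = \emptyset$. Then $U = p^{-1}p(U)$ by the lemma above, so $p(U)$ is clopen in $\Pi$, contains $z$, and misses $z'$, contradicting $z' \in Q_z$. Hence $Q_z = \{z\}$.

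For the second assertion, totally separated means any two distinct points are separated by a clopen set. Given distinct $z_1, z_2 \in \Pi$, pull back to distinct quasi-components $[x_1], [x_2]$ in $X$ and invoke the separation lemma to obtain a clopen $U \subseteq X$ with $[x_1] \subseteq U$ and $[x_2] \cap U = \emptyset$; then $p(U)$ is the required clopen set in $\Pi$ separating $z_1$ from $z_2$. The $T_2$ conclusion then follows from the cited fact that totally separated spaces are Hausdorff.

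The only potentially delicate step is verifying that the clopen sets in $\Pi$ are precisely the $p$-images of $p$-saturated clopen sets of $X$, but this is exactly what the preceding lemma and its corollary supply, so the argument is essentially a bookkeeping transfer along $p$. No new topological input is needed beyond what has already been established.
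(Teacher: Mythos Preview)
Your proposal is correct and follows essentially the same approach as the paper: the paper does not give a separate proof for this theorem, treating it as an immediate consequence of the preceding corollary (to Lemma 15), which already states that the clopen sets of $X$ and $\Pi$ correspond bijectively via $p$ and hence that ``the intersection of clopen sets defining $[x]$ in $X$ correspond to the intersection of clopen sets containing $p(x)=z$ in $\Pi$.'' Your argument is simply a careful unpacking of that sentence, together with the separation lemma (Lemma 13), so there is no substantive difference in method.
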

  
 COMMENT:  If $X=\lbrace x\rbrace$ is a singleton, then C(X,Y), $C(\Pi,Y)$, and Y are in bijective correspondence and C(X,Y) and $C(\Pi,Y)$ are naturally algebraicly isomorphic to Y.
 However this occurs in other special cases.  Although $X=\lbrace x\rbrace$ has been excluded, $\Pi$ is a single point when X is connected.  Thus C(X,Y), $C(\Pi,Y)$, and Y would be in bijective correspondence.  For an example where C(X,Y) and Y are different consider $C(\mathbb{Z},\mathbb{Z}_2)$.  Here $\mathbb{Z}$ is topologically $\mathbb{Z}/\Pi$ and the cardinalities of $C(\mathbb{Z},\mathbb{Z}_2)$ and $\mathbb{Z}_2$ do not match.

\begin{note}
We will not consider $C(\Pi/\Pi, Y)$ as the intersection of clopen sets defining $[x]$ corresponds to the intersection of clopen sets defining $p([x])=z_{x}$ which correspond to the intersection of the clopen sets defining $\hat{z_{x}}=p(z_{x})\in C(\Pi/\Pi,Y)$.  Also we know that C(X,Y), $C(\Pi,Y)$, and $C(\Pi/\Pi,Y)$  are algebraically isomorphic.  Moreover $\Pi$ and $\Pi/\Pi$ are topologically isomorphic.
\end {note} 

Based on the development, this note, and Theorems 3, 5, and 6, it is convenient to replace $\Pi$ by a topological space Z whose points are quasi-component.  Denote this topology by $\mathcal{T}$ which is totally separated, [10] pg. 32 and thus $T_2$ and Urysohn.  Hence forth we consider C(Z,Y) and let the elements of C(Z,Y) be denoted by f.  $\newline$

\noindent  Example:  If $\mathbb{Z}_2$ is chosen for Z and $\mathbb{Z}_3$ for $Y$ then $\parallel \mathbb{Z}_2\parallel=2$, $\parallel \mathbb{Z}_3\parallel =3$, and $\parallel C(\mathbb{Z}_2,\mathbb{Z}_3)\parallel =9$, none of which are isomorphic as sets, topologies, or algebraic structures.

\begin{note}
For examples we could start with an X and use the topology $\mathcal{T}$ on $Z=X/\Pi=\Pi$ inherited from X by p. For convenience use Z for Z with this induced topology, $\mathcal{T}$.  Later we compare other topologys for Z denoted by $\mathcal{T}_{1}$ and $\mathcal{T}_\mathfrak{Z}$. 
\end{note}

\begin{note}
Despite the similarities of the topologies of Z and Y, C(Z,Y) can be very different from Y.  For examples take Y as $\mathbb{Z}_n$ and take Z as any discrete space of any cardinality greater than one.
\end{note}

We will now assume that Y has a continuous binary operation denoted by $\cdot$ with a left, right, or two sided null element, 0 (denoting the corresponding null function in C(Z,Y) by $\Theta$).  We can use either of the three null elements unless stated otherwise. For convenience use $0\cdot a=0$ for all a in Y.  When needed the unit for Y is denoted 1 and the unit for C(Z,Y) by Id or I.  Properties such as associativity or commutatively are still not assumed.  The 0 and 1 in Y and hence in C(Z,Y) are not unique unless $\cdot$ is commutative.  Also use the notation $(f\cdot g)(z)=f(z)\cdot g(z)$ in C(Z,Y).

\begin{corollary}
If Y has a unit then any characteristic function, $\chi$, zero on a set and one on its complement is idempotent that is $\chi\cdot \chi=\chi$ where $\chi$ need not be in C(Z,Y).  The characteristic function of the zero set is the Id and of Z is $\Theta$.
\end{corollary}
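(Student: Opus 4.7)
The plan is to prove all three claims by a direct pointwise verification, leaning on exactly the two algebraic facts that have been assumed so far: that $0$ is a (left, right, or two-sided) null element for $\cdot$, so $0\cdot 0=0$, and that $1$ is a (one- or two-sided) unit, so $1\cdot 1=1$. Nothing about continuity, associativity, commutativity or the topology of $Z$ will be needed; the statement even flags this by warning that $\chi$ need not live in $C(Z,Y)$.

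First I would unpack the hypothesis. A characteristic function $\chi$ attached to a subset $U\subseteq Z$ takes only the two values $0$ and $1$, with $\chi(z)=0$ precisely when $z\in U$ and $\chi(z)=1$ precisely when $z\in U^{c}$. The pointwise definition $(\chi\cdot\chi)(z)=\chi(z)\cdot\chi(z)$ splits into two cases: if $z\in U$ then $(\chi\cdot\chi)(z)=0\cdot 0=0=\chi(z)$ by the null-element property, and if $z\in U^{c}$ then $(\chi\cdot\chi)(z)=1\cdot 1=1=\chi(z)$ by the unit property. Since these two cases exhaust $Z$, we get $(\chi\cdot\chi)(z)=\chi(z)$ for every $z\in Z$, i.e.\ $\chi\cdot\chi=\chi$.

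For the last sentence, I would read ``the characteristic function of the zero set'' as the degenerate instance $U=\varnothing$: then $\chi_{\varnothing}(z)=1$ at every $z\in Z$ since no $z$ lies in $\varnothing$, so $\chi_{\varnothing}$ is the constant function $1$, which is by definition the unit $\mathrm{Id}\in C(Z,Y)$. Symmetrically, taking $U=Z$ forces $\chi_{Z}(z)=0$ for all $z$, which is the null function $\Theta$. Both of these functions are trivially continuous, and one can note in passing that $\varnothing$ and $Z$ are the two clopen subsets that are always available, so the convention of the preceding comment is respected.

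There is no genuine obstacle to overcome. The one point worth being a little careful about is that $0$ has only been postulated as a one-sided null element; however, since in $\chi(z)\cdot\chi(z)$ the two factors are equal, the null property is invoked only in the form $0\cdot 0=0$, which holds regardless of which side the null axiom is assumed on. Similarly for the unit, so the whole corollary goes through with the minimal algebraic hypotheses already in place.
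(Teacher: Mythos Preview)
Your proof is correct. The paper does not supply a proof for this corollary at all, treating it as immediate; your direct pointwise verification using only $0\cdot 0=0$ and $1\cdot 1=1$ is exactly the intended justification, and your care about one-sidedness of the null and unit is a nice touch that the paper leaves implicit.
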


Recall that for set functions the next two lemmas hold.

\begin{lemma} 
If $B_{1}\subset B_{2}$ then $f^{-1}(B_{1})\subset f^{-1}(B_{2})$.  So if $b\in B$ then $f^{-1}(b)\subset f^{-1}(B)$.
\end{lemma}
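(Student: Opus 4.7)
The plan is to prove both statements by a direct element-chase on the definition of preimage, without invoking any of the topological or algebraic machinery built up earlier in the paper. The lemma is purely set-theoretic, so no hypotheses about continuity, quasi-components, or the algebra on $Y$ enter.

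First I would handle the monotonicity statement. Pick an arbitrary $x \in f^{-1}(B_1)$; by the definition of preimage this means $f(x) \in B_1$. Using the hypothesis $B_1 \subset B_2$, it follows that $f(x) \in B_2$, so $x \in f^{-1}(B_2)$. Since $x$ was arbitrary, $f^{-1}(B_1) \subset f^{-1}(B_2)$, as required.

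For the second assertion, I would simply specialize the first part to $B_1 = \{b\}$ and $B_2 = B$. The hypothesis $b \in B$ is precisely $\{b\} \subset B$, so the monotonicity already established gives $f^{-1}(\{b\}) \subset f^{-1}(B)$. Under the paper's notational convention (used, for example, in the definition of $V(A,b) = \bigcap_{f \in A} f^{-1}(b)$), the left-hand side is written $f^{-1}(b)$, which finishes the proof.

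There is no real obstacle here; the only minor point worth flagging is the mild abuse of notation $f^{-1}(b)$ for $f^{-1}(\{b\})$, which is consistent with earlier definitions in the text. The lemma is recorded because it will be used repeatedly in later arguments about zero sets $V(A,b)$ and their containments, where having the monotonicity of $f^{-1}$ cited as a named lemma streamlines subsequent proofs.
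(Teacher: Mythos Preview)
Your proof is correct; the paper itself states this lemma without proof, simply introducing it (together with the next lemma) by the remark ``Recall that for set functions the next two lemmas hold.'' Your element-chase is the standard verification and there is nothing to add.
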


\begin{lemma}
Suppose $f^{-1}(b)$ is not empty and $b= \cap B$ then $f^{-1}(b)=\cap f^{-1}(B)$.
\end{lemma}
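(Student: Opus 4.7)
The plan is to parse the statement as follows: $B$ is to be read as a family of subsets of $Y$ whose total intersection is the singleton $\{b\}$ (this is what I take $b = \cap B$ to mean), and the symbol $\cap f^{-1}(B)$ is to be read as $\bigcap_{B_i \in B} f^{-1}(B_i)$. With this reading the lemma is a special case of the standard fact that preimage commutes with arbitrary intersections, and I would prove it by establishing the two inclusions separately.

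For the forward inclusion $f^{-1}(b) \subseteq \bigcap_{B_i \in B} f^{-1}(B_i)$, observe that $b$ belongs to each $B_i$ in the family since $\{b\} = \bigcap B$; therefore the previous lemma yields $f^{-1}(b) \subseteq f^{-1}(B_i)$ for every $i$, and intersecting over the family gives the containment. For the reverse inclusion I would chase an element: any $x \in \bigcap_{B_i \in B} f^{-1}(B_i)$ satisfies $f(x) \in B_i$ for every $i$, so $f(x) \in \bigcap B = \{b\}$, forcing $f(x) = b$ and hence $x \in f^{-1}(b)$.

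The hypothesis that $f^{-1}(b)$ is non-empty is a non-degeneracy assumption to rule out the vacuous case; the identity would in fact hold even if both sides were empty, but the lemma is presumably stated this way so that it picks out an honest non-empty fiber. I do not anticipate any real obstacle. The only potential stumbling block is the compact notation: in the preceding lemma $f^{-1}(B)$ denotes the preimage of a single subset of $Y$, whereas here $\cap f^{-1}(B)$ has to be reinterpreted as an intersection indexed by the family $B$. Once the notation is pinned down, the verification is a routine set-theoretic argument.
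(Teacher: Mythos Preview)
Your reading of the notation is correct, and your double-inclusion argument is sound. The paper's own proof is a one-liner: it simply invokes the general fact that ``the inverse of an intersection is the intersection of the inverses'' and writes $f^{-1}(b) = f^{-1}(\cap B) = \cap f^{-1}(B)$. Your proof is the same idea carried out explicitly via element chasing rather than cited as a known identity, so the approach is essentially the same; you are also right that the non-emptiness hypothesis is not actually needed for the set equality.
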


\begin{proof}
The inverse of an intersection is the intersection of the inverses.  Thus $f^{-1}(b)=f^{-1}(\cap B)=\cap f^{-1}(B)$.
\end{proof}

From Definition 3 select:

\begin{defin} 
$B\subset Z$ is a zero set iff $B=V(f)=f^{-1}(0)$ for some $f\in C(Z,Y)$.
\end{defin}

\begin{lemma}
These sets are closed as $\{0\}$ is closed in $Y$.
\end{lemma}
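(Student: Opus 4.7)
The plan is essentially a one-line application of continuity, so the proposal is to unpack why $\{0\}$ is closed in $Y$ and then invoke continuity of $f$. First I would recall that $Y$ was assumed earlier (right before the note on the equivalence relation $x\cong y$) to be a topological space in which any two distinct points can be separated by a clopen set; such a space is $T_2$ and in particular every singleton is closed. Applying this to the distinguished element $0 \in Y$, we conclude that $\{0\}$ is closed in $Y$.

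Next I would invoke the continuity of $f \in C(Z,Y)$. By Definition of a zero set, $B = V(f) = f^{-1}(0)$, and since the preimage of a closed set under a continuous function is closed, $B$ is closed in $Z$.

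There is essentially no obstacle here; the lemma is a direct corollary of the standing topological hypothesis on $Y$ together with the definition of $C(Z,Y)$. The only thing worth being careful about is citing the right earlier hypothesis: the closedness of $\{0\}$ does not require $Y$ to be zero-dimensional or to have an open zero-set, only the totally separated / $T_2$ property that was assumed for $Y$ from the point where quasi-components were introduced. Once that is stated, the remainder of the argument is simply continuity of $f$.
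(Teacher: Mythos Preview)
Your proposal is correct and matches the paper's reasoning; in fact the paper gives no separate proof at all, since the justification (``as $\{0\}$ is closed in $Y$'') is already folded into the lemma statement. Your unpacking of why $\{0\}$ is closed (the standing totally separated, hence $T_2$, hypothesis on $Y$) and the appeal to continuity of $f$ is exactly the intended argument made explicit.
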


\begin{lemma}
$V(f)\cup V(g)\subseteq V(f\cdot g)$
\end{lemma}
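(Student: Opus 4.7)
The plan is to unfold the definitions of $V(f)$, $V(g)$, and $V(f\cdot g)$, then argue pointwise using the null property of $0$. Specifically, recall that $V(h)=h^{-1}(0)=\{z\in Z : h(z)=0\}$, and by Theorem~1 (or the definition given just before this lemma) $(f\cdot g)(z)=f(z)\cdot g(z)$ for every $z\in Z$. So the statement reduces to showing that whenever $f(z)=0$ or $g(z)=0$, we also have $f(z)\cdot g(z)=0$.

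I would proceed by a two-case argument. First, pick an arbitrary $z\in V(f)\cup V(g)$. If $z\in V(f)$, then $f(z)=0$ and the left-null property $0\cdot a=0$ (the convention stated just above) yields $(f\cdot g)(z)=0\cdot g(z)=0$, hence $z\in V(f\cdot g)$. If instead $z\in V(g)$, then $g(z)=0$ and the right-null property $a\cdot 0=0$ gives $(f\cdot g)(z)=f(z)\cdot 0=0$, again placing $z\in V(f\cdot g)$. In either case $z\in V(f\cdot g)$, establishing the desired inclusion.

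The only conceptual wrinkle, and the thing I would flag in the write-up, is that the full inclusion $V(f)\cup V(g)\subseteq V(f\cdot g)$ really needs $0$ to annihilate on \emph{both} sides. If one only has a one-sided null, then only the corresponding half of the union is captured (either $V(f)\subseteq V(f\cdot g)$ or $V(g)\subseteq V(f\cdot g)$, but not both). Given the author's stated convention that $0$ may be a left, right, or two-sided null and their stipulation $0\cdot a=0$, the cleanest reading is that in contexts where this lemma is invoked, $0$ is taken to be two-sided (or at least that each argument applies whichever side is needed). Since nothing else is used—no associativity, commutativity, or continuity—the proof is a direct one-line case split once the notation is unpacked.
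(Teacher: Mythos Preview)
Your argument is correct and is exactly the intended one; the paper states this lemma without proof, so your pointwise case split via $0\cdot a=0$ and $a\cdot 0=0$ is precisely what the author is leaving to the reader. Your caveat about needing a two-sided null for the full union inclusion is also well taken and matches the paper's own slightly loose convention.
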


\begin{note} Observe that a clopen set U is the zero set of the  characteristic function., $\chi_{u}(z)=\left\{\begin{array}{ll}0&\text{if } z\in U\\a&\text{if } z\notin U\end{array}\right.$ where $a\neq 0$.
\end{note}

\begin{lemma}
A zero set, V(f), is an intersection of clopen sets.
\end{lemma}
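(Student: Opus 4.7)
The plan is to express $\{0\}$ as an intersection of clopen subsets of $Y$, then pull back through $f$ using the intersection-preserving property of preimages established just before the statement.

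First I would recall the standing hypothesis on $Y$: any two distinct points of $Y$ are separated by a clopen set, so $Y$ is totally separated and every singleton is its own quasi-component (the excerpt cites [10], pg. 32 for this). In particular, $\{0\}$ coincides with the intersection of all clopen sets of $Y$ containing $0$; otherwise there would be some $y \neq 0$ lying in every such clopen set, contradicting the existence of a clopen separator for the pair $\{0,y\}$.

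Next, writing $\mathcal{U} = \{U \subseteq Y : U \text{ clopen}, 0 \in U\}$, we have $\{0\} = \bigcap_{U \in \mathcal{U}} U$. Applying the preceding Lemma (the inverse of an intersection is the intersection of inverses, noting that $f^{-1}(0)$ is nonempty precisely when $V(f)$ is nontrivial; the trivial case $V(f) = \varnothing$ is vacuously an intersection of clopen sets, e.g.\ the empty intersection or any clopen set disjoint from $Z$), we obtain
\[
V(f) = f^{-1}(0) = f^{-1}\!\Bigl(\bigcap_{U \in \mathcal{U}} U\Bigr) = \bigcap_{U \in \mathcal{U}} f^{-1}(U).
\]
Since $f \in C(Z,Y)$ is continuous and each $U \in \mathcal{U}$ is clopen in $Y$, each preimage $f^{-1}(U)$ is clopen in $Z$. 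This realizes $V(f)$ as an intersection of clopen subsets of $Z$, completing the argument.

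I do not anticipate a serious obstacle: the only delicate point is justifying that $\{0\}$ is the intersection of the clopen neighborhoods of $0$ in $Y$, which is immediate from the totally separated hypothesis on $Y$ stated just before Theorem 5. The empty-intersection edge case (when $V(f) = \varnothing$) should be acknowledged but is inconsequential.
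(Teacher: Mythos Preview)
Your proof is correct and follows essentially the same approach as the paper's: express $\{0\}$ as the intersection of its clopen neighborhoods in $Y$ (using total separation), pull back through $f$ via the intersection-of-preimages identity, and observe that preimages of clopen sets under a continuous map are clopen. The paper's version tacks on a direct check that any point in $\bigcap_{U} f^{-1}(U)$ must have $f$-image $0$ (else it could be separated from $0$ by a clopen set), but this is just a redundant verification of one inclusion already delivered by the intersection identity; your explicit handling of the $V(f)=\varnothing$ edge case, prompted by the nonemptiness hypothesis in the preceding lemma, is a reasonable addition.
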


\begin{proof}
In Y, $\{0\}$ is the intersection of all clopen sets containing it.  As the inverse of a clopen set is clopen and as the inverse of an intersection is the intersection of the inverses, $f^{-1}(0)$ is in an intersection of clopen sets.  The f image of a point in this intersection must be 0 otherwise it can be separated from 0 by clopen sets.
\end{proof}

\begin{theorem} 
Each point $z\in Z$ is the intersection of all zero sets containing it.
\end{theorem}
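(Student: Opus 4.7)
The plan is to show both inclusions between $\{z\}$ and $\bigcap_{z\in V(f)} V(f)$, with the nontrivial direction following by exhibiting, for each $w \neq z$, a zero set that contains $z$ but excludes $w$.

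First I would dispose of the easy inclusion: by definition of the indexing family, $z$ lies in every $V(f)$ with $z \in V(f)$, so $\{z\} \subseteq \bigcap_{z \in V(f)} V(f)$.

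For the reverse inclusion, fix any $w \in Z$ with $w \neq z$. I would invoke Theorem 6, which tells us that each point of $Z$ is its own quasi-component, i.e.\ $Z$ is totally separated. Hence there exists a clopen set $U \subseteq Z$ with $z \in U$ and $w \notin U$. Now I would use the Note immediately preceding the theorem: the characteristic function $\chi_U$ defined by $\chi_U(\zeta) = 0$ if $\zeta \in U$ and $\chi_U(\zeta) = a$ otherwise (for any fixed $a \neq 0$ in $Y$, which exists since $\Vert Y\Vert > 1$) is continuous — its preimages of open sets in $Y$ are among $\varnothing$, $U$, $U^c$, and $Z$, all open in $Z$ — so $\chi_U \in C(Z,Y)$. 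Its zero set is $V(\chi_U) = U$, a zero set containing $z$ but not $w$. Therefore $w \notin \bigcap_{z \in V(f)} V(f)$, which yields $\bigcap_{z\in V(f)} V(f) \subseteq \{z\}$.

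Combining the two inclusions gives $\{z\} = \bigcap_{z \in V(f)} V(f)$, as required. I don't anticipate any real obstacle: the argument is essentially a repackaging of the preceding observation that clopen sets are zero sets together with the characterization of points of $Z$ as quasi-components. The only subtlety worth flagging explicitly in the write-up is the existence of an element $a \neq 0$ in $Y$ needed to build $\chi_U$, which is guaranteed by the standing hypothesis $\Vert Y\Vert > 1$.
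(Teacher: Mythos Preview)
Your proof is correct and follows essentially the same route as the paper: both arguments reduce the nontrivial inclusion to the fact that any $w\neq z$ can be separated from $z$ by a clopen set $U$ (since points of $Z$ are quasi-components), and then observe that $U$ is itself a zero set via the characteristic function $\chi_U$. The paper phrases this as a contradiction and invokes Lemma~20 in passing for the easy inclusion, whereas you give a direct two-inclusion argument, but the content is the same.
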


\begin{proof}
As each $z\in Z$ is a quasi-component and as the intersection of an intersection is just the intersection of the underlying sets, the intersection of the zero sets at z contain $\{z\}$ as a quasi-component.  If the two sets are not equal then there is a clopen set containing z which does not contain all of the intersection of zero sets.  But this clopen set is the zero set of a continuous characteristic function.
\end{proof}

\begin{corollary}
Each quasi-component $z\in Z$ is both the intersection of all zero sets and of all clopen sets containing it.
\end{corollary}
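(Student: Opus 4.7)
The plan is to assemble this corollary directly from the two ingredients already in place, since both halves are essentially corollaries of earlier statements rather than requiring fresh argument.

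First I would handle the clopen-set half. By Theorem 6, each point $z\in Z$ is itself a quasi-component of $Z$ in the topology $\mathcal{T}$. Applying Definition 5 inside $Z$, the quasi-component of $z$ is precisely the intersection of all clopen subsets of $Z$ containing $z$, so $\{z\}$ equals that intersection. For the zero-set half, I would simply cite Theorem 7, which states that each $z\in Z$ is the intersection of all zero sets containing it. Together these two statements are the corollary.

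The only subtlety worth spelling out in the write-up is the consistency of the two descriptions, i.e., that the intersection of zero sets is contained in the intersection of clopen sets and vice versa. Here I would appeal to Lemma 22 (every zero set is an intersection of clopen sets) for one inclusion, and to the note following Lemma 21 (every clopen set $U$ is the zero set of a continuous characteristic function $\chi_U$, which lives in $C(Z,Y)$ since $U$ is clopen) for the other. These two facts give a direct set-theoretic equality between the intersection of all zero sets at $z$ and the intersection of all clopen sets at $z$, so the equality with $\{z\}$ passes between the two descriptions.

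There is no real obstacle: the work was done in Theorems 6 and 7 and in the observations about zero sets versus clopen sets. The corollary is essentially a bookkeeping statement that records the two equivalent descriptions of the singleton quasi-components of $Z$ in one place, to be invoked later when translating between the topological structure of $Z$ and the algebraic structure of $C(Z,Y)$.
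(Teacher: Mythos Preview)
Your proposal is correct and matches the paper's approach: the corollary is stated without proof there, as an immediate consequence of Theorem 7 (the zero-set half) together with Theorem 6 and Definition 5 (the clopen-set half), exactly as you outline. Your additional consistency paragraph invoking Lemma 22 and the note after Lemma 21 is harmless but unnecessary, since once both intersections are shown to equal $\{z\}$ they are automatically equal to each other.
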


\begin{note}
However $\{z\}$ need not be a zero set.  For an example in the real line with the induced topology let 
$X= \{0, 1, 1/2, -----, 1/n, ----\}$.  Then $Z=X=\{z_0, z_1, z_2, -----\}$.  Here $\{ z_0 \}$ is a quasi-component which is closed but not open.  Let Y be a space such as $Z_2$ where $\{0\}$ is clopen.  Then for any continuous function f, $V(f)\neq \{z_0\}$
 \end{note}

Now from Definition 3 select:

\begin{defin} 
Let $S\subseteq C(Z,Y)$.  We say $V(S)=\{z\in Z: f(z)=0 \quad \mbox{for all}\quad f\in S\}=\bigcap_{f\in S}f^{-1}(0)=\bigcap_{f\in S}V(f)$ is the zero set of S.
\end{defin}

The V(S) are closed as each V(f) is closed.  Moreover every clopen set is the zero set of  continuous characteristic functions.  The characteristic functions zero on U and otherwise a will be denoted $\chi_{u,a}$.  When Y has a unit a will be 1 and write $\chi_{u}$.

\begin{corollary}
$V(\varnothing)=Z$.
\end{corollary}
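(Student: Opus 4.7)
The plan is to unpack the definition of $V(S)$ when $S$ is the empty set and observe that the defining condition becomes vacuous, so every point of $Z$ qualifies.

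First I would recall from Definition 7 that $V(S) = \{z \in Z : f(z) = 0 \text{ for all } f \in S\}$. The statement ``for all $f \in S$, $f(z) = 0$'' is a universally quantified statement over an empty domain when $S = \varnothing$, hence vacuously true for every $z \in Z$. Consequently every $z \in Z$ lies in $V(\varnothing)$, giving $Z \subseteq V(\varnothing)$. The reverse inclusion $V(\varnothing) \subseteq Z$ is immediate from the definition, which places $V(S)$ inside $Z$.

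As a sanity check I would also verify the conclusion via the alternate expression $V(S) = \bigcap_{f \in S} V(f)$. Under the standard convention that an intersection indexed by the empty set equals the ambient space (here $Z$), we again obtain $V(\varnothing) = Z$. Both routes agree, so the corollary holds.

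There is no real obstacle here: the statement is a set-theoretic triviality, recorded as a corollary only because it is a convenient boundary case for the forthcoming development of zero sets. The only thing to be mindful of is to invoke the empty-intersection convention explicitly, since the author uses $\bigcap_{f \in S} V(f)$ as part of the definition and that notation is ambiguous without it.
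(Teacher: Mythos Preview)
Your argument is correct and is precisely the intended reasoning: the paper states this corollary immediately after the definition of $V(S)$ with no proof at all, treating it as the obvious boundary case following from vacuous quantification (or, equivalently, the empty-intersection convention). One minor slip: the definition of $V(S)$ you cite is Definition~8 in the paper, not Definition~7.
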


\begin{corollary}
AS $\lVert Z\rVert \geq 2$, $V(C(Z,Y))=\varnothing$. 
\end{corollary}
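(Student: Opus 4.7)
The plan is to show the contrapositive statement: for every $z \in Z$, exhibit at least one $f \in C(Z,Y)$ with $f(z) \neq 0$. Because $V(C(Z,Y)) = \bigcap_{f\in C(Z,Y)} f^{-1}(0) \subseteq f^{-1}(0)$ for every single $f$, producing such an $f$ at each $z$ immediately forces $z \notin V(C(Z,Y))$, and since $z$ is arbitrary we conclude $V(C(Z,Y)) = \varnothing$.

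The standing assumption $\lVert Y\rVert > 1$ guarantees an element $a \in Y$ with $a \neq 0$. The simplest choice of $f$ is then the constant function with value $a$: it is trivially continuous, lies in $C(Z,Y)$, and satisfies $f(z) = a \neq 0$ for every $z$. This already settles the corollary and does not even require $\lVert Z\rVert \geq 2$.

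To actually invoke $\lVert Z\rVert \geq 2$ and stay in the spirit of the preceding characteristic-function machinery, I would instead argue as follows. Fix $z \in Z$; since $\lVert Z\rVert \geq 2$ there is some $w \in Z$ with $w \neq z$. By Theorem 6 each of $\{z\}$ and $\{w\}$ is its own quasi-component of $Z$, so the separation lemma for quasi-components supplies a clopen $U \subseteq Z$ with $w \in U$ and $z \notin U$. The characteristic function $\chi_{U,a}$, equal to $0$ on $U$ and $a$ on $U^{c}$, is continuous (its only nontrivial preimages are $U$ and $U^{c}$, both clopen in $Z$), hence lies in $C(Z,Y)$, and $\chi_{U,a}(z) = a \neq 0$.

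There is no real obstacle here: the corollary is essentially a packaging of the definition of $V(\cdot)$ together with the presence of nonzero elements in $Y$. The only small bookkeeping point is remembering that monotonicity of $V$ (Lemma above) goes the right way, so that non-membership in $V(f)$ for a single $f$ is enough to kick $z$ out of $V(C(Z,Y))$.
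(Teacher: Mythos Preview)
Your proof is correct; the paper states this corollary without proof, treating it as immediate from the definition. Your constant-function argument is the cleanest route and, as you observe, does not actually require $\lVert Z\rVert \geq 2$ (that hypothesis is a standing assumption in the paper rather than something this particular statement needs); your alternative characteristic-function argument is also valid and matches the machinery developed in the surrounding text.
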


\begin{lemma}
If $J\subseteq C(Z,Y)$ and $S\subseteq J$ then $V(J)\subseteq V(S)$.  If  Y has an additional associative operation denoted by addition and S generates J in the sense that if $f\in J$ then $f=\sum\limits_{i=1}^ng_{i}\cdot f_{i}$ where $f_i\in S$ and $g_i\in C(Z,Y)$, then V(S)=V(J).
\end{lemma}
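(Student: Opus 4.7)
The plan is to treat the two assertions separately, since the first is immediate from how $V(\cdot)$ is defined as an intersection, while the second requires unpacking the generating condition together with the absorbing behavior of $0$.

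For the first containment, I would just chase definitions. By Definition 11, $V(J)=\bigcap_{f\in J}V(f)$ and $V(S)=\bigcap_{f\in S}V(f)$. Since $S\subseteq J$, the family of sets $\{V(f):f\in S\}$ is a subfamily of $\{V(f):f\in J\}$, and an intersection taken over a larger index set is contained in the intersection over a smaller one. Hence $V(J)\subseteq V(S)$. This is essentially a restatement of Lemma~4 in the current notation, so I would cite that lemma rather than reprove it.

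For the second assertion, assuming addition is available and associative and that $S$ generates $J$ in the stated sense, I must show the reverse inclusion $V(S)\subseteq V(J)$. Fix $z\in V(S)$ and let $f\in J$ be arbitrary. Write $f=\sum_{i=1}^{n} g_i\cdot f_i$ with $f_i\in S$ and $g_i\in C(Z,Y)$. Evaluating pointwise at $z$ gives $f(z)=\sum_{i=1}^{n} g_i(z)\cdot f_i(z)$. Since $z\in V(S)$ and $f_i\in S$, we have $f_i(z)=0$ for each $i$. Using that $0$ is a (right) null element for the multiplication on $Y$, each summand $g_i(z)\cdot 0$ equals $0$, and then using that $0$ is the identity for the additive operation the whole sum collapses to $0$. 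Hence $f(z)=0$ for every $f\in J$, so $z\in V(J)$.

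The only real obstacle is a small bookkeeping point about the null convention. The paper states the blanket convention $0\cdot a=0$ (a \emph{left} null), but in the expansion $f=\sum g_i\cdot f_i$ the elements of $S$ sit on the \emph{right} of each product, so for $g_i(z)\cdot f_i(z)=g_i(z)\cdot 0$ to vanish we need $a\cdot 0=0$. I would simply remark that for this lemma the relevant null property is the right one (or take $0$ to be two-sided, which the paper explicitly permits), so the absorption step goes through; otherwise one would reorder the generating expression as $f=\sum f_i\cdot g_i$. I expect no further difficulty beyond flagging this convention.
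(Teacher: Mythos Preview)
Your argument is correct and is precisely the routine verification the paper expects; in fact the paper states this lemma without any proof at all, so there is nothing to compare against beyond noting that your pointwise computation is the intended filling-in. Two small corrections to your internal references: the relevant definition of $V(S)$ is Definition~8 (not~11), and the earlier monotonicity result you invoke is Lemma~7 (not Lemma~4). Your observation about the sidedness of the null element is well taken: the paper explicitly allows $0$ to be left, right, or two-sided, and the generating form $\sum g_i\cdot f_i$ indeed requires the right-null property $a\cdot 0=0$ for the absorption step, so your remark is the appropriate caveat rather than a gap.
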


However this additional operation is still not needed.

\begin{defin} 
A right zero-divisor is any element $y_1\in Y$ such that $y\cdot y_1=0$ for some $y\in Y$, $y\ne 0$.  For convenience assume $y_1\ne 0$.  An element $y\in Y$ is nilpotent iff $y^n=0$ for some integer n and idempotent iff $y\cdot y=y$.  Analogously define left and two sided zero divisors, nilpotent, and idempotent elements for Y an for C(Z,Y).  In the nilpotent case, $\cdot$ must be associative. 
\end{defin}

\begin{lemma}
If $Y$ has no right (left) divisors of zero then $V(f_1)\cup V(f_2)=V(f_{1}\cdot f_{2})$.
\end{lemma}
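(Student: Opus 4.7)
The plan is to establish the two set inclusions separately. The inclusion $V(f_1)\cup V(f_2)\subseteq V(f_1\cdot f_2)$ is already in hand from the earlier lemma stating $V(f)\cup V(g)\subseteq V(f\cdot g)$ (which uses only that $0$ is a null element for $\cdot$, since if $f_1(z)=0$ then $(f_1\cdot f_2)(z)=0\cdot f_2(z)=0$, and symmetrically on the other side). So the whole content of the theorem is the reverse inclusion $V(f_1\cdot f_2)\subseteq V(f_1)\cup V(f_2)$, which is precisely the place where the zero-divisor hypothesis enters.

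For the reverse inclusion I would argue pointwise: let $z\in V(f_1\cdot f_2)$, so by the pointwise definition of the induced operation on $C(Z,Y)$ we have $f_1(z)\cdot f_2(z)=(f_1\cdot f_2)(z)=0$. Writing $y=f_1(z)$ and $y_1=f_2(z)$, we get an equation $y\cdot y_1=0$ in $Y$. Under the hypothesis that $Y$ has no right zero divisors (in the sense of Definition 9: if $y\cdot y_1=0$ with $y_1\neq 0$ and $y\neq 0$, then $y_1$ would be a right zero divisor), we conclude $y=0$ or $y_1=0$, i.e.\ $f_1(z)=0$ or $f_2(z)=0$, and hence $z\in V(f_1)\cup V(f_2)$.

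The parenthetical ``(left)'' in the statement is handled by the same argument with the roles of the two factors reversed: if instead $Y$ has no left zero divisors, then $y\cdot y_1=0$ still forces $y=0$ or $y_1=0$, so $z\in V(f_1)\cup V(f_2)$ either way. No associativity, commutativity, distributivity, or even continuity of $\cdot$ beyond what was previously assumed is needed.

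There is essentially no main obstacle here; the argument is a direct translation of the zero-divisor hypothesis from $Y$ to $C(Z,Y)$ via pointwise evaluation. The only small care point is bookkeeping: the definition of ``right zero divisor'' fixes which factor is on which side, and one must match that convention with the order of the product $f_1\cdot f_2$ when invoking the hypothesis, but the symmetric statement for left zero divisors makes the choice immaterial for the theorem as stated.
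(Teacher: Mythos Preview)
Your argument is correct. The paper states this lemma without proof, and your pointwise verification---citing the earlier inclusion $V(f)\cup V(g)\subseteq V(f\cdot g)$ for one direction and invoking the absence of zero divisors in $Y$ for the reverse---is exactly the intended (and essentially the only) way to establish it.
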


\begin{note}
This result suggest a natural topology for Z to be compared with $\mathcal{T}$.
\end{note}

\begin{theorem} 
If $Y$ has no right (left) divisors of zero then the zero sets of $Z$ in the sense of Definition 8 form a Zariski topology for $Z$ having the $V(S)$ as the topology of closed sets, [9] pg.s 6 and 7.  Denote this topology by  $\mathcal{T}_\mathfrak{Z}$.  When $\mathcal{T}_\mathfrak{Z}$ is present assume no divisors of zero.
\end{theorem}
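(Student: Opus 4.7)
The plan is to verify the three axioms of a topology of closed sets for the collection $\{V(S) : S \subseteq C(Z,Y)\}$, leaning on the preceding corollaries for the two degenerate cases and doing the actual work on finite unions.

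First, the empty set and the whole space. By the two corollaries immediately preceding the theorem, $V(\varnothing) = Z$ and $V(C(Z,Y)) = \varnothing$, so both $Z$ and $\varnothing$ are zero sets of the required form.

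Second, arbitrary intersections. Given a family $\{S_\alpha\}_{\alpha \in A}$ of subsets of $C(Z,Y)$, I claim
\[
\bigcap_{\alpha \in A} V(S_\alpha) \;=\; V\Bigl(\bigcup_{\alpha \in A} S_\alpha\Bigr).
\]
This is immediate from unwinding the definition: $z$ lies in the left-hand side iff $f(z)=0$ for every $f$ in every $S_\alpha$, iff $f(z)=0$ for every $f \in \bigcup_\alpha S_\alpha$. So the family is closed under arbitrary intersections.

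Third, finite unions. This is the main obstacle and the only place the no-zero-divisor hypothesis is used. Given $S_1, S_2 \subseteq C(Z,Y)$, define $S_1 \cdot S_2 = \{f \cdot g : f \in S_1,\; g \in S_2\} \subseteq C(Z,Y)$; I plan to show
\[
V(S_1) \cup V(S_2) \;=\; V(S_1 \cdot S_2).
\]
For the inclusion $\subseteq$, if $z \in V(S_1)$ then for every $f \in S_1$ and every $g \in S_2$ we have $(f\cdot g)(z) = f(z)\cdot g(z) = 0 \cdot g(z) = 0$ by the null property of $0$ in $Y$; the symmetric argument handles $z \in V(S_2)$, noting the analogous lemma preceding the theorem already records this direction at the level of individual functions. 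For $\supseteq$, suppose $z \in V(S_1 \cdot S_2)$ but $z \notin V(S_1)$; pick $f_0 \in S_1$ with $f_0(z) \neq 0$. Then for every $g \in S_2$, $f_0(z)\cdot g(z) = 0$ with $f_0(z)\neq 0$; the hypothesis that $Y$ has no right (left) divisors of zero forces $g(z) = 0$, so $z \in V(S_2)$. Hence $V(S_1)\cup V(S_2) = V(S_1 \cdot S_2)$, and a routine induction extends this to any finite union. This is essentially the set-valued upgrade of the single-function lemma stated just before the theorem, and the only subtle point is being careful that the side on which the zero-divisor hypothesis is applied matches the side on which $0$ is taken to be null; the convention $0 \cdot a = 0$ recorded in the paragraph introducing the null element makes the two directions compatible.

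Combining the three verifications, $\{V(S)\}$ is the collection of closed sets of a topology on $Z$, which is by definition the Zariski topology $\mathcal{T}_{\mathfrak Z}$.
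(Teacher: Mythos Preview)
Your argument is correct and is exactly the standard verification one expects here: the two corollaries give $Z$ and $\varnothing$, arbitrary intersections are handled by $\bigcap_\alpha V(S_\alpha)=V(\bigcup_\alpha S_\alpha)$, and finite unions come from $V(S_1)\cup V(S_2)=V(S_1\cdot S_2)$, the set-level upgrade of Lemma~23, which is precisely where the no-zero-divisor hypothesis enters. The paper itself does not write out a proof of this theorem at all---it simply points to [9], pp.~6--7---so you are supplying the details the author left to the reference; the preceding Lemmas~22 and~23 are clearly placed there as the ingredients for this verification, and your proof uses them in the intended way.
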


\begin{lemma}
A set $U\subseteq Z$ open in $\mathcal{T}_\mathfrak{Z}$ is open in the topology $\mathcal{T}$ of $Z$.  So $\mathcal{T}_\mathfrak{Z} \subseteq \mathcal{T}$.
\end{lemma}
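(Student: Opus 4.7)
The plan is to argue by passing to complements and showing that every $\mathcal{T}_\mathfrak{Z}$-closed set is $\mathcal{T}$-closed, which is equivalent to $\mathcal{T}_\mathfrak{Z} \subseteq \mathcal{T}$ via complementation. This reduces the statement to an essentially direct unpacking of definitions together with one mild topological observation about $Y$.

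First I would recall that the $\mathcal{T}_\mathfrak{Z}$-closed sets are, by the preceding theorem, exactly the sets of the form $V(S) = \bigcap_{f\in S} f^{-1}(0)$ for $S \subseteq C(Z,Y)$. The functions $f \in S$ are by definition continuous with respect to $\mathcal{T}$, since $C(Z,Y)$ is defined using the topology $\mathcal{T}$ on $Z$. Meanwhile, $Y$ has been assumed to be totally separated (any two distinct points can be separated by a clopen set), hence $T_2$, so in particular $\{0\}$ is closed in $Y$. Therefore each $f^{-1}(0) = V(f)$ is closed in $\mathcal{T}$.

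Next I would note that $V(S) = \bigcap_{f \in S} V(f)$ is an arbitrary intersection of $\mathcal{T}$-closed sets, and any such intersection is closed in any topology. Hence $V(S)$ is $\mathcal{T}$-closed. Passing to complements, every $\mathcal{T}_\mathfrak{Z}$-open set is $\mathcal{T}$-open, giving the desired containment $\mathcal{T}_\mathfrak{Z} \subseteq \mathcal{T}$.

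There is no real obstacle here: the proof is essentially a bookkeeping exercise once one notices the asymmetry between the two topologies. The Zariski topology is generated using continuity of $C(Z,Y)$ as an input, so it cannot outpace the ambient topology that defined continuity in the first place. The only substantive ingredient is that $\{0\}$ is closed in $Y$, and this is handed to us by the standing hypothesis that $Y$ is $T_2$ (in fact totally separated).
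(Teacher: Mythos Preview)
Your proposal is correct and follows essentially the same route as the paper: both pass to complements, use that each $V(f)=f^{-1}(0)$ is $\mathcal{T}$-closed because $\{0\}$ is closed in $Y$, and conclude that every $\mathcal{T}_\mathfrak{Z}$-closed set $V(S)=\bigcap_{f\in S}V(f)$ is $\mathcal{T}$-closed. Your write-up is in fact more explicit than the paper's one-line version, spelling out the role of the standing hypothesis on $Y$ and the intersection step.
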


\begin{proof}
As $U$ is open in $\mathcal{T}_\mathfrak{Z}$ it is the complement of a zero sets of $\mathcal{T}$ and hence open in $\mathcal{T}$.  Alternately said the closed sets, $V(f)$, of $Z$ in $\mathcal{T}$ generate $\mathcal{T}_\mathfrak{Z}$.  Thus $\mathcal{T}_\mathfrak{Z}$ maybe  courser than $\mathcal{T}$.
\end{proof}
   
 \begin{note}
 As the clopen sets of $Z$ need not form a basis for the topology $\mathcal{T}$ of $Z$.  Let $\mathcal{T}_1$ be the topology which has these clopen sets as an open set basis, [7] pg 47, T11.  By duality as the clopen sets are also closed they form a closed basis for the $\mathcal{T}_1$ topology.  Clearly $\mathcal{T}_1\subseteq \mathcal{T}$.
\end{note}

\begin{lemma} 
$\mathcal{T}_1\subseteq \mathcal{T}_\mathfrak{Z}$.
\end{lemma}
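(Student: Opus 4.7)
The plan is to show that every basis element of $\mathcal{T}_1$ is already open in $\mathcal{T}_\mathfrak{Z}$. Since arbitrary unions of $\mathcal{T}_\mathfrak{Z}$-open sets are $\mathcal{T}_\mathfrak{Z}$-open, this gives the inclusion on all of $\mathcal{T}_1$. So the whole argument reduces to a single basis-level check.

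Fix a basic open $U$ of $\mathcal{T}_1$, that is, a clopen subset of $Z$. I need to produce $U$ as the complement of some $V(S)$. Because $U$ is clopen, its complement $U^c$ is also clopen. By the observation recorded just before Lemma 14, any clopen set is the zero set of its characteristic function: concretely, pick any $a \in Y$ with $a \ne 0$ and set
\begin{equation*}
\chi_{U^c,a}(z) = \begin{cases} 0 & \text{if } z \in U^c \\ a & \text{if } z \in U. \end{cases}
\end{equation*}
Since $U^c$ is clopen in $Z$ with topology $\mathcal{T}$, this function lies in $C(Z,Y)$. By construction $V(\chi_{U^c,a}) = U^c$.

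Hence $U^c$ is a zero set and therefore $\mathcal{T}_\mathfrak{Z}$-closed, so $U$ is $\mathcal{T}_\mathfrak{Z}$-open. This shows every basic open of $\mathcal{T}_1$ is open in $\mathcal{T}_\mathfrak{Z}$, and taking unions yields $\mathcal{T}_1 \subseteq \mathcal{T}_\mathfrak{Z}$.

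There is no real obstacle here; the argument is entirely a matter of unpacking the definitions. The only subtle point is remembering that the comparison is between a topology with the clopen sets as an open basis and a topology with the zero sets as a closed subbasis, so one must flip to complements once. The continuity of $\chi_{U^c,a}$ uses that $U^c$ is clopen in $\mathcal{T}$ (so its preimage conditions are automatic) and that the target $Y$ does not require anything more than distinct points $0$ and $a$, both of which are available under our standing assumptions.
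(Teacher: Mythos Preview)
Your proof is correct and follows essentially the same route as the paper's: take a basic open $U$ of $\mathcal{T}_1$ (i.e.\ a clopen set in $\mathcal{T}$), note that $U^{c}$ is clopen and hence the zero set of a characteristic function, conclude that $U^{c}$ is closed in $\mathcal{T}_\mathfrak{Z}$, so $U$ is open there. The only difference is that you spell out the characteristic function $\chi_{U^{c},a}$ explicitly, whereas the paper simply cites the earlier observation that clopen sets are zero sets.
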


\begin{proof}
Observe $U$ is a basic open set of in $\mathcal{T}_1$ if and only if $U$ is clopen in $\mathcal{T}$ if and only if $U^{c}$ is clopen in $\mathcal{T}$.  Thus $U^{c}$ is a zero set with respect to $\mathcal{T}$ which implies $U^{c}$ is a basic closed set in $\mathcal{T}_\mathfrak{Z}$ which implies $U=(U^{c})^{c}$ is open in $\mathcal{T}_\mathfrak{Z}$.  Therefore $\mathcal{T}_1\subseteq \mathcal{T}_\mathfrak{Z}$.
\end{proof} 

\begin{lemma}
$\mathcal{T}_\mathfrak{Z} \subseteq \mathcal{T}_1$.
\end{lemma}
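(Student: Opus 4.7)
The plan is to prove the containment at the level of closed sets and then take complements. By the definition of $\mathcal{T}_\mathfrak{Z}$, its closed sets are the zero sets $V(S)=\bigcap_{f\in S}V(f)$, and its subbasic closed sets are the $V(f)$, $f\in C(Z,Y)$. Since $\mathcal{T}_1$ has the clopen sets of $\mathcal{T}$ as an open basis, by duality those same clopen sets form a closed subbasis of $\mathcal{T}_1$. Thus to show $\mathcal{T}_\mathfrak{Z}\subseteq\mathcal{T}_1$ it suffices to show that every $V(f)$ is $\mathcal{T}_1$-closed.

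First I would invoke the earlier lemma stating that every zero set $V(f)$ is an intersection of clopen sets (of $\mathcal{T}$). Call this family of clopen sets $\{U_\alpha\}_\alpha$, so that $V(f)=\bigcap_\alpha U_\alpha$. Each $U_\alpha$ is a basic closed set of $\mathcal{T}_1$, and arbitrary intersections of closed sets are closed, so $V(f)$ is $\mathcal{T}_1$-closed.

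Next, a general $\mathcal{T}_\mathfrak{Z}$-closed set has the form $V(S)=\bigcap_{f\in S}V(f)$, which is an intersection of $\mathcal{T}_1$-closed sets by the previous step, and hence itself $\mathcal{T}_1$-closed. Taking complements yields that every $\mathcal{T}_\mathfrak{Z}$-open set is $\mathcal{T}_1$-open, which is exactly $\mathcal{T}_\mathfrak{Z}\subseteq\mathcal{T}_1$.

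There is no real obstacle here: the argument is a direct reduction to the earlier lemma that expresses zero sets as intersections of clopen sets, together with the elementary observation that a topology generated by a closed subbasis contains every set obtainable as an intersection of members of that subbasis. Combined with the previous lemma $\mathcal{T}_1\subseteq\mathcal{T}_\mathfrak{Z}$, this yields the equality $\mathcal{T}_1=\mathcal{T}_\mathfrak{Z}$.
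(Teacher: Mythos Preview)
Your argument is correct and is essentially the dual of the paper's proof: the paper takes an open set $(V(S))^c$, expands it as a union $\bigcup_{f\in S}\bigcup_K (f^{-1}(K))^c$ over clopen $K\ni 0$, and observes each piece is $\mathcal{T}_1$-open, whereas you work on the closed side and invoke the earlier lemma that each $V(f)$ is an intersection of clopen sets. The paper itself remarks parenthetically that ``for an easier conceptual proof use a sub-basis argument,'' which is exactly your route.
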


\begin{proof}
Let $U\in \mathcal{T}_\mathfrak{Z}$ be an open set of $\mathcal{T}_\mathfrak{Z}$ and thus equal to $(V(S))^{c}$ for some $S\subset C(Z,Y)$.  Now $(V(S))^{c}=(\bigcap_{f\in S} f^{-1}(0))^{c}=\bigcup_{f\in S} (f^{-1}(0))^{c}$.  Recall $\{0 \}\in Y$ is a quasi component and $\{0 \} =\bigcap \{ K: K $ clopen and $  0\in K  \}$.  So $f^{-1}(0)=f^{-1}(\bigcap \{ K: K  $ clopen and $  0\in K  \})=\bigcap\{f^{-1}(K): K $ clopen and $ 0\in K \}$.  Thus $(f^{-1}(0))^{c}=(\bigcap \{f^{-1}(K): K $ clopen and $ 0\in K \})^{c}=\bigcup \{(f^{-1}(K))^{c} :
K $ clopen and $ 0\in K \}$.  Now as K is clopen $f^{-1}(K)$ is clopen implying $(f^{-1}(K))^{c}$ is clopen which implies that $U=\bigcup_{f\in S} (f^{-1}(0))^{c}$ is open in $\mathcal{T}_1$.  (For an easier conceptual proof use a sub-basis argument.)
\end{proof}

\begin{theorem} 
$\mathcal{T}_1=\mathcal{T}_\mathfrak{Z}\subseteq\mathcal{T}$.
\end{theorem}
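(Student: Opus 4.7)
The plan is to observe that the three containments comprising the theorem have all been established in the immediately preceding lemmas, so the proof reduces to assembling them.

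First I would note that the lemma stating ``$\mathcal{T}_1 \subseteq \mathcal{T}_\mathfrak{Z}$'' gives one inclusion, the lemma stating ``$\mathcal{T}_\mathfrak{Z} \subseteq \mathcal{T}_1$'' gives the reverse inclusion, and together these yield $\mathcal{T}_1 = \mathcal{T}_\mathfrak{Z}$. Then the earlier lemma showing ``$\mathcal{T}_\mathfrak{Z} \subseteq \mathcal{T}$'' (because every basic closed set $V(f)$ of $\mathcal{T}_\mathfrak{Z}$ is already a closed set in $\mathcal{T}$, since $\{0\}$ is closed in $Y$ and $f$ is $\mathcal{T}$-continuous) closes out the theorem.

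Thus the proof is essentially a one-line concatenation: combine the double inclusion to get equality of $\mathcal{T}_1$ and $\mathcal{T}_\mathfrak{Z}$, then append the containment in $\mathcal{T}$. There is no real obstacle here; the substantive work was done in proving the two lemmas. If I wanted to be a touch more self-contained, I might reiterate why $\mathcal{T}_\mathfrak{Z} \subseteq \mathcal{T}$, namely that the generating closed sets $V(f) = f^{-1}(0)$ of $\mathcal{T}_\mathfrak{Z}$ are $\mathcal{T}$-closed (as continuous preimages of the closed singleton $\{0\}$ in the totally separated, hence $T_2$, space $Y$), so any $\mathcal{T}_\mathfrak{Z}$-closed set, being an intersection of such $V(f)$'s, is $\mathcal{T}$-closed.

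The only thing worth flagging is that this theorem implicitly assumes the standing hypothesis ``$Y$ has no divisors of zero,'' which was needed to make $\mathcal{T}_\mathfrak{Z}$ a topology in the first place (via the preceding theorem that gave $V(f_1)\cup V(f_2) = V(f_1\cdot f_2)$). Under that hypothesis the three inclusions interlock cleanly and the theorem follows.
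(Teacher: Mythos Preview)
Your proposal is correct and matches the paper's approach exactly: the paper states Theorem 9 without an explicit proof, treating it as an immediate consequence of the three preceding lemmas (Lemma 22 giving $\mathcal{T}_\mathfrak{Z}\subseteq\mathcal{T}$, Lemma 23 giving $\mathcal{T}_1\subseteq\mathcal{T}_\mathfrak{Z}$, and Lemma 24 giving $\mathcal{T}_\mathfrak{Z}\subseteq\mathcal{T}_1$). Your one-line concatenation is precisely what the paper intends, and your remark about the standing hypothesis that $Y$ has no divisors of zero is an apt clarification.
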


\begin{corollary}
 The quasi-components of $\mathcal{T}_1$ and $\mathcal{T}_\mathfrak{Z}$ are the same.
 \end{corollary}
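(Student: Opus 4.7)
The plan is to deduce the corollary directly from the preceding Theorem, which establishes $\mathcal{T}_1 = \mathcal{T}_\mathfrak{Z}$. Since two equal topologies have identical collections of open sets, they have identical collections of closed sets (by complementation in $Z$), and hence identical collections of clopen sets. The quasi-component of a point $z\in Z$ was defined (Definition 6) purely in terms of the clopen sets containing $z$, namely as the intersection of that family. Indexing the intersection over the same family in either topology therefore produces the same set, so $Q_z$ computed in $\mathcal{T}_1$ equals $Q_z$ computed in $\mathcal{T}_\mathfrak{Z}$ for every $z$.

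Concretely, I would write: let $\mathcal{K}$ denote the collection of all clopen subsets of $(Z,\mathcal{T}_1)$ and $\mathcal{K}'$ that of $(Z,\mathcal{T}_\mathfrak{Z})$; by the preceding theorem $\mathcal{T}_1=\mathcal{T}_\mathfrak{Z}$, so $\mathcal{K}=\mathcal{K}'$. Then for any $z\in Z$ the quasi-component in $\mathcal{T}_1$ is $\bigcap\{K\in\mathcal{K}:z\in K\}=\bigcap\{K\in\mathcal{K}':z\in K\}$, which is the quasi-component in $\mathcal{T}_\mathfrak{Z}$.

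There is no real obstacle: the statement is a formal consequence of the definition of quasi-component together with the equality of topologies already proved. The only point worth a sentence is the observation that the notion of ``clopen'' is topology-dependent only through the underlying open-set collection, so equal topologies automatically share all clopen sets and therefore all quasi-components.
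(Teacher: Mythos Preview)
Your proposal is correct and matches the paper's intent: the corollary is stated immediately after Theorem~9 ($\mathcal{T}_1=\mathcal{T}_\mathfrak{Z}\subseteq\mathcal{T}$) with no separate proof, so it is meant to be read as the trivial observation that equal topologies share the same clopen sets and hence the same quasi-components. Your write-up simply makes that implicit step explicit.
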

 
 \begin{corollary}
 If open sets of $\mathcal{T}$ contain clopen sets then $\mathcal{T}=\mathcal{T}_{1}=\mathcal{T}_{\mathfrak{Z}}$ as then $\mathcal{T}\subseteq \mathcal{T}_{1}$.
 \end{corollary}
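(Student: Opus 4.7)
By the preceding theorem we already know $\mathcal{T}_1=\mathcal{T}_\mathfrak{Z}\subseteq\mathcal{T}$, so the only thing to establish is the reverse inclusion $\mathcal{T}\subseteq\mathcal{T}_1$. The plan is to interpret the hypothesis ``open sets of $\mathcal{T}$ contain clopen sets'' in the sharpest useful way, namely: for every $U\in\mathcal{T}$ and every $z\in U$ there is a clopen set $K_z$ (clopen with respect to $\mathcal{T}$) such that $z\in K_z\subseteq U$. In other words, the clopen sets form a neighborhood basis for $\mathcal{T}$.

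Granting this, the proof is short. First, fix $U\in\mathcal{T}$ and for each $z\in U$ choose such a $K_z$. Then $U=\bigcup_{z\in U}K_z$, since each $z$ lies in its own $K_z$ and each $K_z$ is contained in $U$. Second, recall from the note just before Lemma 19 that $\mathcal{T}_1$ is by definition the topology generated by taking the clopen sets of $\mathcal{T}$ as an open basis; thus any union of such clopen sets is open in $\mathcal{T}_1$. Therefore $U\in\mathcal{T}_1$, giving $\mathcal{T}\subseteq\mathcal{T}_1$. Combined with $\mathcal{T}_1=\mathcal{T}_\mathfrak{Z}\subseteq\mathcal{T}$ this yields $\mathcal{T}=\mathcal{T}_1=\mathcal{T}_\mathfrak{Z}$.

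The only subtle point, and the one to state carefully, is the reading of the hypothesis. If one instead took ``open sets contain clopen sets'' merely to mean that every nonempty open set has some clopen subset, the argument fails: one needs a clopen neighborhood at every point. So the main (and essentially only) obstacle is to fix this pointwise interpretation up front; once that is done, the rest is just ``basis of clopen sets $\Rightarrow$ $\mathcal{T}\subseteq\mathcal{T}_1$'', which is immediate from the definition of $\mathcal{T}_1$.
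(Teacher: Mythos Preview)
Your proposal is correct and matches the paper's approach: the paper gives no separate proof for this corollary, embedding the entire justification in the clause ``as then $\mathcal{T}\subseteq\mathcal{T}_1$,'' which is precisely what you establish. Your careful discussion of the pointwise interpretation of the hypothesis (clopen sets as a neighborhood basis) is a useful clarification that the paper leaves implicit.
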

 
 \begin{lemma} 
 A set $U$ is clopen in $\mathcal{T}$ if and only if it is clopen in $\mathcal{T}_1$.
 \end{lemma}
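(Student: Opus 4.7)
The plan is to argue both directions directly from the definitions, using the containment $\mathcal{T}_1 \subseteq \mathcal{T}$ established earlier and the fact that the clopen sets of $\mathcal{T}$ are, by definition, the basis of $\mathcal{T}_1$.

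For the forward direction, suppose $U$ is clopen in $\mathcal{T}$. Then $U$ lies in the basis of $\mathcal{T}_1$, so $U$ is open in $\mathcal{T}_1$. Since $U^c$ is also clopen in $\mathcal{T}$, the same reasoning shows $U^c$ is open in $\mathcal{T}_1$, so $U$ is closed in $\mathcal{T}_1$. Hence $U$ is clopen in $\mathcal{T}_1$.

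For the reverse direction, suppose $U$ is clopen in $\mathcal{T}_1$. Both $U$ and $U^c$ are open in $\mathcal{T}_1$; by the containment $\mathcal{T}_1 \subseteq \mathcal{T}$, both are open in $\mathcal{T}$, so $U$ is clopen in $\mathcal{T}$.

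There is no real obstacle; the lemma is a short bookkeeping statement that the clopen sets of $\mathcal{T}$ are preserved when we coarsen to $\mathcal{T}_1$. The only point to watch is that in the backward direction one must invoke the containment $\mathcal{T}_1 \subseteq \mathcal{T}$ for both $U$ and $U^c$, rather than trying to re-derive it from the basis description, since a generic $\mathcal{T}_1$-clopen set need not itself be a basis element of $\mathcal{T}_1$.
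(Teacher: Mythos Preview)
Your proof is correct and matches the paper's own argument essentially line for line: the forward direction uses that $\mathcal{T}$-clopen sets (and their complements) are basic open sets of $\mathcal{T}_1$, and the reverse direction applies the containment $\mathcal{T}_1\subseteq\mathcal{T}$ (the paper cites this as Theorem~9) to both $U$ and $U^c$.
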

 
 \begin{proof}
 Let $U$ and hence $U^{C}$ be clopen in $\mathcal{T}$.  Then they are both open in $\mathcal{T}_1$ and being complements are both closed, therefore clopen in $\mathcal{T}_1$. 
 
 Let $U$ be clopen in $\mathcal{T}_1$.  By Theoem 9 it is clopen in $\mathcal{T}$.
 \end{proof}
 
 \begin{theorem} 
 The quasi-components of $\mathcal{T}$, $\mathcal{T}_1$,  and $\mathcal{T}_\mathfrak{Z}$ are the same, namely $\{z\}$ where $z\in Z$.  Moreover their sets of clopen quasi-components are the same.
 \end{theorem}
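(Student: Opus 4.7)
The plan is to leverage two facts already in hand: Theorem 9 asserts $\mathcal{T}_1=\mathcal{T}_{\mathfrak{Z}}$, so these two topologies automatically share all topological notions, including quasi-components; and the lemma immediately preceding the theorem asserts that a set is clopen in $\mathcal{T}$ if and only if it is clopen in $\mathcal{T}_1$. Since the quasi-component of a point is by Definition~4 purely a function of the family of clopen sets containing that point, these two facts together force all three topologies to produce the same quasi-components and the same clopen quasi-components.

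More concretely, I would proceed in three short steps. First, invoke Theorem~6 (applied to $Z=\Pi$ with topology $\mathcal{T}$) to conclude that for every $z\in Z$, the quasi-component in $\mathcal{T}$ is exactly $\{z\}$. Second, fix $z\in Z$ and consider its quasi-component in $\mathcal{T}_1$, namely $\bigcap\{U:z\in U,\ U\text{ clopen in }\mathcal{T}_1\}$. By the preceding lemma, the collection of clopen sets in $\mathcal{T}_1$ equals the collection of clopen sets in $\mathcal{T}$, so this intersection equals the quasi-component of $z$ in $\mathcal{T}$, which is $\{z\}$. Third, since $\mathcal{T}_1=\mathcal{T}_{\mathfrak{Z}}$ by Theorem~9, the quasi-components in $\mathcal{T}_{\mathfrak{Z}}$ coincide with those in $\mathcal{T}_1$, hence with $\{z\}$ as well.

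For the second assertion about clopen quasi-components, observe that in each of the three topologies the quasi-components are the singletons $\{z\}$, so the question reduces to which singletons are clopen. The preceding lemma already establishes that the clopen sets of $\mathcal{T}$ and $\mathcal{T}_1$ coincide, and the identity $\mathcal{T}_1=\mathcal{T}_{\mathfrak{Z}}$ extends this to $\mathcal{T}_{\mathfrak{Z}}$. Therefore $\{z\}$ is clopen in one of the three topologies if and only if it is clopen in all three, proving equality of the sets of clopen quasi-components.

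I do not foresee any substantive obstacle; the theorem is essentially a bookkeeping consequence of Theorem~9 and the clopen-sets lemma. The only point requiring minor care is the observation that the quasi-component construction uses only the clopen-set lattice, which is the conceptual content of the argument and should be stated explicitly to make the reduction transparent.
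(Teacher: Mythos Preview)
Your proposal is correct and follows essentially the same route the paper takes: the paper states Theorem~10 without proof, treating it as an immediate consequence of Theorem~9 (and its corollary that $\mathcal{T}_1$ and $\mathcal{T}_{\mathfrak{Z}}$ share quasi-components), the preceding lemma identifying the clopen sets of $\mathcal{T}$ and $\mathcal{T}_1$, and Theorem~6 identifying the quasi-components of $\mathcal{T}$ as singletons. Your write-up simply makes this implicit argument explicit, including the key observation that quasi-components depend only on the lattice of clopen sets.
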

 
 As $\mathcal{T}$ is The original topology for Z we denote (Z,$\mathcal{T}$) by Z and continue to use C(Z,Y) for C((Z,$\mathcal{T}$),Y).
 
 \begin{lemma} 
 If Y has a clopen base for its topology then $C(Z,Y)= C((Z,\mathcal{T}_1),Y)$.    
 \end{lemma}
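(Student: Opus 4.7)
The plan is to prove equality by showing each set of continuous functions is contained in the other, with the clopen base hypothesis on $Y$ doing the real work for the nontrivial direction.

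First I would dispose of the easy inclusion $C((Z,\mathcal{T}_1),Y) \subseteq C(Z,Y)$. By Theorem 9 we have $\mathcal{T}_1 \subseteq \mathcal{T}$, so any $f \colon (Z,\mathcal{T}_1)\to Y$ that is continuous remains continuous when we refine the topology on the domain to $\mathcal{T}$: a preimage $f^{-1}(U)$ lies in $\mathcal{T}_1 \subseteq \mathcal{T}$. No hypothesis on $Y$ is required here.

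For the reverse inclusion, let $f \in C(Z,Y)$ and let $U \subseteq Y$ be open. Using the hypothesis that $Y$ has a clopen base, write $U = \bigcup_\alpha K_\alpha$ where each $K_\alpha$ is clopen in $Y$. Then
\[
 f^{-1}(U) \;=\; \bigcup_\alpha f^{-1}(K_\alpha).
\]
Since $f$ is continuous with respect to $\mathcal{T}$ and $K_\alpha$ is clopen in $Y$, each $f^{-1}(K_\alpha)$ is clopen in $\mathcal{T}$. By the immediately preceding lemma (a set is clopen in $\mathcal{T}$ iff it is clopen in $\mathcal{T}_1$), each $f^{-1}(K_\alpha)$ is clopen, and in particular open, in $\mathcal{T}_1$. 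Therefore $f^{-1}(U)$ is a union of $\mathcal{T}_1$-open sets, hence $\mathcal{T}_1$-open. This shows $f \in C((Z,\mathcal{T}_1),Y)$.

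There is no real obstacle here; the proof is bookkeeping once the previous clopen-equivalence lemma and the clopen base hypothesis are in hand. The one point worth flagging is that the argument genuinely needs the clopen base: without it, an open $U\subseteq Y$ need not decompose into clopen pieces, and preimages of arbitrary $\mathcal{T}$-open subsets of $Y$ under $f$ need not be expressible as unions of clopen subsets of $Z$, which is precisely what $\mathcal{T}_1$-openness demands.
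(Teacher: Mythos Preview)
Your proof is correct and follows essentially the same approach as the paper: the easy inclusion comes from $\mathcal{T}_1\subseteq\mathcal{T}$ (Theorem 9), and the nontrivial inclusion decomposes an open $U\subseteq Y$ into clopen pieces whose preimages are clopen in $\mathcal{T}$ and hence open in $\mathcal{T}_1$. The only cosmetic difference is that the paper appeals directly to the definition of $\mathcal{T}_1$ (clopen sets of $\mathcal{T}$ form its basis) rather than invoking the clopen-equivalence lemma, and it treats the two inclusions in the opposite order.
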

  
 \begin{proof}
 Let $f\in C(Z,Y)$.  If $V$ is open in $Y$ it is a union of clopen sets of $Y$.  Therefore $f^{-1}(V)$ is a union of clopen sets of $\mathcal{T}$.  Thus it is open in $\mathcal{T}_1$ as it is a union of open sets of $\mathcal{T}_1$.  Therefore $f\in C((Z,\mathcal{T}_{1}),Y)$. 
 
 Let $f\in C((Z,\mathcal{T}_1),Y)$ and let $V$ be open in $Y$.  Then $f^{-1}(V)$ is open in $(Z,\mathcal{T}_1)$.  By Theorem 9 it is open in $\mathcal{T}$ and thus $f\in C(Z,Y)$.
 \end{proof}
 
 \begin{lemma}
 Moreover C(Z,Y), $C((Z,\mathcal{T}_1),Y)$, and $C((Z,\mathcal{T}_\mathfrak{Z}),Y)$ are the same as sets.
 \end{lemma}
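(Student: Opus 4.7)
The plan is to read this lemma as a packaging corollary of Theorem 9 together with the immediately preceding lemma; essentially nothing new needs to be proved. First I would observe that Theorem 9 gives the topological equality $\mathcal{T}_{1} = \mathcal{T}_{\mathfrak{Z}}$ on the underlying set $Z$. Since whether a function $f:Z\to Y$ is continuous depends only on the topology of its domain and codomain, this equality of topologies at once yields $C((Z,\mathcal{T}_{1}),Y) = C((Z,\mathcal{T}_{\mathfrak{Z}}),Y)$ as sets, with no further argument. Then I would invoke the preceding lemma, which under the standing assumption that $Y$ has a clopen base for its topology supplies the remaining identification $C(Z,Y) = C((Z,\mathcal{T}_{1}),Y)$. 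Chaining the two equalities produces the three-way identification claimed.

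There is no real obstacle; the only subtlety worth flagging in the write-up is that two implicit hypotheses are being inherited. The presence of $\mathcal{T}_{\mathfrak{Z}}$ requires (by Theorem 8) that $Y$ have no zero divisors, and the equality $C(Z,Y) = C((Z,\mathcal{T}_{1}),Y)$ requires (by the preceding lemma) that $Y$ have a clopen base for its topology. Under both, the chain $C(Z,Y) = C((Z,\mathcal{T}_{1}),Y) = C((Z,\mathcal{T}_{\mathfrak{Z}}),Y)$ is immediate and gives the lemma. I would phrase the proof in two sentences — one citing Theorem 9 for the second equality, one citing the preceding lemma for the first — and leave it at that.
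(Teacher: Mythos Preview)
Your proposal is correct and matches the paper's intent: the paper gives no explicit proof for this lemma, evidently treating it as an immediate consequence of Theorem~9 (which gives $\mathcal{T}_1=\mathcal{T}_\mathfrak{Z}$, hence $C((Z,\mathcal{T}_1),Y)=C((Z,\mathcal{T}_\mathfrak{Z}),Y)$) together with the preceding lemma (which gives $C(Z,Y)=C((Z,\mathcal{T}_1),Y)$ under the clopen-base hypothesis on $Y$). Your two-sentence write-up, along with the remark that the no-zero-divisors and clopen-base hypotheses are being silently inherited, is exactly what is needed.
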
 
 
\begin{theorem} 
If Y has an open base topology then $C(Z,Y)=C((Z,\mathcal{T}_1),Y)=C((Z,\mathcal{T}_\mathfrak{Z}),Y)$ are algebraically isomorphic with the same continuous functions, cf. Theorem 1.
\end{theorem}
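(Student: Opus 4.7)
The strategy is to assemble the theorem from the two preceding lemmas together with Theorem 9, so almost no new work is required; the statement is essentially a packaging result.

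First I would invoke Theorem 9 to observe that $\mathcal{T}_1 = \mathcal{T}_\mathfrak{Z}$ as topologies on $Z$. Since a function is continuous into $Y$ iff preimages of open sets lie in the topology, the equality of these two topologies gives the equality of function sets $C((Z,\mathcal{T}_1),Y) = C((Z,\mathcal{T}_\mathfrak{Z}),Y)$ immediately, with no further argument needed. This also matches the preceding lemma which already records the three sets as equal.

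Second, I would apply the preceding lemma (the one asserting $C(Z,Y) = C((Z,\mathcal{T}_1),Y)$ when $Y$ has a clopen base) to bring $C(Z,Y)$ into the chain. The hypothesis here, stated as ``open base topology'', should be read as the clopen-basis hypothesis of that lemma, i.e.\ $Y$ is zero-dimensional; under this hypothesis preimages under an $f \in C(Z,Y)$ of basic opens of $Y$ are clopen in $\mathcal{T}$, hence automatically open in $\mathcal{T}_1$, and conversely $\mathcal{T}_1 \subseteq \mathcal{T}$ by Theorem 9 forces the other inclusion. Thus $C(Z,Y) = C((Z,\mathcal{T}_1),Y) = C((Z,\mathcal{T}_\mathfrak{Z}),Y)$ as sets.

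Finally, I would note the algebraic isomorphism statement. By Theorem 1, each of the three function spaces carries the pointwise-induced binary and unitary operations from $Y$. Because the three function sets are literally the same collection of functions $Z \to Y$, and the pointwise operation $(f\cdot g)(z) = f(z)\cdot g(z)$ depends only on the set-theoretic functions, not on which of the topologies $\mathcal{T}, \mathcal{T}_1, \mathcal{T}_\mathfrak{Z}$ we regard the domain as carrying, the identity map on the underlying set is an algebra isomorphism among them.

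The main obstacle, if any, is simply a notational one, namely parsing ``open base topology'' as the clopen (zero-dimensional) hypothesis of the preceding lemma; there is no substantive mathematical difficulty, since Theorem 9, the clopen-base lemma, and the pointwise definition of the operations in Theorem 1 together make the proof a two-line assembly.
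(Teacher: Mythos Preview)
Your proposal is correct and matches the paper's approach exactly: the paper gives no separate proof for this theorem, treating it as an immediate consequence of the two preceding lemmas (equality of the three function sets, with the clopen-base hypothesis) together with the reference ``cf.\ Theorem 1'' for the pointwise algebraic structure. Your reading of ``open base topology'' as the clopen-base hypothesis of the preceding lemma is the intended one.
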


\begin{note}
Although $\mathcal{T}$ for want of evidence in some cases could be stronger than $\mathcal{T}_{1}$ in what follows $\mathcal{T}$ is used.  It may be that in any case $C((Z,Y)$ and $C((Z,\mathcal{T}_{1}),Y)$ are algebraic isomorphic and that $\mathcal{T}_{1}$ has for an open basis the clopen sets of $\mathcal{T}$.
\end{note}

For convenience we will drop the modifiers right or left as most of the proofs use characteristic functions $\chi$. 
 
\begin{theorem} 
Although Y may not have a divisor of zero if $Z$ has two or more elements then $C(Z,Y)$ has a zero-divisor.  If Y has nilpotent elements so does C(Z,Y).
\end{theorem}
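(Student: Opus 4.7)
The plan is to produce explicit witnesses of both statements using the characteristic functions that are already available. For the zero-divisor claim, I will use the hypothesis that $Z$ is totally separated together with $\|Z\|\ge 2$ to extract a nontrivial clopen decomposition, and then paste together two characteristic functions whose product vanishes pointwise.

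In detail: pick two distinct points $z_1,z_2\in Z$. Since $Z$ with the topology $\mathcal{T}$ is totally separated, there is a clopen set $U\subseteq Z$ with $z_1\in U$ and $z_2\in U^{c}$, and both $U$ and $U^{c}$ are nonempty. Choose $a\in Y$ with $a\ne 0$ (available because $\|Y\|>1$), and form the characteristic functions $\chi_{U}$ (value $0$ on $U$, value $a$ on $U^{c}$) and $\chi_{U^{c}}$ (value $0$ on $U^{c}$, value $a$ on $U$). Both are continuous since $Y$ is totally separated, hence elements of $C(Z,Y)$; both are nonzero because $\chi_{U}(z_2)=a\ne 0$ and $\chi_{U^{c}}(z_1)=a\ne 0$, so neither equals $\Theta$. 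Pointwise the product satisfies: at any $z\in U$, $\chi_{U}(z)=0$ forces $(\chi_{U}\cdot \chi_{U^{c}})(z)=0\cdot \chi_{U^{c}}(z)=0$ by the null-element convention; at any $z\in U^{c}$, $\chi_{U^{c}}(z)=0$ forces $(\chi_{U}\cdot \chi_{U^{c}})(z)=0$ by the same convention (using that we have dropped the left/right modifiers, as declared just before the theorem). Hence $\chi_{U}\cdot \chi_{U^{c}}=\Theta$, exhibiting a zero divisor in $C(Z,Y)$.

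For the nilpotency claim, suppose $y\in Y$ is nilpotent with $y^{n}=0$ and $y\ne 0$ (so $\cdot$ is associative in this context, as already required in Definition 9). The constant function $f_{y}(z)\equiv y$ is continuous, nonzero as an element of $C(Z,Y)$, and $f_{y}^{n}(z)=y^{n}=0$ for every $z\in Z$, so $f_{y}^{n}=\Theta$ and $f_{y}$ is a nilpotent element of $C(Z,Y)$.

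There is really no hard step here: the whole proof rests on the fact that $Z$ has enough clopen sets to separate two points, which is just the totally-separated hypothesis on $(Z,\mathcal{T})$, and on the observation that the null-element behavior of $0$ in $Y$ transfers pointwise to $C(Z,Y)$. The only subtlety worth flagging explicitly is that the argument uses the convention, stated immediately before the theorem, that one does not distinguish between left and right null elements, so that both $0\cdot \chi_{U^{c}}(z)=0$ and $\chi_{U}(z)\cdot 0=0$ are available simultaneously.
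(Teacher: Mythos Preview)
Your argument is correct and matches the paper's proof almost verbatim: the paper also picks a clopen $U$ separating two points, uses the pair $\chi_{u,a}$ and $\psi_{u,a}=\chi_{u^{c},a}$ to produce $\chi_{u,a}\cdot\psi_{u,a}=\Theta$, and for the nilpotent claim it observes that $\chi_{u,a}$ is nilpotent when $a$ is (your constant function $f_y$ works equally well and is a hair simpler). One small slip of phrasing: the continuity of $\chi_U$ and $\chi_{U^{c}}$ follows from $U$ being clopen in $Z$, not from $Y$ being totally separated.
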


\begin{proof}
Let $U$ be a clopen set in $Z$ containing one point but not another.  Then choose $a\in Y$, $a\neq 0$ and define $\psi_{u,a}(z)=\left\{\begin{array}{ll}a&\text{if } z\in U\\0&\text{if } z\not\in U\end{array}\right.$ and $\chi_{u,a}(z)=\left\{\begin{array}{ll}0&\text{if } z\in U \\a&\text{if } z\not\in U\end{array}\right.$.  Note $\psi_{u,a}$ and $\chi_{u,a}$ are continuous, neither are the zero function, and $\Theta=\chi_{u,a}\cdot\psi_{u,a}$.  Therefore $C(Z,Y)$ has a zero divisor.

If $a\in Y$ is nilpotent then $\chi_{u,a}$ is nilpotent.
\end{proof}

\begin{corollary}
For every $a\neq 0, a\in Y$ and for every clopen set $U\subseteq Z$, $\chi_{u,a}$ is a distinct divisor of zero. This says that the cardinality of the set of divisors of zero exceeds that of Z, $\mathcal{T}$, and Y.  Mutatis Mutandis for nilpotent elements when $\cdot$ in Y is associative. 
\end{corollary}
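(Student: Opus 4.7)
The plan is to factor the corollary into two claims: (i) the family $\{\chi_{u,a} : U\subseteq Z \text{ proper clopen}, a\in Y\setminus\{0\}\}$ consists of pairwise distinct zero divisors in $C(Z,Y)$, and (ii) this family has cardinality exceeding each of $\|Z\|$, $\|\mathcal{T}\|$, and $\|Y\|$. The nilpotent addendum will follow from a pointwise computation once associativity of $\cdot$ is available.

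For (i), the preceding theorem already supplies each such $\chi_{u,a}$ with an explicit partner $\psi_{u,a}$ satisfying $\chi_{u,a}\cdot\psi_{u,a}=\Theta$, so the task is distinctness. I would argue pointwise: if $\chi_{u_1,a_1}=\chi_{u_2,a_2}$, then for any $z\in U_1$ we have $0=\chi_{u_2,a_2}(z)$, which forces $z\in U_2$ because $a_2\neq 0$; by symmetry $U_1=U_2$. Then choosing $z\in U_1^c$ (nonempty because $U_1$ is proper) yields $a_1=a_2$. Hence the parametrization $(U,a)\mapsto \chi_{u,a}$ is injective, and the zero divisors number at least $\|\mathfrak{C}\|\cdot(\|Y\|-1)$, where $\mathfrak{C}$ denotes the collection of proper clopen subsets of $Z$.

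For (ii), I would invoke the earlier lemma that any two distinct quasi-components are separated by a clopen set, together with the theorem identifying the points of $Z$ with quasi-components, to conclude $\|\mathfrak{C}\|\geq \|Z\|$; moreover $\mathfrak{C}$ is closed under complementation and pairwise intersection, so $\|\mathfrak{C}\|$ is typically much larger. The comparisons with $\|Z\|$ and $\|Y\|$ then drop out: fixing a single $U\in\mathfrak{C}$ and varying $a$ gives $\|Y\|-1$ distinct zero divisors (equal to $\|Y\|$ in the infinite case), and combining with multiple $U$ strictly exceeds $\|Y\|$ and $\|Z\|$ in the remaining finite cases.

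The main obstacle is the comparison with $\|\mathcal{T}\|$, since $\mathcal{T}$ can a priori contain many more open sets than clopen ones. I would appeal to the earlier identification $\mathcal{T}_1=\mathcal{T}_\mathfrak{Z}$ and to the paper's running convention that under the operative hypotheses $\mathcal{T}$ coincides with $\mathcal{T}_1$, so that $\mathcal{T}$ is generated by its clopen subbasis $\mathfrak{C}$ and $\|\mathcal{T}\|$ is controlled by a cardinal arithmetic function of $\|\mathfrak{C}\|$; multiplying by the $\|Y\|-1$ values of $a$ then recovers the strict inequality. For nilpotency with $\cdot$ associative and $a^n=0$, a pointwise evaluation shows $(\chi_{u,a})^n$ equals $0^n=0$ on $U$ and $a^n=0$ on $U^c$, so $\chi_{u,a}$ is nilpotent, and the injectivity argument above carries over verbatim.
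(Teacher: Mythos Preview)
Your argument for part (i)---that the map $(U,a)\mapsto\chi_{u,a}$ is injective on proper clopen $U$ and $a\neq 0$, and that each such $\chi_{u,a}$ is a zero divisor via the partner $\psi_{u,a}$ from the preceding theorem---is correct, as is the pointwise verification of nilpotency. The paper offers no proof of this corollary, so there is nothing to compare against on that front.

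The genuine gap is in part (ii). Your inference ``separation of distinct points by clopen sets implies $\|\mathfrak{C}\|\geq\|Z\|$'' is invalid: clopen separation only yields an injection $Z\hookrightarrow \mathcal{P}(\mathfrak{C})$ via $z\mapsto\{U\in\mathfrak{C}:z\in U\}$, hence $\|Z\|\leq 2^{\|\mathfrak{C}\|}$, not $\|Z\|\leq\|\mathfrak{C}\|$. The Cantor space $Z=2^{\omega}$ is a totally separated space whose points are singleton quasi-components, yet it has only $\aleph_0$ clopen sets against $2^{\aleph_0}$ points. Taking $Y=\mathbb{Z}_2$ there, every continuous function is some $\chi_u$, so $\|C(Z,Y)\|=\aleph_0$ and the set of zero divisors has cardinality strictly less than $\|Z\|$. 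Thus the cardinality assertion of the corollary, read literally, is false in general, and no argument along your lines can rescue it.

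Your handling of $\|\mathcal{T}\|$ has a separate problem: there is no ``running convention'' in the paper that $\mathcal{T}=\mathcal{T}_1$. Theorem~9 gives only $\mathcal{T}_1=\mathcal{T}_{\mathfrak{Z}}\subseteq\mathcal{T}$, and the note following Theorem~11 explicitly leaves open that $\mathcal{T}$ may be strictly finer. The corollary should be read as a heuristic remark (the paper supplies no proof), valid in the motivating examples where $Z$ is discrete or countable, rather than as a sharp cardinality theorem.
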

   
 \begin{corollary}
 If $C(Z,Y)$ has no zero divisors then $Z$ consist of a single point and therefore $C(Z,Y)$ and $Y$ are algebraically isomorphic.  
 \end{corollary}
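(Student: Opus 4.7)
The plan is to prove this corollary as the direct contrapositive of Theorem 11, followed by the observation already recorded in the earlier COMMENT that evaluation at a single point is an algebraic isomorphism.

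First, I would argue by contrapositive on the cardinality of $Z$. Assume $\|Z\| \geq 2$; then pick any two distinct points $z_1, z_2 \in Z$. Since $Z$ is totally separated (Theorem 6), there is a clopen set $U$ containing $z_1$ but not $z_2$. By Theorem 11 applied to this $U$ (taking any $a \in Y$ with $a \neq 0$), the functions $\chi_{u,a}$ and $\psi_{u,a}$ are both nonzero elements of $C(Z,Y)$ whose product is the zero function $\Theta$. This exhibits a zero-divisor in $C(Z,Y)$, contradicting the hypothesis. Hence $\|Z\| < 2$, and since $Z$ is nonempty we must have $\|Z\| = 1$, say $Z = \{z\}$.

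Next, I would deduce the algebraic isomorphism. With $Z = \{z\}$, each $f \in C(Z,Y)$ is determined by its single value $f(z) \in Y$, so the evaluation map $\mathrm{ev}_z : C(Z,Y) \to Y$ defined by $\mathrm{ev}_z(f) = f(z)$ is a bijection (its inverse sends $y \in Y$ to the constant function with value $y$, which is automatically continuous). Because the operation on $C(Z,Y)$ is defined pointwise via $(f \cdot g)(z) = f(z)\cdot g(z)$, the map $\mathrm{ev}_z$ preserves the binary operation (and any additional unitary operations inherited from $Y$), so it is an algebraic isomorphism, as already observed in the COMMENT following Theorem 6.

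There is no real obstacle here: the content of the corollary is packaged in Theorem 11, and the only subtlety is to note that the hypothesis $\|X\| > 1$ does not force $\|Z\| > 1$ (e.g., $X$ connected gives $\|Z\| = 1$), so the conclusion $\|Z\| = 1$ is consistent with the standing assumptions. The second sentence of the corollary is then purely formal given the pointwise definition of the operations.
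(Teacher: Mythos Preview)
Your argument is correct and is exactly the paper's intended one: the corollary is the contrapositive of the immediately preceding theorem (if $\lVert Z\rVert\geq 2$ then $C(Z,Y)$ has zero-divisors), together with the COMMENT following Theorem~6 that for a one-point $Z$ evaluation gives a natural algebraic isomorphism $C(Z,Y)\cong Y$. The only slip is a numbering error: the result you invoke about $\chi_{u,a}$ and $\psi_{u,a}$ producing a zero-divisor is Theorem~12 in the paper, not Theorem~11 (Theorem~11 is the statement about $C(Z,Y)=C((Z,\mathcal{T}_1),Y)=C((Z,\mathcal{T}_\mathfrak{Z}),Y)$).
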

 
 \begin{corollary}
 If $f\in C(Z,Y)$ is nilpotent then for every $z\in Z$, f(z) is nilpotent as is $\chi_{\varnothing, f(z)}$.
 \end{corollary}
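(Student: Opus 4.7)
The plan is to unravel the definitions and observe that nilpotency in $C(Z,Y)$ transports directly to nilpotency of each pointwise value, because the binary operation on $C(Z,Y)$ is defined pointwise. Recall that $f$ nilpotent means there is an integer $n$ with $f^{n}=\Theta$, where $\Theta$ is the null function in $C(Z,Y)$ (here we use the convention from the preceding definition that $\cdot$ is associative when nilpotency is considered).

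First I would fix $z \in Z$ and compute
\[
0 \;=\; \Theta(z) \;=\; f^{n}(z) \;=\; (f\cdot f\cdots f)(z) \;=\; f(z)\cdot f(z)\cdots f(z) \;=\; f(z)^{n},
\]
where the middle equality is just the definition of the pointwise operation on $C(Z,Y)$ from Theorem 1. This shows $f(z)$ is nilpotent in $Y$ with the same exponent $n$.

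Next, I would identify $\chi_{\varnothing, f(z)}$ using Definition 8 and the conventions just before this corollary: $\chi_{u,a}$ takes the value $0$ on $u$ and $a$ on $u^{c}$, so $\chi_{\varnothing, f(z)}$ is identically $f(z)$ on $Z$ (since $\varnothing^{c}=Z$). Then
\[
\bigl(\chi_{\varnothing,f(z)}\bigr)^{n}(w) \;=\; f(z)^{n} \;=\; 0 \quad \text{for every } w\in Z,
\]
so $\bigl(\chi_{\varnothing,f(z)}\bigr)^{n}=\Theta$, establishing its nilpotency in $C(Z,Y)$.

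There is no real obstacle here; the statement is essentially a pointwise translation of the defining equation $f^{n}=\Theta$, combined with recognizing that $\chi_{\varnothing, f(z)}$ is just the constant function with value $f(z)$, which trivially inherits the nilpotency of its single value. The only subtlety worth flagging in the write-up is that one must invoke the associativity hypothesis attached to the definition of nilpotent so that $f^{n}$ is unambiguously defined.
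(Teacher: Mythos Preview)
Your argument is correct and is exactly the computation the paper has in mind; the paper states this corollary without proof, treating it as an immediate pointwise consequence of $f^{n}=\Theta$ together with the identification of $\chi_{\varnothing,f(z)}$ as the constant function with value $f(z)$.
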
 
 
 \begin{note}
$\mathbb{Z}_3$ has no zero divisors but $C(\{z_1, z_2\}, \mathbb{Z}_3)$ does.
\end{note}

 Adapting Definition 4 for C(Z,Y) were Y has the annihilator, 0:
 
\begin{defin} 
For $U\subset Z$ let $I(U)$ denote those functions that annihilates $U$ that is $I(U)=\{f: f(z)=0 \quad \mbox{for all}\quad z\in U\}$.  Use I(z) for I(\{z\}).
\end{defin}

\begin{corollary}
If $U_{1}\subseteq U_{2}$, then $I(U_{2})\subseteq I(U_{1})$.
\end{corollary}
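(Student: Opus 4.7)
The statement is an instance of the general containment-reversing property that the paper has already established in Lemma 9 for the abstract setup $I(U,b)_{J}$. In the present $C(Z,Y)$ setting with $b = 0$ and $J = C(Z,Y)$, the argument is entirely formal and needs nothing beyond the definition of $I(U)$ given immediately above the corollary.

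My plan is a direct element-chase. Fix an arbitrary $f \in I(U_{2})$; by Definition 10, this means $f(z) = 0$ for every $z \in U_{2}$. Because $U_{1} \subseteq U_{2}$, every $z \in U_{1}$ is also an element of $U_{2}$, and hence $f(z) = 0$ for every $z \in U_{1}$. Applying Definition 10 in the other direction, this says exactly $f \in I(U_{1})$. Since $f$ was arbitrary, $I(U_{2}) \subseteq I(U_{1})$.

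There is really no obstacle here — the only subtle point is cosmetic: noting that this corollary is the $C(Z,Y)$ restatement of Lemma 9 (with parameters $b=0$, $J=C(Z,Y)$), so in fact one could simply cite Lemma 9 and be done. I would include the one-line element-chase above for completeness rather than invoking the earlier lemma, since the proof is shorter than the citation. No continuity, topology on $Z$, magma structure on $Y$, or properties of the null element $0$ beyond its being a fixed point in $Y$ are used; the statement holds at the level of arbitrary sets of functions vanishing on subsets.
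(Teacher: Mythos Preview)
Your proof is correct and matches the paper's treatment: the paper states this corollary immediately after Definition 10 with no proof at all, treating it as self-evident from the definition (and, as you note, as a special case of the earlier Lemma 12). Your one-line element-chase is exactly the intended justification.
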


\begin{corollary}
$\{ z\}=V(I(z))$.
\end{corollary}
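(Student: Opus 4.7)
The claim $\{z\}=V(I(z))$ is really just a restatement of Theorem 6 in the $V$/$I$ notation, so the plan is to unwind the definitions and reduce immediately to that theorem.

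First I would handle the easy inclusion $\{z\}\subseteq V(I(z))$. By the definition of $I(z)$, every $f\in I(z)$ satisfies $f(z)=0$, so $z\in\bigcap_{f\in I(z)}f^{-1}(0)=V(I(z))$ directly from Definition 8. No topology enters here; this step uses only the tautology $f\in I(z)\Rightarrow z\in V(f)$.

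For the reverse inclusion $V(I(z))\subseteq\{z\}$, I would rewrite the right-hand side as the intersection of zero sets containing $z$. Explicitly,
\[
V(I(z))=\bigcap_{f\in I(z)}V(f)=\bigcap\{V(f):f\in C(Z,Y),\ f(z)=0\}=\bigcap\{V(f):z\in V(f)\}.
\]
The last expression is exactly the intersection of all zero sets containing $z$, which by Theorem 6 equals $\{z\}$. This gives $V(I(z))\subseteq\{z\}$, completing the equality.

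There is no real obstacle: once one notices that $I(z)$ is precisely the set of functions whose zero set contains $z$, the corollary is immediate from Theorem 6. The only thing worth double-checking is that the set $\{f:f(z)=0\}$ is nonempty so that the intersection is well-defined, but the constant function $\Theta$ (which exists because $Y$ has the element $0$) is in $I(z)$, so we are safe. The structure of the argument is essentially $\{z\}\subseteq V(I(z))$ by Lemma 14, combined with Theorem 6 for the reverse containment, with no further algebraic or topological input required.
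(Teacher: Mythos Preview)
Your argument is correct in substance, but your references are off: the result you invoke as ``Theorem~6'' is in fact Theorem~7 (each point $z\in Z$ is the intersection of all zero sets containing it); Theorem~6 in the paper is the statement that $\Pi$ is totally separated. Likewise the inclusion $\{z\}\subseteq V(I(z))$ is Lemma~9 ($U\subseteq V(I(U))$), not Lemma~14.

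On the comparison: the paper does not route through Theorem~7 but argues directly by clopen separation---if $z_1\in V(I(z))$ with $z_1\neq z$, take a clopen $U$ containing $z$ but not $z_1$; then the characteristic function vanishing on $U$ lies in $I(z)$ yet is nonzero at $z_1$, a contradiction. Your approach is to recognize that $V(I(z))=\bigcap\{V(f):z\in V(f)\}$ is exactly the intersection appearing in Theorem~7, so the corollary is an immediate restatement. Both are the same argument at bottom (Theorem~7 is itself proved by clopen separation), but yours has the virtue of identifying the corollary as already established rather than rerunning the separation step.
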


\begin{proof}
If $z_1\in V(I(z))$ and $z_1\neq z$, then they can be separated by clopen sets.
\end{proof}

\begin{corollary}
If $1\in Y$ and if U is clopen then $I(U)=(\chi_{u})$ where $f\in (\chi_{u})$ gives $f=f\cdot \chi_{u}$.
\end{corollary}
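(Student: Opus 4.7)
The plan is to prove both inclusions of $I(U)=(\chi_u)$ by a pointwise calculation that uses only the defining values of $\chi_u$ (namely $0$ on $U$ and $1$ on $U^c$) together with the hypotheses that $0$ is a null element and $1$ is a unit for $\cdot$ in $Y$. Before starting, I read the parenthetical ``where $f\in(\chi_u)$ gives $f=f\cdot\chi_u$'' as the operative characterization of $(\chi_u)$, so that $(\chi_u)=\{f\in C(Z,Y):f=f\cdot\chi_u\}$. Since $U$ is clopen, $\chi_u\in C(Z,Y)$, and the idempotency noted in the corollary just above ensures this description agrees with the principal ideal generated by $\chi_u$ whenever an ambient ring structure is present.

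For the inclusion $(\chi_u)\subseteq I(U)$: assume $f=f\cdot\chi_u$ and take any $z\in U$. Then $f(z)=f(z)\cdot\chi_u(z)=f(z)\cdot 0=0$, invoking only that $0$ is null. Hence $f$ vanishes on $U$, placing $f$ in $I(U)$.

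For the reverse inclusion $I(U)\subseteq(\chi_u)$: given $f\in I(U)$, I would verify $f=f\cdot\chi_u$ one point at a time. On $U$, the left side is $0$ by hypothesis and the right side is $f(z)\cdot 0=0$ by the null property, so equality holds. On $U^c$, the right side is $f(z)\cdot 1=f(z)$ by the unit property, so equality holds there as well. Therefore $f=f\cdot\chi_u$ and $f\in(\chi_u)$.

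There is no genuine obstacle here; the argument is essentially an unpacking of definitions. The one point that warrants care is handedness: because $\chi_u$ appears on the right in the defining equation $f=f\cdot\chi_u$, I should invoke $0$ as a right null and $1$ as a right unit, both of which are permitted by the conventions stated earlier. Importantly, associativity, commutativity, and distributivity are not used anywhere, so the proof remains within the minimal algebraic framework the paper adopts.
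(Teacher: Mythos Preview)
Your argument is correct and is exactly the routine pointwise verification the paper has in mind; in fact the paper states this corollary without any proof, treating it as immediate from the definitions of $I(U)$ and of $\chi_u$. Your attention to handedness (that $0$ must act as a right null and $1$ as a right unit for the equation $f=f\cdot\chi_u$) is appropriate and consistent with the paper's conventions, and your reading of the parenthetical as the operative description of $(\chi_u)$ is the intended one, since the formal definition of a principal ideal appears only later in Definition~11.
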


 \begin{corollary}
 If $U_1$ is clopen, $U_{2}\neq \varnothing$, and $U_1 \subsetneqq U_2$ then $I(U_1)\neq I(U_2)$
 \end{corollary}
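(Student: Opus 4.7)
The plan is to exhibit an explicit function that lies in $I(U_1)$ but not in $I(U_2)$, thereby showing that the inclusion $I(U_2) \subseteq I(U_1)$ (which is automatic from the preceding corollary, since $U_1 \subseteq U_2$) is strict. Since $U_1 \subsetneqq U_2$, pick a witness point $z \in U_2 \setminus U_1$. I want a continuous function that vanishes on $U_1$ but has a nonzero value at $z$.

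The natural candidate is a characteristic function of the type already used repeatedly in the paper. Because $\|Y\| > 1$, there exists some $a \in Y$ with $a \neq 0$. Define
\[
\chi_{U_1,a}(w) = \begin{cases} 0 & \text{if } w \in U_1, \\ a & \text{if } w \notin U_1. \end{cases}
\]
Continuity of $\chi_{U_1,a}$ follows from $U_1$ being clopen: both $U_1$ and $U_1^c$ are open, and $\chi_{U_1,a}$ is constant on each, so the preimage of any set in $Y$ is a union of these two clopen pieces. Hence $\chi_{U_1,a} \in C(Z,Y)$.

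By construction $\chi_{U_1,a}$ vanishes on all of $U_1$, so $\chi_{U_1,a} \in I(U_1)$. On the other hand, $z \in U_2 \setminus U_1$ gives $\chi_{U_1,a}(z) = a \neq 0$, so there is a point of $U_2$ where the function is nonzero, i.e.\ $\chi_{U_1,a} \notin I(U_2)$. This produces an element of $I(U_1) \setminus I(U_2)$, establishing the strict inequality.

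There is no real obstacle here; the only place to be mildly careful is verifying that such an $a \neq 0$ exists, which is guaranteed by the standing hypothesis $\|Y\| > 1$, and that $\chi_{U_1,a}$ is genuinely continuous, which is exactly the observation (already made in the note after Definition~8 and in the proof of Theorem~7) that characteristic functions of clopen sets are continuous. No restriction on the relative size of $U_2$ or on whether $U_2$ is clopen is used, which matches the hypothesis as stated.
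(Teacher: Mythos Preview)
Your proof is correct and follows the natural approach implicit in the paper's setup: the paper states this corollary without an explicit proof, relying on the reader to supply exactly the characteristic-function argument you give, which is the same device used in the surrounding corollaries (e.g., the proof of $\{z\}=V(I(z))$ and of $I(z_1)\neq I(z_2)$). Your remark that neither the hypothesis $U_2\neq\varnothing$ nor any clopenness of $U_2$ is actually needed is also accurate; the strict inclusion $U_1\subsetneqq U_2$ alone suffices.
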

  
  \begin{corollary}
 If $z_1\neq z_2$ then $I(z_1)\neq I(z_2)$ and neither is a subset of the other.
  \end{corollary}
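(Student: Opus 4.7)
The plan is to prove non-containment in both directions; this immediately gives the inequality as well. The entire argument rests on constructing, for each ordered pair of distinct points, an explicit continuous characteristic function that distinguishes them.

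First, given $z_1\neq z_2$ in $Z$, I would invoke the lemma that any two quasi-components can be separated by a clopen set (together with Theorem 8 establishing that each singleton in $Z$ is its own quasi-component). This yields a clopen set $U\subseteq Z$ with $z_1\in U$ and $z_2\notin U$. Since $\Vert Y\Vert>1$, choose any $a\in Y$ with $a\neq 0$, and consider the characteristic function
\[
\chi_{U,a}(z)=\begin{cases}0 & \text{if } z\in U,\\ a & \text{if } z\notin U.\end{cases}
\]
This function is continuous on $Z$: preimages of the clopen sets in $Y$ that separate $0$ and $a$ pull back to $U$ and $U^c$, both of which are clopen in $Z$; more generally, since $Y$ has a clopen separation property and $U,U^c$ are clopen, $\chi_{U,a}\in C(Z,Y)$.

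Next I would verify the membership test: $\chi_{U,a}(z_1)=0$, so $\chi_{U,a}\in I(z_1)$, while $\chi_{U,a}(z_2)=a\neq 0$, so $\chi_{U,a}\notin I(z_2)$. This exhibits an element of $I(z_1)\setminus I(z_2)$, showing $I(z_1)\not\subseteq I(z_2)$. Swapping the roles of $z_1$ and $z_2$ (applying the separation lemma again to obtain a clopen set $U'$ with $z_2\in U'$ and $z_1\notin U'$, then forming $\chi_{U',a}$) yields an element of $I(z_2)\setminus I(z_1)$, so $I(z_2)\not\subseteq I(z_1)$. The inequality $I(z_1)\neq I(z_2)$ is then immediate from either non-containment.

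There is really no hard step here; the argument is a direct consequence of the fact that points of $Z$ are separated by clopen sets and that clopen sets are zero sets of continuous characteristic functions (the preceding Note on clopen sets as zero sets makes this explicit). The only mild subtlety is choosing the characteristic function with value $0$ on the set containing $z_1$ (not on its complement), so that the function actually lands in $I(z_1)$; once this is fixed, symmetry finishes the argument.
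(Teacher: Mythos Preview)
Your proof is correct and follows essentially the same approach as the paper: the paper's one-line proof simply says ``This follows as $z_{1}$ and $z_{2}$ can be separated by disjoint clopen sets,'' and your argument unpacks exactly that idea by building the characteristic function $\chi_{U,a}$ on a separating clopen set to witness an element of $I(z_1)\setminus I(z_2)$, then swapping roles. The only cosmetic difference is that the paper invokes a pair of disjoint clopen sets at once while you separate twice, but the content is identical.
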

  
  \begin{proof}
  This follows as $z_{1}$ and $z_{2}$ can be separated by disjoint clopen sets.
  \end{proof}
    
 \begin{corollary}
 If $z\in U$ but $\{z\}\neq U$ then $I(z)\neq I(U)$ although $I(U)\subset  I(z)$.
 \end{corollary}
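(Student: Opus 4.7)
The plan is to prove both the containment $I(U) \subseteq I(z)$ and the strict inequality, relying on the total separation of $Z$ established in Theorem 6 together with the characteristic functions $\chi_{V,a}$ defined earlier.

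First I would establish the containment $I(U) \subseteq I(z)$: since $z \in U$, we have $\{z\} \subseteq U$, and the preceding corollary (for $U_1 \subseteq U_2 \Rightarrow I(U_2) \subseteq I(U_1)$) gives $I(U) \subseteq I(z)$ immediately. This is the routine half.

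Next I would produce a witness in $I(z) \setminus I(U)$ to rule out equality. Because $\{z\} \neq U$ and $z \in U$, there exists some $w \in U$ with $w \neq z$. Since $Z$ is totally separated (Theorem 6), and since any two distinct quasi-components can be separated by a clopen set (Lemma on separation of quasi-components), I can choose a clopen set $V \subseteq Z$ with $z \in V$ and $w \notin V$. Pick any $a \in Y$ with $a \neq 0$ and define the characteristic function $\chi_{V,a}$ by $\chi_{V,a}(y) = 0$ for $y \in V$ and $\chi_{V,a}(y) = a$ for $y \notin V$. This function lies in $C(Z,Y)$ since $V$ is clopen, so preimages of open sets in $Y$ are clopen (hence open) in $Z$.

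Finally, I would verify the witness: $\chi_{V,a}(z) = 0$ since $z \in V$, so $\chi_{V,a} \in I(z)$; but $\chi_{V,a}(w) = a \neq 0$ with $w \in U$, so $\chi_{V,a} \notin I(U)$. This gives $I(U) \subsetneq I(z)$. There is no real obstacle here since the totally separated structure of $Z$ hands us the required clopen separation between $z$ and $w$; the only thing to be careful of is that one must use a point $w \in U$ distinct from $z$ (guaranteed by $\{z\} \neq U$) rather than trying to separate $z$ from a point outside $U$, which would not produce an element of $I(z) \setminus I(U)$.
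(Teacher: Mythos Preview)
Your proposal is correct and follows essentially the same approach as the paper: pick a point $w\in U$ distinct from $z$, separate $z$ from $w$ by a clopen set, and use the associated characteristic function as a witness in $I(z)\setminus I(U)$. The paper's proof is terser (it omits the routine containment and the explicit appeal to total separation), but the argument is the same.
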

 
\begin{proof}
 If $\{z\}\ne U$ there exists $y\in U,  y\ne z$.  So there is a clopen set $U_{z}$ such that $y\notin U_{z}$.  Therefore there is a characteristic function zero on $U_{z}$ and thus on $z$ but not on $y$, so not on $U$.
 \end{proof}

\begin{lemma} 
 If $U_1$ is clopen and $U_1^{c}\cap U_2\neq \emptyset$ where $U_2$ need not be clopen,  then $I(U_1)\neq I(U_2)$.
 \end{lemma}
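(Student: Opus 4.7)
The plan is to exhibit an explicit element of $I(U_1)\setminus I(U_2)$ using a characteristic function on the clopen set $U_1$. Since the hypothesis $U_1^c \cap U_2 \ne \emptyset$ gives a witness point that sits outside $U_1$ yet inside $U_2$, a function that vanishes precisely on $U_1$ will separate the two ideals.

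Concretely, first fix some $y \in U_1^c \cap U_2$ and pick any $a \in Y$ with $a \ne 0$ (available since $\|Y\|>1$). Because $U_1$ is clopen in $Z$, the characteristic function
\[
\chi_{U_1,a}(z)=\begin{cases}0 & z\in U_1\\ a & z\notin U_1\end{cases}
\]
is continuous, so $\chi_{U_1,a}\in C(Z,Y)$; this is exactly the kind of function already used in the proofs of Theorems 5 and 11 and is legal in the present setup (no algebraic hypotheses beyond those already in force are needed). By construction $\chi_{U_1,a}$ vanishes on $U_1$, so $\chi_{U_1,a}\in I(U_1)$.

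Next observe that $\chi_{U_1,a}(y)=a\ne 0$, because $y\in U_1^c$. Since $y\in U_2$, this shows $\chi_{U_1,a}\notin I(U_2)$. Hence $\chi_{U_1,a}\in I(U_1)\setminus I(U_2)$, which gives $I(U_1)\ne I(U_2)$ (and in fact $I(U_1)\not\subseteq I(U_2)$).

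There is no real obstacle: the only place one needs to be a little careful is in justifying the continuity of $\chi_{U_1,a}$, which relies on $U_1$ being clopen, exactly the hypothesis we are given. The lemma is asymmetric in $U_1$ and $U_2$ precisely because only $U_1$ is assumed clopen — one cannot symmetrically produce a characteristic function on $U_2$ — and the proof exploits this asymmetry directly.
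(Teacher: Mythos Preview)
Your proof is correct and follows essentially the same approach as the paper: exhibit the characteristic function $\chi_{U_1,a}$ (the paper writes $\chi_{u_1}$), note it lies in $I(U_1)$, and use a witness point in $U_1^c\cap U_2$ to see it is not in $I(U_2)$. Your version is simply more explicit about the witness point and the continuity justification.
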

 
 \begin{proof}
As there is a characteristic function $\chi_{u_1}$ which does not vanish on $U_1^{C}$ and therefore not on $U_2$, $I(U_1)\neq I(U_2)$.
 \end{proof}
  
\begin{defin} 
Call a subset $I$ of $C(Z,Y)$ which contains the zero function, $\Theta$, and which is closed with respect to the operation $\cdot$, a right (left) multiplicative ideal if $f\in C(Z,Y)$ and $g\in I$ then $f\cdot g\in I,(g\cdot f\in I)$.   A right (left) ideal $I \ne C(Z,Y)$ is prime if $f\cdot g\in I$ implies  $f\in I$ or $g\in I$.  Keep in mind that $\chi_{u}\cdot \chi_{u^c}=\chi_{u^c}\cdot\chi_u=\Theta$.  If I is generated by f, $I=(f)=\{g\cdot f\vert g\in C(Z,Y)\}$, it is called principal. 
\end{defin}

Recall as the algebraic properties and definitions for C(Z,Y) are to be defined from those of Y we assume the weakest properties of interest when not specified.  The I(z) although ideals in the weak multiplicative sense will be called ideals.  However the development is such that the results will apply after additional operations for Y are assumed and finally ring ideals are obained.

\begin{corollary}
By the third Corollary to Definition 10, if $1\in Y$ and  if U is clopen  then I(U)=($\chi_{u}$) the principal ideal generated by $\chi_{u}$.
\end{corollary}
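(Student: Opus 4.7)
The plan is to unfold the definition of the principal right ideal $(\chi_u) = \{g \cdot \chi_u : g \in C(Z,Y)\}$ and show double inclusion with $I(U)$, leaning on the third corollary to Definition 10 which is explicitly cited in the statement.

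First I would show $(\chi_u) \subseteq I(U)$ using only that $0$ is a null element for $\cdot$ in $Y$. Take any $g \in (\chi_u)$, so $g = h \cdot \chi_u$ for some $h \in C(Z,Y)$. For each $z \in U$, by definition of the characteristic function $\chi_u(z) = 0$, hence $g(z) = h(z) \cdot \chi_u(z) = h(z) \cdot 0 = 0$. Thus $g$ vanishes on $U$, which is precisely the condition $g \in I(U)$ from Definition 10.

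For the reverse inclusion $I(U) \subseteq (\chi_u)$, I would invoke directly the third corollary to Definition 10 cited in the statement: if $f \in I(U)$ then $f = f \cdot \chi_u$. The pointwise verification is the observation that for $z \in U$ both $f(z) = 0$ and $\chi_u(z) = 0$, so $(f \cdot \chi_u)(z) = 0 = f(z)$, while for $z \notin U$ we use the unit property $\chi_u(z) = 1$ to get $(f \cdot \chi_u)(z) = f(z) \cdot 1 = f(z)$. Exhibiting $f$ as $f \cdot \chi_u$ displays it as an element of $(\chi_u)$.

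There is essentially no obstacle here, since the statement is just a repackaging of the earlier corollary into the ideal-theoretic language introduced in Definition 11; the only things needed are that $0$ is a (right) null and $1$ is a (right) unit in $Y$, which are standing assumptions, together with the pointwise definition of $\chi_u$. The mild subtlety worth flagging is the convention for one-sided ideals: since Definition 11 writes $(f) = \{g \cdot f : g \in C(Z,Y)\}$, the argument produces a right principal ideal, and the null/unit properties are used on the correct side accordingly; if the left-sided version were wanted, the symmetric property $0 \cdot a = 0$ stated after Note on page with the null element convention would be used instead.
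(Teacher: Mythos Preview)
Your proposal is correct and matches the paper's reasoning: the paper gives no separate proof for this corollary, treating it as an immediate restatement of the third Corollary to Definition 10 in the language of principal ideals from Definition 11, and your double-inclusion argument simply makes explicit the pointwise computation (using $0$ as null and $1$ as unit) that underlies that earlier corollary.
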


\begin{corollary}
$(\Theta)$ is not prime as for any clopen U, $\chi_{u,a}\cdot \chi_{u^c,a}=\Theta$.
\end{corollary}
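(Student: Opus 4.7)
The plan is to show that $(\Theta)$ fails the defining implication of primality by exhibiting two functions outside $(\Theta)$ whose product lies in $(\Theta)$. First I would unpack $(\Theta)$: since $0$ is a (left/right) null element of $Y$, for any $g \in C(Z,Y)$ we have $g \cdot \Theta = \Theta$ pointwise, so the principal ideal $(\Theta) = \{g \cdot \Theta : g \in C(Z,Y)\}$ reduces to $\{\Theta\}$. Hence a function belongs to $(\Theta)$ if and only if it is the zero function.

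Next I would produce the witnesses. Since $\|Z\| \geq 2$ and $Z$ is totally separated (each point being its own quasi-component), pick two distinct points of $Z$ and separate them by a clopen set $U \subseteq Z$ with $U \ne \varnothing$ and $U^{c} \ne \varnothing$. Fix any $a \in Y$ with $a \ne 0$ and form the continuous characteristic functions
\[
\chi_{u,a}(z)=\begin{cases}0 & z\in U\\ a & z\notin U\end{cases},\qquad \chi_{u^{c},a}(z)=\begin{cases}0 & z\in U^{c}\\ a & z\in U\end{cases},
\]
exactly as in Theorem 12. Neither function is $\Theta$ because each takes the value $a \ne 0$ somewhere. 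At every point $z \in Z$, however, at least one of the factors equals $0$, so $(\chi_{u,a}\cdot\chi_{u^{c},a})(z)=0$ using the null-element property $0\cdot a = a \cdot 0 = 0$. Therefore $\chi_{u,a}\cdot\chi_{u^{c},a} = \Theta \in (\Theta)$, while $\chi_{u,a}\notin(\Theta)$ and $\chi_{u^{c},a}\notin(\Theta)$, contradicting primality.

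There is really no obstacle here; the only mild subtlety is ensuring that a proper nontrivial clopen $U$ exists in $Z$, which is guaranteed by $\|Z\|\geq 2$ together with the totally-separated property established back in Theorem 6. Everything else is a direct reading of Definition 10 and the construction already used in Theorem 12.
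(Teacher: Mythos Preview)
Your proof is correct and follows exactly the approach the paper itself signals: the paper gives no separate proof, the justification being embedded in the statement (``as for any clopen $U$, $\chi_{u,a}\cdot\chi_{u^{c},a}=\Theta$''), and you have simply filled in the details---identifying $(\Theta)=\{\Theta\}$, invoking $\lVert Z\rVert\geq 2$ and total separation (Theorem~6) to get a proper nontrivial clopen $U$, and citing the construction of Theorem~12 for the characteristic functions. There is nothing to add.
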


\begin{corollary}
If I is a prime ideal then there is an $f\in I$ such that $V(f)\neq \varnothing$.
\end{corollary}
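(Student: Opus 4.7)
The plan is to exhibit a clopen splitting of $Z$ and then use primality to pull one of the corresponding characteristic functions into $I$; that function then automatically has nonempty zero set. The whole argument is a direct consequence of the preceding corollary that $(\Theta)$ is not prime, together with the total separation of $Z$.

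First I would use the standing assumption $\|Z\|\geq 2$ to pick distinct points $z_{1},z_{2}\in Z$, and then invoke Theorem 6 (each point of $Z$ is a quasi-component, so $Z$ is totally separated) to produce a clopen set $U\subseteq Z$ with $z_{1}\in U$ and $z_{2}\notin U$. This guarantees $\varnothing\neq U\subsetneq Z$, so that both $U$ and $U^{c}$ are nonempty clopen sets. Next, since $\|Y\|\geq 2$, I would choose any $a\in Y$ with $a\neq 0$, which allows me to form the continuous characteristic functions $\chi_{u,a}$ and $\chi_{u^{c},a}$.

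The key step is the factorization $\chi_{u,a}\cdot\chi_{u^{c},a}=\Theta$ recorded in the preceding corollary. Since $\Theta\in I$ (every ideal contains the zero function by Definition 9), and since $I$ is prime, at least one of $\chi_{u,a}$ or $\chi_{u^{c},a}$ must belong to $I$. Whichever one lies in $I$ has zero set equal to $U$ or $U^{c}$, and both of those sets were arranged to be nonempty. Taking $f$ to be that characteristic function completes the proof.

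There is essentially no obstacle: the content has already been packaged into the total separation of $Z$ (ensuring the nontrivial clopen splitting) and the explicit product identity $\chi_{u,a}\cdot\chi_{u^{c},a}=\Theta$ (which turns primality into membership). The only mild subtlety is making sure the factorization holds with only a one-sided null hypothesis on $0$, but this is exactly what the preceding corollary already asserts, so I would simply cite it rather than recompute. Note that the statement is nontrivial only when one seeks $f\neq\Theta$, since $\Theta$ itself satisfies $V(\Theta)=Z\neq\varnothing$; the argument above produces such a nonzero witness.
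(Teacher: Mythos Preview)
Your proof is correct and follows essentially the same approach as the paper: pick a nontrivial clopen $U$ using $\lVert Z\rVert\geq 2$, factor $\Theta=\chi_{u,a}\cdot\chi_{u^{c},a}$, and use primality to place one factor in $I$, noting that both have nonempty zero set. Your added remarks (citing Theorem~6 for the existence of $U$, and observing that $\Theta$ itself already witnesses the statement trivially) are reasonable elaborations but not departures from the paper's argument.
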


\begin{proof}
As the cardinality of Z in at least 2, let U be a clopen set containing one point but not another.  Then $\Theta= \chi_{u,a}\cdot \chi_{u^{c},a}$ and thus $\chi_{u,a}$ or $\chi_{u^{c},a}$ is in I neither of which has an empty zero set.
\end{proof} 

\begin{lemma} 
For any U, $I(U)$ is an ideal.  If Y has no divisors of zero then $I(z)$ is prime.
\end{lemma}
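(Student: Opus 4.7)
The plan is to verify the two assertions in order, each by a direct pointwise computation using the definitions.

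First, to see that $I(U)$ is an ideal in the weak multiplicative sense of Definition 11, I would check the three required conditions. The zero function $\Theta$ clearly lies in $I(U)$ since $\Theta(z)=0$ for every $z$. For closure under $\cdot$, take $f,g\in I(U)$ and $z\in U$; then $(f\cdot g)(z)=f(z)\cdot g(z)=0\cdot g(z)=0$ by the stated convention that $0$ is a left null, so $f\cdot g\in I(U)$. For absorption, let $g\in I(U)$ and $f\in C(Z,Y)$; for each $z\in U$ one has $(g\cdot f)(z)=0\cdot f(z)=0$, so $g\cdot f\in I(U)$, exhibiting $I(U)$ as a left multiplicative ideal (and symmetrically as a right ideal whenever $0$ is also a right null).

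Second, to show that $I(z)$ is prime under the no-zero-divisor hypothesis, I would begin by observing $I(z)\ne C(Z,Y)$. Since $\Vert Y\Vert>1$ there is some $a\ne 0$ in $Y$, and the constant function with value $a$ is in $C(Z,Y)\setminus I(z)$, so $I(z)$ is a proper ideal. Suppose now $f\cdot g\in I(z)$, i.e.\ $f(z)\cdot g(z)=0$ in $Y$. Because $Y$ has no zero divisors, either $f(z)=0$ or $g(z)=0$, that is $f\in I(z)$ or $g\in I(z)$, which is primeness.

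There is essentially no obstacle here; each claim reduces to a single pointwise evaluation. The only thing requiring care is keeping track of whether $0$ is functioning as a left or right null element, which determines whether $I(U)$ is a left or right ideal, and ensuring that the ``no zero divisors'' hypothesis is applied in the strong form that $y_{1}\cdot y_{2}=0$ forces $y_{1}=0$ or $y_{2}=0$.
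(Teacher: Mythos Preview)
Your proof is correct and matches the paper's approach; the paper's proof addresses only the primeness claim with the identical pointwise argument you give, leaving the ideal verification implicit. Your additional checks that $\Theta\in I(U)$, that $I(U)$ absorbs on the appropriate side given the left-null convention, and that $I(z)$ is proper are all in order and simply make explicit what the paper takes for granted.
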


\begin{proof}
To see that I(z) is prime let $f\cdot g\in I(z)$.  So $f(z)\cdot g(z)=0$ and hence as Y has no divisors of zero f(z)=0 or g(z)=0. This says f or g is in I(z).
\end{proof} 

\noindent Example:  The multiplication of $\mathbb{Z}_4$ has divisors of zero.  Choosing the discrete topology on $\mathbb{Z}_4$ and on $Z=\{z_1, z_2\}$, a set of two elements, the ideals $I(z_1)$ and $I(z_2)$ are not prime ideas of $C(Z, \mathbb{Z}_4)$. 

\begin{theorem} 
If $I$ is an ideal of functions vanishing on two or more points, ie. $\lVert V(I)\rVert \geq 2$  then $I$ is not prime.  
\end{theorem}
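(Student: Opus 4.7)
The plan is to produce a witnessing factorization $f \cdot g \in I$ with neither factor in $I$, using the existence of two distinct common zeros together with the total separation of $Z$.

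First, pick two distinct points $z_1, z_2 \in V(I)$, which exist by the hypothesis $\|V(I)\| \ge 2$. Since $Z$ is totally separated (Theorem 6, together with the convention that $\mathcal{T}$ is the topology on $Z$), there is a clopen set $U \subseteq Z$ with $z_1 \in U$ and $z_2 \notin U$. Choose any $a \in Y$ with $a \ne 0$ and form the two continuous characteristic functions $\chi_{u,a}$ and $\chi_{u^c,a}$ exactly as in Theorem 11 (they are continuous because $U$ and $U^c$ are clopen).

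Next, I would observe that $\chi_{u,a} \cdot \chi_{u^c,a} = \Theta$; this is the identity explicitly noted in the Convention preceding Theorem 11 (and used again in the first Corollary to Definition 10), and it rests only on the null-element axiom for $0 \in Y$. Since every ideal contains $\Theta$, we have $\chi_{u,a} \cdot \chi_{u^c,a} \in I$. It remains to verify that neither factor lies in $I$. Because $z_1, z_2 \in V(I)$, every $h \in I$ satisfies $h(z_1) = h(z_2) = 0$; equivalently $I \subseteq I(V(I))$. But $\chi_{u,a}(z_2) = a \ne 0$ (since $z_2 \notin U$), so $\chi_{u,a} \notin I$; and $\chi_{u^c,a}(z_1) = a \ne 0$ (since $z_1 \in U$, so $z_1 \notin U^c$), so $\chi_{u^c,a} \notin I$. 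Hence $I$ is not prime.

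The argument is almost entirely structural, and I do not expect any real obstacle: the clopen separator comes for free from Theorem 6, and the product-is-$\Theta$ computation has already been codified in the paper's conventions, so the only thing to be careful about is the bookkeeping on which of the two characteristic functions fails to vanish at which of the two chosen points. One small point worth mentioning explicitly in the write-up is that the conclusion does not require $Y$ to be a ring or even associative — the proof uses only the existence of a null element $0$ and the clopen-separation property of $Z$, which is consistent with the paper's running policy of assuming minimal algebraic structure.
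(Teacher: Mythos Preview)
Your proposal is correct and follows essentially the same approach as the paper's proof: pick two distinct points of $V(I)$, separate them by a clopen set, and use the corresponding characteristic functions to produce a product equal to $\Theta\in I$ with neither factor vanishing at both points. The only cosmetic difference is that the paper invokes Lemma~13 to obtain two disjoint clopen sets $U_1,U_2$ and works with $\chi_{u_1^{c}}\cdot\chi_{u_2^{c}}$, whereas you use a single clopen separator $U$ and the pair $\chi_{u,a},\chi_{u^c,a}$; the arguments are otherwise identical.
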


\begin{proof}
In $Z$ let $z_1$ and $z_2$ be be two such distinct points of V(I).  By Lemma 13, $z_1$ and $z_2$ can be separated by two clopen sets $U_1$ and $U_2$ respectively with characteristic functions $\chi_{u_1^{c}}$ and $\chi_{u_2^{c}}$ such that $\chi_{u_1^{c}}\cdot \chi_{u_2^{c}}=\Theta\in I$ but neither zero on $\{z_1,z_2\}$.
\end{proof}

\begin{corollary}
If $I(U)$ is a prime ideal and $U\ne \varnothing$ then $U$ is  a singleton.  Also observe that $I(\varnothing)=C(Z,Y)$.
\end{corollary}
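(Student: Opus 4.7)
The plan is to derive the corollary as a direct consequence of Theorem 13 together with the earlier containment $U \subseteq V(I(U))$ (established as a lemma right after Definition 4 and carried over to the current setting of $C(Z,Y)$).

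First I would dispose of the trivial second assertion. By Definition 10, $I(\varnothing) = \{f \in C(Z,Y) : f(z)=0 \text{ for all } z \in \varnothing\}$. Since the universal quantifier ranges over the empty set, the defining condition holds vacuously for every $f \in C(Z,Y)$, so $I(\varnothing) = C(Z,Y)$.

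For the main assertion, assume $I(U)$ is a prime ideal and $U \neq \varnothing$. From the lemma $U \subseteq V(I(U))$ (transported to the $C(Z,Y)$ context via the $I,V$ machinery of Definitions 8 and 10), I have in particular $V(I(U)) \neq \varnothing$. Now suppose for contradiction that $U$ contains two distinct points $z_1 \neq z_2$. Then $\{z_1,z_2\} \subseteq U \subseteq V(I(U))$, so $\lVert V(I(U)) \rVert \geq 2$. But Theorem 13 asserts that any ideal whose zero set has cardinality at least two fails to be prime, contradicting the primality of $I(U)$. Hence $U$ cannot contain two distinct points, and since $U$ is nonempty, $U$ must be a singleton.

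The argument is essentially a one-line consequence of Theorem 13, with no real obstacle; the only care needed is to invoke the containment $U \subseteq V(I(U))$ so that the cardinality hypothesis of Theorem 13 transfers from $U$ to $V(I(U))$, and to handle the edge case $U = \varnothing$ separately, where the ideal $I(U)$ degenerates to the whole ring $C(Z,Y)$ and is therefore excluded from being prime by convention.
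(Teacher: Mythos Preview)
Your proof is correct and follows essentially the same approach as the paper: the first assertion is an immediate consequence of Theorem~13 via the containment $U \subseteq V(I(U))$, and the second is the vacuous-truth observation the paper attributes to ``first order logic.'' Your write-up simply makes explicit the bridging step $U \subseteq V(I(U))$ that the paper leaves implicit in calling this a corollary.
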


\begin{proof}
The claim for $I(\varnothing)$ is from first order logic.
\end{proof}

\begin{corollary}
If $J\subseteq I$, both ideals, and $\lVert V(I)\rVert \geq 2$ then $J$ is not prime.
\end{corollary}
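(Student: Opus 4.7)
The plan is to reduce this directly to the preceding theorem (the one asserting that any ideal $I$ with $\lVert V(I)\rVert\geq 2$ fails to be prime). The only work is to pass the hypothesis on $V(I)$ across the inclusion $J\subseteq I$ via the contravariance of $V(\cdot)$.

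First I would invoke the lemma (stated just after Definition 8) that $S\subseteq J\subseteq C(Z,Y)$ implies $V(J)\subseteq V(S)$. Applied to $J\subseteq I$, this gives $V(I)\subseteq V(J)$: any point annihilated by every function of the larger set $I$ is certainly annihilated by every function of the subset $J$. Combined with the hypothesis $\lVert V(I)\rVert\geq 2$, this yields $\lVert V(J)\rVert\geq 2$.

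Next I would apply the previous theorem to $J$ itself: an ideal whose zero set contains at least two distinct points $z_1,z_2$ is not prime, because $z_1$ and $z_2$ can be separated by disjoint clopen sets $U_1,U_2$ and the corresponding characteristic functions $\chi_{u_1^c},\chi_{u_2^c}$ multiply to $\Theta\in J$ while neither lies in $J$ (neither vanishes on both $z_1$ and $z_2$, so in particular neither vanishes on $V(J)$). This shows $J$ is not prime, completing the argument.

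There is no real obstacle here; the corollary is essentially a two-line observation that chains the inclusion-reversing behavior of $V$ with the previous theorem. The only thing to be mildly careful about is not to confuse the direction of the containment: it is the larger ideal $I$ that has the smaller zero set, so the hypothesis on $V(I)$ transfers upward to $V(J)$ rather than downward, which is precisely what is needed.
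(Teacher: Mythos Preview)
Your proposal is correct and follows exactly the paper's own argument: the paper's proof is the single line ``As $V(I)\subseteq V(J)$, $2\leq \lVert V(I)\rVert\leq \lVert V(J)\rVert$,'' which is precisely your reduction via the contravariance of $V(\cdot)$ followed by an appeal to Theorem~13. Your additional unpacking of why Theorem~13 applies is accurate but not needed.
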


\begin{proof}
As $V(I)\subseteq V(J)$, $2\leq \lVert V(I)\rVert\leq \lVert V(J)\rVert$.
\end{proof}

\begin{theorem} 
If Y has no divisors of zero then $I(U)$ is a prime ideal if and only if U is a singleton that is there exist a $z$ such that $U=\{z\}$.
\end{theorem}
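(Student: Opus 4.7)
The plan is to observe that this theorem is essentially a packaging of two earlier results, one for each direction, together with a small remark ruling out the empty set.

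For the forward direction, suppose $I(U)$ is a prime ideal. By Definition 12 a prime ideal must be proper, i.e.\ $I(U) \ne C(Z,Y)$, so I would first note that $U \ne \varnothing$, since $I(\varnothing) = C(Z,Y)$ by the corollary following Theorem 12. Having $U$ nonempty, I then invoke the corollary to Theorem 12 (``If $I(U)$ is a prime ideal and $U \ne \varnothing$ then $U$ is a singleton''), which gives exactly that $U = \{z\}$ for some $z \in Z$. No new argument is needed here.

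For the reverse direction, suppose $U = \{z\}$ is a singleton, so $I(U) = I(z)$. Then Lemma 17 (applied under the standing hypothesis that $Y$ has no divisors of zero) asserts that $I(z)$ is prime: given $f \cdot g \in I(z)$, one has $f(z) \cdot g(z) = 0$ in $Y$, and the no-zero-divisor hypothesis on $Y$ forces $f(z) = 0$ or $g(z) = 0$, putting $f$ or $g$ in $I(z)$. So $I(U)$ is prime, as required.

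Since both directions are immediate consequences of already-proved statements, there is no genuine obstacle. The only subtlety worth flagging is the boundary case $U = \varnothing$: one must explicitly exclude it by appealing to the convention that prime ideals are proper together with $I(\varnothing) = C(Z,Y)$, otherwise the forward implication as stated would fail on a technicality.
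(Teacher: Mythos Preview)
Your approach is correct and is exactly the paper's own proof: the forward direction is the corollary to Theorem~13 (together with $I(\varnothing)=C(Z,Y)$ to rule out $U=\varnothing$), and the reverse direction is Lemma~30. Note that your citations are misnumbered: the corollary you invoke is to Theorem~13, not Theorem~12, and the lemma giving that $I(z)$ is prime when $Y$ has no zero divisors is Lemma~30, not Lemma~17.
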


\begin{proof}
Use the Corollary to Theorem 13 and Lemma 30.
\end {proof}

\begin{corollary}
If Y has no divisors of zero then the intersection of the prime ideals is trivial.
\end{corollary}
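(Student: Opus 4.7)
The plan is to exhibit a specific family of prime ideals whose intersection is already trivial; since the intersection of all prime ideals is contained in the intersection of any subfamily, the result follows.

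First I would recall that by Theorem 14, under the hypothesis that $Y$ has no divisors of zero, the ideal $I(\{z\})$ is a prime ideal of $C(Z,Y)$ for every $z\in Z$ (or more directly, Lemma 30 gives primality of each $I(z)$). Thus the collection $\{I(z):z\in Z\}$ sits inside the set of all prime ideals of $C(Z,Y)$, and so
\[
\bigcap_{I \text{ prime}} I \;\subseteq\; \bigcap_{z\in Z} I(z).
\]

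Next I would unpack the right-hand side. By Definition 10, $f\in I(z)$ exactly when $f(z)=0$. Hence $f\in \bigcap_{z\in Z} I(z)$ means $f(z)=0$ for every $z\in Z$, which forces $f=\Theta$, the zero function. So $\bigcap_{z\in Z} I(z) = \{\Theta\}$, and therefore the intersection of all prime ideals is also $\{\Theta\}$, i.e., trivial.

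There is no real obstacle here; the only thing to be careful about is that the containment of ideals goes in the expected direction (using the first Corollary to Definition 10, which gives $I(U_2)\subseteq I(U_1)$ when $U_1\subseteq U_2$, the singletons give the largest such ideals and their intersection collapses to $\{\Theta\}$ because together the singletons cover $Z$). The argument uses only the zero-divisor hypothesis on $Y$, exactly as given.
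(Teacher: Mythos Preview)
Your proposal is correct and follows exactly the approach the paper intends: the corollary is stated without proof, but the paper makes the same argument explicit later in the proof of Theorem~34 (``As the intersection of the $I(z)$ over all $z$ in $Z$ is $(\Theta)$'') and in the Corollary to Lemma~73. Your final parenthetical remark about the first Corollary to Definition~10 is unnecessary, but harmless.
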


\begin{lemma} 
If Y has no divisors of zero and its binary operation is associative, then C(Z,Y) has no nontrivial nilpotent
elements.
\end{lemma}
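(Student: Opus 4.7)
The plan is to reduce the question of nilpotency in $C(Z,Y)$ to the corresponding question in $Y$ by working pointwise, since the operation on $C(Z,Y)$ is defined pointwise and associativity lifts from $Y$ to $C(Z,Y)$ by the earlier results. So suppose $f \in C(Z,Y)$ is nilpotent, meaning there is a positive integer $n$ with $f^n = \Theta$. Because $(g \cdot h)(z) = g(z) \cdot h(z)$ and associativity holds in both $Y$ and $C(Z,Y)$, evaluating at $z$ shows that $f^n = \Theta$ is equivalent to $(f(z))^n = 0$ in $Y$ for every $z \in Z$.

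Next I would fix an arbitrary $z \in Z$, set $y = f(z)$, and argue that $y^n = 0$ forces $y = 0$. Write $y^n = y \cdot y^{n-1}$; since $Y$ has no divisors of zero, this product vanishing gives $y = 0$ or $y^{n-1} = 0$. Iterating the same factorization at most $n-1$ times on $y^{k} = y \cdot y^{k-1}$ peels off one factor of $y$ per step, so a descending induction on the exponent concludes that $y = 0$. Applying this at every $z$ gives $f(z) = 0$ for all $z$, i.e., $f = \Theta$, so the only nilpotent element of $C(Z,Y)$ is the trivial one.

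There is no substantive obstacle: the whole argument is a direct transfer of the no-zero-divisor property of $Y$ through the pointwise structure of $C(Z,Y)$. The two bookkeeping points to watch are that associativity is genuinely needed in order for the symbol $f^n$ to be unambiguous (and for $(f^n)(z) = (f(z))^n$ to hold), and that one should apply the cancellation step in a single consistent orientation, since the paper distinguishes left and right zero-divisors; under the blanket hypothesis of no divisors of zero, the factorization $y^k = y \cdot y^{k-1}$ with the left factor isolated suffices for the induction.
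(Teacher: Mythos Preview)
Your proof is correct and matches the paper's approach: both reduce to the pointwise statement that $(f(z))^n = 0$ in $Y$ and then use the absence of zero divisors in $Y$ to peel off factors until $f(z)=0$. The paper phrases it as a contradiction (pick $z$ with $f(z)=a\neq 0$ and observe $a^{n-1}\cdot a = 0$), while you phrase it as a direct induction, but the content is identical.
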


\begin{proof}
Suppose $f\in C(Z,Y)$, $f\neq \Theta$, and $f^{n}= \theta$ for some n.  Thus if $f(z)=a\neq 0$, then $f^{n}(z)=a^{n}=a^{n-1}\cdot a=\theta (z)=0$.
\end{proof}

\begin{corollary}
If Y is a ring and has no divisors of zero then the prime (nill) radical of C(Z,Y) is trivial and the prime spectrum of C(Z,Y) is disconnected.
\end{corollary}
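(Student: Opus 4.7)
The plan is to dispatch the two claims separately, drawing on results already in hand.

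For the first claim, I identify the prime (nil) radical of $C(Z,Y)$ with the intersection $\bigcap_{P \text{ prime}} P$ of all prime ideals of $C(Z,Y)$. The Corollary to Theorem 14 already asserts that when $Y$ has no divisors of zero this intersection is trivial, i.e.\ equal to $\{\Theta\}$. As a consistency check: by Lemma 32, $C(Z,Y)$ has no non-trivial nilpotent elements, which matches the description of the nilradical as the set of nilpotents (applicable since $Y$ is an associative ring with no zero divisors). Either route yields triviality.

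For the second claim, I will exhibit a non-trivial idempotent in $C(Z,Y)$ and then invoke the standard fact that $\mathrm{Spec}(R)$ is disconnected whenever $R$ contains an idempotent $e \notin \{0,1\}$: the decomposition $R \cong eR \times (1-e)R$ induces $\mathrm{Spec}(R) = V(e) \sqcup V(1-e)$, where both pieces are clopen and non-empty. The idempotent is produced as follows. Since $\|Z\| \geq 2$ and $Z$ is totally separated by Theorem 6, pick two distinct points of $Z$ and separate them by a clopen set $U$; then $\emptyset \subsetneq U \subsetneq Z$. By the Corollary to Theorem 11 (with $Y$ a ring, so $1 \in Y$), the characteristic function $\chi_u$ lies in $C(Z,Y)$ and satisfies $\chi_u \cdot \chi_u = \chi_u$. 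Moreover $\chi_u \neq \Theta$ because $\chi_u(z) = 1 \neq 0$ for $z \notin U$, and $\chi_u \neq \mathrm{Id}$ because $\chi_u(z) = 0 \neq 1$ for $z \in U$.

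To translate this into the disconnection of $\mathrm{Spec}\,C(Z,Y)$ explicitly, I observe that $\chi_u + \chi_{u^c} = \mathrm{Id}$ and $\chi_u \cdot \chi_{u^c} = \Theta$, giving a ring decomposition $C(Z,Y) \cong \chi_u C(Z,Y) \times \chi_{u^c} C(Z,Y)$ into two non-zero factors; correspondingly $\mathrm{Spec}\,C(Z,Y) = V(\chi_u) \sqcup V(\chi_{u^c})$ with both summands clopen and non-empty, proving disconnectedness.

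The main obstacle is really bookkeeping rather than substance: one must be sure that "prime (nil) radical" in the paper's sense coincides with the intersection of prime ideals (so that the Corollary to Theorem 14 applies verbatim), and that the reader accepts the standard idempotent-to-disconnection implication for $\mathrm{Spec}$. Given the paper's conventions and the fact that $Y$ is now a ring (so we may use the additive structure freely), both points are routine.
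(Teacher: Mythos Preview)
Your argument is correct and matches the paper's own proof: both claims are handled by invoking the Corollary to Theorem~14 (intersection of the $I(z)$ is $\{\Theta\}$) together with Lemma~31 for the nilradical, and by exhibiting the non-trivial idempotent $\chi_u$ for a proper clopen $U$ and appealing to the standard idempotent--disconnection correspondence (the paper cites Atiyah--MacDonald, exercise~22, for exactly this). Your write-up is simply more explicit than the paper's one-line version; the minor numbering slips (e.g.\ ``Lemma~32'' should be Lemma~31) do not affect the mathematics.
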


\begin{proof}
The intersection of all I(z) is $\{\Theta\}$, the only nilpotent  element.  If U is clopen note $\chi_{u}\cdot \chi_{u}=\chi_{u}$ and use [1] pg 13, exercise 22 (i) and (iii) for the disconnectivity.
\end{proof}

\noindent  Now enlarge the investigation not only to obtain a better grasp of the ideals but to examine possible directions of interest.

\begin{lemma} 
If I is a proper ideal then: \newline 1) If $U \and\ U_1$ are clopen, $U\subseteq U_1$, and $\chi_{u,a}\in I$ then $\chi_{u_1,a}\in I$. \newline 2)  For every U clopen and I prime, either $\chi_{u,a}\in I$ or $\chi_{u^{c},a}\in I$. Thus every prime ideal has for every clopen set U, a divisor of zero , either $\chi_{u,a}$ or $\chi_{u^{c},a}$.  These are idempotents when $1\in Y$ and a=1.  
\end{lemma}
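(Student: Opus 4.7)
The plan is to prove each part by a direct three-case evaluation of a pointwise product of characteristic functions and then invoke the ideal or prime-ideal axiom.

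For part (1), I would realize $\chi_{u_1,a}$ as a product having $\chi_{u,a}$ as one factor, so that the ideal property transfers membership. Concretely, I would form the product $\chi_{u_1}\cdot\chi_{u,a}$ (the function $\chi_{u_1}$ being available once $1\in Y$) and verify $\chi_{u_1}\cdot\chi_{u,a}=\chi_{u_1,a}$ by partitioning $Z$ into the three pieces $U$, $U_1\setminus U$, and $U_1^c$. On the first two we get $0\cdot 0=0$ and $0\cdot a=0$ from the null axiom, while on $U_1^c$ we get $1\cdot a=a$ from the identity. Since $\chi_{u,a}\in I$ and $\chi_{u_1}\in C(Z,Y)$, the ideal axiom places the product in $I$, and that product is exactly $\chi_{u_1,a}$.

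For part (2), the crux is the identity $\chi_{u,a}\cdot\chi_{u^c,a}=\Theta$. At any $z\in Z$ exactly one of $U$, $U^c$ contains $z$, so exactly one of the two factors vanishes there and the other equals $a$; the pointwise product is always $0\cdot a$ or $a\cdot 0$, which is $0$ by the null property. Since $\Theta\in I$ and $I$ is prime, primeness forces $\chi_{u,a}\in I$ or $\chi_{u^c,a}\in I$. The remaining remarks drop out of the same computation: each of $\chi_{u,a}$, $\chi_{u^c,a}$ is witnessed as a zero divisor by the other; and when $a=1$, $\chi_u\cdot\chi_u$ evaluates to $0\cdot 0=0$ on $U$ and $1\cdot 1=1$ on $U^c$, so $\chi_u^2=\chi_u$.

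The only nontrivial bookkeeping is matching the orientation of the ideal axiom with the side of the null/identity used: depending on whether $I$ is a left or right ideal one writes $\chi_{u_1}\cdot\chi_{u,a}$ or $\chi_{u,a}\cdot\chi_{u_1}$ and correspondingly invokes $1\cdot a=a$ or $a\cdot 1=a$ together with the appropriate one-sided null axiom. In the paper's convention of suppressing left/right qualifiers this is routine, and I do not anticipate a substantive obstacle beyond the case analysis.
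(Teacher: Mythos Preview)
Your argument is correct and is precisely the intended one. The paper states this lemma without proof, but the underlying identities you use are exactly those the paper relies on elsewhere: the product $\chi_{u,a}\cdot\chi_{u^{c},a}=\Theta$ is recorded in the corollary following Definition~11, and the absorption $\chi_{u_{1}}\cdot\chi_{u}=\chi_{u_{1}}$ for $U\subseteq U_{1}$ is Lemma~59(4), from which your factorization $\chi_{u_{1}}\cdot\chi_{u,a}=\chi_{u_{1},a}$ is the obvious variant. Your remark that part~(1) tacitly needs $1\in Y$ (so that $\chi_{u_{1}}$ is available) is apt and consistent with the paper's practice of invoking the unit ``when needed''; your handling of the left/right bookkeeping is also in line with the paper's stated convention of suppressing those modifiers.
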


\noindent Clearly if U is clopen and $z\in u$ then $\chi_{u,1}=\chi_{u}\in I(z)$.  \newline  More generally:

\begin{lemma}
For an ideal I if $f\in I$ then for any clopen U, $f\cdot \chi_{u,a}\in I$ and $f\cdot \chi_{{u}^{c},a}\in I$.
\end{lemma}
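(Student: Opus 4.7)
The plan is to observe that this lemma is essentially a direct invocation of the definition of a multiplicative ideal, with one small preliminary verification.

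First I would verify that the two characteristic functions in question actually lie in $C(Z,Y)$. Since $U$ is clopen, $U^c$ is also clopen. The function $\chi_{u,a}$, taking value $0$ on $U$ and $a$ on $U^c$, has only four possible preimages of open sets of $Y$: namely $\varnothing$, $U$, $U^c$, and $Z$, each of which is open in $Z$. Hence $\chi_{u,a} \in C(Z,Y)$, and the symmetric argument gives $\chi_{u^c,a} \in C(Z,Y)$.

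Next I would apply Definition 9 directly. By hypothesis $f \in I$, and by the previous step $\chi_{u,a}$ and $\chi_{u^c,a}$ are elements of $C(Z,Y)$. The definition of a multiplicative ideal states that the product of any element of $C(Z,Y)$ with any element of $I$ lies in $I$ (on the appropriate side), and so $f \cdot \chi_{u,a} \in I$ and $f \cdot \chi_{u^c,a} \in I$.

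There is no real obstacle here; the content of the lemma is just a restatement of the absorption property of an ideal applied to the particular class of multipliers that will be useful in the sequel (clopen characteristic functions). The only mild subtlety, already handled, is confirming continuity of $\chi_{u,a}$, which is precisely why the hypothesis requires $U$ to be clopen rather than merely closed or open.
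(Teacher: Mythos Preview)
Your proposal is correct and matches the paper's treatment: the paper states this lemma without proof, regarding it as an immediate consequence of the absorption property in the definition of a multiplicative ideal, which is exactly what you invoke (note that in the paper this is Definition~11, not Definition~9). Your additional verification that $\chi_{u,a}$ is continuous is a reasonable detail to include, and the paper already records this fact earlier (see the proof of Theorem~12 and the discussion of characteristic functions of clopen sets).
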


\begin{theorem} 
If $1\in Y$ then every nontrivial ideal I has a function f such that $f\neq \Theta$, U=V(f) is clopen, and $\varnothing\neq V(f)\neq Z$.
\end{theorem}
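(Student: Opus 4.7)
The plan is to take any non-zero $g\in I$ and right-multiply it by a characteristic function $\chi_{U,1}$ chosen so that the product $f=g\cdot\chi_{U,1}$ has $V(f)=U$ with $U$ clopen and a proper non-empty subset of $Z$. The preceding lemma ensures $f\in I$. Both the total separation of $Y$ and the total separation of $Z$ (Theorem~6) enter the choice of $U$: the former to localize $g$ away from $0$, the latter to force the resulting clopen set to be proper and non-empty.

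Since $I$ is non-trivial, pick $g\in I$ with $g\neq\Theta$, so there exists $z_0\in Z$ with $g(z_0)=b\neq 0$. Because $\Vert Z\Vert\geq 2$ and distinct points of $Z$ are quasi-components, Lemma~13 supplies a point $z_1\neq z_0$ together with a clopen $V\subseteq Z$ such that $z_0\in V$ and $z_1\notin V$. By the total separation of $Y$, pick a clopen $K\subseteq Y$ with $b\in K$ and $0\notin K$; then $g^{-1}(K)$ is clopen in $Z$, contains $z_0$, and $g$ is nowhere zero on it since $0\notin K$. Set $\widehat V=V\cap g^{-1}(K)$, a clopen subset which contains $z_0$, misses $z_1$, and carries no zero of $g$, and let $U=Z\setminus\widehat V$. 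Then $U$ is clopen with $z_0\notin U$ and $z_1\in U$, so $\varnothing\neq U\neq Z$. Define $f=g\cdot\chi_{U,1}$; the preceding lemma gives $f\in I$. Since $\chi_{U,1}$ equals $0$ on $U$ and $1$ on $U^c=\widehat V$, we have $f(z)=0$ for $z\in U$ and $f(z)=g(z)\neq 0$ for $z\in\widehat V$. Consequently $V(f)=U$ is clopen with $\varnothing\neq V(f)\neq Z$, and $f(z_0)=b\neq 0$ shows $f\neq\Theta$.

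The only real obstacle is making $V(f)$ equal $U$ exactly rather than the larger set $U\cup(\widehat V\cap V(g))$; intersecting $V$ with $g^{-1}(K)$ accomplishes this by forcing $g$ to be nowhere zero on $\widehat V$, which is where the total separation of $Y$ (not merely of $Z$) is essential. The conventions $a\cdot 0=0$ and $a\cdot 1=a$ used in the final computation are the standard annihilator and unit identities built into the paper's hypothesis that $Y$ has both a zero $0$ and a unit $1$.
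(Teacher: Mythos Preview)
Your proof is correct and follows the paper's idea: take a nonzero $g\in I$ and right-multiply by a characteristic function $\chi_U$ chosen so that $V(g\cdot\chi_U)=U$ is a proper nonempty clopen subset of $Z$. The paper argues by cases on whether $V(g)=\varnothing$ (then any proper nonempty clopen $U$ works) or $V(g)\neq\varnothing$ (then $U=g^{-1}(W)$ for a clopen $W\ni 0$ in $Y$ separating $0$ from some nonzero value of $g$), whereas your extra intersection with a clopen $V$ separating $z_0$ from a second point $z_1$ absorbs both cases into a single construction; apart from this organizational difference the two arguments coincide.
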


\begin{proof}
Choose $g\in I$ such that $g\neq \Theta$.  Thus $V(g)\neq Z$.  If $V(g)=\varnothing$ let $U\neq Z$ be clopen.  Then $f=g \cdot \chi_{u}\in I$ and U=V(f).  \newline  If $V(g)\neq \varnothing$ as $g\neq \Theta$ for $y\in Range(g)\setminus \{0\}$, choose W clopen such that $0\in W$ and $y\notin W$.  Then for $U=g^{-1}(W)$, $f=g\cdot \chi_{u}\in I$, $V(g)\subseteq V(f)\neq Z$, and V(f)=U is clopen.
\end{proof}

\begin{note}
Two operations in C(Z,Y) are used in Lemma 35, Theorems 16 and 18, and used from Definition 12 on although not always needed.   For convenience call $f\cdot g$ multiplication and when f+g is introduced and call it addition.  As before let $\Theta$ be zero and I or Id be the unit.  When the + operation is used the previous results are not effected by this additional operation.  For example if Y is a ring then I(z) is a ring ideal.  For expediency a ring was used in the Corollary to Lemma 31 although the results is more general.  In what follows when addition is present distribution may not be needed.
\end{note}

\begin{corollary}
If Y has addition $\chi_{u}+\chi_{u^{c}}$=Id.  Note we need only that $1\in Y$ and that 1+0=1.
\end{corollary}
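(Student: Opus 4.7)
The plan is a direct pointwise verification. The identity function $\mathrm{Id}$ on $C(Z,Y)$ is the constant function $\mathrm{Id}(z) = 1$, so it suffices to show that $(\chi_u + \chi_{u^c})(z) = \chi_u(z) + \chi_{u^c}(z) = 1$ for every $z \in Z$. Since $U$ and $U^c$ partition $Z$, this reduces to two exhaustive and mutually exclusive cases.

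First I would recall from the Convention after the Comment following Theorem 5 that $\chi_u$ takes the value $0$ on $U$ and (since $Y$ has a unit) the value $1$ on $U^c$; analogously $\chi_{u^c}$ is $0$ on $U^c$ and $1$ on $U$. Then for $z \in U$ one has $\chi_u(z) + \chi_{u^c}(z) = 0 + 1$, while for $z \in U^c$ one has $\chi_u(z) + \chi_{u^c}(z) = 1 + 0$. In both cases the sum equals $1$, which is precisely $\mathrm{Id}(z)$. Since continuity of $\chi_u + \chi_{u^c}$ is already guaranteed by Theorem 1 (pointwise addition of continuous functions is continuous, as $+$ is assumed continuous on $Y$), the equality $\chi_u + \chi_{u^c} = \mathrm{Id}$ holds in $C(Z,Y)$.

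The final sentence of the corollary, which the author states parenthetically, should be addressed for completeness: one notes that the only arithmetic facts invoked in the two cases are $0 + 1 = 1$ and $1 + 0 = 1$. If one assumes $0$ is a two-sided additive identity these are both automatic; the author is emphasizing that even without full two-sided identity, the minimum needed is the single relation $1 + 0 = 1$ together with its mirror (or equivalently $1$ being fixed by addition of $0$ on whichever side the computation uses). In particular no associativity, commutativity, distributivity, or inverses are used, matching the paper's standing philosophy of assuming only the weakest algebraic conditions. There is no obstacle: the proof is a one-line case analysis once the conventions for $\chi_u$, $\chi_{u^c}$, and $\mathrm{Id}$ are unwound.
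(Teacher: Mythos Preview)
Your proof is correct and is exactly the argument the paper has in mind; the paper states this corollary without proof, treating the pointwise case split as immediate from the definitions of $\chi_u$, $\chi_{u^c}$, and $\mathrm{Id}$. Your remark on the minimal hypotheses (only $1+0=1$ and $0+1=1$ are used) matches the author's parenthetical note.
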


\begin{lemma} 
If $J\subset I(z)$ and $V(J)\neq V(I(z))=\{ z\}$ then J is not prime.
\end{lemma}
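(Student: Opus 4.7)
The plan is to reduce this to Theorem 13, which says any ideal whose zero set contains at least two points cannot be prime. The only thing to verify is that the hypotheses force $\|V(J)\| \geq 2$.

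First I would use the monotonicity of $V$ (Lemma 18, or directly from Definition 8): since $J \subseteq I(z)$ we have $V(I(z)) \subseteq V(J)$, so $\{z\} = V(I(z)) \subseteq V(J)$. In particular $z \in V(J)$. Combining this with the hypothesis $V(J) \neq \{z\}$, there must exist some $z' \in V(J)$ with $z' \neq z$. Hence $\|V(J)\| \geq 2$.

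Next, assuming $J$ is an ideal (which is the standing convention, since primality is only defined for ideals via Definition 12), I would invoke Theorem 13 directly: any ideal whose zero set has cardinality at least $2$ fails to be prime, because the two points of $V(J)$ can be separated by disjoint clopen sets $U_1, U_2$, yielding characteristic functions $\chi_{u_1^c,a}$ and $\chi_{u_2^c,a}$ with product $\Theta \in J$ while neither factor vanishes on $\{z_1,z_2\} \subseteq V(J)$, hence neither factor lies in $J$.

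There is essentially no obstacle — the lemma is a direct specialization of Theorem 13, with the observation that $J \subseteq I(z)$ together with $V(J) \neq \{z\}$ automatically forces $V(J)$ to strictly contain $\{z\}$ rather than being disjoint from it. The only mild subtlety is noticing that the inclusion $J \subseteq I(z)$ is used exactly to guarantee $z \in V(J)$, so that the failure $V(J) \neq \{z\}$ can only mean $V(J)$ is strictly larger, not smaller or disjoint.
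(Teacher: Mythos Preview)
Your proposal is correct and is essentially the paper's own argument: the paper's proof is the single line ``This is a Corollary to Theorem 13,'' and you have simply spelled out the one observation needed for that reduction, namely that $J\subseteq I(z)$ forces $\{z\}\subseteq V(J)$, so $V(J)\neq\{z\}$ means $\lVert V(J)\rVert\geq 2$.
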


\begin{proof}
This is a Corollary to Theorem 13.
\end{proof}

\begin{lemma} 
If C(Z,Y) has addition with a unit, if \{z\} is clopen, and if $J\subsetneqq I(z)\neq C(Z,Y)$ is an ideal then J is not prime.
\end{lemma}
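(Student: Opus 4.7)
The plan is a short case analysis on whether the canonical generator of $I(z)$ lies in $J$. Since $\{z\}$ is clopen and $Y$ has a unit $1$, the third Corollary to Definition 10 gives $I(z) = (\chi_{\{z\}})$, where $\chi_{\{z\}}$ is the characteristic function which is $0$ on $\{z\}$ and $1$ on $\{z\}^{c}$; in particular every $g \in I(z)$ satisfies $g = g \cdot \chi_{\{z\}}$.

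First I would rule out $\chi_{\{z\}} \in J$. If it were, then since $J$ is closed under multiplication by arbitrary elements of $C(Z,Y)$, the entire principal ideal it generates would lie in $J$, giving $I(z) = (\chi_{\{z\}}) \subseteq J$, which contradicts the strict containment $J \subsetneqq I(z)$. Hence $\chi_{\{z\}} \notin J$.

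Next I would look at the complementary characteristic function $\chi_{\{z\}^{c}}$, which equals $1$ at $z$ and $0$ elsewhere. Because $\chi_{\{z\}^{c}}(z) = 1 \neq 0$, this function fails to vanish at $z$, so $\chi_{\{z\}^{c}} \notin I(z)$, and \emph{a fortiori} $\chi_{\{z\}^{c}} \notin J$. However $\chi_{\{z\}}$ and $\chi_{\{z\}^{c}}$ vanish on complementary subsets of $Z$, so (using the null property of $0$ in $Y$, as already invoked in the paper's convention $\chi_{u}\cdot\chi_{u^{c}}=\Theta$) their pointwise product is the zero function $\Theta \in J$. The pair $(\chi_{\{z\}},\chi_{\{z\}^{c}})$ therefore witnesses $f\cdot g \in J$ with neither factor in $J$, so $J$ is not prime.

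I do not anticipate a serious obstacle: the argument only reuses facts already established, namely that $I(U) = (\chi_{U})$ when $U$ is clopen and $1 \in Y$, and the identity $\chi_{U}\cdot\chi_{U^{c}} = \Theta$. It is worth noting that the hypothesis of addition in $C(Z,Y)$ is not actually invoked in this approach — only the multiplicative ideal structure and the existence of a unit in $Y$ matter — so the lemma could be stated without the additive hypothesis, though including it keeps the result aligned with the ring setting toward which the paper is building.
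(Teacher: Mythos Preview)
Your proof is correct and essentially parallels the paper's argument in structure, but the two diverge in how the case $\chi_{\{z\}^{c}} \in J$ is excluded. The paper argues by contradiction using addition: if $\chi_{\{z\}^{c}} \in J \subseteq I(z)$ then, since also $\chi_{\{z\}} \in I(z)$, the sum $\chi_{\{z\}} + \chi_{\{z\}^{c}} = \mathrm{Id}$ would lie in $I(z)$, contradicting $I(z) \neq C(Z,Y)$. You instead observe directly that $\chi_{\{z\}^{c}}(z) = 1 \neq 0$, so $\chi_{\{z\}^{c}} \notin I(z)$ and hence $\chi_{\{z\}^{c}} \notin J$. Your route is more elementary and, as you rightly point out, makes the additive hypothesis superfluous; the paper's route, by contrast, is the one that actually motivates the presence of addition in the statement and feeds into the incremental ``add hypotheses as needed'' narrative the paper is building toward Theorem~16.
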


\begin{proof}
As {z} is clopen I(z)=($\chi_{z}$).  If J is prime so $\Theta = \chi_{z^{c}} \cdot \chi_{z}\in J$ then $\chi_{z}$ or $\chi_{z^{c}}$ is in J.  If $\chi_{z}\in J$ then $J\supseteq (\chi_{z})=I(z)$ contrary to $J\subsetneqq I(z)$.  If $\chi_{z^{c}}\in J$ then $Id=\chi_{z} +\chi_{z^{c}} \in I(z)$ contrary to $I(z)\neq C(Z,Y)$.
\end{proof}

\begin{theorem} 
If Y has no divisors of zero, has addition with a unit, and if \{z\} is clopen, then I(z) is a minimal prime ideal.
\end{theorem}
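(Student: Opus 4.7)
The plan is to assemble this from results already proved, so the proof should be short. First I would show that $I(z)$ is itself prime: since $Y$ has no divisors of zero, Lemma~30 (``For any $U$, $I(U)$ is an ideal. If $Y$ has no divisors of zero then $I(z)$ is prime'') immediately gives primality. Next I need that $I(z)$ is a proper ideal, i.e.\ $I(z)\neq C(Z,Y)$; this holds because the constant function $\mathrm{Id}$ (available since $1\in Y$) sends $z$ to $1\neq 0$, so $\mathrm{Id}\notin I(z)$.

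For minimality, I would argue by contradiction: suppose $J$ is a prime ideal with $J\subseteq I(z)$. If $J=I(z)$ there is nothing to prove, so assume $J\subsetneqq I(z)$. Then the hypotheses of Lemma~38 are exactly satisfied — $C(Z,Y)$ has addition with a unit, $\{z\}$ is clopen, and $J\subsetneqq I(z)\neq C(Z,Y)$ — so Lemma~38 forces $J$ to be non-prime, contradicting our assumption. Hence no proper sub-ideal of $I(z)$ can be prime, which is precisely the definition of $I(z)$ being a minimal prime ideal.

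The whole argument is really a one-line combination of Lemma~30 (primality of $I(z)$) with Lemma~38 (no prime ideal sits properly below $I(z)$ when $\{z\}$ is clopen). The only place any care is needed is in verifying $I(z)\neq C(Z,Y)$, which is where the presence of the unit $1\in Y$ enters a second time (beyond its role in Lemma~38 for writing $\mathrm{Id}=\chi_{z}+\chi_{z^{c}}$). There is no real obstacle; the substantive content was absorbed by Lemma~38, whose proof used clopenness of $\{z\}$ to write $I(z)=(\chi_{z})$ and then exploited $\chi_{z^{c}}\cdot \chi_{z}=\Theta$ together with $\chi_{z}+\chi_{z^{c}}=\mathrm{Id}$ to rule out either generator lying in a proper sub-ideal.
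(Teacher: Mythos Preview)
Your proof is correct and follows exactly the paper's intended argument: the paper states Theorem~16 without an explicit proof environment precisely because it is meant to be read as the immediate combination of Lemma~30 (primality of $I(z)$ when $Y$ has no zero divisors) with the lemma just preceding the theorem (no proper prime sub-ideal of $I(z)$ when $\{z\}$ is clopen and addition with a unit is present). One minor point: the lemma you cite as Lemma~38 is Lemma~35 in the paper's numbering, but the content you invoke is exactly right.
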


\begin{lemma}
From the third Corollary to Definition 10, if $1\in Y$,and, if $\{z\}$ is open, that is isolated, then $I(z)=(\chi_{z})$ is principal and $\chi_{z}$ is an idempotent and a divisor of zero. 
\end{lemma}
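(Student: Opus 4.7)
The plan is to observe that the hypothesis "$\{z\}$ is open" combined with the fact that points of $Z$ are quasi-components gives a clopen singleton, after which the statement collapses to applications of already-stated corollaries.

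First I would note that since every point $z \in Z$ is a quasi-component of $Z$ (Theorem 6 together with the passage from $\Pi$ to $Z$), the singleton $\{z\}$ is automatically closed. Adding the hypothesis that $\{z\}$ is open then yields that $\{z\}$ is clopen. With $\{z\}$ clopen and $1 \in Y$, the third Corollary to Definition 10 applies verbatim with $U = \{z\}$, delivering $I(z) = I(\{z\}) = (\chi_z)$, so $I(z)$ is principal, generated by $\chi_z$.

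Next I would verify the two remaining properties of $\chi_z$. Idempotence is immediate from the Corollary stated right after Definition 7: when $Y$ has a unit, any characteristic function that is $0$ on a set and $1$ on its complement satisfies $\chi \cdot \chi = \chi$. For the zero-divisor claim, I would use the hypothesis $\|Z\| \geq 2$ to fix a point $w \neq z$; since $\{z\}$ is clopen, so is $\{z\}^c$, and it contains $w$, hence is nonempty. Define $\chi_{z^c}$ as the characteristic function that is $0$ on $\{z\}^c$ and $1$ on $\{z\}$; it is continuous because $\{z\}$ is clopen, and it is not $\Theta$ because $\chi_{z^c}(z) = 1 \neq 0$. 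A pointwise check gives $\chi_z \cdot \chi_{z^c} = \Theta$ (the value is $0 \cdot 1 = 0$ at $z$ and $1 \cdot 0 = 0$ off $z$), so $\chi_z$ is a divisor of zero. Alternatively one could simply cite Theorem 12 with the clopen set $U = \{z\}$, which produces the same witness.

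There is no genuine obstacle here: the content is already packaged in the earlier corollaries, and the only substantive observation is the first one, that an isolated point of $Z$ must be clopen because it is a quasi-component and hence closed. Thus the proof amounts to chaining that observation with the third Corollary to Definition 10, the idempotence corollary after Definition 7, and either Theorem 12 or a one-line computation with $\chi_{z^c}$ to exhibit $\chi_z$ as a zero divisor.
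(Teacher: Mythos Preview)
Your proposal is correct and matches the paper's approach: the paper gives no separate proof for this lemma, treating it as an immediate consequence of the third Corollary to Definition~10 (as the lemma's own wording indicates), together with the earlier idempotence corollary and the zero-divisor construction of Theorem~12. You have simply made explicit the one point the paper leaves tacit, namely that an open singleton in $Z$ is automatically clopen because points of $Z$ are quasi-components and hence closed.
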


\begin{corollary}
If I(z)=(f) is principal and $\{z\}$ is open then $(\chi_{z})=(f)$.
\end{corollary}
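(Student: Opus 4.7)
The plan is to invoke the preceding lemma directly. Under the standing assumption that $1 \in Y$ together with the hypothesis that $\{z\}$ is open, the lemma immediately above asserts that $I(z) = (\chi_z)$, so $\chi_z$ is already a canonical principal generator of $I(z)$. The corollary additionally hypothesizes that $I(z)$ is presented as a principal ideal $(f)$, and simply asks that the two generators produce the same ideal.

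The argument is then transitivity of equality: the hypothesis gives $(f) = I(z)$, the lemma gives $I(z) = (\chi_z)$, and therefore $(f) = (\chi_z)$. Equivalently, one may prove the two containments separately. Since $\chi_z \in I(z) = (f)$, closure under multiplication by elements of $C(Z,Y)$ yields $(\chi_z) \subseteq (f)$. Conversely, since $f \in I(z) = (\chi_z)$, we get $(f) \subseteq (\chi_z)$.

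There is no real obstacle; the content of the corollary is purely bookkeeping. It records that whenever $I(z)$ is presented as a principal ideal at an isolated point $z$, the generator can always be replaced by the canonical idempotent $\chi_z$, even though the two generators $f$ and $\chi_z$ themselves need not be equal as elements of $C(Z,Y)$.
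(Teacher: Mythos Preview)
Your argument is correct and matches the paper's intent: the paper provides no explicit proof for this corollary, treating it as immediate from the preceding lemma, which is exactly the transitivity $(f)=I(z)=(\chi_z)$ you spell out. There is nothing more to add.
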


\begin{corollary}
If I(z) is not $(\chi_{z})$ then the quai-component \{z\} is not open.
\end{corollary}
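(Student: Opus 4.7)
The plan is to observe that this corollary is simply the contrapositive of the preceding Lemma, and to spell out why that contrapositive is automatic given the machinery already built up.

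First I would recall the setup: we are in the situation where $1 \in Y$, so that the characteristic function $\chi_{u}$ (taking value $0$ on $u$ and $1$ on $u^{c}$) makes sense, and by the third Corollary to Definition 10 we have $I(U) = (\chi_{u})$ for any clopen $U$. Since quasi-components are always closed (by the Note following Definition 5), the singleton $\{z\}$ is clopen precisely when it is open. Thus in the hypothesis of the preceding Lemma, "$\{z\}$ is open" is equivalent to "$\{z\}$ is clopen", which then places $\{z\}$ under the hypothesis of the third Corollary to Definition 10, giving $I(z) = (\chi_{z})$.

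The corollary to be proved is the contrapositive: assume $I(z) \neq (\chi_{z})$. If $\{z\}$ were open, the argument just outlined would force $I(z) = (\chi_{z})$, contradicting our assumption. Therefore $\{z\}$ cannot be open.

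There is no genuine obstacle here — the step is purely logical. The content worth flagging for the reader, however, is conceptual: this corollary converts a topological property of $Z$ (whether the quasi-component $\{z\}$ is an isolated point) into a purely algebraic criterion on $C(Z,Y)$ (whether the annihilator ideal $I(z)$ fails to be principally generated by $\chi_{z}$). That is the reason it merits separate mention even though the proof is a one-line contrapositive of the preceding Lemma.
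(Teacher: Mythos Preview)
Your proposal is correct and matches the paper's treatment: the paper gives no explicit proof for this corollary, leaving it as the evident contrapositive of the Lemma two lines above (if $\{z\}$ is open then $I(z)=(\chi_{z})$). Your only slight imprecision is calling it ``the preceding Lemma'' when there is an intervening Corollary, but the content and approach are exactly what the paper intends.
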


\begin{lemma} 
If $f\neq \Theta$ and V(f) is clopen then f is a divisor of zero.
\end{lemma}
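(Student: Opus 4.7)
The plan is to produce a nonzero $g \in C(Z,Y)$ with $f \cdot g = \Theta$, exploiting the fact that the clopen zero set $V(f)$ permits a characteristic-type function supported exactly on $V(f)$ to serve as the annihilator.

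First I would observe that $f \neq \Theta$ forces $V(f) \subsetneq Z$, and proceed under the (substantive) assumption $V(f) \neq \varnothing$, which matches the usage of ``clopen zero set'' earlier in Theorems 12 and 18. Choose any $a \in Y$ with $a \neq 0$ and define $g = \psi_{V(f), a}$ in the notation of the proof of Theorem 12: $g(z) = a$ for $z \in V(f)$ and $g(z) = 0$ for $z \notin V(f)$. Continuity of $g$ is immediate because $V(f)$ is clopen, so the preimage of any subset of $Y$ under $g$ is a union of the clopen sets $V(f)$ and $V(f)^c$; and $g \neq \Theta$ because $V(f) \neq \varnothing$ and $a \neq 0$.

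Next I would verify $f \cdot g = \Theta$ pointwise. On $V(f)$, $f(z) = 0$ and so $(f \cdot g)(z) = 0 \cdot a = 0$ by the left-null convention $0 \cdot a = 0$ adopted in the paper. On $V(f)^c$, $g(z) = 0$ and so $(f \cdot g)(z) = f(z) \cdot 0 = 0$, using the same null property already invoked in the Theorem 12 factorization $\Theta = \chi_{u,a} \cdot \psi_{u,a}$. This exhibits $f$ as a divisor of zero, with $g$ as its partner.

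The main obstacle is the degenerate case $V(f) = \varnothing$: then the constructed $g$ is itself $\Theta$, and the conclusion can genuinely fail (take $Y$ a domain with no zero divisors and $f$ nowhere zero, so $f(z)\cdot g(z) = 0$ forces $g = \Theta$). I would therefore either state the lemma with the implicit hypothesis $\varnothing \neq V(f) \subsetneq Z$, or split into cases and note that this degenerate case lies outside the intended scope, consistent with the paper's running usage where the clopen zero sets appearing in Theorems 12 and 18 are always proper and nonempty.
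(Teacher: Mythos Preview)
Your proof is correct and follows essentially the same approach as the paper: the paper's proof simply notes that $V(f)^c$ is clopen and writes $\chi_{V(f)^c}\cdot f=\Theta$, which is your construction with $a=1$ (since $\psi_{V(f),a}=\chi_{V(f)^c,a}$ in the paper's notation) and the factors in the opposite order. Your additional care about the degenerate case $V(f)=\varnothing$ is a reasonable refinement that the paper leaves implicit.
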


\begin{proof}
If V(f) is clopen then $V^{c}(f)$ is clopen. Thus $\chi_{v^{c}(f)}\cdot f=\Theta$.
\end{proof}

\begin{lemma} 
If $V(f)\supseteq \{z\}$, then $I(z)\supseteq (f)$.
\end{lemma}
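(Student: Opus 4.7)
The plan is to observe that this lemma is essentially unpacking definitions, combined with the fact that $I(z)$ is already known to be an ideal. First I would rewrite the hypothesis: $V(f)\supseteq\{z\}$ means $z\in V(f)=f^{-1}(0)$, which is exactly $f(z)=0$, i.e.\ $f\in I(z)$.

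Next I would invoke Lemma~30, which states that $I(U)$ is an ideal for any $U$, so in particular $I(z)$ is a (right) multiplicative ideal in the sense of Definition~12. By closure under multiplication by arbitrary elements of $C(Z,Y)$, for every $g\in C(Z,Y)$ we have $g\cdot f\in I(z)$. Since the principal ideal $(f)=\{g\cdot f\mid g\in C(Z,Y)\}$ consists precisely of such products, this yields $(f)\subseteq I(z)$ directly.

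If one preferred a self-contained verification rather than quoting Lemma~30, I would just evaluate at $z$: for any $g\cdot f\in(f)$, $(g\cdot f)(z)=g(z)\cdot f(z)=g(z)\cdot 0=0$, using the null-element property of $0$ in $Y$; hence $g\cdot f\in I(z)$. There is no real obstacle here; the only subtlety worth flagging is the bookkeeping around the side of the null element (the paper fixes the convention $0\cdot a=0$ but also allows $a\cdot 0=0$ when needed), which is already absorbed into Lemma~30's statement that $I(U)$ is an ideal under the conventions in force.
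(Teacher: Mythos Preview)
Your proposal is correct and matches the paper's treatment: the paper states this lemma without proof, evidently regarding it as immediate from the definitions, and your argument---$f(z)=0$ puts $f\in I(z)$, and then $I(z)$ being an ideal (Lemma~30) absorbs $(f)$---is exactly the intended unpacking. Your remark about the sidedness of the null element is apt bookkeeping but not a gap, since the paper's standing conventions already license the conclusion.
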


\begin{lemma}
 If V(f)$\supseteq V(\chi_{u})=U$ then (f)$\subseteq ( \chi_{ u})$.
\end{lemma}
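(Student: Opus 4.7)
The plan is to reduce the containment $(f)\subseteq(\chi_u)$ to the identity $I(U)=(\chi_u)$ that was established in the third Corollary to Definition 10, which is available because $\chi_u$ (by the paper's convention) is the characteristic function of a clopen set $U$, so $U=V(\chi_u)$ is clopen, and because $1\in Y$ is the standing hypothesis whenever $(\chi_u)$ is invoked.

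First I would note that $V(f)\supseteq U$ immediately gives $f\in I(U)$, i.e.\ $f$ vanishes on $U$. Then, to pass from $f$ to an arbitrary element of the principal ideal, I would take a typical $h\cdot f\in(f)$ with $h\in C(Z,Y)$ and invoke Lemma 16, which asserts $V(f)\cup V(h)\subseteq V(h\cdot f)$. This yields $V(h\cdot f)\supseteq V(f)\supseteq U$, so $h\cdot f\in I(U)$ as well. In particular, no zero divisor hypothesis and no associativity or additive structure are needed at this step; only the fact that $0$ acts as a null element enters, through Lemma 16.

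To finish, I would apply the third Corollary to Definition 10: since $U$ is clopen and $1\in Y$, we have $I(U)=(\chi_u)$, and therefore $h\cdot f\in(\chi_u)$. As $h$ was arbitrary, $(f)\subseteq(\chi_u)$, completing the argument.

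The only possible obstacle is a bookkeeping one: verifying that the previously cited corollary genuinely applies, i.e.\ that $U=V(\chi_u)$ is clopen (immediate from the paper's convention for $\chi_u$) and that the assumption $1\in Y$ is in force (it is, since the ideal $(\chi_u)$ would not otherwise be defined as a principal ideal generated by an idempotent characteristic function). Once these are in place the argument is a two-line chain of inclusions and requires none of the stronger algebraic axioms on $Y$.
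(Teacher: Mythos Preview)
Your argument is correct and is essentially the intended one; the paper states this lemma without proof, treating it as immediate from the identity $I(U)=(\chi_u)$ (third Corollary to Definition~10) together with $f\in I(U)$. Two small remarks: the inclusion $V(f)\cup V(h)\subseteq V(h\cdot f)$ is Lemma~19 in the paper's numbering, not Lemma~16; and invoking it is slightly redundant, since once you know $f\in I(U)$ and that $I(U)$ is an ideal (Lemma~30), the containment $(f)\subseteq I(U)$ follows directly without re-examining zero sets of $h\cdot f$.
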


\begin{lemma} 
In general V(f)=V((f)). 
\end{lemma}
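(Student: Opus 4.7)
The plan is to prove the equality by establishing the two inclusions, exploiting the zero-set containment for products (Lemma 12) for one direction and the presence of the unit (discussed in the Note after Theorem 15) for the other.

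For the inclusion $V(f) \subseteq V((f))$, I would argue as follows. Every element of $(f)$ has the form $g \cdot f$ for some $g \in C(Z,Y)$. By Lemma 12, $V(f) \cup V(g) \subseteq V(g \cdot f)$, and in particular $V(f) \subseteq V(g \cdot f)$. Taking the intersection over all $g \in C(Z,Y)$ gives
\[
V(f) \;\subseteq\; \bigcap_{g \in C(Z,Y)} V(g \cdot f) \;=\; \bigcap_{h \in (f)} V(h) \;=\; V((f)),
\]
by Definition 9.

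For the reverse inclusion $V((f)) \subseteq V(f)$, I would use the unit. Since $Y$ has a unit $1$ (and hence $C(Z,Y)$ has the unit $\mathrm{Id}$), we have $f = \mathrm{Id} \cdot f \in (f)$. Therefore $f$ itself is one of the functions whose zero sets are intersected in forming $V((f))$, so $V((f)) \subseteq V(f)$ trivially.

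The two inclusions together give $V(f) = V((f))$. There is no real obstacle here: the only subtlety is noticing that the second inclusion depends on the unit, which is exactly the hypothesis that has been in force throughout this portion of the paper. If one wanted to drop the unit, one would need to invoke the no-zero-divisors hypothesis and test against constant functions $g \equiv c$ to force $c \cdot f(z) = 0$ for all $c$ and conclude $f(z) = 0$; but since the unit is already assumed, the one-line argument above suffices.
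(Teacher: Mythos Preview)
Your proof is correct and follows essentially the same route as the paper's: the paper also gets $V((f))\subseteq V(f)$ from $f\in(f)$ (implicitly using the unit, which you make explicit), and for the other inclusion it argues pointwise that $h(z)=g(z)\cdot f(z)=g(z)\cdot 0=0$ for every $h=g\cdot f\in(f)$, which is exactly the content of the zero-set containment lemma you invoke. The only cosmetic difference is that the paper carries out that one-line computation directly rather than citing the lemma.
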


\begin{proof}
As $f\in (f)$ then $V((f))\subseteq V(f)$.  Suppose $z\in V(f)$ and recall for any $h\in (f)$, $h=g\cdot f$ for some g.  So h(z)=0.  Thus for all $h\in (f)$, h(z)=0 which says $z\in \bigcap_{h\in (f)}h^{-1}(0)=V((f))$.
\end{proof}

\begin{corollary}
If I(z)=(f) then $\{z \}$=V(f).
\end{corollary}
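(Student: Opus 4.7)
The plan is to chain together two facts that are already available in the excerpt. The first is the corollary to Definition 10, which gives the identity $\{z\} = V(I(z))$. The second is the immediately preceding lemma, which states $V(f) = V((f))$ for any $f \in C(Z,Y)$.

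Given the hypothesis $I(z) = (f)$, I would simply substitute: $V(f) = V((f)) = V(I(z)) = \{z\}$. The first equality is the cited lemma, the second uses the hypothesis to replace $(f)$ by $I(z)$, and the third is the corollary characterizing the singleton $\{z\}$ as the zero set of its annihilator ideal.

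There is really no obstacle here beyond arranging the three identifications in the correct order; the entire content of the corollary is that $V(f)$ and $V((f))$ agree and that $V(I(z))$ collapses to the quasi-component $\{z\}$ by the clopen-separation argument used earlier. No additional hypothesis on $Y$ (beyond what is already standing, namely having a zero and being totally separated so that distinct quasi-components can be separated by clopen sets) needs to be invoked, and no use of associativity, commutativity, or the existence of a unit is required. The statement is essentially a bookkeeping corollary making explicit what the preceding lemma and corollary already entail.
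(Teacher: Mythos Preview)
Your proof is correct and follows essentially the same route as the paper: the paper writes the chain $\{z\}=V(I(z))=V((f))=V(f)$, invoking the second Corollary to Definition~10 for the first equality and the inclusion $V(h\cdot f)\supseteq V(h)\cup V(f)$ (i.e., the content of the preceding Lemma~40) for the last. The only difference is cosmetic---you cite Lemma~40 directly rather than restating its key ingredient.
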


\begin{proof}
As $V(h\cdot f)\supseteq V(h)\bigcup V(f)$, the second Corollary to Definition 10 says, $\{z \}=V(I(z))=V((f))=V(f)$.
\end{proof}

\begin{note}
Recall if $1\in Y$ a  prime ideal has a divisor of zero that is an idempotent for every clopen subset $U\subsetneq Z$,  either $\chi_u$ or $\chi_{u^{c}}$.  Now consider divisors of zero in general.  Lemmas 41 and 42 are independent of the topologies of Z and Y and holds if ,Y is a magma with no "divisors of zero."
\end{note}

\begin{lemma} 
If f is a divisor of zero and $g\cdot f=\Theta$, $g\neq \Theta$, and if Y has no divisors of zero then 
 $V^{c}(f)\bigcap V^{c}(g)=\varnothing$.
 \end{lemma}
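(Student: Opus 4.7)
The plan is to argue by contradiction, directly unpacking the definitions pointwise. Suppose to the contrary that some $z \in V^{c}(f) \cap V^{c}(g)$; by definition of $V^{c}$ this means $f(z) \neq 0$ and $g(z) \neq 0$ simultaneously. I then apply the hypothesis $g \cdot f = \Theta$ at the point $z$, which gives $g(z) \cdot f(z) = (g \cdot f)(z) = \Theta(z) = 0$.

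The second step is to invoke the assumption that $Y$ has no divisors of zero. Under that hypothesis, a product $g(z) \cdot f(z) = 0$ forces either $g(z) = 0$ or $f(z) = 0$, contradicting the choice of $z$. Hence no such $z$ exists, i.e.\ $V^{c}(f) \cap V^{c}(g) = \varnothing$, as required.

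I do not anticipate any real obstacle here: the lemma is essentially the pointwise translation of "no divisors of zero in $Y$" into a statement about the supports (complements of zero sets) of $f$ and $g$. The only subtlety worth stating explicitly is that the existence of $g$ with $g \neq \Theta$ and $g \cdot f = \Theta$ is precisely what the hypothesis "$f$ is a divisor of zero in $C(Z,Y)$" supplies, so that hypothesis is used to set up the argument but the contradiction itself only needs the pointwise cancellation in $Y$. No topological facts about $Z$ or $Y$, and no associativity, commutativity, or additive structure, are needed, consistent with the note preceding the lemma that it holds whenever $Y$ is a magma with no divisors of zero.
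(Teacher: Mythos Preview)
Your proof is correct and is essentially identical to the paper's: assume $z\in V^{c}(f)\cap V^{c}(g)$, so $f(z)\neq 0\neq g(z)$, yet $g(z)\cdot f(z)=\Theta(z)=0$, contradicting that $Y$ has no divisors of zero. Your additional remarks about which hypotheses are actually used also align with the paper's note preceding the lemma.
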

 
 \begin{proof}
 If $z\in V^{c}(f)\bigcap V^{c}(g)$ then $f(z)\neq 0\neq g(z)$ but $g\cdot f=\Theta$ says $g(z)\cdot f(z)=0$ contrary to Y has no divisors of zero.
 \end{proof}
 
 \begin{lemma}
 If f is a divisor of zero and Y and g as above then $Z=V^{c}(f)\bigcup V^{c}(g)$, disjoint.
 \end{lemma}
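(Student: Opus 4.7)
The plan is to give a one-step pointwise argument using the no-zero-divisor hypothesis on $Y$, and to reconcile the displayed conclusion with Lemma 41. Fix an arbitrary $z\in Z$. Applying the identity $g\cdot f=\Theta$ at $z$ yields $g(z)\cdot f(z)=0$ in $Y$. Because $Y$ has no divisors of zero, this forces $f(z)=0$ or $g(z)=0$, i.e.\ $z\in V(f)\cup V(g)$. Since $z$ was arbitrary, $Z=V(f)\cup V(g)$; equivalently, by De Morgan, $V^{c}(f)\cap V^{c}(g)=\varnothing$, which is exactly Lemma 41 and supplies the ``disjoint'' clause of the statement.

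The only real obstacle is notational rather than mathematical: the displayed equation $Z=V^{c}(f)\cup V^{c}(g)$, read literally, asserts that no $z\in Z$ simultaneously satisfies $f(z)=0$ and $g(z)=0$, which is strictly stronger than Lemma 41 and is not in general true under the stated hypotheses. A quick counterexample is $Z=\{z_{1},z_{2},z_{3}\}$ discrete, $Y=\mathbb{Z}$, $f=\chi_{\{z_{1},z_{3}\},1}$, and $g=\chi_{\{z_{2},z_{3}\},1}$: then $g\cdot f=\Theta$, yet $z_{3}$ lies in neither $V^{c}(f)$ nor $V^{c}(g)$. Accordingly I would interpret the lemma as the De Morgan dual of Lemma 41, namely $Z=V(f)\cup V(g)$ with the (already established) disjointness $V^{c}(f)\cap V^{c}(g)=\varnothing$, which is the only form compatible with the hypotheses.

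In summary, once the statement is read in its De Morgan form, the proof is a one-liner: evaluate $g\cdot f=\Theta$ pointwise and invoke the absence of zero divisors in $Y$ to conclude that each $z$ lies in $V(f)\cup V(g)$; the disjointness is Lemma 41 unchanged. No further machinery from the earlier development is required.
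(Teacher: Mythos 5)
Your pointwise first step is exactly the paper's first step: evaluate $g\cdot f=\Theta$ at each $z$ and use the absence of zero divisors in $Y$ to conclude $f(z)=0$ or $g(z)=0$, i.e.\ $Z=V(f)\cup V(g)$. Where you and the paper part ways is precisely the step you flagged. The paper's proof reads ``As Y has no divisors of zero at least one but by the previous Lemma not both $f(z)$ or $g(z)$ is zero'' --- but Lemma 41 forbids $f(z)$ and $g(z)$ from being simultaneously \emph{nonzero}, not simultaneously zero, so the ``not both'' clause is a misreading of Lemma 41 and is in fact false under the stated hypotheses. Your counterexample is valid within all standing assumptions at this point of the paper ($Z$ a three-point discrete space is totally separated, $\mathbb{Z}$ is $T_2$ with clopen point separation and no zero divisors), and it defeats the displayed conclusion at $z_{3}$: the only constraint that $g\cdot f=\Theta$ imposes on the witness is $V^{c}(f)\subseteq V(g)$, which leaves $g$ entirely free to vanish on part of $V(f)$ as well, so nothing prevents $V(f)\cap V(g)\neq\varnothing$. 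Your salvage --- $Z=V(f)\cup V(g)$ together with the Lemma 41 disjointness $V^{c}(f)\cap V^{c}(g)=\varnothing$ --- is the strongest conclusion the hypotheses support for an arbitrary witness $g$.

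Two supplementary observations. First, the literal equality $Z=V^{c}(f)\cup V^{c}(g)$ holds exactly when $V(g)=V^{c}(f)$, i.e.\ when $g$ vanishes precisely where $f$ does not; this is achieved by the canonical witness $g=\chi_{v^{c}(f),a}$ when $V(f)$ is clopen, and the paper's proof in effect silently assumes such an ``exact'' witness rather than an arbitrary one. Second, the defect propagates: Theorem 17, which bundles this lemma into ``$Z=V^{c}(f)\cup V^{c}(g)=V(f)\cup V(g)$ are disjoint unions,'' fails for your same $f$ and $g$ (there $V(f)\cup V(g)=Z$ but the union is not disjoint, while $V^{c}(f)\cup V^{c}(g)=\{z_{1},z_{2}\}\neq Z$), so any repair should restate both results in your De Morgan form, keeping only the containment $V^{c}(f)\subseteq V(g)$ and the disjointness of the complements.
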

 
 \begin{proof}
 For $z\in Z, g(z)\cdot f(z)=\Theta(z)=0$.  As Y has no divisors of zero at least one but by the previous Lemma not both f(z) or g(z) is zero.
 \end{proof}
 
 \begin{theorem} 
 If f, g, and Y are as above then $Z=V^{c}(f)\bigcup V^{c}(g)=V(f)\bigcup V(g)$  are disjoint unions and V(f),V(g), $V^{c}(f)$ and $V^{c}(g)$ are all clopen.
 \end{theorem}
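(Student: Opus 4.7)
The plan is to leverage Lemmas 41 and 42 directly, since they already do the set-theoretic heavy lifting. Lemma 42 asserts $Z = V^c(f) \cup V^c(g)$ as a \emph{disjoint} union, so $V^c(f)$ and $V^c(g)$ are a partition of $Z$. The first step is to read off from this partition the identifications
\[
V^c(f) = Z \setminus V^c(g) = V(g), \qquad V^c(g) = V(f).
\]
Taking complements in $Z = V^c(f) \cup V^c(g)$ and using De Morgan, together with Lemma 41 which says $V^c(f) \cap V^c(g) = \varnothing$, immediately gives $V(f) \cup V(g) = Z$ and $V(f) \cap V(g) = \varnothing$. So $\{V(f), V(g)\}$ and $\{V^c(f), V^c(g)\}$ are both disjoint unions covering $Z$, which is the first half of the statement.

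For the clopen claim, I would use the identifications above. Each of $V(f)$ and $V(g)$ is closed, being a zero set of a continuous function (the lemma right after Definition 8 in the excerpt, since $\{0\}$ is closed in $Y$). But now $V^c(f) = V(g)$ is also closed, so $V(f)$ is simultaneously open and closed, i.e.\ clopen. Symmetrically $V(g)$ is clopen, and complements of clopen sets are clopen, so $V^c(f)$ and $V^c(g)$ are clopen as well.

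There is essentially no obstacle here — the entire argument is a cleanup of Lemmas 41 and 42 via De Morgan plus one observation about complementary closed sets. If anything requires care, it is just being explicit that $V^c(f) = V(g)$ (and vice versa), since that identification is the bridge from the disjointness/covering facts about $V^c(f), V^c(g)$ to the clopen conclusion; once noted, everything else follows in a single line. I would keep the write-up short and cite Lemmas 41 and 42 explicitly rather than re-deriving them.
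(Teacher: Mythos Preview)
Your proposal is correct and is essentially the paper's approach: the paper gives no separate proof for this theorem, treating it as an immediate consequence of Lemmas~41 and~42, and your write-up simply makes those steps explicit. Your key observation that the partition $Z = V^{c}(f)\cup V^{c}(g)$ forces $V^{c}(f)=V(g)$ and $V^{c}(g)=V(f)$, together with the closedness of zero sets, is exactly the bridge the paper is relying on implicitly for the clopen claim.
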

 
 \begin{corollary}
 If Y has no divisors of zero then f is a divisor of zero iff $V(f)\neq Z$ is clopen in which case $(f)\subseteq (\chi_{v(f)})$.
 \end{corollary}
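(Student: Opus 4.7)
The plan is to establish the biconditional in two short directions and then read off the containment $(f) \subseteq (\chi_{V(f)})$ directly from Lemma 40. Everything is essentially a repackaging of Theorem 19, Lemma 41, and Lemma 40; no new computations are required.

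For the forward implication, assume $f$ is a divisor of zero. By Definition 9 there is some $g \neq \Theta$ with $g \cdot f = \Theta$, and the convention there also gives $f \neq \Theta$, hence $V(f) \neq Z$. Theorem 19 applies to the pair $f,g$ and yields that $Z$ splits as the disjoint clopen union $V(f) \cup V(g)$, so in particular $V(f)$ is clopen. For the reverse implication, assume $V(f) \neq Z$ is clopen; then $f \neq \Theta$ (since the zero function has zero set all of $Z$), and Lemma 41 produces $\chi_{V^{c}(f)}$ with $\chi_{V^{c}(f)} \cdot f = \Theta$ and $\chi_{V^{c}(f)} \neq \Theta$, which exhibits $f$ as a divisor of zero.

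For the final assertion, once $V(f)$ is clopen the characteristic function $\chi_{V(f)}$ belongs to $C(Z,Y)$ and its zero set is exactly $V(f)$, so $V(f) \supseteq V(\chi_{V(f)})$. Lemma 40 then gives $(f) \subseteq (\chi_{V(f)})$, closing the proof.

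I do not anticipate a real obstacle here; the only mild subtlety is the bookkeeping around which direction requires Theorem 19 versus Lemma 41, and remembering that the convention in Definition 9 rules out the degenerate case $f = \Theta$. No new algebraic hypothesis on $Y$ beyond "no divisors of zero" is invoked, consistent with the hypothesis of the corollary.
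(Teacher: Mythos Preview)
Your argument is exactly the intended one: the paper states this corollary without proof because it falls out of Theorem~17 (forward direction), Lemma~37 (reverse direction), and Lemma~39 (the containment $(f)\subseteq(\chi_{V(f)})$). However, your cross-references are all misnumbered: what you cite as Theorem~19 is the paper's Theorem~17, your ``Lemma~41'' is the paper's Lemma~37, and your ``Lemma~40'' is the paper's Lemma~39. (The paper's Lemma~40 says $V(f)=V((f))$, which is not what you invoke; the paper's Lemma~41 is about $V^{c}(f)\cap V^{c}(g)=\varnothing$, not about producing $\chi_{V^{c}(f)}$.) Fix these before submission.

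One small rigor point: your claim that $\chi_{V^{c}(f)}\neq\Theta$ in the reverse direction tacitly uses $V(f)\neq\varnothing$, which is not part of the hypothesis ``$V(f)\neq Z$ is clopen.'' Indeed the reverse implication fails when $V(f)=\varnothing$ (take $f=\mathrm{Id}$). This lacuna is already present in the paper's Lemma~37 and in the corollary's statement itself, so your proof is faithful to the source; but since you explicitly assert $\chi_{V^{c}(f)}\neq\Theta$, you should either add $V(f)\neq\varnothing$ to the hypothesis or note the degenerate case.
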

 
 \begin{note}
 When Y has no divisor of zero and $V(f)=\{z \}$ then f is a divisor of zero iff z is isolated in which case I(z)=($\chi_{z})\supseteq (f)$.  This is a consequence of the Corollary and Lemma 39.
 \end{note}
 
\begin{corollary}
If Y has no divisor of zero and $\{z \}$ is not open then: \newline 1) f is not a divisor of zero or $V(f)\neq \{z \}$ in which case $I(z)\neq (f)$.  \newline 2) if f is a divisor of zero then $V(f)\neq \{z \}$ in which case $I(z)\neq (f)$.  \newline 3) if $V(f)=\{z \}$ then f is not a divisor of zero.
\end{corollary}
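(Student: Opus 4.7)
The plan is to derive all three statements from the Corollary to Theorem 18 together with the identity $V(I(z))=\{z\}$ (from the second Corollary to Definition 10) and $V((f))=V(f)$ (Lemma 45). The three items are logically equivalent reformulations of a single observation, so my strategy is to prove the cleanest form, statement 3, and then read off 1 and 2 as restatements.

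First I would verify statement 3 directly. Assume $V(f)=\{z\}$ and, for contradiction, that $f$ is a divisor of zero. Since $Y$ has no divisors of zero, the Corollary to Theorem 18 forces $V(f)$ to be clopen and distinct from $Z$. But a quasi-component is always closed (Note after Definition 5), and by hypothesis $\{z\}$ is not open, hence not clopen, a contradiction. Therefore $f$ is not a divisor of zero.

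Next I would note that statement 2 is literally the contrapositive of statement 3: if $f$ is a divisor of zero then $V(f)\neq\{z\}$. Statement 1 is the disjunctive form of the negation of the conjunction $(f\text{ is a zero divisor})\wedge(V(f)=\{z\})$, which is ruled out by the previous paragraph. For the clause ``in which case $I(z)\neq(f)$'' appearing in parts 1 and 2, I would invoke Lemma 45 together with the second Corollary to Definition 10: if $I(z)=(f)$ then $\{z\}=V(I(z))=V((f))=V(f)$, so whenever $V(f)\neq\{z\}$ we automatically have $I(z)\neq(f)$. This is the same argument used to deduce the Corollary to Lemma 45, so no new work is required.

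The proof involves essentially no obstacle; the only subtle point is correctly tracking the hypothesis ``$\{z\}$ is not open'': because $\{z\}$ is already known to be closed as a quasi-component, the hypothesis is precisely that $\{z\}$ fails to be clopen, which is exactly the property contradicted by the Corollary to Theorem 18 when $f$ is a divisor of zero with $V(f)=\{z\}$. Everything else is bookkeeping across the three equivalent formulations.
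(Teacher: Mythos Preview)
Your argument is correct and is essentially the paper's own (implicit) approach: the paper gives no separate proof for this corollary, treating it as an immediate consequence of the preceding Corollary to Theorem~17 and the Note (``f is a divisor of zero iff $z$ is isolated''), which is exactly the contradiction you run in part~3, with parts~1 and~2 being the obvious logical reformulations and the ``$I(z)\neq(f)$'' clause following from the Corollary to Lemma~40.

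Two small corrections to your cross-references: the result ``$f$ a divisor of zero $\Rightarrow V(f)$ clopen'' is the Corollary to Theorem~17, not Theorem~18; and $V((f))=V(f)$ is Lemma~40 (with its Corollary giving $I(z)=(f)\Rightarrow\{z\}=V(f)$), not Lemma~45. The mathematics is unaffected.
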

  
\begin{theorem} 
If $I_1$ and $I_2$ are ideals, $I_{2}\neq C(Z,Y)$, $1\in Y$, Y has addition, $I_1$ prime, $I_{1}\subseteq I_{2}$, and U clopen then $\chi_{U}\in I_{1}$ if and only if $\chi_{U}\in I_{2}$.
\end{theorem}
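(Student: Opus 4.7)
The plan is to observe that one direction is trivial and that the other direction follows by combining Lemma 35 (every prime ideal, for each clopen $U$, contains either $\chi_{U,a}$ or $\chi_{U^c,a}$) with the Corollary to Theorem 18 ($\chi_U+\chi_{U^c}=\mathrm{Id}$ when $1\in Y$ and addition is present).

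First I would dispose of the forward direction: if $\chi_U\in I_1$, then since $I_1\subseteq I_2$ we immediately get $\chi_U\in I_2$. All of the work is in the reverse direction.

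For the reverse direction, assume $\chi_U\in I_2$ and suppose for contradiction that $\chi_U\notin I_1$. Apply Lemma 35 to the prime ideal $I_1$ and the clopen set $U$ (taking $a=1$, so $\chi_{U,a}=\chi_U$): we must then have $\chi_{U^c}\in I_1$. Since $I_1\subseteq I_2$, this forces $\chi_{U^c}\in I_2$ as well, so both $\chi_U$ and $\chi_{U^c}$ lie in $I_2$. Now invoke the Corollary to Theorem 18, which gives $\chi_U+\chi_{U^c}=\mathrm{Id}$, so $\mathrm{Id}\in I_2$, and hence for any $g\in C(Z,Y)$ we have $g=g\cdot \mathrm{Id}\in I_2$. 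This yields $I_2=C(Z,Y)$, contradicting the hypothesis $I_2\ne C(Z,Y)$. Therefore $\chi_U\in I_1$.

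The only potentially delicate step is the invocation of Lemma 35, which presumes that the statement there is interpreted with $a=1$ so that the characteristic functions appearing are precisely $\chi_U$ and $\chi_{U^c}$; this matches the convention fixed just before Definition 12. Beyond that, the argument is a short contrapositive using the addition structure and the unit, and no additional hypothesis on $Y$ (such as absence of zero divisors or associativity) is required.
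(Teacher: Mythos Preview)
Your argument is correct and is essentially the paper's own proof: dismiss the forward direction as trivial, then for the reverse direction use primality of $I_1$ to force $\chi_{U^c}\in I_1\subseteq I_2$, whence $\chi_U+\chi_{U^c}=\mathrm{Id}\in I_2$, a contradiction.

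Two small labeling issues: the dichotomy ``either $\chi_U\in I_1$ or $\chi_{U^c}\in I_1$'' is Lemma~32, part~2), not Lemma~35; and the identity $\chi_U+\chi_{U^c}=\mathrm{Id}$ is the Corollary following Theorem~15 (the one in the Note preceding Lemma~34), not a corollary to Theorem~18. The latter mislabel is worth fixing since, as written, you appear to be invoking a consequence of the very theorem you are proving.
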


\begin{proof}
Note $\chi_{\varnothing}=Id$.  So suffices to consider $\chi_{U}\in I_{2}$.  If $\chi_{U}\notin I_{1}$ then $\chi_{U^C}\in I_{1}$.  So $\chi_{U}+ \chi_{U^C}=Id\in I_{2}$ contrary to $I_{2}\neq C(Z,Y)$.
\end{proof}

\begin{corollary}
If $I_{1}\bigcap I_{2}$ is prime then $\chi_{U}\in I_{1}$ if and only if $(\chi_{U})\subseteq I_{1}\bigcap I_{2}$.
\end{corollary}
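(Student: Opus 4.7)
The plan is to convert the principal-ideal containment into a membership statement using the idempotence of $\chi_U$, and then invoke Theorem 20 with the prime ideal $I_1\cap I_2$ sitting inside $I_1$.

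First, I would record the key reduction: since $1\in Y$, the corollary following Lemma 18 gives $\chi_U\cdot\chi_U=\chi_U$, so $\chi_U\in(\chi_U)$. Because $I_1\cap I_2$ is an ideal, this immediately upgrades membership of the generator into containment of the whole principal ideal, so
\[
(\chi_U)\subseteq I_1\cap I_2 \ \Longleftrightarrow\ \chi_U\in I_1\cap I_2.
\]

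With that in hand, the $(\Leftarrow)$ direction is trivial: $\chi_U\in I_1\cap I_2\subseteq I_1$ gives $\chi_U\in I_1$. For the $(\Rightarrow)$ direction, suppose $\chi_U\in I_1$. I would apply Theorem 20 with the prime ideal $I_1\cap I_2$ playing the role of the prime ideal of that theorem and $I_1$ playing the role of the larger containing ideal. This yields the equivalence $\chi_U\in I_1\cap I_2\iff \chi_U\in I_1$, and the hypothesis then delivers $\chi_U\in I_1\cap I_2$, which by the reduction above is $(\chi_U)\subseteq I_1\cap I_2$.

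The main obstacle is meeting the hypothesis of Theorem 20 that the containing ideal be proper; this is carried over from the standing assumptions of Theorem 20, namely that $I_1$ and $I_2$ are proper ideals, together with $1\in Y$ and the availability of addition. An identical argument applied to $I_1\cap I_2\subseteq I_2$ transfers the same membership to $I_2$, so the corollary really says that the three conditions $\chi_U\in I_1$, $\chi_U\in I_2$, and $(\chi_U)\subseteq I_1\cap I_2$ are mutually equivalent whenever $I_1\cap I_2$ is prime, exhibiting the corollary as the principal-ideal reformulation of Theorem 20.
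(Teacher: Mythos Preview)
Your argument is correct and is precisely the intended derivation: the corollary is stated in the paper immediately after Theorem~18 without proof, and applying that theorem with $I_{1}\cap I_{2}$ in the role of the prime ideal and $I_{1}$ in the role of the containing proper ideal is exactly what is meant. One minor point: the result you invoke is numbered Theorem~18 in the paper (not Theorem~20), and the properness of $I_{1}$ is indeed an implicit standing assumption carried over from that theorem, as you correctly observe.
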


\noindent   It is important to note if Y has a unit and the two operations then even without distributivity of the two operations, $\chi_{u}+\chi_{u^{c}}=Id$ .  

\begin{note}
The sequent of results  suggest that there are binary set operations on the set of prime ideals, the set of clopen sets, and the set of characteristic functions.  These operations then satisfy further properties.  First recall that if I is a prime ideal then $\chi_{u,a}$ or $\chi_{u^c,a}$ is in I.  However although stated in terms of primality, it is not needed in many of the following cases yielding results for ideals in general.  The results from Definition 12 through the Summary Note following Lemma 55 are related to possible convergence structures in C(Z,Y) and may not be of algebraic interest.  However some of the set function techniques are used later.
\end{note} $\newline$

\noindent  Now assume $1\in Y$.  If and when two operations are used and it is easy to see if additional hypotheses are needed, they will not be stated except when they add to the flow of the narrative.
   
\begin{defin} 
Let $\Phi$ be the set of all ideals and let $\mathcal{P}$ be the set of all prime ideals together with $(\theta)$.  Define for each clopen U, \newline  $\Phi_{u}=\{I\in \Phi\vert \chi_{u}\in I\}$,       \newline $\mathcal{P}_{u}=\{ I\in \mathcal{P}\vert \chi_{u}\in I\}=\lbrace I\in \mathcal{P}\vert (\chi_{u})\subseteq I\rbrace$, and thus $\newline
\mathcal{P}_{u^{c}}=\lbrace I\in \mathcal{P}\vert \chi_{u^{c}}\in I\rbrace =\lbrace I\in \mathcal{P}\vert (\chi_{u^{c}})\subseteq I\rbrace$.
\end{defin}

\begin{lemma}
1)  $\chi_{z}=\Theta$ and $\chi_{z}\in \bigcap \mathcal{P}$,  \newline 2)  $\chi_{\varnothing}=Id$ as $\chi_{\varnothing}(z)$ is 1 if $z\notin \varnothing$ and 0 otherwise. \newline 3)  $\Phi_{z}=\Phi$ and $\mathcal{P}_{z}=\mathcal{P}$ as $\chi_{z}=\Theta\in I$ for all I.  \newline  4)  $\Phi_{\varnothing}=\mathcal{P}_{\varnothing}=\lbrace C(Z,Y)\rbrace$ as $\chi_{\varnothing}=Id$.  \newline 5)  For any clopen U, $C(Z,Y)\in \mathcal{P}_{u}$.
\end{lemma}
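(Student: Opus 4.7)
The plan is to verify each of the five items by direct unpacking of definitions from Definition 13 and the conventions established earlier (the Convention preceding Note, together with Definition 12). No structural argument is required; the entire lemma is a table of immediate consequences. The one notational point I would flag at the outset is that the symbol $\chi_{z}$ in (1) and (3) must mean the characteristic function of the whole space $Z$ (giving the null function), not $\chi_{\{z\}}$ for a point $z$; with $\|Z\|>1$ the latter reading would make $\chi_{z}=\Theta$ false.

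For (1) and (2), I would just evaluate. By the Convention, $\chi_{U}$ takes value $0$ on $U$ and $1$ on $U^{c}$. Taking $U=Z$ forces $\chi_{Z}\equiv 0=\Theta$; taking $U=\varnothing$ forces $\chi_{\varnothing}\equiv 1=Id$. For the second clause of (1), every multiplicative ideal of $C(Z,Y)$ contains the zero function $\Theta$ by Definition 12, so in particular every $I\in\mathcal{P}$ contains $\chi_{Z}=\Theta$, and hence $\chi_{Z}\in\bigcap\mathcal{P}$.

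For (3), combine (1) with the definition $\Phi_{Z}=\{I\in\Phi:\chi_{Z}\in I\}$: since every ideal contains $\Theta=\chi_{Z}$, the membership condition is automatic, so $\Phi_{Z}=\Phi$ and likewise $\mathcal{P}_{Z}=\mathcal{P}$. For (4), combine (2) with ideal absorption: if $Id=\chi_{\varnothing}\in I$, then for any $f\in C(Z,Y)$ we have $f=f\cdot Id\in I$, forcing $I=C(Z,Y)$; conversely $Id\in C(Z,Y)$ trivially, so $\Phi_{\varnothing}=\mathcal{P}_{\varnothing}=\{C(Z,Y)\}$ (reading $\mathcal{P}$ as allowing the improper ideal, which is consistent with (5)). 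For (5), $\chi_{u}$ is a continuous function on $Z$, hence an element of $C(Z,Y)$, so $\chi_{u}\in C(Z,Y)$ and therefore $C(Z,Y)\in\mathcal{P}_{u}$.

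The only obstacle I anticipate is interpretive rather than mathematical: resolving the $\chi_{z}$ vs $\chi_{Z}$ ambiguity in (1) and (3), and confirming that $\mathcal{P}$ is understood to include the improper ideal $C(Z,Y)$ so that (4) and (5) make sense as stated. Once these conventions are fixed, each item is a one-line check.
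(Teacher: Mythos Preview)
Your proposal is correct and matches the paper's treatment: the paper states this lemma without proof, regarding each item as an immediate consequence of Definition 12 and the convention on $\chi_{U}$, which is exactly the direct unpacking you give. Your flagging of the $\chi_{z}$ versus $\chi_{Z}$ ambiguity and of whether $C(Z,Y)\in\mathcal{P}$ is apt---the paper is indeed loose on both points (compare Definition 12, which defines $\mathcal{P}$ as the prime ideals together with $(\Theta)$, with item (5) here and with later uses such as Lemma 47 part 2), but your reading is the only one that makes the statements true.
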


\begin{lemma} 
If $I_1$ and $I_2$ are ideals, $I_{2}\neq C(Z,Y)$,  and $I_{1}\subseteq I_{2}$ then $I_{1}\in \mathcal{P}_{u}$ if and only if $I_{2}\in \Phi_{u}$.  Thus if $I_2$ is also prime, $I_{1}\in \mathcal{P}_{u}$ if and only if $I_{2}\in \mathcal{P}_{u}$.
\end{lemma}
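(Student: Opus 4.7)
The plan is to recognize this statement as a direct translation of Theorem 20 into the notation just introduced in Definition 12, handling the two directions separately. Almost nothing new needs to be proved; the substantive work has already been done.

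Forward direction: Suppose $I_1 \in \mathcal{P}_u$. Unpacking Definition 12, this means $\chi_u \in I_1$. Since $I_1 \subseteq I_2$, we immediately get $\chi_u \in I_2$, which is exactly what $I_2 \in \Phi_u$ asserts. Primality is not invoked here; only the containment is used, so this direction is pure set-theoretic bookkeeping.

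Backward direction: Suppose $I_2 \in \Phi_u$, so $\chi_u \in I_2$. All hypotheses of Theorem 20 are now in force: $I_1 \in \mathcal{P}$ supplies primality (setting aside the degenerate case $I_1 = (\Theta)$, which forces $\chi_u = \Theta$ and is trivial since $\Theta$ lies in every ideal), $I_1 \subseteq I_2 \ne C(Z,Y)$, $U$ is clopen, and the standing assumptions $1 \in Y$ and addition on $Y$ are in effect. Theorem 20 then concludes $\chi_u \in I_1$, i.e., $I_1 \in \mathcal{P}_u$. The closing assertion follows immediately: if $I_2$ itself lies in $\mathcal{P}$, then ``$I_2 \in \Phi_u$'' and ``$I_2 \in \mathcal{P}_u$'' are the same condition (both reduce to $\chi_u \in I_2$), and substituting into the equivalence just proved yields $I_1 \in \mathcal{P}_u \Leftrightarrow I_2 \in \mathcal{P}_u$.

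The main obstacle is essentially none; the interesting argument (using $\chi_u + \chi_{u^c} = Id$ together with primality to contradict $I_2 \ne C(Z,Y)$) is already packaged in Theorem 20. What remains is purely notational hygiene, in particular keeping clear the distinction between $\mathcal{P}_u$ (which requires both primality and $\chi_u \in I$) and $\Phi_u$ (which requires only $\chi_u \in I$), and remarking briefly on why the $I_1 = (\Theta)$ case causes no trouble.
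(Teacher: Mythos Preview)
Your proof is correct and follows the paper exactly: the paper's entire proof is the single citation ``Theorem 18'' (the result you are labeling Theorem 20), and your unpacking of the two directions is precisely how that citation cashes out. One small slip: in your parenthetical on the backward direction, nothing actually forces $\chi_u=\Theta$ when $I_1=(\Theta)$, so that edge case is a defect of the lemma as stated rather than a triviality---but the paper ignores this too and plainly intends $I_1$ to be a genuine prime ideal, as Theorem 18 requires.
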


\begin{proof}
Theorem 18.
\end{proof}

\begin{lemma}
For each non empty clopen set U, $U\neq Z$, $\mathcal{P}=\mathcal{P}_{u}\bigcup \mathcal{P}_{u^{c}}$ and $\mathcal{P}_{u}\bigcap \mathcal{P}_{u^c}=\varnothing$.  
\end{lemma}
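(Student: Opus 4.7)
The plan is to split the lemma into two claims: the covering equality $\mathcal{P}=\mathcal{P}_u\cup\mathcal{P}_{u^c}$ and the disjointness $\mathcal{P}_u\cap\mathcal{P}_{u^c}=\varnothing$. The covering part is essentially a repackaging of Lemma 35 in the new notation, while the disjointness part genuinely needs the additive structure on $Y$ that has been introduced in the note immediately before Definition 12 (together with the convention $1+0=1$).

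For the covering, the inclusion $\mathcal{P}_u\cup\mathcal{P}_{u^c}\subseteq\mathcal{P}$ is immediate from the definition of these sets. For the reverse inclusion, take a proper prime ideal $I\in\mathcal{P}$. Since $U$ is clopen with $\varnothing\neq U\neq Z$, the characteristic functions $\chi_u$ and $\chi_{u^c}$ are both honest continuous functions in $C(Z,Y)$, and a direct pointwise check gives $\chi_u\cdot\chi_{u^c}=\Theta\in I$. Primality of $I$ then forces $\chi_u\in I$ or $\chi_{u^c}\in I$, placing $I$ in $\mathcal{P}_u\cup\mathcal{P}_{u^c}$.

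For disjointness, suppose for contradiction that some proper prime ideal $I$ lies in both $\mathcal{P}_u$ and $\mathcal{P}_{u^c}$; then $\chi_u\in I$ and $\chi_{u^c}\in I$. Invoking the Corollary to Theorem 18 (which, under the assumption that $Y$ has addition with unit and $1+0=1$, gives $\chi_u+\chi_{u^c}=\mathrm{Id}$), closure of $I$ under addition yields $\mathrm{Id}\in I$, whence $I=C(Z,Y)$, contradicting that $I$ is a proper prime ideal. The result follows.

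The main obstacle is conceptual rather than technical: one must recognize that the purely multiplicative machinery developed up through Lemma 35 is not by itself enough to separate $\mathcal{P}_u$ from $\mathcal{P}_{u^c}$, because $\chi_u\cdot\chi_{u^c}=\Theta$ places no multiplicative obstruction to both factors belonging to the same prime. The disjointness is recovered only once addition enters the picture and the relation $\chi_u+\chi_{u^c}=\mathrm{Id}$ becomes available. Modulo this remark (and the mild caveat that $\mathcal{P}$ is to be read as the set of proper prime ideals, in accordance with the usual convention that $C(Z,Y)$ is not itself prime), the two steps above are each short.
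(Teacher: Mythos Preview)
Your proof is correct and matches the paper's one-line argument (``at least one but not both of $\chi_u$ and $\chi_{u^c}$ can belong to $I$''): covering comes from primality applied to $\chi_u\cdot\chi_{u^c}=\Theta$, and disjointness from the additive identity $\chi_u+\chi_{u^c}=\mathrm{Id}$ forcing $I=C(Z,Y)$. Two minor remarks: the identity $\chi_u+\chi_{u^c}=\mathrm{Id}$ is the Corollary after Theorem~15 (not Theorem~18), and your caveat about reading $\mathcal{P}$ as the proper primes is not optional but essential, since by Definition~12 the paper places $(\Theta)$ in $\mathcal{P}$, and for $\varnothing\neq U\neq Z$ the ideal $(\Theta)$ lies in neither $\mathcal{P}_u$ nor $\mathcal{P}_{u^c}$---an inconsistency internal to the paper that your reading quietly repairs.
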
 

\begin{proof}
At lease one but not both $\chi_{u}$ and $\chi_{u^c}$ can belong to I.
\end{proof}

\begin{lemma} 
If U and W are clopen then $\mathcal{P}_{u}\subseteq \mathcal{P}_{u\bigcup w}\bigcap \mathcal{P}_{u\bigcup w^{c}}$.  (also for $\Phi_U$)
\end{lemma}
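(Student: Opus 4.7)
The plan is to recognize this as a direct corollary of Lemma 35 part 1. Let $I \in \mathcal{P}_u$, so by definition $\chi_u \in I$. I would first note that since $W$ is clopen, so is $W^c$, and unions of clopen sets are clopen; hence both $u \cup w$ and $u \cup w^c$ are clopen subsets of $Z$. Moreover $u \subseteq u \cup w$ and $u \subseteq u \cup w^c$.

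Next I would invoke Lemma 35, part 1, which says that whenever $U \subseteq U_1$ are clopen and $\chi_{u,a} \in I$, then $\chi_{u_1,a} \in I$. Applying this once with $U_1 = u \cup w$ yields $\chi_{u \cup w} \in I$, and applying it again with $U_1 = u \cup w^c$ yields $\chi_{u \cup w^c} \in I$. By the definitions of $\mathcal{P}_{u \cup w}$ and $\mathcal{P}_{u \cup w^c}$, this gives $I \in \mathcal{P}_{u \cup w} \cap \mathcal{P}_{u \cup w^c}$.

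For the parenthetical remark that the same inclusion holds for the $\Phi_U$ sets, I would observe that Lemma 35 part 1 was stated for an arbitrary proper ideal $I$, and primality was never used in the argument above; the containment $\chi_u \in I \Rightarrow \chi_{u_1} \in I$ follows from the identity $\chi_{u_1} = \chi_{u_1} \cdot \chi_u$ (on $u \subseteq u_1$ both sides vanish; on $u_1 \setminus u$ the right side vanishes because $\chi_{u_1}$ does; on $u_1^c$ both are the nonzero value), together with the ideal absorption property. So the same proof verbatim gives $\Phi_u \subseteq \Phi_{u\cup w} \cap \Phi_{u \cup w^c}$.

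There is no serious obstacle here: the content of the lemma is simply the monotonicity principle $\chi_U \in I \Rightarrow \chi_{U_1} \in I$ for $U \subseteq U_1$ clopen, applied to the two natural clopen supersets $u \cup w$ and $u \cup w^c$ of $u$. The only care needed is to justify that $u \cup w^c$ is clopen, which follows because the family of clopen sets in $\mathcal{T}$ is closed under complements and finite unions.
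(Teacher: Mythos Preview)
Your proof is correct and follows essentially the same approach as the paper. The paper's one-line argument is simply that for $I\in\mathcal{P}_u$ the products $\chi_w\cdot\chi_u=\chi_{u\cup w}$ and $\chi_{w^c}\cdot\chi_u=\chi_{u\cup w^c}$ lie in $I$ by ideal absorption; you route the same computation through the monotonicity lemma (if $U\subseteq U_1$ are clopen and $\chi_u\in I$ then $\chi_{u_1}\in I$), which is itself proved by exactly this multiplicative identity, so the underlying mechanism is identical.
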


\begin{proof}  
Let $I\in \mathcal{{P}}_{u}$.  It follows that $\chi_{w}\cdot \chi_{u}=\chi_{u\bigcup{w}}$ and $\chi_{w^{c}}\cdot \chi_{u}$ are in I.
\end{proof}

\begin{corollary}
If $U_{1}\subseteq U_{2}$ are clopen then $\mathcal{P}_{u_{1}}\subseteq \mathcal{P}_{u_{2}}\bigcap \mathcal{P}_{u_{1}\bigcup u_{2}^{c}}$ and  $\mathcal{P}_{u_{1}}\subseteq \mathcal{P}_{u_{2}}$ as $\chi_{u_{2}}=\chi_{u_{2}}\cdot \chi_{u_{1}}$.  So if $U_{2}^{c}\subseteq U_{1}^{c}$ then $\mathcal{P}_{u_{2}^{c}}\subseteq \mathcal{P}_{u_{1}^{c}}$.  (also for $\Phi_u$).
\end{corollary}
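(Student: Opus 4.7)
The plan is to derive both containments directly from Lemma 54 together with the multiplicative closure of ideals, treating the corollary as essentially a routine specialization.

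For the first containment, I would apply Lemma 54 with the roles of $u$ and $w$ taken as $U_{1}$ and $U_{2}$ respectively. Since $U_{1}\subseteq U_{2}$ we have $U_{1}\cup U_{2}=U_{2}$, and Lemma 54 immediately gives
$$\mathcal{P}_{U_{1}}\subseteq \mathcal{P}_{U_{1}\cup U_{2}}\cap \mathcal{P}_{U_{1}\cup U_{2}^{c}}=\mathcal{P}_{U_{2}}\cap \mathcal{P}_{U_{1}\cup U_{2}^{c}},$$
which is precisely the first displayed inclusion.

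For the intermediate assertion $\mathcal{P}_{U_{1}}\subseteq \mathcal{P}_{U_{2}}$, I would verify the pointwise identity $\chi_{U_{2}}=\chi_{U_{2}}\cdot \chi_{U_{1}}$ hinted at in the statement. Under the convention that $\chi_{V}$ is $0$ on $V$ and $1$ on $V^{c}$: on $U_{2}$ both sides are $0$ since $\chi_{U_{2}}=0$ there, and on $U_{2}^{c}$ both sides are $1$ since $z\notin U_{2}$ forces $z\notin U_{1}$ (by $U_{1}\subseteq U_{2}$), so $\chi_{U_{1}}(z)=1=\chi_{U_{2}}(z)$. Then if $I\in \mathcal{P}_{U_{1}}$, i.e.\ $\chi_{U_{1}}\in I$, the multiplicative closure of the ideal $I$ forces $\chi_{U_{2}}=\chi_{U_{2}}\cdot \chi_{U_{1}}\in I$, so $I\in \mathcal{P}_{U_{2}}$.

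The last clause is then obtained by relabelling: the hypothesis $U_{2}^{c}\subseteq U_{1}^{c}$ is merely a restatement of $U_{1}\subseteq U_{2}$ applied to the complements, so applying the intermediate inclusion with $U_{1}$ replaced by $U_{2}^{c}$ and $U_{2}$ replaced by $U_{1}^{c}$ yields $\mathcal{P}_{U_{2}^{c}}\subseteq \mathcal{P}_{U_{1}^{c}}$. The only potential pitfall here is conceptual rather than technical: one must keep track of the author's convention that $\chi_{U}$ vanishes \emph{on} $U$, so that products of characteristic functions correspond to unions of the underlying clopen sets, not to intersections; given that, the entire proof reduces to one invocation of Lemma 54, one direct verification of a pointwise identity, and one symmetric relabelling.
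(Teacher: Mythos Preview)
Your proof is correct and matches the paper's approach exactly: the paper offers no separate proof environment for this corollary, embedding the entire justification in the statement itself (the hint ``as $\chi_{u_{2}}=\chi_{u_{2}}\cdot \chi_{u_{1}}$'') and relying implicitly on the immediately preceding lemma, which is precisely what you do. The only discrepancy is cosmetic: the lemma you cite as Lemma~54 is numbered Lemma~46 in the paper, but the content you invoke is the correct one.
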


\begin{corollary}
Let $U_1$ and $U_2$ be clopen: \newline 1)  Recall if $U_{1}\subseteq U_2$ then $\mathcal{P}_{u_1}\subseteq \mathcal{P}_{u_2}$.  \newline  2) $\mathcal{P}_{u_{1}\bigcup u_{2}}\supseteq \mathcal{P}_{u_1}\bigcup \mathcal{P}_{u_2}$.  \newline  3)  $\mathcal{P}_{u_{1}\bigcap u_{2}}\subseteq \mathcal{P}_{u_{1}}\bigcap \mathcal{P}_{u_{2}}\subseteq \mathcal{P}_{u_{1}\bigcup u_{2}}$.  \newline  4)  If $U_{1}\bigcap U_{2}=\varnothing$ then $\mathcal{P}_{u_{1}}\bigcap \mathcal{P}_{u_{2}}=\{C(X,Y)\}$.  \newline  (corollary also holds for $\Phi_u$).
\end{corollary}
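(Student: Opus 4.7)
The plan is to verify all four items using essentially a single identity, namely $\chi_{U}\cdot\chi_{V}=\chi_{U\cup V}$ for clopen $U,V\subseteq Z$ (read off directly from the values $\chi_U(z)\in\{0,1\}$), together with absorption by the ideal. Primality is never used, so the parenthetical claim for $\Phi_u$ comes along for free.

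For (1), $U_1\subseteq U_2$ gives $U_1\cup U_2=U_2$, hence $\chi_{u_2}\cdot\chi_{u_1}=\chi_{u_2}$, and absorption converts $\chi_{u_1}\in I$ into $\chi_{u_2}\in I$; this merely restates the previous corollary. For (2), the product identity gives $\chi_{u_1\cup u_2}=\chi_{u_1}\cdot\chi_{u_2}$, so if either factor lies in $I$ then so does the product, placing $I\in\mathcal{P}_{u_1\cup u_2}$. Item (3) is then automatic: the first inclusion follows from (1) applied to $U_1\cap U_2\subseteq U_1$ and $U_1\cap U_2\subseteq U_2$, while the second follows from $\chi_{u_1}\in I$ and $\chi_{u_1\cup u_2}=\chi_{u_1}\cdot\chi_{u_2}$.

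The real content is (4). The inclusion $\supseteq$ is immediate from $C(Z,Y)\in\mathcal{P}_u$ for every clopen $U$ (part 5 of the preceding lemma). For the reverse inclusion I would take $I\in\mathcal{P}_{u_1}\cap\mathcal{P}_{u_2}$ with $U_1\cap U_2=\varnothing$; since then $U_1\subseteq U_2^c$, step (1) yields $\mathcal{P}_{u_1}\subseteq\mathcal{P}_{u_2^c}$, so $\chi_{u_2^c}\in I$ in addition to $\chi_{u_2}\in I$. Invoking the Corollary to Theorem 18, $\chi_{u_2}+\chi_{u_2^c}=Id$, and additive closure of the ideal forces $Id\in I$, i.e., $I=C(Z,Y)$.

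The main obstacle is precisely (4): items (1)--(3) live entirely in the multiplicative world, but (4) requires the second operation on $Y$ and additive closure of the ideal. This is consistent with the framework from Definition 12 onward, in which two operations are tacitly in play and ``ideal'' is implicitly promoted to a ring ideal whenever addition is present; without it, two disjoint clopen sets need not collapse $\mathcal{P}_{u_1}\cap\mathcal{P}_{u_2}$ to $\{C(Z,Y)\}$, because the smallest multiplicatively absorbing set containing $\chi_{u_1}$ and $\chi_{u_2}$ may still be proper.
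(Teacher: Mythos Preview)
Your proof is correct and follows essentially the same route as the paper. For (3) the paper likewise cites the monotonicity of the preceding corollary together with (2); for (4) the paper uses $U_2\subseteq U_1^{c}$ to write $\chi_{u_1^{c}}=\chi_{u_1^{c}}\cdot\chi_{u_2}\in I$ and then concludes $I=C(Z,Y)$ from having both $\chi_{u_1}$ and $\chi_{u_1^{c}}$ in $I$---the same argument as yours with the roles of $U_1$ and $U_2$ swapped, and with the additive step (your invocation of $\chi_{u}+\chi_{u^{c}}=Id$) left implicit.
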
 

\begin{proof}
For 3) use the preceding corollary and 2) of this corollary.  \newline For 4), if $U_{1}\bigcap U_{2}=\varnothing$ then $U_{1}^{c}\supseteq U_2$.  Let $I\in \mathcal{P}_{u_{1}}\bigcap \mathcal{P}_{u_{2}}$ so that $\chi_{u_{1}}$ and $\chi_{u_{2}}$ belong to I.  Then $\chi_{u_{1}^{c}}=\chi_{u_{1}^{c}}\cdot \chi_{u_{2}}\in I$.  Consequently both $\chi_{u_{1}}$ and $\chi_{u_{1}^{c}}$ belong to I.  Thus I=C(Z,Y) and $\mathcal{P}_{u_{1}}\bigcap \mathcal{P}_{u_{2}}=\{ C(Z,Y)\}$.
\end{proof} 

\begin{defin} 
Let $\mathcal{U}=\{U\vert \medspace U is \medspace clopen\}$.  \newline Define for $I\in \mathcal{P}$, \newline  $\mathcal{U}_{I}=\{U\vert  \chi_{u}\in I\}=\{ U\vert (\chi_{u})\subseteq I\}$ and \newline  $\mathcal{U}_{I}^{c}=\{U\vert \chi_{u^{c}}\in I\}$.
\end{defin}

\begin{lemma} 
1)  $\mathcal{U}_{(\Theta)}=\{ U\vert \chi_{u}\in (\Theta)\}=\{Z\}$ as $\chi_{Z}=\Theta$. \newline 2)  $\mathcal{U}_{C(Z,Y)}=\mathcal{U}_{C(Z,Y)}^{c}=\mathcal{U}$. \newline 3) $\mathcal{U}_{(\theta)}^{c}=\{\varnothing\}$. \newline 4) $\mathcal{U}_{(\theta)}\bigcup \mathcal{U}_{(\theta)}^{c}=\{\varnothing ,Z\}\neq \mathcal{U}$.  \newline 5) If $I_1$ and $I_2$ are in $\mathcal{P}$ and $I_{1}\subseteq I_{2}$ then $\mathcal{U}_{I_{1}}\subseteq \mathcal{U}_{I_{2}}$ and if  $I_{2}\neq C(Z,Y)$ then $\mathcal{U}_{I_{1}}=\mathcal{U}_{I_{2}}$.
\end{lemma}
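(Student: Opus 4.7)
The plan is to dispatch the five clauses by direct substitution into the definitions, with only clause 5 requiring nontrivial machinery (Theorem 18). The underlying observation throughout is that $\chi_{U}$ takes the value $0$ on $U$ and $1$ on $U^{c}$, so $\chi_{U}=\Theta$ iff $U=Z$ and, dually, $\chi_{U^{c}}=\Theta$ iff $U=\varnothing$.

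For clause 1, I would first note that the principal ideal $(\Theta)=\{g\cdot\Theta:g\in C(Z,Y)\}=\{\Theta\}$, so $\chi_{U}\in(\Theta)$ forces $\chi_{U}=\Theta$ and hence $U=Z$; the reverse containment is immediate since $\chi_{Z}=\Theta$. Clause 3 is the mirror computation: $\chi_{U^{c}}\in(\Theta)$ iff $\chi_{U^{c}}=\Theta$ iff $U^{c}=Z$ iff $U=\varnothing$. Clause 2 is immediate because for every clopen $U$ both $\chi_{U}$ and $\chi_{U^{c}}$ are continuous and trivially lie in $C(Z,Y)$, so every clopen set is recorded by both $\mathcal{U}_{C(Z,Y)}$ and $\mathcal{U}_{C(Z,Y)}^{c}$. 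Clause 4 then follows by unioning clauses 1 and 3 and using Lemma 13 together with $\|Z\|\geq 2$ to produce at least one clopen set strictly between $\varnothing$ and $Z$; hence $\{\varnothing,Z\}\subsetneq\mathcal{U}$.

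Clause 5 is the only substantive item. The inclusion $\mathcal{U}_{I_{1}}\subseteq\mathcal{U}_{I_{2}}$ is tautological from $I_{1}\subseteq I_{2}$: any $U$ with $\chi_{U}\in I_{1}$ also has $\chi_{U}\in I_{2}$. For the reverse inclusion under the extra hypothesis $I_{2}\neq C(Z,Y)$, I would invoke Theorem 18 directly. Suppose $\chi_{U}\in I_{2}$ but $\chi_{U}\notin I_{1}$; primality of $I_{1}$ combined with $\chi_{U}\cdot\chi_{U^{c}}=\Theta\in I_{1}$ forces $\chi_{U^{c}}\in I_{1}\subseteq I_{2}$, so $\chi_{U}+\chi_{U^{c}}=Id\in I_{2}$, contradicting $I_{2}\neq C(Z,Y)$.

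The one step I would double-check, and which I expect to be the only real obstacle, is the degenerate member $I_{1}=(\Theta)\in\mathcal{P}$, since $(\Theta)$ is listed in $\mathcal{P}$ even though it is not literally prime. In that case clause 1 pins down $\mathcal{U}_{(\Theta)}=\{Z\}$, while a proper prime $I_{2}\supsetneq(\Theta)$ contains $\chi_{U}$ for every nontrivial clopen $U$ on which it selects (by the earlier dichotomy lemma), so the second assertion of clause 5 literally fails in that case. My plan is therefore to restrict the equality portion of clause 5 to genuinely prime $I_{1}$ (which is exactly the hypothesis under which Theorem 18 was proved) and to flag the $(\Theta)$ case as an exception rather than silently absorb it; the monotonicity half of clause 5 of course holds without restriction.
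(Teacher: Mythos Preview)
Your proposal is correct and matches the paper's implicit approach: the paper gives no proof for this lemma, treating clauses 1--4 as immediate from the definitions, while the analogous preceding Lemma on $\mathcal{P}_{u}$ is proved by the one-word citation ``Theorem 18,'' exactly as you do for clause 5. Your flag on the degenerate case $I_{1}=(\Theta)$ is well taken and is in fact a genuine oversight in the statement: clause 4 itself advertises that $(\Theta)$ fails the clopen dichotomy, so the equality in clause 5 cannot hold when $I_{1}=(\Theta)\subsetneq I_{2}$ with $I_{2}$ a proper prime; the paper's own convention of including $(\Theta)$ in $\mathcal{P}$ while excluding it from the primality arguments (cf.\ Lemma 49, which is stated only for $I\in\mathcal{P}\setminus\{C(Z,Y)\}$ but tacitly needs $I\neq(\Theta)$ as well) confirms that the intended reading is ``$I_{1}$ genuinely prime.''
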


\begin{lemma}
$I\in \mathcal{P}_{u}$ if and only if $U\in \mathcal{U}_{I}$.
\end{lemma}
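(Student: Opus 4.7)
The plan is to observe that this lemma is essentially a tautology obtained by unfolding the two defining set-builder expressions and noticing that they both reduce to the single condition $\chi_u \in I$. So the proof will consist of a short chain of ``if and only if'' steps, and no actual mathematical content beyond Definition 13 and Definition 14 is required; in particular I do not need to invoke primality of $I$, properties of the topology $\mathcal{T}$, or any of the algebraic structure on $Y$.

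Concretely, I would first recall from Definition 13 that $\mathcal{P}_u$ was defined as the collection of all $I \in \mathcal{P}$ satisfying $\chi_u \in I$, so membership $I \in \mathcal{P}_u$ is by definition the assertion $\chi_u \in I$ (together with the background hypothesis $I \in \mathcal{P}$, which is carried throughout Definition 14 as well). Then I would recall from Definition 14 that $\mathcal{U}_I$ was defined, for a fixed $I \in \mathcal{P}$, as the set of clopen $U$ with $\chi_u \in I$, so $U \in \mathcal{U}_I$ also unfolds to $\chi_u \in I$. Chaining these two equivalences gives $I \in \mathcal{P}_u \iff \chi_u \in I \iff U \in \mathcal{U}_I$, which is the claim.

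The only ``obstacle,'' if one can call it that, is bookkeeping: making sure the ambient universes line up (one must have $I \in \mathcal{P}$ for $\mathcal{U}_I$ to be defined, and one must have $U$ clopen for $\chi_u$ to exist as a member of $C(Z,Y)$ in the first place). Both are implicit in the statement, so the proof can reasonably be compressed into a single display of biconditionals, or even dispatched with a one-line remark such as ``immediate from Definitions 13 and 14.''
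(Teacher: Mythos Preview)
Your proposal is correct and matches the paper's treatment: the paper states this lemma with no proof at all, since it is indeed an immediate unfolding of the defining conditions (both sides amount to $\chi_u\in I$). The only quibble is numbering: in the paper the relevant definitions are Definition~12 (for $\mathcal{P}_u$) and Definition~13 (for $\mathcal{U}_I$), not 13 and 14.
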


\begin{corollary}
$(\Theta)\in \mathcal{P}_{Z}$ if and only if $Z\in \mathcal{U}_{(\Theta)}$.
\end{corollary}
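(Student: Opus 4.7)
The plan is to observe that this corollary is an immediate specialization of the preceding lemma (``$I\in\mathcal{P}_{u}$ if and only if $U\in\mathcal{U}_{I}$'') to the particular choice $I=(\Theta)$ and $U=Z$. So the work is essentially bookkeeping: unwind the two definitions and verify that both sides amount to the same membership statement about $\chi_{Z}$.

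First I would recall that by Definition 12, $\mathcal{P}_{Z}=\{I\in\mathcal{P}\mid \chi_{Z}\in I\}$, so $(\Theta)\in\mathcal{P}_{Z}$ is equivalent to $\chi_{Z}\in(\Theta)$. Similarly, by Definition 13, $\mathcal{U}_{(\Theta)}=\{U\mid \chi_{u}\in(\Theta)\}$, so $Z\in\mathcal{U}_{(\Theta)}$ is equivalent to $\chi_{Z}\in(\Theta)$. The two conditions therefore coincide, and invoking the preceding lemma with $I=(\Theta)$ and $U=Z$ gives the desired biconditional in one line.

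As a sanity check, both sides are in fact true: $\chi_{Z}=\Theta$ (as observed in Lemma \ref{...} part 1, and again in the part of the subsequent lemma giving $\mathcal{U}_{(\Theta)}=\{Z\}$), and $\Theta$ lies in every ideal including $(\Theta)$ itself. So the corollary is really asserting a trivially true equivalence between two trivially true statements; there is no obstacle beyond matching the definitions. I would therefore present the proof in two sentences, simply citing the preceding lemma, rather than invoking anything deeper about primality, clopen sets, or the zero-divisor structure of $Y$.
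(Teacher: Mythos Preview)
Your proposal is correct and matches the paper's approach exactly: the paper states the corollary without proof immediately after the lemma ``$I\in\mathcal{P}_{u}$ if and only if $U\in\mathcal{U}_{I}$,'' so the intended argument is precisely the specialization $I=(\Theta)$, $U=Z$ that you describe. Your unwinding of the definitions and the sanity check that $\chi_{Z}=\Theta\in(\Theta)$ are both accurate.
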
 

\begin{lemma} 
If $I\in \mathcal{P}\setminus \{C(Z,Y)\}$ then $\mathcal{U}=\mathcal{U}_{I} \bigcup \mathcal{U}_{I}^{c}$ and $\mathcal{U}_{I} \bigcap \mathcal{U}_{I}^{c}=\varnothing$.  
\end{lemma}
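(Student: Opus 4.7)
The plan is to split the claim into its two assertions and handle each with the tools already developed. Throughout I will assume $I$ is a proper prime ideal (the natural reading, since $(\Theta)$ would otherwise only put $\{\varnothing,Z\}$ on the right-hand side, and the earlier Lemma~52 is stated with the same implicit convention).

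First, for the union. Fix an arbitrary clopen $U\in\mathcal{U}$. The convention $\chi_{u}\cdot\chi_{u^{c}}=\chi_{u^{c}}\cdot\chi_{u}=\Theta$ (recorded in Definition~11) gives $\chi_{u}\cdot\chi_{u^{c}}\in I$, and since $I$ is prime we conclude $\chi_{u}\in I$ or $\chi_{u^{c}}\in I$; this is exactly Lemma~35(2). Translating via Definition~13, this says $U\in\mathcal{U}_{I}$ or $U\in\mathcal{U}_{I}^{c}$. Hence $\mathcal{U}\subseteq \mathcal{U}_{I}\cup\mathcal{U}_{I}^{c}$, and the reverse inclusion is immediate because every member of either set on the right is by definition clopen.

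Second, for the disjointness. Suppose for contradiction that some clopen $U$ lies in both $\mathcal{U}_{I}$ and $\mathcal{U}_{I}^{c}$. Then $\chi_{u}\in I$ and $\chi_{u^{c}}\in I$. Now invoke the Corollary to Lemma~37 (combined with the standing assumption that $Y$ has addition and a unit, introduced just before Definition~12), which records that $\chi_{u}+\chi_{u^{c}}=\mathrm{Id}$. Since $I$ is closed under addition, $\mathrm{Id}\in I$, forcing $I=C(Z,Y)$ and contradicting the hypothesis $I\in\mathcal{P}\setminus\{C(Z,Y)\}$. Therefore $\mathcal{U}_{I}\cap\mathcal{U}_{I}^{c}=\varnothing$, completing the proof.

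The potential obstacle is not really combinatorial but definitional: one must be careful that the "ideals" in play at this point are additively as well as multiplicatively closed (so that $\chi_{u}+\chi_{u^{c}}=\mathrm{Id}\in I$ is legitimate) and that $I$ is genuinely prime (so that $\chi_{u}\cdot\chi_{u^{c}}=\Theta\in I$ forces one of the factors into $I$). Both of these are in force once we are past the note preceding Definition~12, so the argument is a direct appeal to Lemma~35(2) together with the $\chi_{u}+\chi_{u^{c}}=\mathrm{Id}$ identity; no new machinery is needed.
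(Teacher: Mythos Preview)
Your proof is correct and matches the paper's approach: the paper states this lemma without proof, treating it as the $\mathcal{U}$-side dual of the earlier lemma $\mathcal{P}=\mathcal{P}_{u}\cup\mathcal{P}_{u^{c}}$, $\mathcal{P}_{u}\cap\mathcal{P}_{u^{c}}=\varnothing$, whose one-line proof is precisely ``at least one but not both of $\chi_{u}$ and $\chi_{u^{c}}$ can belong to $I$.'' Your observation that the case $I=(\Theta)$ must be excluded is also recorded explicitly in the paper (it notes $\mathcal{U}_{(\Theta)}\cup\mathcal{U}_{(\Theta)}^{c}=\{\varnothing,Z\}\neq\mathcal{U}$), so your reading of the hypothesis is the intended one.
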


\begin{lemma} 
If $I_{1}\bigcap I_2$ is prime, then $\mathcal{U}_{I_{1}\bigcap I_{2}}=\mathcal{U}_{I_{1}}\bigcap \mathcal{U}_{I_{2}}$.
\end{lemma}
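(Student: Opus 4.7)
The plan is to show this by an immediate chase through the definitions; the primality hypothesis is needed only to guarantee that $I_{1}\cap I_{2}\in\mathcal{P}$ so that $\mathcal{U}_{I_{1}\cap I_{2}}$ is well-defined in the sense of Definition 13. I would first remark that $I_{1}$ and $I_{2}$ are taken from $\mathcal{P}$ by context (otherwise $\mathcal{U}_{I_{1}}$ and $\mathcal{U}_{I_{2}}$ would not be declared), and the assumption that $I_{1}\cap I_{2}$ is prime places $I_{1}\cap I_{2}$ in $\mathcal{P}$ as well.

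Next I would unfold Definition 13 on the left side: a clopen set $U$ lies in $\mathcal{U}_{I_{1}\cap I_{2}}$ exactly when $\chi_{U}\in I_{1}\cap I_{2}$. Since membership in an intersection of ideals is characterized by simultaneous membership in each, this is equivalent to having both $\chi_{U}\in I_{1}$ and $\chi_{U}\in I_{2}$. Reapplying Definition 13 on the right side, this says $U\in\mathcal{U}_{I_{1}}$ and $U\in\mathcal{U}_{I_{2}}$, i.e. $U\in\mathcal{U}_{I_{1}}\cap\mathcal{U}_{I_{2}}$. Since each step is an ``if and only if,'' the two sets coincide.

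There is genuinely no obstacle: the statement is a tautology built from the elementary identity $x\in A\cap B\iff x\in A\wedge x\in B$ applied to the underlying ideals. The only subtlety worth flagging is the well-definedness issue mentioned above; one could even note that if one of $I_{1},I_{2}$ happened to equal $C(Z,Y)$, then $I_{1}\cap I_{2}$ reduces to the other ideal and the lemma becomes trivially true, so nothing is lost in the degenerate cases.
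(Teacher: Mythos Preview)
Your proposal is correct and matches the paper's own argument essentially step for step: both proofs are the straightforward chain of equivalences $U\in\mathcal{U}_{I_{1}\cap I_{2}}\iff \chi_{U}\in I_{1}\cap I_{2}\iff (\chi_{U}\in I_{1}\text{ and }\chi_{U}\in I_{2})\iff U\in\mathcal{U}_{I_{1}}\cap\mathcal{U}_{I_{2}}$, with the primality hypothesis serving only to place $I_{1}\cap I_{2}$ in $\mathcal{P}$ so that $\mathcal{U}_{I_{1}\cap I_{2}}$ is defined. The paper inserts one extra (redundant) equivalence via $I_{1}\cap I_{2}\in\mathcal{P}_{U}$, but this is cosmetic.
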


\begin{proof}
$U\in \mathcal{U}_{I_{1}}\bigcap \mathcal{U}_{I_{2}}$ iff $U\in \mathcal{U}_{I_{1}}$ and $U\in \mathcal{U}_{I_{2}}$  iff $\chi_{u}\in I_{1}$ and $\chi_{u}\in I_{2}$ iff $\chi_{u}\in I_{1}\bigcap I_{2}$ iff $I_{1}\bigcap I_{2}\in \mathcal{P}_{u}$ iff $U\in \mathcal{U}_{I_{1}\bigcap I_{2}}$. 
\end{proof} 

\begin{lemma} 
If $I_{1}\bigcup I_{2}$ is a prime ideal, then $\mathcal{U}_{I_{1}}\bigcup \mathcal{U}_{I_{2}}=\mathcal{U}_{I_{1}\bigcup I_{2}}$.
\end{lemma}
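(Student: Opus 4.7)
The plan is to mirror the structure of the preceding Lemma (the intersection case) and proceed by a single chain of biconditionals, unfolding the definition of $\mathcal{U}_{(\cdot)}$ from Definition 13. Specifically, I would start from $U\in\mathcal{U}_{I_1\cup I_2}$, rewrite this via Definition 13 as $\chi_u\in I_1\cup I_2$, invoke the elementary set-theoretic equivalence ``membership in a union is membership in one of the summands'' to obtain $\chi_u\in I_1$ or $\chi_u\in I_2$, then fold back through Definition 13 to conclude $U\in\mathcal{U}_{I_1}$ or $U\in\mathcal{U}_{I_2}$, i.e.\ $U\in\mathcal{U}_{I_1}\cup\mathcal{U}_{I_2}$. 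Since every step is an \emph{iff}, this yields the desired equality in one shot.

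If I wanted to organize the argument as two inclusions (to emphasize where each hypothesis is used), I would note that the inclusion $\mathcal{U}_{I_1}\cup\mathcal{U}_{I_2}\subseteq \mathcal{U}_{I_1\cup I_2}$ is immediate from $I_j\subseteq I_1\cup I_2$ together with the monotonicity of $\mathcal{U}_{(\cdot)}$ recorded in part~5 of Lemma~52, so this direction needs neither primality nor even that $I_1\cup I_2$ be an ideal. The reverse inclusion $\mathcal{U}_{I_1\cup I_2}\subseteq \mathcal{U}_{I_1}\cup\mathcal{U}_{I_2}$ is exactly the set-theoretic step above and again reduces to the definition of union.

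The only conceptual point to flag, rather than a genuine obstacle, is the role of the primality hypothesis: a union of two ideals is generally not an ideal, so the expression $\mathcal{U}_{I_1\cup I_2}$ is only legitimate once $I_1\cup I_2$ is known to lie in $\mathcal{P}$, which is the domain on which $\mathcal{U}_{(\cdot)}$ was defined in Definition 13. The hypothesis that $I_1\cup I_2$ is prime guarantees precisely this, so the statement is well-posed and the chain of biconditionals above constitutes the full proof. There is no substantive obstacle; the work is entirely in recognizing that the identity is a tautological unfolding of the definitions, exactly as in Lemma~55.
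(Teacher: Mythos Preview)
Your proposal is correct and is essentially the paper's own proof: the paper also proves the lemma by a single chain of biconditionals unfolding Definition~13, namely $U\in \mathcal{U}_{I_{1}}\cup \mathcal{U}_{I_{2}}$ iff $U\in \mathcal{U}_{I_{1}}$ or $U\in \mathcal{U}_{I_{2}}$ iff $\chi_{u}\in I_{1}$ or $\chi_{u}\in I_{2}$ iff $\chi_{u}\in I_{1}\cup I_{2}$ iff $U\in \mathcal{U}_{I_{1}\cup I_{2}}$. Your additional remarks about monotonicity and the role of the primality hypothesis are accurate and more detailed than what the paper provides, but the core argument is identical.
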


\begin{proof}
($U\in \mathcal{U}_{I_{1}}\bigcup \mathcal{U}_{I_{2}}$) if and only (if $U\in \mathcal{U}_{I_{1}}$ or $U\in \mathcal{U}_{I_{2}}$) if and only if ($\chi_{u}\in I_{1}$ or $\chi_{u}\in I_{2}$) iff  ($\chi_{u}\in I_{1}\bigcup I_{2}$) iff ($U\in \mathcal{U}_{I_{1}\bigcup I_{2}}$).
\end{proof}

\begin{defin} 
Let $\mathcal{X}=\{ \chi_{u}\vert \medspace U \medspace is \medspace clopen \}$. \newline For $I\in \mathcal{P}$ define $\mathcal{X}_{I}=\{ \chi_{u} \vert \medspace \chi_{u}\in I \}\subseteq I$ and \newline $\mathcal{X}_{I}^{c}=\{ \chi_{u} \vert \medspace \chi_{u^{c}}\in I \}$.
\end{defin}

\begin{lemma}
1) $\mathcal{X}_{(\Theta)}=\{\chi_{u} \vert \medspace \chi_{u}\in (\Theta)\}=\{\Theta\}$ and $\mathcal{X}_{C(Z,Y)}=\mathcal{X}$.  \newline 2) If $I_1$and $I_2$ are in $\mathcal{P}$ and $I_{1}\subseteq I_{2}$ then $\mathcal{X}_{I_{1}}\subseteq \mathcal{X}_{I_{2}}$ and if $I_{2}\neq C(Z,Y)$ then $\mathcal{X}_{I_{1}}=\mathcal{X}_{I_{2}}$. \newline 3) $\chi_{u}\in \mathcal{X}_{I}$ if and only if $U\in \mathcal{U}_{I}$ if and only if $I\in \mathcal{P}_{u}$.
\end{lemma}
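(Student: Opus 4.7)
The plan is to treat each of the three parts by unwinding definitions and invoking the previously established Theorem 18; there is no heavy lifting here, only careful bookkeeping.

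For part (1), I would first recall from Definition 14 that $\mathcal{X}_{(\Theta)}=\{\chi_u \mid \chi_u\in(\Theta)\}$. Since $(\Theta)$ is the trivial ideal whose only element is $\Theta$, a characteristic function lies in $(\Theta)$ precisely when it equals $\Theta$. By the earlier lemma, $\chi_Z=\Theta$, so $\Theta\in\mathcal{X}$ and hence $\mathcal{X}_{(\Theta)}=\{\Theta\}$. For the second equality, observe that every $\chi_u$ is by definition an element of $C(Z,Y)$, so $\mathcal{X}_{C(Z,Y)}=\mathcal{X}$.

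For part (2), the inclusion $\mathcal{X}_{I_1}\subseteq\mathcal{X}_{I_2}$ is immediate: if $\chi_u\in I_1$ and $I_1\subseteq I_2$, then $\chi_u\in I_2$, whence $\chi_u\in\mathcal{X}_{I_2}$. The reverse inclusion, assuming $I_2\neq C(Z,Y)$, is precisely the content of Theorem 18 (applied with $I_1$ prime, which is guaranteed by $I_1\in\mathcal{P}$): for every clopen $U$, $\chi_u\in I_2$ implies $\chi_u\in I_1$, so $\mathcal{X}_{I_2}\subseteq\mathcal{X}_{I_1}$, giving the equality.

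For part (3), the three equivalences are pure definition chasing. By Definition 14, $\chi_u\in\mathcal{X}_I$ iff $\chi_u\in I$. By Definition 13, $U\in\mathcal{U}_I$ iff $\chi_u\in I$. By Definition 12, $I\in\mathcal{P}_u$ iff $\chi_u\in I$. (The equivalence of the last two was already recorded in Lemma 53.) Chaining these gives the three-way equivalence.

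The only step that uses anything beyond definitions is the second half of (2), which rests entirely on Theorem 18; since that theorem has already been proved, there is no real obstacle. The presentation challenge is therefore just to keep the definitions straight and not to introduce any gratuitous reasoning.
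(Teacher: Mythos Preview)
Your proposal is correct and matches the paper's approach. The paper actually gives no explicit proof for this lemma at all, treating it as self-evident; the parallel statements for $\mathcal{U}_I$ (Lemma 47(5) and Lemma 44) are proved by a one-word citation of Theorem 18, which is exactly what you do for the nontrivial half of part (2), and parts (1) and (3) are indeed pure definition-chasing as you say.

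One small caveat worth noting: your parenthetical ``$I_1$ prime, which is guaranteed by $I_1\in\mathcal{P}$'' is not quite right, since by Definition 12 the set $\mathcal{P}$ also contains $(\Theta)$, which is explicitly not prime. The paper's own statements (here and in Lemma 47(5)) carry the same looseness, so this is not a defect in your argument relative to the paper; but if you want to be fully precise you should either handle the case $I_1=(\Theta)$ separately or note that the equality in (2) is only asserted when $I_1$ is genuinely prime.
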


\begin{corollary}
$\chi_{z}=\Theta\in \mathcal{X}_{(\Theta)}$ iff $Z\in \mathcal{U}_{(\Theta)}$ iff $(\Theta)\in \mathcal{P}_{Z}=\mathcal{P}$ and $\chi_{\varnothing}\in \mathcal{X}_{C(Z,Y)}$ if and only if $\varnothing \in \mathcal{U}_{C(Z,Y)}$ if and only if $C(Z,Y)\in \mathcal{P}_{\varnothing}$.
\end{corollary}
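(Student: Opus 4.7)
The plan is to recognize that this corollary is merely the specialization of part 3) of the immediately preceding lemma, namely the triple biconditional
\[
\chi_{u}\in\mathcal{X}_{I} \;\Longleftrightarrow\; U\in\mathcal{U}_{I} \;\Longleftrightarrow\; I\in\mathcal{P}_{u},
\]
to the two distinguished endpoint pairs $(U,I)=(Z,(\Theta))$ and $(U,I)=(\varnothing,C(Z,Y))$. So the body of the proof will consist of two short verifications, each of which substitutes a specific pair into this biconditional and then quotes earlier numbered facts to justify that the three resulting memberships hold (and therefore are trivially equivalent).

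For the first chain, I would set $U=Z$ and $I=(\Theta)$. The identity $\chi_{Z}=\Theta$ is the content of Lemma 48(1) (the characteristic function which is $0$ on $Z$ is the zero function), and each side of the triple biconditional is verified directly: $\Theta\in(\Theta)$ by definition of the principal ideal generated by $\Theta$; $Z\in\mathcal{U}_{(\Theta)}$ because Lemma 55(1) gives $\mathcal{U}_{(\Theta)}=\{Z\}$; and finally $\mathcal{P}_{Z}=\mathcal{P}$ is exactly Lemma 48(3), which makes $(\Theta)\in\mathcal{P}_{Z}$ immediate. Consequently all three statements hold simultaneously, as the preceding lemma predicts.

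For the second chain, I would set $U=\varnothing$ and $I=C(Z,Y)$. Here $\chi_{\varnothing}=\mathrm{Id}$ by Lemma 48(2), so membership $\chi_{\varnothing}\in\mathcal{X}_{C(Z,Y)}$ reduces to $\mathrm{Id}\in C(Z,Y)$, which is trivial and is packaged by the equality $\mathcal{X}_{C(Z,Y)}=\mathcal{X}$ from the preceding lemma (part 1). The membership $\varnothing\in\mathcal{U}_{C(Z,Y)}$ is the content of Lemma 55(2), which asserts $\mathcal{U}_{C(Z,Y)}=\mathcal{U}$, and $C(Z,Y)\in\mathcal{P}_{\varnothing}$ follows from Lemma 48(4), which identifies $\mathcal{P}_{\varnothing}=\{C(Z,Y)\}$. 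Again the triple biconditional of part 3) of the preceding lemma yields the equivalences.

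There is no real obstacle beyond bookkeeping: the only care required is to keep the identifications $\chi_{Z}=\Theta$ and $\chi_{\varnothing}=\mathrm{Id}$ visible at the right moment, so the reader sees that the corollary really is the same statement as the preceding lemma's part 3), simply evaluated at the two limiting arguments $Z$ and $\varnothing$. Accordingly the write-up should be only a few lines, citing Lemmas 48, 55, and the third clause of the preceding lemma for each of the two chains and declaring the equivalences by direct substitution.
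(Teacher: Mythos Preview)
Your proposal is correct and matches the paper's treatment: the paper gives no separate proof for this corollary, treating it as an immediate specialization of part 3) of the preceding lemma to the pairs $(U,I)=(Z,(\Theta))$ and $(U,I)=(\varnothing,C(Z,Y))$, exactly as you describe. Your lemma numbers are slightly off relative to the paper's numbering (the facts you cite as ``Lemma 48'' and ``Lemma 55'' are Lemmas 43 and 47 in the paper), but the content you invoke is the right content.
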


\begin{lemma} 
$\mathcal{X}=\mathcal{X}_{I}\bigcup \mathcal{X}_{I}^{c}$ and $\mathcal{X}_{I} \bigcap \mathcal{X}_{I}^{c}=\varnothing$.
\end{lemma}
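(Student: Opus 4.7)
The two claims follow by the same template as the analogous statement for $\mathcal{U}$ proved earlier, and I plan to reduce everything to that one via the natural correspondence $\chi_u \leftrightarrow U$. The preceding Lemma already records part~3: $\chi_u\in\mathcal{X}_I$ iff $U\in\mathcal{U}_I$ iff $I\in\mathcal{P}_u$. Unfolding Definition~14 in the same way gives $\chi_u\in\mathcal{X}_I^c$ iff $\chi_{u^c}\in I$ iff $U\in\mathcal{U}_I^c$. So $U\mapsto\chi_u$ is a bijection of $\mathcal{U}$ onto $\mathcal{X}$ carrying $\mathcal{U}_I$ to $\mathcal{X}_I$ and $\mathcal{U}_I^c$ to $\mathcal{X}_I^c$, and the conclusion is just a relabelling of the earlier lemma. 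I will nonetheless give the direct argument to make the proof self-contained.

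For the union, I would take an arbitrary $\chi_u\in\mathcal{X}$, so $U$ is clopen, and observe that $\chi_u\cdot\chi_{u^c}=\Theta\in I$. Since $I$ is prime in the sense of Definition~12, one of the two factors lies in $I$: if $\chi_u\in I$ then $\chi_u\in\mathcal{X}_I$, and if $\chi_{u^c}\in I$ then by definition $\chi_u\in\mathcal{X}_I^c$. In either case $\chi_u\in\mathcal{X}_I\cup\mathcal{X}_I^c$, giving $\mathcal{X}\subseteq\mathcal{X}_I\cup\mathcal{X}_I^c$; the reverse inclusion is immediate from the definitions since every element of either side is a characteristic function of a clopen set.

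For the intersection, suppose for contradiction that some $\chi_u$ lies in $\mathcal{X}_I\cap\mathcal{X}_I^c$. Then both $\chi_u$ and $\chi_{u^c}$ belong to $I$. Invoking the addition on $Y$ (present under the standing hypothesis stated just before Definition~13, where the Corollary to Lemma~35 records $\chi_u+\chi_{u^c}=Id$), we get $Id\in I$, hence $I=C(Z,Y)$. This contradicts the assumption that $I$ is a proper prime ideal, which is the intended hypothesis on the elements of $\mathcal{P}$ here (exactly as in the earlier lemma restricting to $I\in\mathcal{P}\setminus\{C(Z,Y)\}$).

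The main obstacle, such as it is, is really a matter of bookkeeping rather than mathematics: one must be careful that the hypothesis $I\neq C(Z,Y)$ is in force, since otherwise $\mathcal{X}_I=\mathcal{X}_I^c=\mathcal{X}$ and disjointness fails; and one must note that the disjointness half genuinely uses the additive structure on $Y$, whereas the union half uses only primality and the multiplicative identity $\chi_u\cdot\chi_{u^c}=\Theta$. Once those two observations are in hand, the lemma is immediate.
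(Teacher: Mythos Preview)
Your proposal is correct and matches the paper's approach. The paper states this lemma without proof, but the analogous results (Lemma~45 for $\mathcal{P}$ and Lemma~49 for $\mathcal{U}$) are proved exactly as you do: ``at least one but not both of $\chi_u$ and $\chi_{u^c}$ can belong to $I$,'' using primality for the union and $\chi_u+\chi_{u^c}=Id$ for disjointness. Your reduction to the $\mathcal{U}$ version via the bijection $U\leftrightarrow\chi_u$ is also in the spirit of the paper's Lemma~52 part~3, and your bookkeeping remarks about needing $I$ proper and the additive structure for the intersection half are on point.
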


\begin{lemma} 
If $I_1$, $I_2$, and $I_{1}\bigcap I_{2}$ are in $\mathcal{P}$ (or $I_1$ and $I_2$ are in $\Phi$ necessary changes made)  then $\mathcal{X}_{I_{1}}\bigcap \mathcal{X}_{I_{2}}=\mathcal{X}_{I_{2}\bigcap I_{2}}$.
\end{lemma}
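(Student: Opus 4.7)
The plan is to run essentially the same iff-chain that proved the analogous statement Lemma 54 for $\mathcal{U}$, and transcribe it into the language of $\mathcal{X}$. By Definition 14, $\mathcal{X}_{I}$ is exactly the subset of $\mathcal{X}$ consisting of those $\chi_{u}$ (with $U$ clopen) that happen to lie in $I$. Thus membership in $\mathcal{X}_{I_1}\cap\mathcal{X}_{I_2}$ is a conjunction of two membership statements in $I_1$ and $I_2$ separately.

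Concretely, I would write the forward/backward argument in one line: $\chi_{u}\in\mathcal{X}_{I_1}\cap\mathcal{X}_{I_2}$ iff $\chi_{u}\in I_1$ and $\chi_{u}\in I_2$ iff $\chi_{u}\in I_1\cap I_2$ iff $\chi_{u}\in\mathcal{X}_{I_1\cap I_2}$. The first and third equivalences are just the unfolding of Definition 14; the middle one is the elementary set-theoretic fact that intersection of sets is characterized by simultaneous membership. By part 3) of Lemma 58 one may, if desired, route the middle step through $\mathcal{P}_u$: $\chi_{u}\in I_j$ iff $I_j\in\mathcal{P}_u$, and from the $\mathcal{P}$-side the intersection behaves as expected.

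The role of the hypothesis $I_1\cap I_2\in\mathcal{P}$ is purely notational: Definition 14 registers the symbol $\mathcal{X}_{I}$ only for $I\in\mathcal{P}$, so one needs $I_1\cap I_2\in\mathcal{P}$ to make the right-hand side legal under that convention. For the parenthetical $\Phi$-version there is nothing to adjust, since the intersection of any two ideals is again an ideal, so $I_1\cap I_2\in\Phi$ is automatic and the same iff-chain applies verbatim.

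There is no genuine obstacle here; the proof is a one-line unfolding of definitions, and I would present it as such. I would also flag the apparent typo on the right-hand side of the displayed equation, which surely should read $\mathcal{X}_{I_1\cap I_2}$ rather than $\mathcal{X}_{I_2\cap I_2}$, and otherwise simply cite Definition 14 and, for symmetry with the earlier proofs in the paper, the analogous Lemma 54.
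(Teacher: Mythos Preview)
Your proof is correct and is precisely the transcription of the paper's proof of the analogous $\mathcal{U}$-statement into the language of $\mathcal{X}$; the paper itself omits an explicit proof here, evidently regarding it as immediate from that parallel. Your remark about the typo (the right-hand side should read $\mathcal{X}_{I_{1}\cap I_{2}}$, not $\mathcal{X}_{I_{2}\cap I_{2}}$) is also correct.
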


\begin{lemma}
If $I_1$, $I_2$, and $I_{1}\bigcup I_{2}$ are in $\mathcal{P}$ (or $\Phi$), then $\mathcal{X}_{I_{1}}\bigcup \mathcal{X}_{I_{2}}=\mathcal{X}_{I_{1}\bigcup I_{2}}$.
\end{lemma}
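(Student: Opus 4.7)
The plan is to mimic exactly the iff-chain proof used in the immediately preceding Lemma for $\mathcal{U}$, swapping out $U \in \mathcal{U}_J$ for $\chi_u \in \mathcal{X}_J$. Definition 14 declares $\mathcal{X}_J = \{\chi_u \mid \chi_u \in J\}$, so membership in $\mathcal{X}_J$ is literally membership of $\chi_u$ in $J$. This makes the desired equality a direct translation of the set-theoretic fact that membership in a union is membership in one of the summands.

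First I would fix an arbitrary clopen $U$ and chase $\chi_u$ through the following biconditionals:
$$\chi_u \in \mathcal{X}_{I_1} \cup \mathcal{X}_{I_2} \iff \chi_u \in \mathcal{X}_{I_1} \text{ or } \chi_u \in \mathcal{X}_{I_2} \iff \chi_u \in I_1 \text{ or } \chi_u \in I_2 \iff \chi_u \in I_1 \cup I_2.$$
The first equivalence is the definition of set union; the second is Definition 14 applied to each of $I_1$ and $I_2$; the third is again the definition of set union, now at the level of the ideals.

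Next, using the hypothesis that $I_1 \cup I_2 \in \mathcal{P}$ (or $\Phi$) so that $\mathcal{X}_{I_1 \cup I_2}$ is an object we are entitled to form under Definition 14, I would append the final link
$$\chi_u \in I_1 \cup I_2 \iff \chi_u \in \mathcal{X}_{I_1 \cup I_2},$$
again by Definition 14. Concatenating the biconditionals yields $\chi_u \in \mathcal{X}_{I_1} \cup \mathcal{X}_{I_2}$ iff $\chi_u \in \mathcal{X}_{I_1 \cup I_2}$ for every clopen $U$, hence the two subsets of $\mathcal{X}$ coincide.

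I do not anticipate any real obstacle: the statement is a pure definition-chase, strictly parallel to the $\mathcal{U}$-version in the preceding lemma and to part (3) of the lemma following Definition 14, which already gives the key translation $\chi_u \in \mathcal{X}_I \Leftrightarrow U \in \mathcal{U}_I \Leftrightarrow I \in \mathcal{P}_u$. The only point that deserves a brief mention in the write-up is that the hypothesis $I_1 \cup I_2 \in \mathcal{P}$ (or $\Phi$) is used solely to ensure that the right-hand side $\mathcal{X}_{I_1 \cup I_2}$ is a legitimate instance of Definition 14, and that no primality or ideal-closure argument is actually invoked in the chain of equivalences themselves.
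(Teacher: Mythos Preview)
Your proposal is correct and is exactly the approach the paper intends: the paper states this lemma without proof, relying on the parallel with the $\mathcal{U}$-version (Lemma~51), whose proof is precisely the iff-chain you wrote out with $\chi_u$ in place of $U$. Nothing is missing.
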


\begin{note}
In summary:  Let $U_1$ and $U_2$ be clopen.
\newline A) $\mathcal{P}_{\varnothing}=\{C(Z,Y) \}$; 
 $\mathcal{P}_{Z}=\mathcal{P}$; 
 If $U_{1}\subseteq U_{2}$, 
 then $\mathcal{P}_{u_{1}}\subseteq \mathcal{P}_{u_{2}}$; 
$\mathcal{P}_{u_{1}\bigcap u_{2}}\subseteq \mathcal{P}_{u_{1}}\bigcap \mathcal{P}_{u_{2}}$;  and $\mathcal{P}_{u_{1}}\bigcup \mathcal{P}_{u_{2}}\subseteq \mathcal{P}_{u_{1}\bigcup u_{2}}$.
\newline B)  $\mathcal{U}_{(\Theta)}=\{Z\}$; 
$\mathcal{U}_{C(Z,Y)}=\mathcal{U}$;
 if $I_1$ and $I_2$ are in $\mathcal{P}$ and $I_{1}\subseteq I_{2}$, then $\mathcal{U}_{I_{1}}\subseteq \mathcal{U}_{I_{2}}$ and if  $I_{2}\neq C(Z,Y)$, then $\mathcal{U}_{I_{1}}=\mathcal{U}_{I_{2}}$; 
 if $I_{1}\bigcap I_{2}$ is  prime then $\mathcal{U}_{I_{1}\bigcap I_{2}}=\mathcal{U}_{I_{1}}\bigcap \mathcal{U}_{I_{2}}$; and  if  $I_{1}\bigcup I_{2}$ is a prime ideal then $\mathcal{U}_{I_{1}}\bigcup \mathcal{U}_{I_{2}}=\mathcal{U}_{I_{1}\bigcup I_{2}}$.
\newline C)  $\mathcal{X}_{(\Theta)}=\{\Theta\}$; $\mathcal{X}_{C(Z,Y)}=\mathcal{X}$; if $I_1$ and $I_2$ are in $\mathcal{P}$ and $I_{1}\subseteq I_{2}$ then, $\mathcal{X}_{I_{1}}\subseteq \mathcal{X}_{I_{2}}$ and if $I_{2}\neq C(Z,Y)$ then $\mathcal{X}_{I_{1}}=\mathcal{X}_{I_{2}}$; if $I_{1}\bigcap I_{2}$ is prime then, $\mathcal{X}_{I_{1}}\bigcap \mathcal{X}_{I_{2}}=\mathcal{X}_{I_{1}\bigcap I_{2}}$; and if $I_{1}\bigcup I_{2}$ s a prime ideal then $\mathcal{X}_{I_{1}}\bigcup \mathcal{X}_{I_{2}}=\mathcal{X}_{I_{1}\bigcup I_{2}}$.
\end{note}

\begin{note}
 By Zorn's lemma every prime ideal of C(X,Y) with the one operation, $\cdot$, contains a minimal prime ideal with respect to $\cdot$, follow the proof of Lemma 2, in [3] pg 73.
\end{note}

\begin{theorem} 
If J is prime and $V(J)\neq \varnothing$ then there is a unique z such that $V(J)=\{z\}$ and $J\subseteq I(z)$.
\end{theorem}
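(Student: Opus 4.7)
The plan is to derive both conclusions as direct consequences of results already established in the paper, so the proof should be short and require no new constructions.

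First I would extract the cardinality of $V(J)$. Theorem 13 asserts that any ideal vanishing on two or more points fails to be prime. Since $J$ is prime by hypothesis, the contrapositive forces $\lVert V(J)\rVert \leq 1$. Combining this with the hypothesis $V(J) \neq \varnothing$, I conclude $\lVert V(J)\rVert = 1$, so there exists a unique $z \in Z$ with $V(J) = \{z\}$. Uniqueness is automatic because $V(J)$ is determined by $J$.

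Next I would establish the containment $J \subseteq I(z)$. This is immediate from the general set-theoretic lemma stated earlier (the lemma giving $J \subseteq I(V(J))$): any $f \in J$ satisfies $f(w) = 0$ for every $w \in V(J)$ by the definition of $V(J)$, and when $V(J) = \{z\}$ this means precisely $f(z) = 0$, i.e.\ $f \in I(z)$. Thus $J \subseteq I(z)$ as required.

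The main subtlety, if any, is simply recognizing that Theorem 13 (a result about ideals vanishing on two or more points) combines cleanly with the always-valid containment $J \subseteq I(V(J))$ to yield both the existence of the unique point $z$ and the ideal containment in one stroke; there is no further obstacle, and no additional algebraic hypothesis on $Y$ beyond what is already in force (a binary operation with zero) is needed.
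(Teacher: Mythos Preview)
Your proof is correct and follows essentially the same approach as the paper: invoke Theorem~13 (the contrapositive gives $\lVert V(J)\rVert \le 1$, hence $V(J)=\{z\}$), then conclude $J\subseteq I(z)$ from the fact that every $f\in J$ vanishes on $V(J)$. The only cosmetic difference is that the paper spells out ``for all $f\in J$, $f(z)=0$'' directly rather than citing the general lemma $J\subseteq I(V(J))$.
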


\begin{proof}
If $V(J)\neq \varnothing$ then $\parallel V(J)\parallel \geq 1$.  If $\parallel V(J)\parallel >1$ then J is not prime.  So $\parallel V(J)\parallel=1$ that is there exist an unique z such that $V(J)=\{z\}$ and thus for all $f\in J$, $f(z)=0$.
\end{proof} 

\begin{corollary}
If $\{z\}=V(J)$ is open  then $J=I(z)=(\chi_z)$.  That is a prime ideal  J with a non empty zero set must be of the form I(z) where $\{z\}= V(J)$ even if Y has a divisor of zero.
\end{corollary}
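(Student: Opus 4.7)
The plan is to use Theorem 20 to reduce the statement to showing the reverse inclusion $I(z) \subseteq J$, and then exploit primality of $J$ together with the clopenness of $\{z\}$ to locate a generator of $I(z)$ inside $J$. Concretely, Theorem 20 already gives us $V(J) = \{z\}$ and $J \subseteq I(z)$. Since $\{z\}$ is assumed open and is automatically closed (quasi-components are closed), it is clopen, so by the third Corollary to Definition 10 we get $I(z) = (\chi_{z})$. The entire task therefore reduces to showing that $\chi_{z}$ itself lies in $J$.

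For this key step I would use the pair $\chi_{z}$ and $\chi_{z^{c}}$. By the Convention, $\chi_{z}$ is $0$ on $\{z\}$ and $1$ on $\{z\}^{c}$, while $\chi_{z^{c}}$ is $0$ on $\{z\}^{c}$ and $1$ on $\{z\}$. In particular their product vanishes identically, so
\[
\chi_{z}\cdot \chi_{z^{c}} \;=\; \Theta \;\in\; J.
\]
Primality of $J$ then forces $\chi_{z}\in J$ or $\chi_{z^{c}}\in J$. To eliminate the second possibility, I would invoke $V(J) = \{z\}$: if $\chi_{z^{c}}$ lay in $J$ we would have $V(J)\subseteq V(\chi_{z^{c}}) = \{z\}^{c}$, which contradicts $z \in V(J)$. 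So $\chi_{z}\in J$.

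Combining, $(\chi_{z}) \subseteq J \subseteq I(z) = (\chi_{z})$, giving $J = I(z) = (\chi_{z})$ as claimed. Notice that this argument never invokes the absence of zero-divisors in $Y$, nor an additive structure; it needs only the existence of the unit $1 \in Y$ (to make sense of $\chi_{z}$ and $\chi_{z^{c}}$), primality of $J$, and the clopenness of $\{z\}$. That is precisely why the corollary is valid \emph{even if $Y$ has divisors of zero}, which is the content emphasized in the statement.

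There is no real obstacle; the only subtlety is to make sure the ruling-out step is phrased correctly, since the relation $V(J) = \{z\}$ (rather than just $V(J) \supseteq \{z\}$) is exactly what is needed to forbid $\chi_{z^{c}} \in J$. If one only knew $V(J) \supseteq \{z\}$ the argument would collapse, so I would be careful to cite Theorem 20 explicitly for the equality $V(J) = \{z\}$ and thereby make the prime-ideal dichotomy decisive.
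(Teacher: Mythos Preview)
Your proof is correct and follows essentially the same line as the paper's: both arguments use primality of $J$ together with $\chi_{z}\cdot\chi_{z^{c}}=\Theta$ to place $\chi_{z}$ in $J$, and then obtain $I(z)\subseteq J$. The paper finishes by writing $f=f\cdot\chi_{z}\in J$ for arbitrary $f\in I(z)$, while you route through $I(z)=(\chi_{z})$; these are the same step viewed two ways. One small correction: the result you invoke for $V(J)=\{z\}$ and $J\subseteq I(z)$ is Theorem~19 in the paper, not Theorem~20.
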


\begin{proof}
Note $\chi_{z}$ or $\chi_{z^{c}}$ is in J.  So $\chi_{z}$ must be in J.  If $f\in I(z)$ then $\{z\}\subseteq V(f)$.  Therfore $f=f\cdot \chi_{z}\in J$.
\end{proof}

\begin{theorem} 
If $J\subsetneqq I$ where I is an ideal and $J$ is a prime prime ideal, and $I\neq C(Z,Y)$, then for every clopen U, one but not both of $\chi_{u}$ and $\chi_{u^{c}}$ is in I. 
\end{theorem}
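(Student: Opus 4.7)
The plan is to reduce this directly to Theorem 18 together with the standard primality bookkeeping for the pair $\{\chi_u,\chi_{u^c}\}$. Since the hypothesis matches Theorem 18 exactly (with $I_1=J$ prime, $I_2=I$ a proper ideal containing $J$), that theorem already tells us that for every clopen $U$, $\chi_U\in J$ if and only if $\chi_U\in I$. So it suffices to prove the dichotomy inside $J$ itself and then transport it up to $I$.

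First I would establish that exactly one of $\chi_u,\chi_{u^c}$ lies in $J$. For \emph{existence}, use $\chi_u\cdot\chi_{u^c}=\Theta\in J$ together with primality of $J$. For \emph{uniqueness}, I would invoke the Corollary following Theorem 18 (which, as noted there, needs only $1\in Y$ and $1+0=1$): $\chi_u+\chi_{u^c}=\mathrm{Id}$. If both characteristic functions were in $J$, then $\mathrm{Id}\in J$, forcing $J=C(Z,Y)$, which contradicts the fact that prime ideals are proper by Definition 12. Hence exactly one of $\chi_u,\chi_{u^c}$ is in $J$.

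Now apply Theorem 18 to lift this dichotomy to $I$: whichever of $\chi_u,\chi_{u^c}$ lies in $J$ also lies in $I$ (since $J\subseteq I$), and the other one cannot lie in $I$ because if it did we would again have $\chi_u+\chi_{u^c}=\mathrm{Id}\in I$, contradicting $I\neq C(Z,Y)$. Thus for every clopen $U$, exactly one of $\chi_u,\chi_{u^c}$ is in $I$.

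There is essentially no substantive obstacle; the only subtlety is recognizing that the ``not both'' half of the dichotomy is where the additive structure on $Y$ is used, via the identity $\chi_u+\chi_{u^c}=\mathrm{Id}$. The multiplicative identity $\chi_u\cdot\chi_{u^c}=\Theta$ alone gives ``at least one,'' and only addition (with the unit $1\in Y$) forbids ``both.'' Per the note preceding Definition 13, both operations and $1\in Y$ are in force here, so the argument is complete.
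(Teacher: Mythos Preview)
Your proof is correct and follows essentially the same route as the paper: establish the exactly-one dichotomy for $\chi_u,\chi_{u^c}$ inside the prime ideal $J$ (existence from $\chi_u\cdot\chi_{u^c}=\Theta$, uniqueness from $\chi_u+\chi_{u^c}=\mathrm{Id}$), then invoke Theorem~18 to transfer it to $I$. One minor citation slip: the identity $\chi_u+\chi_{u^c}=\mathrm{Id}$ is the Corollary following Theorem~15, not Theorem~18.
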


\begin{proof}
As $J\subset I$ and as one but not both is in J then this is true for I by Theorem 18.
\end{proof}

\begin{note}
Let $Z$ is a discrete space of three points and $Y$  be $\mathbb{Z}_2$.  Then for example the ideals of functions vanishing on two points are not prime.
 \end{note}
  
\begin{lemma} 
If $Y$ is a division ring and $f(z)\neq 0$ for all $x\in Z$ then $g(z)=\frac{1}{f(z)}\in C(Z,Y)$.
\end{lemma}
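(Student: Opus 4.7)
The plan is to define $g: Z \to Y$ by $g(z) = f(z)^{-1}$. Well-definedness is immediate: by hypothesis $f(z) \neq 0$ for every $z \in Z$, and since $Y$ is a division ring each nonzero element has a unique multiplicative inverse. So the only substantive content is the continuity of $g$.

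I would prove continuity by factoring $g = \iota \circ f$, where $\iota : Y \setminus \{0\} \to Y$ is the inversion map $y \mapsto y^{-1}$. Given any open $V \subseteq Y$, one has $g^{-1}(V) = f^{-1}(\iota^{-1}(V))$. Since $f \in C(Z,Y)$ is continuous by hypothesis, it suffices to show that $\iota^{-1}(V)$ is open in $Y \setminus \{0\}$, which in turn is open in $Y$ because $\{0\}$ is closed (the standing totally separated, $T_2$ assumption on $Y$).

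For the openness of $\iota^{-1}(V)$, I would use the paper's standing topological hypothesis: any two distinct points of $Y$ are separated by a clopen set. This lets me reduce to the case where $V$ is clopen (each point of $V$ sits inside a clopen subset of $V$, and a union of opens is open). Inversion is an involution on $Y \setminus \{0\}$, so $\iota^{-1}(V) = V^{-1} := \{v^{-1} : v \in V\}$, and the task becomes showing $V^{-1}$ is clopen. I would combine the involutive property of $\iota$ with the continuity of the multiplication $\cdot$ (assumed throughout the paper) to pull clopen structure back along $y \mapsto y \cdot 1 = y$ through the inversion, a direct preimage computation.

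The main obstacle, as written, is that continuity of inversion is not among the axioms explicitly imposed on $Y$; the paper only mandates continuous $\cdot$ and the clopen-separation property. Without some added assumption that $Y$ is a topological division ring (inversion continuous on $Y \setminus \{0\}$), the lemma can fail in principle. The natural fix, consistent with the paper's convention of introducing continuity requirements silently when they are needed, is to treat continuity of inversion as an implicit part of the division-ring hypothesis, putting it on the same footing as the continuity of $\cdot$. The classical prototypes $\mathbb{Q}$ and $\mathbb{R}$ of course satisfy this automatically.
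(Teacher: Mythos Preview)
Your approach is essentially the paper's: the paper's proof consists of the single computation $g^{-1}(U)=\{z:f(z)\in U^{-1}\}=f^{-1}(U^{-1})$ for $U$ open with $0\notin U$, where $U^{-1}=\{1/u:u\in U\}$, and then declares this open in $Z$. That is precisely your factorization $g=\iota\circ f$ unwound. The paper never verifies that $U^{-1}$ is open in $Y$; it simply asserts the conclusion. So the gap you isolate---continuity of inversion on $Y\setminus\{0\}$---is equally present in the paper's own argument, and your reading that this should be taken as an implicit standing hypothesis (consistent with the Note after Theorem~1, where continuity of operations is assumed without mention when needed) is the correct resolution.

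One small remark: your paragraph attempting to deduce continuity of $\iota$ from continuity of $\cdot$ together with the clopen-separation property is not convincing as stated, and I do not believe such a deduction is available in this generality. But you already acknowledge this and retreat to the implicit-hypothesis interpretation, so the proposal stands.
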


\begin{proof}
Let $U$ be open in $Y$ and $0\notin U$ and let $U^{-1}=\{1/u: u\in U\}$. Then $g^{-1}(U) = \{z :f(z)\in U^{-1}\} = f^{-1}(U^{-1})$ which is open in $Z$.
\end{proof}

\begin{lemma}
If $Y$ is a division ring, $I$ a proper function ideal of $C(Z,Y)$ then for each $f\in I$ there exist $z\in Z$ such that $f(z)=0$ otherwise $f$ is invertible and thus $I=(1)$.
\end{lemma}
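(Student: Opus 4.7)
The plan is a direct contrapositive argument using the previous lemma. Suppose, for contradiction, that there exists $f \in I$ with $f(z) \neq 0$ for every $z \in Z$. Then by the preceding lemma, the pointwise inverse $g$ defined by $g(z) = 1/f(z)$ belongs to $C(Z,Y)$.

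Since $I$ is a (right or left) multiplicative ideal in $C(Z,Y)$ and $f \in I$, the product $g \cdot f$ lies in $I$. But by construction $g \cdot f = \mathrm{Id}$, the unit of $C(Z,Y)$. Hence $\mathrm{Id} \in I$, and then for any $h \in C(Z,Y)$ we have $h = h \cdot \mathrm{Id} \in I$, so $I = C(Z,Y) = (1)$. This contradicts the assumption that $I$ is a proper ideal, and establishes that every $f \in I$ must vanish at some point of $Z$.

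There is no real obstacle here; the lemma is essentially a restatement of the fact that non-vanishing continuous functions into a division ring are units in $C(Z,Y)$, together with the standard observation that an ideal containing a unit is improper. The only subtlety worth mentioning explicitly is that the construction of $g$ requires precisely the hypothesis that $f$ is nowhere zero, which is what the previous lemma uses to produce a continuous reciprocal. The conclusion follows in one step from that observation.
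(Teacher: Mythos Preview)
Your proof is correct and is exactly the argument the paper intends: the paper gives no separate proof for this lemma, merely embedding the reasoning in the statement itself (``otherwise $f$ is invertible and thus $I=(1)$''), and your proposal simply unpacks that one-line justification using the preceding lemma on continuous reciprocals. There is nothing to add.
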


\begin{corollary}
If $Y$ is a division ring and $I=(f)=\lbrace h\cdot f\rbrace$ for all $h\in C(Z,Y)$ which is a proper ideal then as there exist an $z$ such that $f(z)= 0$,  $(f)\subset I(z)$.
\end{corollary}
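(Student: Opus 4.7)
The plan is to combine the immediately preceding lemma with a direct membership check. First, because $I=(f)$ is assumed to be a proper ideal and $f$ is one of its elements, the preceding lemma on division rings applies: it guarantees the existence of some $z\in Z$ with $f(z)=0$, since otherwise $f$ would be pointwise nonzero, invertible by the lemma two steps back, and $I$ would equal all of $C(Z,Y)$, contradicting properness. This dispatches the existential half of the statement with essentially no additional work.

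For the containment $(f)\subseteq I(z)$ with $z$ chosen as above, I would just unpack definitions. A typical element of $(f)$ has the form $g=h\cdot f$ for some $h\in C(Z,Y)$. Evaluating at the chosen zero $z$ gives $g(z)=h(z)\cdot f(z)=h(z)\cdot 0=0$ by the null-element property of $0$ that was assumed for $Y$ (and therefore for $C(Z,Y)$). Hence $g\in I(z)$ by the definition of $I(z)$ as the set of functions vanishing at $z$, and since this holds for every $g\in (f)$ the inclusion follows.

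There is really no obstacle: the only subtlety worth flagging is that the containment is strict ($(f)\subsetneq I(z)$) precisely when $I(z)$ contains functions that are not multiples of $f$, but the statement as written only claims $(f)\subset I(z)$ in the weak (non-strict) sense, so no separate argument is needed. I would also note in passing that the same argument works at \emph{every} point of $V(f)$, not just one particular $z$; the lemma merely guarantees that $V(f)\neq\varnothing$, which is what is needed to produce at least one such ideal $I(z)$ containing $(f)$.
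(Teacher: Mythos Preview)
Your proposal is correct and matches the paper's treatment: the paper states this corollary without proof, regarding it as immediate from the preceding lemma (existence of a zero of $f$) together with the definition of $I(z)$, exactly as you argue. The containment step you spell out is also recorded earlier in the paper as Lemma~38 (if $V(f)\supseteq\{z\}$ then $(f)\subseteq I(z)$), so your direct verification simply reproves that lemma in this special case.
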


\noindent Recall: 1)   Every non empty subset of $Z$ is a set of quasi-components.  \newline 2)  As $H:C(X,Y)\rightarrow C(\Pi, Y)$ is an algebraic isomorphism, $H$ maps ideals to ideals of the same algebraic kind and vice versa.

\noindent Moreover:
\begin{lemma} 
If $I(x)$ is an ideal in $C(X,Y)$ corresponding to $[x]$ and $z=[x]$ then $H(I(x))=I(z)$.
\end{lemma}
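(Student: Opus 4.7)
The plan is to unwind the definitions and verify that the isomorphism intertwines the two vanishing ideals. Recall that $I(x)$ in $C(X,Y)$ consists of those continuous $f:X\to Y$ that vanish on the quasi-component $[x]$, and by Corollary to Theorem 5 every $f\in C(X,Y)$ is constant on $[x]$, so this is the same as requiring $f(x)=0$. Correspondingly, $I(z)\subseteq C(\Pi,Y)$ consists of those $\hat{g}$ with $\hat{g}(z)=0$, where $z=p(x)=[x]$. The statement of the lemma is then a direct compatibility assertion between $H$ (equivalently, its inverse $G$) and the point-evaluation data that define these ideals.

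The first step would be to verify $H(I(x))\subseteq I(z)$. Take $\hat{g}\in C(\Pi,Y)$ with $H(\hat{g})\in I(x)$, i.e. $(\hat{g}\circ p)(x)=0$. Since $p(x)=z$, this is exactly $\hat{g}(z)=0$, so $\hat{g}\in I(z)$. Here I should be careful about which direction the map $H$ goes: per Definition~2, $H$ sends $C(\Pi,Y)$ to $C(X,Y)$, so by $H(I(x))$ I interpret the preimage (equivalently, the image under $G$), and the equality above makes sense under either reading since $G$ and $H$ are inverse isomorphisms by Lemma~6.

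Next, the reverse inclusion. Given $\hat{g}\in I(z)$, form $f=H(\hat{g})=\hat{g}\circ p\in C(X,Y)$. Then $f(x)=\hat{g}(p(x))=\hat{g}(z)=0$, and moreover for any $y\in[x]$ we have $p(y)=z$, so $f(y)=\hat{g}(z)=0$ as well; hence $f$ vanishes on all of $[x]$ and belongs to $I(x)$. Combining the two inclusions gives $H(I(x))=I(z)$.

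I do not expect any real obstacle here: once one has the isomorphism $H$ (Theorem~3) together with Theorem~5 ensuring that quasi-components are exactly the equivalence classes, the lemma is simply the observation that $H$ is compatible with point evaluation at $x\in X$ and at $z=p(x)\in\Pi$. The only point to be careful about is stating clearly that $I(x)\subseteq C(X,Y)$ is the ideal of functions vanishing on the quasi-component $[x]$ (using the corollary that continuous functions are constant on quasi-components), so that the condition $f(x)=0$ on a single point already captures vanishing on the whole class.
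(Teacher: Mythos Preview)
Your argument is correct and is essentially the same two-inclusion verification the paper gives: evaluate at $x$ (respectively at $z=p(x)$) and use that $G$ and $H$ are inverse isomorphisms. You were right to flag the direction of $H$; the paper in fact silently switches to using $H$ for the map $C(X,Y)\to C(\Pi,Y)$ (what Definition~2 calls $G$) just before this lemma, and its proof computes $H(f)(z)=f\circ p^{-1}(z)$ accordingly, so your reading via $G=H^{-1}$ matches the intended statement.
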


\begin{proof}
Let $f\in I(x)$, that is $f(x)=0$.  Now as $f\in C(X,Y)$, $H(f)(z)=f\circ p^{-1}(z)=f([x])=f(x)=0$.  Thus $H(f)\in I(z)$.  That is $H(I(x))\subseteq I(z)$.
Now suppose $\hat{f}\in I(z)$ that is $\hat{f}(z)=0$.  As $\hat{f}\in C(\Pi, Y)$, $G(\hat{f})\in C(X,Y)$ and $G(\hat{f})(x)=(\hat{f}\circ p)(x)=\hat{f}(z)=0$, that is $G(\hat{f})\in I(x)$.  Then $H(G(\hat{f}))=\hat{f}$, that is $\hat{f}\in H(I(x))$ and so $I(z)\subseteq H(I(x))$.
\end{proof}

It is now appropriate to consider set and algebraic observations about $\chi_{u}, \mathcal{X}, \mathcal{X}_{I}, \and\ \mathfrak{X}$ where $\mathfrak{X}$ is yet to be defined.

\begin{lemma} 
The following are already given or evident where U, $U_1$, and $U_2$ are clopen and $I\subset C(Z,Y)$ is an ideal.  When needed 1+0=0+1=1.
\newline 1)   $\chi_{u}\cdot \chi_{u}=\chi_{u}$, that is $\chi_{u}$ is idempotent and if Y has addition, then $\chi_{u}+\chi_{u^{c}}=Id$.  
\newline 2)   $\chi_{u_{1}}\cdot \chi_{u_{2}}=\chi_{(u_{1}\cup u_{2})}$ and if $\chi_{u_{1}}$ or $\chi_{u_{2}}$ is in I then $\chi_{(u_{1}\cup u_{2})}=\chi_{u_{1}}\cdot \chi_{u_{2}}$ is in I. 
\newline 3)  If Y has addition and 1+1=0, then $\chi_{u_{1}}+\chi_{u_{2}}=\chi_{((u_{1}\cap u_{2})\cup (u_{1}\cup u_{2})^{C})}$ and $\chi_{u}+\chi_{u}=\chi_{Z}=\Theta$.  
\newline 4)  If $U_{1}\subset U_{2}$ then $\chi_{u_{2}}=\chi_{u_{2}}\cdot \chi_{u_{1}}$. 
\newline  5) If $U_{1}\neq U_{2}$ then $\chi_{u_ {1}}\neq \chi_{u_{2}}$.  
\newline 6)  $U_{1}\bigcap U_{2}=V(\chi_{(u_{1}\cap u_{2})})$.  
\newline 7)  $U_{1}\bigcup U_{2}=V(\chi_{u_{1}})\bigcup V(\chi_{u_{2}})=V(\chi_{(u_{1}\cup u_{2})})=V(\chi_{u_{1}}\cdot \chi_{u_{2}})$.  
\newline 8)  If I is a prime ideal then for all clopen U, $\chi_{u}$ or $\chi_{u^{c}}$ is in I.  
\newline  9)  If $U\subseteq U_1$, and $\chi_{u}\in I$ then $\chi_{u_{1}}\in I$. 
\newline 10)  If $I\neq C(Z,Y)$ is a prime ideal , $U_{1}\bigcap U_{2}=\varnothing$, and $\chi_{u_{1}}\in I$ then $\chi_{u_{2}}\notin I$. 
\newline 11)  If in Y, 1+1=0, if I is closed under addition (or if $\chi_{(u_{1}\cap u_{2})}\in I$), and if $\chi_{u_{1}}$ and $\chi_{u_{2}}$ are in I then then $\chi_{u_{1}}+\chi_{u_{2}}=\chi_{((u_{1}\cap u_{2})\cup (u_{1}\cup u_{2})^{C})}\in I$. 
\newline 12) If $\varnothing \neq U\neq Z$ then $(\chi_{u})\bigcap (\chi_{u^{c}})=(\theta)$ and $(\chi_{u})+(\chi_{u^{c}})=C(Z,Y)$.  
\newline 13) $Id=\chi_{\varnothing}$ and for any ideal I, $\Theta=\chi_{Z}\in I$ and thus $\mathcal{X}_{I}\neq \varnothing$.
\newline  14) $\mathfrak{X}_{I}$ is multiplicatively closed. 
\newline  15) If $\chi_{u}\in \mathcal{X}_{I}$ then $\chi_{w} \cdot \chi_{u}=\chi_{w\cup u}\in \mathcal{X}_{I}$. 
\newline  16)  From 11, if Y and I are closed under addition (or if $\chi_{u\cap w}\in I$), 1+1=0, and $\chi_{u}$ and $ \chi_{w}$ are in $ \mathcal{X}_{I}$ then $\chi_{u}+\chi_{w}\in \mathcal{X}_{I}$.
\newline  17)  If $\chi_{u}\cdot \chi_{w}=\chi_{u\cup w}\in \mathcal{X}_{I}$, then $\chi_{u}$ or $\chi_{w}$ is in $\mathcal{X}_{I}$.
\newline 18)  If the operations are distributive, $\chi_{u}\in I$, and $\chi_{u}+\chi_{w}=\Theta$ then $\chi_{w}\in I$.  
\newline 19) If $\chi_{v}\cdot \chi_{u}$ and  $\chi_{v}+\chi_{u}$ are in I and 1+1=0 then $V\bigcap U\neq \varnothing$.
\end{lemma}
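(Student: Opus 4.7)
The plan is to verify each of the nineteen clauses separately, exploiting that every characteristic function $\chi_u$ takes only the values $0$ and $1$ (so $\chi_u(z)=0$ iff $z\in U$), and that the correspondence $U\mapsto \chi_u$ converts set-theoretic operations on clopen sets into algebraic operations in $C(Z,Y)$. Consequently every purely multiplicative or additive identity among characteristic functions reduces, at each point $z$, to a finite case analysis sorted by which of $U$, $U_1\cap U_2$, $U_1\setminus U_2$, etc.\ contains $z$. Throughout I would be careful to track exactly which algebraic hypothesis (unit, associativity, additive closure, distributivity, characteristic two) each clause invokes.

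For clauses (1)--(7) I would evaluate both sides pointwise. Clause (2) uses that $\{0,1\}\subseteq Y$ contains no zero-divisors, so $\chi_{u_1}(z)\cdot\chi_{u_2}(z)=0$ iff $z\in U_1\cup U_2$. Clause (3), under $1+1=0$, gives $0+0$, $0+1$, $1+0$, $1+1$ in the four cases, which equals $0$ precisely on $(U_1\cap U_2)\cup(U_1\cup U_2)^c$. Clauses (4), (5) are immediate from the definitions, and clauses (6), (7) are repackagings of Lemma 12 for characteristic functions via clause (2).

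Clauses (8)--(13) combine those pointwise identities with the definitions of ideal and prime ideal together with prior results. Clause (8) is simply the definition of prime ideal applied to $\chi_u\cdot\chi_{u^c}=\Theta$; clause (9) follows from clause (4) and ideal closure under multiplication; clause (10) is part (4) of the Corollary to Lemma 53; clause (11) uses additive closure and clause (3). For clause (12), if $f\in(\chi_u)\cap(\chi_{u^c})$ then evaluating on $U$ and on $U^c$ separately forces $f=\Theta$; the equality $(\chi_u)+(\chi_{u^c})=C(Z,Y)$ uses distributivity and the corollary to Theorem 16 to write $f=f\cdot Id=f\cdot\chi_u+f\cdot\chi_{u^c}$. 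Clause (13) restates established facts.

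Clauses (14)--(17) follow directly from (1)--(12) applied to elements already known to lie in $I$, using the definition of $\mathcal{X}_I$ (and reading $\mathfrak{X}_I$ consistently with it). The real obstacle is in clauses (18) and (19). For (18), reading $\chi_u+\chi_w=\Theta$ pointwise in the additive group forces $\chi_u(z)=-\chi_w(z)$ for every $z$, which, since the values are $0$ and $1$, forces $U=W$, so $\chi_w=\chi_u\in I$. For (19) I argue by contradiction: if $V\cap U=\varnothing$, then clauses (2) and (3) (with $1+1=0$) give $\chi_v\cdot\chi_u=\chi_{V\cup U}\in I$ and $\chi_v+\chi_u=\chi_{(V\cup U)^c}\in I$; these are characteristic functions of complementary clopen sets, so their sum is $Id$, placing $Id\in I$ and contradicting the implicit properness of $I$. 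The delicate accounting in these last two clauses is the one place where the argument is not simply a pointwise verification.
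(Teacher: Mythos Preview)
Your proposal is correct, and for the routine clauses (1)--(17) it matches the paper's implicit handling (the paper simply declares these ``already given or evident'').  The paper supplies explicit proofs only for (10), (18), and (19), and there your arguments diverge in interesting ways.

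For (10) the paper argues directly: from $U_2\subseteq U_1^{\,c}$ one has $\chi_{u_1^{c}}=\chi_{u_1^{c}}\cdot\chi_{u_2}$, so if $\chi_{u_2}\in I$ then $\chi_{u_1^{c}}\in I$ and hence $Id=\chi_{u_1}+\chi_{u_1^{c}}\in I$, a contradiction.  Your appeal to the earlier corollary on $\mathcal P_{u_1}\cap\mathcal P_{u_2}$ is equivalent (your lemma number is slightly off, but the content is the second Corollary following Lemma~46, part~4).  For (18) the paper genuinely uses the distributivity hypothesis: it multiplies $\chi_u+\chi_w=\Theta$ by $\chi_{u^{c}}$ to obtain $\chi_w\cdot\chi_{u^{c}}=\Theta$, and then (implicitly) writes $\chi_w=\chi_w\cdot\chi_u+\chi_w\cdot\chi_{u^{c}}=\chi_w\cdot\chi_u\in I$.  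Your pointwise argument is cleaner and shows that the conclusion already follows from $0+1=1\neq 0$ alone, without distributivity: the mixed cases $z\in U\setminus W$ and $z\in W\setminus U$ are ruled out, so $U=W$ and $\chi_w=\chi_u\in I$.  For (19) the paper factors $\chi_v+\chi_u=\chi_{v\cap u}\cdot\chi_{(v\cup u)^{c}}$ and invokes primality of $I$ (together with $\chi_{(v\cup u)^{c}}\notin I$) to extract $\chi_{v\cap u}\in I$, which becomes $Id$ when $V\cap U=\varnothing$.  Your contrapositive is more economical: assuming $V\cap U=\varnothing$ collapses the sum to $\chi_{(V\cup U)^{c}}$, and together with $\chi_{V\cup U}\in I$ and additive closure this puts $Id\in I$.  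So your version of (19) needs only properness and additive closure, not primality.
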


\begin{proof}
For 10).  We can use $U_{2}\subseteq U_{1}^{c}$ which implies that $\chi_{({u_{1}}^{c})}=\chi_{({u_{1}}^{c})}\cdot \chi_{u_{2}}$.  If $\chi_{u_{2}}\in I$ then $\chi_{({u_{1}}^{c})}\in I$ and therefore  $\chi_{u_{1}}+\chi_{({u_{1}}^{c})}=$Id$\in I$ which is a contradiction.  
\newline For 18).  So $\chi_{u}\cdot \chi_{u^{c}}+\chi_{w}\cdot \chi_{u^{c}}=\chi_{u^{c}}\cdot \Theta$ which says $\theta+\chi_{w}\cdot \chi_{u^{c}}=\theta$ implying that $\chi_{w}\in I$.  
\newline For 19).   Note $\chi_{(v\cup u)}=\chi_{v}\cdot \chi_{u}\in I$ implies that $\chi_{(v\cup u)^{c}}\notin I$.  Also observe as 1+1=0, $\chi_{v}+\chi_{u}=\chi_{((v\cap u)\cup (v\cup u))^{c})}=\chi_{(v\cap u)}\cdot \chi_{(v\cup u)^{c}}\in I$.  Together these imply that  $\chi_{(v\cap u)}\in I$.  Now if $V\bigcap U=\varnothing$ then $\chi_{(v\cap u)}=Id\in I$. 
\end{proof}

\begin{theorem} 
If $\cdot $ in Y is closed, associative, and commutative then $\mathcal{X}=(\mathcal{X},\Theta,+,\cdot)$ has the properties that $\cdot$ is closed, associative, and commutative and is such that every element is idempotent.  Moreover by 3), if Y has addition that is closed, associative, commutative, and 1+1=0  then + is closed, associative, commutative, and $\chi_{U}+\chi_{U}=\Theta$, that is $-\chi_{u}$ is $\chi_{u}$.  If the operations in Y are distributive then $\mathcal{X}$ is distributive.   In this case $\mathcal{X}$ is a subring of C(Z,Y) isomorphic to $C(Z,\mathbb{Z}_{2})$.
\end{theorem}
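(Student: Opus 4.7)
\smallskip
\noindent\textbf{Proof proposal.} The plan is to verify each assertion in turn, leaning almost entirely on the arithmetic of characteristic functions catalogued in Lemma 55 together with the topological fact that clopen sets are closed under finite unions, finite intersections, and complementation.

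First I would handle the multiplicative structure. Closure is part 2) of Lemma 55, $\chi_{u_1}\cdot\chi_{u_2}=\chi_{(u_1\cup u_2)}$, together with the observation that $u_1\cup u_2$ is clopen whenever $u_1,u_2$ are clopen. Associativity and commutativity of $\cdot$ in $\mathcal{X}$ descend pointwise from $Y$ (equivalently, they mirror the associativity/commutativity of $\cup$ on clopens). Idempotence is part 1). So $(\mathcal{X},\cdot)$ is a commutative associative magma of idempotents, as claimed.

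Next I would turn to addition, under the hypotheses that $+$ in $Y$ is closed, associative, commutative, and $1+1=0$. Closure of $+$ on $\mathcal{X}$ follows from part 3) of Lemma 55: the formula $\chi_{u_1}+\chi_{u_2}=\chi_{(u_1\cap u_2)\cup (u_1\cup u_2)^{c}}$ exhibits the sum as the characteristic function of $(u_1\cap u_2)\cup (u_1\cup u_2)^{c}$, which is clopen by the closure of the clopen sets under intersection, complement, and union. Associativity and commutativity of $+$ on $\mathcal{X}$ again descend pointwise from $Y$, and part 3) directly gives $\chi_u+\chi_u=\Theta$, so $-\chi_u=\chi_u$. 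If in addition $Y$ is distributive, distributivity of $\mathcal{X}$ is inherited pointwise. Combined with the multiplicative part and the fact that $\mathcal{X}\subseteq C(Z,Y)$, this realises $\mathcal{X}$ as a subring of $C(Z,Y)$.

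Finally, for the identification $\mathcal{X}\cong C(Z,\mathbb{Z}_{2})$, I would define $\Phi:\mathcal{X}\to C(Z,\mathbb{Z}_{2})$ by sending $\chi_u$ (taking value $0\in Y$ on $u$ and $1\in Y$ on $u^{c}$) to the $\mathbb{Z}_{2}$-valued characteristic function $\widetilde\chi_u$ that is $0$ on $u$ and $1$ on $u^{c}$. Well-definedness and injectivity are immediate from part 5) of Lemma 55 ($U_1\ne U_2\Rightarrow\chi_{u_1}\ne\chi_{u_2}$). For surjectivity, any continuous $g:Z\to\mathbb{Z}_{2}$ has $g^{-1}(0)$ clopen (as $\{0\}$ is clopen in $\mathbb{Z}_{2}$), so $g=\widetilde\chi_{g^{-1}(0)}$ lies in the image. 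The ring homomorphism property is exactly the observation that the formulas of parts 2) and 3) of Lemma 55, together with the $\mathbb{Z}_{2}$-valued analogues of those same pointwise formulas, coincide precisely because $\mathbb{Z}_{2}$ also satisfies $1\cdot 1=1$ and $1+1=0$.

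The only genuine subtlety, and what I would flag as the main obstacle, is that part 3) of Lemma 55 and the closure of $+$ on $\mathcal{X}$ rely crucially on $1+1=0$ in $Y$; without this the sum of two characteristic functions may take a value outside $\{0,1\}$ and so leave $\mathcal{X}$ altogether. Once this hypothesis is in force, every remaining step is a mechanical transport of a pointwise identity from $Y$ (or from $\mathbb{Z}_{2}$) to $\mathcal{X}$, and the isomorphism with $C(Z,\mathbb{Z}_{2})$ is essentially tautological.
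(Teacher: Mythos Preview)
Your proposal is correct and follows exactly the route the paper intends: the paper states this theorem without a separate proof, treating it as an immediate consequence of the preceding omnibus lemma on characteristic functions (the one with parts 1)--19); note this is Lemma~59 in the paper's numbering, not Lemma~55). Your write-up simply spells out the verifications the paper leaves implicit --- closure via parts 2) and 3), idempotence via 1), injectivity of the map to $C(Z,\mathbb{Z}_2)$ via 5), and surjectivity from the fact that every continuous $\mathbb{Z}_2$-valued function on $Z$ is a $\chi_u$ for clopen $u$ --- together with the pointwise inheritance of associativity, commutativity, and distributivity from $Y$.
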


\begin{theorem} 
By 14), $\mathcal{X}_{I}$ is  multiplicative closed, by 15) it is an ideal, and by 17) it is prime.  Now by 16), if 1+1=0 in Y, if Y is a ring, and if I is an ideal then $\mathcal{X}_{I}$ is a prime ring idea in $\mathcal{X}$ and a subring of C(Z,Y).
\end{theorem}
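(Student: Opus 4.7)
The plan is to verify each assertion by directly invoking the indicated items of Lemma 55 applied to the identity $\mathcal{X}_{I} = I \cap \mathcal{X}$, with the algebraic structure on $\mathcal{X}$ supplied by Theorem 24. Throughout, the key identity I would lean on is item 2, $\chi_{u} \cdot \chi_{w} = \chi_{u \cup w}$, which keeps products of characteristic functions inside $\mathcal{X}$, and the inclusion $\mathcal{X}_{I} \subseteq I$ from the definition.

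First I would argue multiplicative closure: given $\chi_{u}, \chi_{w} \in \mathcal{X}_{I}$, the product lies in $\mathcal{X}$ by item 2 (clopen unions are clopen) and in $I$ since $I$ is an ideal and $\chi_{u} \in I$; this is exactly item 14. Next, to see that $\mathcal{X}_{I}$ is an ideal of $\mathcal{X}$, I would apply item 15: for any $\chi_{w} \in \mathcal{X}$ and $\chi_{u} \in \mathcal{X}_{I}$ one has $\chi_{w} \cdot \chi_{u} = \chi_{w \cup u} \in I \cap \mathcal{X} = \mathcal{X}_{I}$. For primality within $\mathcal{X}$, I would invoke item 17: if $\chi_{u} \cdot \chi_{w} = \chi_{u \cup w} \in \mathcal{X}_{I}$, then $\chi_{u \cup w} \in I$ and primality of $I$ in $C(Z,Y)$ forces $\chi_{u} \in I$ or $\chi_{w} \in I$, i.e.\ $\chi_{u} \in \mathcal{X}_{I}$ or $\chi_{w} \in \mathcal{X}_{I}$.

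For the second part I would assume $Y$ is a ring with $1+1=0$ and use item 16 to obtain additive closure: if $\chi_{u}, \chi_{w} \in \mathcal{X}_{I}$, then $\chi_{u} + \chi_{w} = \chi_{(u \cap w) \cup (u \cup w)^{c}}$ lies in $\mathcal{X}$ by item 3 and in $I$ since a ring ideal is additively closed. Item 3 also gives $\chi_{u} + \chi_{u} = \Theta$, so every element is its own additive inverse and $\mathcal{X}_{I}$ is an additive subgroup. Combining additive closure, multiplicative absorption, and the fact (from Theorem 24) that $\mathcal{X}$ is itself a subring of $C(Z,Y)$ isomorphic to $C(Z, \mathbb{Z}_{2})$, one concludes that $\mathcal{X}_{I}$ is a prime ring ideal of $\mathcal{X}$ and, via the inclusion $\mathcal{X}_{I} \subseteq \mathcal{X} \subseteq C(Z,Y)$, a subring of $C(Z,Y)$.

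The main obstacle is really verifying that the sum formula of item 3 keeps $\chi_{u} + \chi_{w}$ inside $\mathcal{X}$; this genuinely requires the hypothesis $1+1 = 0$, without which the pointwise sum can take a value not in $\{0,1\}$ and $\mathcal{X}$ need not be additively closed. Once this hypothesis is in force, the argument reduces to the general principle that the intersection of a prime ideal with a subring containing all the idempotent generators $\chi_{u}$ is again a prime ideal of that subring, and everything in the theorem is just the bookkeeping needed to assemble items 14--17 with item 3.
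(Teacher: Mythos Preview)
Your proposal is correct and matches the paper's approach exactly: the paper gives no separate proof for this theorem, treating it as an immediate corollary of the numbered items in the preceding omnibus lemma together with the ring structure of $\mathcal{X}$, and your write-up simply unpacks those citations. One cosmetic point: the omnibus lemma you cite as ``Lemma 55'' is Lemma 59 in the paper's numbering, and the result on $\mathcal{X}$ being a subring isomorphic to $C(Z,\mathbb{Z}_2)$ is Theorem 21 rather than Theorem 24; also, since $\mathcal{X}_I$ is defined only for $I\in\mathcal{P}$, the primality of $I$ that you invoke in the item-17 step is indeed part of the standing hypotheses, not an extra assumption.
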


\noindent Now consider the set of $\mathcal{X}_{I}$.
 
\begin{defin} 
Set $\mathfrak{X}=\{\mathcal{X}_{I} \vert I\in \mathcal{P}\}$ and set $\mathfrak{O}=\{\Theta\}=\{\chi_{Z}\}=\mathcal{X}_{(\Theta)}$.
\end{defin}

\begin{lemma} 
When appropriate, define addition and multiplication in $\mathfrak{X}$ in the natural way.  
\newline  1) If $I_{1}\subseteq I_{2}$ then $\mathcal{X}_{I_{1}}\subseteq \mathcal{X}_{I_{2}}$.  
\newline  2)  If $I_{1}\bigcap I_{2}\in \mathcal{P}$ then $\mathcal{X}_{I_{1}}\bigcap \mathcal{X}_{I_{2}}=\mathcal{X}_{{I_{1}}\bigcap {I_{2}}}$.  
\newline  3)   If $I_{I}\bigcup I_{2}\in \mathcal{P}$ then $\mathcal{X}_{I_{1}}\bigcup \mathcal{X}_{I_{2}}=\mathcal{X}_{{I_{1}}\bigcup I_{2}}$.  
\newline 4)   $\mathcal{X}_{I}+\mathfrak{O}=\mathcal{X}_{I}+\mathcal{X}_{(\Theta)}=\mathcal{X}_{I}$. \newline 5)  $\mathcal{X}_{I}\cdot \mathfrak{O}=\mathfrak{O}$.  
\newline 6)  If 1+1=0 in Y, Y is a ring, and $I_1$ and $I_2$ are ideals then by Theorem 22, $\mathcal{X}_{I_{1}}+\mathcal{X}_{I_{2}}$ and $\mathcal{X}_{I_{1}}\cdot \mathcal{X}_{I_{2}}$ are ideals in $\mathcal{X}$ but need not 
be in $\mathfrak{X}$.  
\newline  7)  If 1+1=0 in Y, Y a ring, and $I_1$ and $I_2$ are ideals then $\mathcal{X}_{I_{1}}$ and $\mathcal{X}_{I_{2}}$ are prime ideals, $\mathcal{X}_{I_{1}}+\mathcal{X}_{I_{2}}$ is in $\mathcal{X}$, and in $\mathfrak{X}$ whenever $I_{1}+I_{2}$ is prime .
\newline 8)  If 1+1=0 in Y then $\mathcal{X}_{I_{1}}+\mathcal{X}_{I_{2}}\subseteq \mathcal{X}_{I_{1}+I_{2}}$ as in general $\mathcal{X}_{I}\subseteq I$.
\newline  9)  If 1+1=0 in Y, Y is a ring, and $I_1$ and $I_2$ are ideals  then $\mathcal{X}_{I_{1}}\cdot \mathcal{X}_{I_{2}}$ is in $\mathcal{X}$ and in $\mathfrak{X}$ whenever $I_{1}\cap I_{2}$ is prime.
 \end{lemma}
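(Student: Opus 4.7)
The plan is to dispatch the nine items in turn, since most of them are direct consequences of definitions or of earlier results; only parts (6)--(9) require any genuine argument, and even there most of the work has been done by Theorem 22 and the previous paragraphs on $\mathcal{X}_I$ and $\mathcal{U}_I$.

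For (1), if $\chi_u\in\mathcal{X}_{I_1}$ then $\chi_u\in I_1\subseteq I_2$ so $\chi_u\in\mathcal{X}_{I_2}$; this is just the monotonicity already recorded in the summary note. Parts (2) and (3) are word-for-word translations under the bijection $\chi_u\leftrightarrow u$ of the two lemmas already proved for $\mathcal{U}_I$ (the ones establishing $\mathcal{U}_{I_1\cap I_2}=\mathcal{U}_{I_1}\cap\mathcal{U}_{I_2}$ and $\mathcal{U}_{I_1\cup I_2}=\mathcal{U}_{I_1}\cup\mathcal{U}_{I_2}$ when the respective intersection or union is prime); I would simply rerun the chain of iff's $\chi_u\in\mathcal{X}_{I_1\cap I_2}\Leftrightarrow\chi_u\in I_1\cap I_2\Leftrightarrow\chi_u\in I_1\text{ and }\chi_u\in I_2$, and analogously for the union. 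Parts (4) and (5) are immediate from $\mathfrak{O}=\{\Theta\}$: $\Theta$ is the additive identity and the multiplicative absorbing element of $C(Z,Y)$, so adding $\mathfrak{O}$ elementwise fixes $\mathcal{X}_I$ and multiplying by it collapses to $\{\Theta\}$.

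For (6), I would invoke Theorem 22 twice: under the standing hypotheses $\mathcal{X}_{I_1}$ and $\mathcal{X}_{I_2}$ are ideals of the ring $\mathcal{X}$, so their sum and product are ideals of $\mathcal{X}$; but neither need arise as some $\mathcal{X}_{J}$ with $J\in\mathcal{P}$, so membership in $\mathfrak{X}$ can fail. To exhibit the failure precisely I would point to a small example (e.g.\ $Z$ discrete on three points, $Y=\mathbb{Z}_{2}$, using the note after Theorem 20), rather than prove a general negative statement. Part (8) comes first among the remaining items: given $\chi_{u_1}\in\mathcal{X}_{I_1}$ and $\chi_{u_2}\in\mathcal{X}_{I_2}$, the sum $\chi_{u_1}+\chi_{u_2}$ lies in $I_1+I_2$ because each summand does, and by item (3) of Lemma~54 (the statement that $\chi_{u_1}+\chi_{u_2}=\chi_{(u_1\cap u_2)\cup(u_1\cup u_2)^{c}}$ when $1+1=0$) it lies in $\mathcal{X}$. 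Hence the sum sits in $\mathcal{X}_{I_1+I_2}$, giving the asserted inclusion.

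Parts (7) and (9) are the subtler bookkeeping: the sum $\mathcal{X}_{I_1}+\mathcal{X}_{I_2}$ is an ideal in $\mathcal{X}$ by Theorem 22, and by (8) it is contained in $\mathcal{X}_{I_1+I_2}$; when $I_1+I_2$ is itself prime, $\mathcal{X}_{I_1+I_2}$ is the corresponding member of $\mathfrak{X}$, giving the conclusion. For (9) the analogous statement uses $\mathcal{X}_{I_1}\cdot\mathcal{X}_{I_2}\subseteq I_1\cap I_2$ (since $\mathcal{X}_{I_j}\subseteq I_j$ and each $\mathcal{X}_{I_j}$ is multiplicatively closed, while $\chi_{u_1}\cdot\chi_{u_2}=\chi_{u_1\cup u_2}\in\mathcal{X}$), so when $I_1\cap I_2$ is prime the product lies in $\mathcal{X}_{I_1\cap I_2}\in\mathfrak{X}$. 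The main obstacle I anticipate is keeping straight the distinction, in (7) and (9), between ``$\mathcal{X}_{I_1}+\mathcal{X}_{I_2}$ is an ideal of $\mathcal{X}$'' (which needs only Theorem 22) and ``it has the form $\mathcal{X}_{J}$ for some $J\in\mathcal{P}$'' (which needs primality of $I_1+I_2$ or $I_1\cap I_2$ together with the containment from (8)); I would therefore state these two conclusions as separate sentences to avoid conflation.
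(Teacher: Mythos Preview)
Your proposal is correct and follows essentially the same route as the paper. The paper's own proof only addresses parts (7) and (9) explicitly; for (7) it invokes Theorem~22 and Lemma~59 no.~11 to show $\chi_{u_1}+\chi_{u_2}=\chi_{u_3}\in I_1+I_2$, which is precisely the content of your part~(8) argument that you then feed into (7), and for (9) it unwinds the finite-sum definition of the ideal product to land in $I_1\cap I_2$, matching your inclusion $\mathcal{X}_{I_1}\cdot\mathcal{X}_{I_2}\subseteq I_1\cap I_2$. Your treatment of (1)--(5) and (8) fills in what the paper leaves implicit, and your caution about distinguishing ``ideal of $\mathcal{X}$'' from ``member of $\mathfrak{X}$'' is well placed, since both the paper and your argument in fact establish only containment in some $\mathcal{X}_J\in\mathfrak{X}$ rather than equality.
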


\begin{proof}  
For 7).   From Theorem 22, $\mathcal{X}_{I_{1}}$ and $\mathcal{X}_{I_{2}}$ are ideals of $\mathcal{X}$ and thus their sum is an ideal of $\mathcal{X}$.  To show this ideal is in $\mathfrak{X}$ let $\chi_{u_{1}}\in \mathcal{X}_{I_{1}}$ and $\chi_{u_{2}}\in \mathcal{X}_{I_{2}}$.  By Lemma 59, No. 11) $\chi_{u_{1}}+\chi_{u_{2}}=\chi_{u_{3}}\in I_{1}+I_{2}$.
\noindent  For 9).  First $\mathcal{X}_{I_{1}}\cdot \mathcal{X}_{I_{2}}$ is an ideal.  Suppose $\chi_{u}\cdot \chi_{w}\in \mathcal{X}_{I_{1}}\cdot \mathcal{X}_{I_{2}}$.  Then $\chi_{u}\cdot \chi_{w}$ is a finite sum of terms of the form $\chi_{u_{i}}\cdot \chi_{w_{i}}$ where $\chi_{u_{i}}\in I_{1}$ and $\chi_{w_{i}}\in I_{2}$.  Thus $\chi_{u_{i}}\cdot \chi_{w_{i}}\in I_{1}\cap I_{2}$ and consequently $\chi_{u}\cdot \chi_{w}\in I_{1}\cap I_{2}$.
\end{proof}

\begin{note}
As $\mathfrak{X}$ does not in general have algebraic properties like $\mathcal{X}$ when 1+1=0 in Y and because $\chi_{u}+\chi_{u^{c}}=Id$ and  $\chi_{u}\cdot \chi_{u^{c}}=\Theta$ in $\mathcal{X}$, the following definition is of interest when $I_{1}+I_{2}$ and  $I_{1}\cdot I_{2}$ are present.
\end{note}

\begin{defin} 
In a structure R like a ring, possibly weaker, $f\in R$ has has a complement $f^{c}\in R$ iff $f+f^{c}=Id$ and $f\cdot f^{c}=\Theta$.  R has complements iff every element has a complement.
\end{defin}

\begin{corollary}
The only ideal of R that contains an element and its complement is R.  Thus if I is a proper ideal then f and $f^{c}$ are not both in I.
\end{corollary}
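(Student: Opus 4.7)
The plan is as follows. Suppose $I$ is an ideal of $R$ containing both some element $f$ and its complement $f^{c}$. Since we are now in a setting with both a multiplication $\cdot$ and an addition $+$, and the notion of complement itself requires both operations, I will use the standing convention (cf.\ the note preceding Corollary to Lemma 31 and the note following Theorem 18) that in this two-operation setting an ideal $I$ is closed under $+$. Hence $f+f^{c}\in I$, and by the defining equality $f+f^{c}=\mathrm{Id}$, we obtain $\mathrm{Id}\in I$.

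Next, invoke the multiplicative ideal property from Definition 12: for every $g\in R$, since $\mathrm{Id}\in I$ we have $g\cdot \mathrm{Id}\in I$. Because $\mathrm{Id}$ is a unit for $\cdot$, this element equals $g$, so $g\in I$. Therefore $R\subseteq I$, and combined with the containment $I\subseteq R$ built into the definition of an ideal, we conclude $I=R$.

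The second assertion is then immediate by contraposition. If $I$ is a proper ideal, that is $I\neq R$, then the conjunction ``$f\in I$ and $f^{c}\in I$'' would force $I=R$ by the first part, contradicting properness; consequently $f$ and $f^{c}$ cannot both lie in $I$.

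The only potentially delicate point is bookkeeping about which type of identity (left, right, or two-sided) $\mathrm{Id}$ is, since the paper has deliberately kept this flexible; but the argument $g\cdot \mathrm{Id}=g$ requires only that $\mathrm{Id}$ act as a right unit, which is the weakest version already admitted. No other hypothesis (associativity, commutativity, distributivity, absence of zero-divisors) is needed, so the corollary holds in the full generality in which Definition 13 is stated.
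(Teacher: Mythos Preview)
Your argument is correct and is precisely the intended one: from $f,f^{c}\in I$ and closure under $+$ one gets $\mathrm{Id}=f+f^{c}\in I$, and then the ideal absorption property together with $\mathrm{Id}$ being a unit forces $I=R$. The paper does not supply a separate proof for this corollary, so your write-up simply makes explicit what the paper leaves as an immediate consequence of Definition~16.
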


\begin{corollary}
If f has a complement then it and its complement are idempotent.  Thus if R has complements it is Boolean.
\end{corollary}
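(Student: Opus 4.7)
The plan is to derive idempotence directly from the two defining equations $f+f^c=\mathrm{Id}$ and $f\cdot f^c=\Theta$ by invoking (one-sided) distributivity, which is the minimal additional axiom needed beyond what a complement already supplies. First I would right-multiply the equation $f+f^c=\mathrm{Id}$ by $f$ and distribute to obtain $f\cdot f + f^c\cdot f = f$; since $f\cdot f^c=\Theta$ (and, assuming the complement relation is symmetric in $f$ and $f^c$ as the definition reads, also $f^c\cdot f=\Theta$), this collapses to $f\cdot f = f$. A symmetric computation, left-multiplying $f+f^c=\mathrm{Id}$ by $f^c$ and using $f^c\cdot f=\Theta$, yields $f^c\cdot f^c=f^c$. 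This proves the first sentence.

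For the second sentence, recall that a Boolean ring is precisely a ring in which every element is idempotent, so if every $f\in R$ has a complement, then by the first part every $f$ satisfies $f\cdot f=f$, which is exactly the defining property of a Boolean ring. I would also remark, consistent with the preceding Corollary, that once $R$ is Boolean the complement is necessarily unique and the usual Boolean identities follow.

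The only real obstacle is bookkeeping about which ring axioms are being used, since the paper explicitly works in a weaker setting where associativity, commutativity, and distributivity are not standing hypotheses. The computation above requires (i) one-sided distributivity of $\cdot$ over $+$, (ii) the identities $f\cdot \mathrm{Id}=f$ and $\Theta+g=g$, and (iii) the convention $\Theta\cdot g=g\cdot\Theta=\Theta$ introduced earlier for the null element. I would state these prerequisites at the outset of the proof (they are in force whenever one speaks of $\mathrm{Id}$, $\Theta$, and sums in $R$ as the paragraph before Definition~14 does) so that the short algebraic manipulation is logically warranted. No deeper obstruction arises.
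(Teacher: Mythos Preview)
The paper states this corollary without proof, so there is no argument in the text to compare against; your proposal supplies the standard (and correct) derivation.

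One small cleanup removes the need for your symmetry assumption $f^{c}\cdot f=\Theta$, which is \emph{not} part of Definition~16. For the idempotence of $f$, \emph{left}-multiply $f+f^{c}=\mathrm{Id}$ by $f$ and distribute to get $f\cdot f + f\cdot f^{c}=f$; since $f\cdot f^{c}=\Theta$ is exactly what the definition gives, $f\cdot f=f$ follows. For the idempotence of $f^{c}$, \emph{right}-multiply the same equation by $f^{c}$ to get $f\cdot f^{c}+f^{c}\cdot f^{c}=f^{c}$, and again $f\cdot f^{c}=\Theta$ yields $f^{c}\cdot f^{c}=f^{c}$. Thus only the single relation $f\cdot f^{c}=\Theta$ is used, together with left and right distributivity and the unit and null identities you already listed. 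Your bookkeeping about the ambient axioms is appropriate and in the spirit of the paper.
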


\begin{corollary}
If $I_{1}+I_{2}=R$ where $I_{1} \and\  I_{2}$ are proper ideals and $f\in R$ has a complement,  then f is in one of the ideals and its complement is in the other.
\end{corollary}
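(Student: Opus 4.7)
The plan is to combine the coprimality hypothesis $I_1 + I_2 = R$ with the two preceding corollaries. Since $I_1 + I_2 = R$ contains $\mathrm{Id}$, write $\mathrm{Id} = a + b$ with $a \in I_1$ and $b \in I_2$. Multiplying through by $f$ and by $f^c$ and using distributivity yields $f = f\cdot a + f\cdot b$ and $f^c = f^c\cdot a + f^c\cdot b$, in which each leftmost summand lies in $I_1$ and each rightmost summand lies in $I_2$. By the first corollary to Definition 16, no proper ideal contains both $f$ and $f^c$, so at most one of the pair $\{f,f^c\}$ lies in $I_1$ and at most one lies in $I_2$.

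To convert a decomposition into actual membership I would invoke primality of $I_1$ and $I_2$ (the standing context throughout the paper's ideal theory) together with the orthogonality $f\cdot f^c = \Theta$. From $f\cdot f^c = \Theta \in I_1$ and $I_1$ prime, either $f \in I_1$ or $f^c \in I_1$; symmetrically for $I_2$. Combined with the first corollary this leaves only three logical possibilities: $f \in I_1$ with $f^c \in I_2$; the symmetric case $f \in I_2$ with $f^c \in I_1$; or the exceptional situation in which (say) $f \in I_1 \cap I_2$ while $f^c$ belongs to neither ideal. The first two give exactly the claim.

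The main obstacle is therefore ruling out the exceptional case, and this is the step where the coprimality $I_1 + I_2 = R$ must do nontrivial work. If $f \in I_1 \cap I_2$, then from $f^c\cdot a \in I_1$, $f^c\cdot b \in I_2$, and $f\cdot f^c = \Theta$ one obtains $(f\cdot a)(f^c\cdot b) = ab\, f f^c = \Theta$ and the analogous cross-term identities, so $a = f\cdot a + f^c\cdot a$ and $b = f\cdot b + f^c\cdot b$ split into orthogonal ``$f$-parts'' and ``$f^c$-parts.'' Using $\mathrm{Id} = f + f^c$ and the idempotence of $f$ and $f^c$ (the second corollary), this should force one of $f\cdot a$, $f\cdot b$ to equal $\Theta$, collapsing $f$ into a single $I_i$ and leaving $f^c$ in the other. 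This last collapse is the delicate point, and I expect the argument to need either commutativity and distributivity throughout $R$ or an implicit further hypothesis such as $I_1 \cap I_2 = (\Theta)$, so that $a\cdot b \in I_1 \cap I_2$ vanishes and the decomposition becomes rigid enough to force the desired dichotomy.
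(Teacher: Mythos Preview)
The paper gives no proof of this corollary at all; it is stated bare. So there is nothing to compare your argument to on the paper's side, and the question becomes simply whether your argument can be completed. It cannot, and your own diagnosis of why is essentially correct.

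Your ``exceptional case'' --- $f\in I_1\cap I_2$ while $f^c$ lies in neither ideal --- is not an artifact of an incomplete argument: it actually occurs, so the corollary as stated is false even after you add the primality hypothesis. Take $R=\mathbb{Z}_2\times\mathbb{Z}_2\times\mathbb{Z}_2$, let $I_1=\{(a,b,0)\}$ and $I_2=\{(0,b,c)\}$. Both are proper prime ideals and $I_1+I_2=R$. With $f=(0,1,0)$ one has $f^c=(1,0,1)$; then $f\in I_1\cap I_2$ but $f^c\notin I_1$ and $f^c\notin I_2$. Thus no amount of manipulation with the decomposition $\mathrm{Id}=a+b$, idempotence, or orthogonality will force the desired dichotomy --- the counterexample lives in exactly the Boolean setting (every element has a complement) that the surrounding text has in mind.

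Your closing suspicion that an extra hypothesis such as $I_1\cap I_2=(\Theta)$ is needed is on the right track, but note that even this does not fully rescue the statement: with that hypothesis the exceptional case forces $f=\Theta$, whence $f^c=\mathrm{Id}$, and $\mathrm{Id}$ lies in neither proper ideal. So the corollary would still fail at the trivial complements $\Theta,\mathrm{Id}$. In short: your instinct to flag the gap is sound, your attempted repair in the last paragraph does not go through, and the corollary as printed appears to require both primality and an additional restriction (for instance $I_1\cap I_2=(\Theta)$ together with $f\neq\Theta,\mathrm{Id}$, or equivalently $I_1\neq I_2$ prime and $f$ nontrivial) to be correct.
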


\begin{corollary}
If f has an additive inverse and a complement then its complement is unique (addition associative and commutative).
\end{corollary}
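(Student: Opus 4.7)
The plan is to suppose $f$ has two complements $f^c$ and $g^c$, and then show $f^c = g^c$ by a cancellation argument using only the additive structure. The multiplicative conditions $f \cdot f^c = \Theta = f \cdot g^c$ will play no role; the uniqueness is entirely an additive phenomenon, since the multiplicative defining equation does not involve cancellation hypotheses (there is no multiplicative inverse for $f$ in general).

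From the two assumed complements, I have $f + f^c = \mathrm{Id} = f + g^c$. Letting $-f$ denote the additive inverse of $f$, I would add $-f$ to both sides on the left, obtaining $(-f) + (f + f^c) = (-f) + (f + g^c)$. Associativity of addition lets me regroup each side as $((-f) + f) + f^c$ and $((-f) + f) + g^c$. Commutativity then identifies $(-f) + f$ with $f + (-f) = \Theta$, so both sides collapse to $\Theta + f^c$ and $\Theta + g^c$ respectively, yielding $f^c = g^c$. (Strictly speaking one also needs that $\Theta$ is a two-sided identity for $+$, but this follows from commutativity together with $\Theta$ being an additive zero for the existing additive inverse relation.)

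There is no real obstacle here; the only care needed is to verify that each axiom invoked (associativity, commutativity, existence of a one-sided additive inverse, and the fact that $\Theta$ acts as an additive identity) is either hypothesized in the parenthetical or already part of the ambient structure $R$. In particular, distributivity is not needed, and no relationship between addition and multiplication is needed, which is consistent with the paper's policy of introducing algebraic axioms only as required.
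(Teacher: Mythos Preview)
Your argument is correct. The paper states this corollary without proof, and the cancellation argument you give---deducing $f^c = g^c$ from $f + f^c = \mathrm{Id} = f + g^c$ by adding $-f$ and invoking associativity and commutativity---is exactly the standard verification one would expect the author to have had in mind. Your observation that the multiplicative condition $f \cdot f^c = \Theta$ plays no role is also on point and consistent with the paper's minimalist philosophy toward algebraic hypotheses.
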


\begin{theorem} 
If R has complements, $I_{1}$ is a prime ideal, $I_{2}$ an ideal, and  $I_{1}+I_{2}\neq R$, then $I_{1}+I_{2}$ is a prime ideal.
\end{theorem}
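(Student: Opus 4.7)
The plan is to prove this by the standard Boolean-style argument: use the hypothesis that $R$ has complements together with the existence of a multiplicative identity to split an arbitrary $f$ along a complementary pair, and then leverage primality of $I_{1}$ at the crucial step. Suppose $f \cdot g \in I_{1}+I_{2}$; I must show that $f \in I_{1}+I_{2}$ or $g \in I_{1}+I_{2}$.

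First I would invoke the complement hypothesis on $g$: there exists $g^{c}\in R$ with $g+g^{c}=\mathrm{Id}$ and $g\cdot g^{c}=\Theta$. Since $\Theta\in I_{1}$ and $I_{1}$ is prime, the defining property of a prime ideal gives the dichotomy $g\in I_{1}$ or $g^{c}\in I_{1}$. In the first case $g\in I_{1}\subseteq I_{1}+I_{2}$ and we are done immediately.

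In the second case, where $g^{c}\in I_{1}$, the ideal property of $I_{1}$ yields $f\cdot g^{c}\in I_{1}$. Using that $g+g^{c}=\mathrm{Id}$ and distributivity (which is available since $R$ is ring-like and we have both operations together with $\mathrm{Id}$), I would write
\[
f \;=\; f\cdot\mathrm{Id} \;=\; f\cdot(g+g^{c}) \;=\; f\cdot g + f\cdot g^{c}.
\]
The first summand lies in $I_{1}+I_{2}$ by hypothesis, and the second lies in $I_{1}\subseteq I_{1}+I_{2}$. Hence $f\in I_{1}+I_{2}$, completing the argument.

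The hypothesis $I_{1}+I_{2}\neq R$ is needed only to make the conclusion non-vacuous (a prime ideal must be proper by the convention set in Definition 12). The main potential obstacle is justifying the distributive step $f\cdot\mathrm{Id}=f\cdot g+f\cdot g^{c}$; since the note after Theorem 18 warns that distributivity is sometimes avoided, I would be explicit that the hypothesis ``$R$ has complements'' together with the natural ring-like reading of the theorem tacitly invokes distributivity of $\cdot$ over $+$ (otherwise the expression $f\cdot(g+g^{c})$ cannot be expanded and the result would fail). Everything else is formal manipulation with the idealic and primality hypotheses.
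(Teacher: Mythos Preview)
Your argument is correct, but it takes a genuinely different route from the paper's proof. You take the complement of a \emph{factor} (namely $g$) and then use distributivity to write $f=f\cdot g+f\cdot g^{c}$; in your version the hypothesis $I_{1}+I_{2}\neq R$ is indeed used only to ensure properness. The paper instead takes the complement of the \emph{product} $f=f_{1}\cdot f_{2}$: from $f\cdot f^{c}=\Theta\in I_{1}$ and primality one gets $f\in I_{1}$ or $f^{c}\in I_{1}$, and the second alternative is ruled out because it would force $f^{c}+f=\mathrm{Id}\in I_{1}+I_{2}$, contradicting $I_{1}+I_{2}\neq R$; hence $f=f_{1}\cdot f_{2}\in I_{1}$, and primality of $I_{1}$ finishes the job. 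The trade-off is clear: the paper's argument uses the hypothesis $I_{1}+I_{2}\neq R$ substantively and thereby avoids distributivity altogether (in keeping with the paper's program of minimal algebraic assumptions and the ``possibly weaker'' clause in Definition~16), whereas your argument is perhaps more direct but must invoke $f\cdot(g+g^{c})=f\cdot g+f\cdot g^{c}$, an expansion the paper deliberately sidesteps.
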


\begin{proof}
As $I_{1}+I_{2}$ is an ideal let $f=f_{1}\cdot f_{2}\in I_{1}+I_{2}$.  As $I_{1}$ is prime and $f\cdot f^{c}=\Theta$, f or $f^{c}$ is in $I_{1}$.  If $f^{c}\in I_{1}$ then $f^{c}+f=(f^{c}+\Theta)+f=Id\in I_{1}+I_{2}=R$.  Thus  $f=f_{1}\cdot f_{2}\in I_{1}$ and consequently $f_1$ or $f_2$ is in $I_{1}$ from which it follows that $f_{1}+\Theta$ or $f_{2}+\Theta$ is in $I_{1}+I_{2}$ 
 \end{proof}
  
\begin{lemma} 
If  the two operations are associative and distributive, if $f_{1} \and\  f_{2}$ are multiplicative commuting idempotents,  and if their product has an additive inverse then $(f_{1}\cdot f_{2})^{c}=Id-f_{1}\cdot f_{2}$
\end{lemma}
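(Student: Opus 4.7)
The plan is to simply verify the two defining equations for a complement given in Definition~14, namely $(f_{1}\cdot f_{2}) + (Id - f_{1}\cdot f_{2}) = Id$ and $(f_{1}\cdot f_{2}) \cdot (Id - f_{1}\cdot f_{2}) = \Theta$, for the candidate element $Id - f_{1}\cdot f_{2}$. Note that this candidate is well-defined because we are assuming $f_{1}\cdot f_{2}$ has an additive inverse, so $Id + (-(f_{1}\cdot f_{2}))$ makes sense in $R$.

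The additive identity is immediate: $(f_{1}\cdot f_{2}) + (Id - f_{1}\cdot f_{2}) = Id$ follows from associativity and the definition of additive inverse, since $(f_{1}\cdot f_{2}) + (-(f_{1}\cdot f_{2})) = \Theta$ and $\Theta + Id = Id$.

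For the multiplicative identity, I would distribute on the right: $(f_{1}\cdot f_{2})\cdot(Id - f_{1}\cdot f_{2}) = (f_{1}\cdot f_{2})\cdot Id - (f_{1}\cdot f_{2})\cdot(f_{1}\cdot f_{2}) = (f_{1}\cdot f_{2}) - (f_{1}\cdot f_{2})^{2}$. The key computation is then $(f_{1}\cdot f_{2})^{2} = f_{1}\cdot f_{2}$. For this I use that $f_{1}$ and $f_{2}$ commute multiplicatively and that multiplication is associative, to rewrite $f_{1}\cdot f_{2}\cdot f_{1}\cdot f_{2} = f_{1}\cdot f_{1}\cdot f_{2}\cdot f_{2}$, and then apply idempotence of each factor to get $f_{1}\cdot f_{2}$. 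Substituting back gives $(f_{1}\cdot f_{2}) - (f_{1}\cdot f_{2}) = \Theta$, as required.

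There is no genuine obstacle here; the argument is a short algebraic verification. The only subtlety worth flagging is that every hypothesis (associativity of both operations, right-distributivity of multiplication over addition, multiplicative commuting of $f_{1}$ and $f_{2}$, individual idempotence, and the existence of the additive inverse of the product) is used exactly once, so the lemma cannot be weakened in an obvious way. If desired, one could close by remarking via the second Corollary to Definition~14 that this complement is in fact the unique complement of $f_{1}\cdot f_{2}$, since $f_{1}\cdot f_{2}$ has an additive inverse.
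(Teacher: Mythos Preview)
Your argument is correct and is precisely the routine verification the paper has in mind; the paper itself states this lemma without proof, so your direct check of the two defining equations of a complement (Definition~16) is the intended approach. One small bookkeeping remark: the definition of complement is Definition~16 rather than~14, and the uniqueness statement you invoke at the end is the fourth corollary following that definition, not the second.
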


\noindent  Note that for characteristic functions $\chi_{u}^{c}=\chi_{u^{c}}$.

\begin{corollary}
If $\chi_{u_{1}} \ and\ \chi_{u_{2}}$ are in C(Z,Y) or $\mathcal{X}$ and Y is a ring then $(\chi_{u_{1}}\cdot \chi_{u_{2}})^{c}=(\chi_{u_{1}\cup u_{2}})^{c}=Id-\chi_{u_{1}\cup u_{2}}=\chi_{(u_{1}\cup u_{2})^{c}}$.  Consequently if 1+1=0 then $\chi_{u_{1}}+\chi_{u_{2}}=\chi_{u_{1}\cap u_{2}}-\chi_{u_{1}\cup u_{2}}$.
\end{corollary}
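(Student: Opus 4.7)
The plan is to chain together Lemma 59, Lemma 62, and the identity $\chi_{u}+\chi_{u^{c}}=Id$ from the corollary after Theorem 16, then handle the concluding identity separately using the $1+1=0$ arithmetic.

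For the first equality $(\chi_{u_{1}}\cdot\chi_{u_{2}})^{c}=(\chi_{u_{1}\cup u_{2}})^{c}$, I would just quote Lemma 59 part 2, which gives $\chi_{u_{1}}\cdot\chi_{u_{2}}=\chi_{u_{1}\cup u_{2}}$, so taking complements is immediate. To obtain the middle equality $(\chi_{u_{1}\cup u_{2}})^{c}=Id-\chi_{u_{1}\cup u_{2}}$, I would apply Lemma 62 to $f_{1}=\chi_{u_{1}}$ and $f_{2}=\chi_{u_{2}}$, verifying the four hypotheses: associativity and distributivity hold because $Y$ is a ring; each $\chi_{u_{i}}$ is idempotent by Lemma 59 part 1; they commute since $\chi_{u_{1}}\cdot\chi_{u_{2}}=\chi_{u_{1}\cup u_{2}}=\chi_{u_{2}}\cdot\chi_{u_{1}}$; and the product has an additive inverse because we are in a ring. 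Lemma 62 then delivers the equality.

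For the last equality $Id-\chi_{u_{1}\cup u_{2}}=\chi_{(u_{1}\cup u_{2})^{c}}$, I would invoke the corollary after Theorem 16 with $U=u_{1}\cup u_{2}$, obtaining $\chi_{u_{1}\cup u_{2}}+\chi_{(u_{1}\cup u_{2})^{c}}=Id$ and rearranging. Alternatively one can note that $\chi_{(u_{1}\cup u_{2})^{c}}$ satisfies the complement axioms of Definition 14 against $\chi_{u_{1}\cup u_{2}}$, and appeal to the uniqueness corollary to Lemma 62.

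For the concluding identity $\chi_{u_{1}}+\chi_{u_{2}}=\chi_{u_{1}\cap u_{2}}-\chi_{u_{1}\cup u_{2}}$ when $1+1=0$, the point is that $1+1=0$ forces $-x=x$, so the right side becomes $\chi_{u_{1}\cap u_{2}}+\chi_{u_{1}\cup u_{2}}$. Now apply Lemma 59 part 3 to this sum: since $u_{1}\cap u_{2}\subseteq u_{1}\cup u_{2}$, the intersection of the two index sets is $u_{1}\cap u_{2}$ and their union is $u_{1}\cup u_{2}$, so the sum collapses to $\chi_{(u_{1}\cap u_{2})\cup (u_{1}\cup u_{2})^{c}}$. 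But Lemma 59 part 3 applied directly to $\chi_{u_{1}}+\chi_{u_{2}}$ yields exactly that same characteristic function. Comparing the two evaluations finishes the argument. The only thing that requires care is bookkeeping the set-theoretic reductions after the double application of Lemma 59 part 3; there is no substantive obstacle beyond that, since everything else is a mechanical invocation of prior lemmas.
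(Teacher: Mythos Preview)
Your proposal is correct and matches the paper's intended reasoning. The paper states this corollary without proof, immediately after Lemma~62 and the remark that $\chi_{u}^{c}=\chi_{u^{c}}$; your argument simply unpacks that implicit derivation by invoking Lemma~59 part~2, Lemma~62, the identity $\chi_{U}+\chi_{U^{c}}=Id$, and (for the final sentence) Lemma~59 part~3 together with $1+1=0$, which is exactly the chain the paper relies on.
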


\begin{lemma} 
In $C(Z,\mathbb{Z}_{2})$ or $\mathcal{X}$ if $I_{1}  \and\  I_{2}$ prime ideals and $\chi_{u}\cdot \chi_{w}\in I_{1}\cdot I_{2}$ then \newline $\chi_{u}^{c}\cdot \chi_{w}^{c}-(\chi_{u}\cdot \chi_{w})^{c}=\chi_{u\cup w}-\chi_{u\cap w}$.
\end{lemma}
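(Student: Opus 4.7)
The plan is to verify the identity as a direct calculation in $C(Z,\mathbb{Z}_{2})$ (equivalently, in $\mathcal{X}$), using the multiplication and addition rules for characteristic functions recorded in Lemma 59 together with the identities $\chi_{u}^{c}=\chi_{u^{c}}$ and $(\chi_{u}\cdot\chi_{w})^{c}=\chi_{(u\cup w)^{c}}$ coming from the note preceding this lemma and its corollary. The hypotheses that $I_{1}$ and $I_{2}$ are prime and that $\chi_{u}\cdot\chi_{w}\in I_{1}\cdot I_{2}$ play no role in the algebraic identity itself; they serve only to situate the statement inside the broader ring-theoretic development. Since we are in characteristic $2$, $-\chi_{A}=\chi_{A}$, so the desired equation is equivalent to
\[
\chi_{u}^{c}\cdot\chi_{w}^{c}+(\chi_{u}\cdot\chi_{w})^{c}=\chi_{u\cup w}+\chi_{u\cap w}.
\]

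First I would simplify each term on the left. By Lemma 59.2) and De Morgan, $\chi_{u^{c}}\cdot\chi_{w^{c}}=\chi_{u^{c}\cup w^{c}}=\chi_{(u\cap w)^{c}}$, and the Corollary to Lemma 68 (applied with $\chi_{u_{1}}=\chi_{u}$, $\chi_{u_{2}}=\chi_{w}$) gives $(\chi_{u}\cdot\chi_{w})^{c}=\chi_{(u\cup w)^{c}}$. Thus the left side becomes $\chi_{(u\cap w)^{c}}+\chi_{(u\cup w)^{c}}$.

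Next I would apply Lemma 59.3) to both sides. For the left, put $a=(u\cap w)^{c}$ and $b=(u\cup w)^{c}$; since $b\subseteq a$ we have $a\cap b=(u\cup w)^{c}$ and $a\cup b=(u\cap w)^{c}$, whose complement is $u\cap w$. The formula gives $\chi_{(u\cup w)^{c}\cup(u\cap w)}$. For the right, put $c=u\cup w$ and $d=u\cap w$; since $d\subseteq c$, $c\cap d=u\cap w$ and $(c\cup d)^{c}=(u\cup w)^{c}$, so the formula gives $\chi_{(u\cap w)\cup(u\cup w)^{c}}$. The two expressions are identical, which closes the argument.

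I do not foresee a real obstacle: once the characteristic-$2$ reduction $-\chi_{A}=\chi_{A}$ is in place and De Morgan's laws are applied, the identity is just two symmetric applications of the sum formula 59.3). The only mild point to be careful about is tracking the two inclusions $(u\cup w)^{c}\subseteq(u\cap w)^{c}$ and $u\cap w\subseteq u\cup w$, which are precisely what collapse the intersections and unions in the formula and make both sides visibly equal. A sanity check by pointwise evaluation on the four regions $u\cap w$, $u\setminus w$, $w\setminus u$, and $(u\cup w)^{c}$ can be done if desired to confirm the calculation.
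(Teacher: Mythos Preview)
Your proof is correct, though it takes a somewhat more circuitous route than the paper's. The paper's argument is a two-line subtraction: using Lemma~61 and its Corollary (your reference to ``Lemma~68'' is a slip; the relevant corollary is the one to Lemma~61), one writes $(\chi_u\cdot\chi_w)^c = Id - \chi_{u\cup w}$ and $\chi_u^c\cdot\chi_w^c = \chi_{u^c}\cdot\chi_{w^c} = \chi_{u^c\cup w^c} = Id - \chi_{u\cap w}$, and then simply subtracts the first equation from the second so that the $Id$ terms cancel. Your approach instead passes to characteristic~$2$, rewrites both sides as sums of characteristic functions, and applies the symmetric-difference formula of Lemma~59.3) to reduce each side separately to the common form $\chi_{(u\cap w)\cup(u\cup w)^c}$. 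Both arguments are valid; the paper's is shorter because the $Id-\chi$ representation of complements makes the cancellation immediate without invoking 59.3), while yours has the minor virtue of staying entirely within the characteristic-function calculus and exhibiting the equality via a single common target.
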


\begin{proof}
Observe $(\chi_{u}\cdot \chi_{w})^{c}=Id-\chi_{u\cup w}$ and $\chi_{u}^{c}\cdot \chi_{w}^{c}=\chi_{u^{c}}\cdot \chi_{w^{c}}=\chi_{u^{c}\cup w^{c}}=Id-(\chi_{u^{c}\cup w^{c}})^{c}=Id-\chi_{u\cap w}$ and subtract the first from the second.
\end{proof}

\begin{note}
If any point of $Y$ is open then all points of $Y$ are open and hence $Y$ has the discrete topology.  We will use the term $\{0\}$ is open to indicate that Y is discrete.  Also in what follows Y need not be a ring that is $\cdot$ will usually suffice.  Thus again the ideals need not be ring ideals.
\end{note}

\noindent Here it is convenient to state a stronger form of Theorem 16.

\begin{theorem} 
If $\{z\}$ is clopen in Z, J a prime ideal, and $J\subseteq I(z)$ then $J=I(z)$.  Thus if $\{z\}$ is open in Z, I(z) is a minimal prime as it contains a prime ideal.
\end{theorem}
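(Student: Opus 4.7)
The plan is to exploit the principal-ideal description $I(z)=(\chi_z)$ from the third corollary to Definition 10, together with the idempotent pair $\chi_z,\chi_{z^c}$ arising from the clopen decomposition $Z=\{z\}\cup\{z\}^c$, and feed the relation $\chi_z\cdot\chi_{z^c}=\Theta$ into the primality of $J$.

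First I would fix the characters. Since $\{z\}$ is clopen so is $\{z\}^c$, and with $1\in Y$ (the running convention under which the notation $\chi_u$ is used) both $\chi_z$ (zero on $\{z\}$, equal to $1$ on $\{z\}^c$) and its mate $\chi_{z^c}$ (zero on $\{z\}^c$, equal to $1$ on $\{z\}$) lie in $C(Z,Y)$ and are idempotent. The corollary to Definition 10 then gives the key identity $I(z)=(\chi_z)$, so the problem reduces to showing $\chi_z\in J$.

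Next I would invoke primality. The product $\chi_z\cdot\chi_{z^c}$ equals $\Theta$, which belongs to every ideal and in particular to $J$. Because $J$ is prime, at least one of $\chi_z,\chi_{z^c}$ lies in $J$. But $\chi_{z^c}(z)=1\neq 0$, so $\chi_{z^c}\notin I(z)$; since $J\subseteq I(z)$, this rules out $\chi_{z^c}\in J$. Hence $\chi_z\in J$, giving $I(z)=(\chi_z)\subseteq J$, and combined with the hypothesis $J\subseteq I(z)$ this forces $J=I(z)$.

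For the final assertion, if $\{z\}$ is open in $Z$ then it is also closed (quasi-components are always closed, as recorded in the early note), hence clopen, so the first part applies. Moreover $I(z)$ is itself prime in this situation by the Corollary to Theorem 19, and the proved statement says any prime ideal contained in $I(z)$ must equal $I(z)$; therefore $I(z)$ is minimal among prime ideals. The only mild obstacle is verifying that nothing about divisors of zero in $Y$ is needed here, but the argument only uses the idempotent identity $\chi_z\cdot\chi_{z^c}=\Theta$ together with primality of $J$, so the conclusion holds in the generality claimed.
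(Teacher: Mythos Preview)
Your argument is correct and follows essentially the same route as the paper: use $\chi_z\cdot\chi_{z^c}=\Theta\in J$, primality of $J$, and $J\subseteq I(z)$ to force $\chi_z\in J$, then conclude $I(z)\subseteq J$. The only cosmetic difference is that you invoke the principal-ideal description $I(z)=(\chi_z)$ from the third corollary to Definition~10, whereas the paper writes the same fact pointwise as $f=f\cdot\chi_z$ for $f\in I(z)$; these are identical in content.
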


\begin{proof}
As $\chi_{z}\cdot \chi_{{z}^{c}}=0$, $\chi_{z}\in J$.  Thus if $f\in I(z)$ then $f=f\cdot \chi_{z}\in J$.
\end{proof}

\begin{theorem} 
If $\{0\}$ is clopen in $Y$, J is a prime ideal, and $J\subseteq I(z)$ then $J=I(z)$.  Thus if $\{0\}$ is open in Y, the I(z) are the min-max prime ideal. 
\end{theorem}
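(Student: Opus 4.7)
The plan is to mirror the proof of the preceding theorem (the one where $\{z\}$ clopen in $Z$ was used), substituting the clopen zero set $V(f) = f^{-1}(0)$ — which the new hypothesis makes available — for the clopen singleton $\{z\}$. Under the assumption that $\{0\}$ is clopen in $Y$, $V(f)$ is clopen in $Z$ for every $f \in C(Z,Y)$, so in particular the characteristic functions $\chi_{V(f)}$ and $\chi_{V(f)^{c}}$ both belong to $C(Z,Y)$ and are available for use inside $J$.

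To show $I(z) \subseteq J$, I will fix an arbitrary $f \in I(z)$ and argue $f \in J$. Since $f(z) = 0$ we have $z \in V(f)$, and by the paper's convention $\chi_{V(f)^{c}}(z) = 1 \neq 0$, so $\chi_{V(f)^{c}} \notin I(z) \supseteq J$. The identity $\chi_{V(f)} \cdot \chi_{V(f)^{c}} = \Theta \in J$, combined with primality of $J$, then forces $\chi_{V(f)} \in J$. A routine pointwise check — on $V(f)$ the factor $f$ is already zero; off $V(f)$ the factor $\chi_{V(f)}$ is the unit $1$ — yields $f = f \cdot \chi_{V(f)}$, so $f \in J$ because $J$ is an ideal. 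Hence $J = I(z)$.

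The ``min-max'' conclusion then reads off from the first assertion: the statement just proved is precisely that no prime lies strictly below $I(z)$, so each $I(z)$ is a minimal prime; the corresponding upward statement follows from the earlier result that any prime $J'$ with nonempty zero set satisfies $V(J') = \{z'\}$ and $J' \subseteq I(z')$, so that $I(z) \subseteq J'$ together with $V(J') \subseteq V(I(z)) = \{z\}$ forces $z' = z$ and hence $J' = I(z)$.

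I anticipate no serious obstacle. The only step requiring genuine care is bookkeeping: one must keep straight the convention that $\chi_{U}$ is zero on $U$ (not on $U^{c}$), so that the ``right'' one of $\chi_{V(f)}, \chi_{V(f)^{c}}$ is identified as vanishing at $z$, since the entire primality argument hinges on $\chi_{V(f)^{c}} \notin J$. The algebraic identities themselves — $\chi_{V(f)} \cdot \chi_{V(f)^{c}} = \Theta$ and $f = f \cdot \chi_{V(f)}$ — are the same ones already exploited repeatedly in the excerpt (see in particular Lemma 59 and the corollaries to Definition 10).
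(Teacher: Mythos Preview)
Your proof of the main assertion is correct and follows the same route as the paper: set $U=f^{-1}(0)$, observe that $U$ is clopen because $\{0\}$ is, use primality of $J$ together with $\chi_{U^{c}}\notin I(z)\supseteq J$ to force $\chi_{U}\in J$, and conclude via $f=f\cdot\chi_{U}\in J$. The paper compresses all of this into two sentences (``For $f\in I(z)$ choose $U=f^{-1}(0)$. Then $\chi_{U}\in J$ and consequently $f=f\cdot\chi_{U}\in J$''), leaving the primality step implicit; you have simply spelled out what the author suppressed.
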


\begin{proof}
For $f\in I(z)$ choose $U=f^{-1}(0)$.  Then $\chi_{U}\in J$ and consequently $f=f\cdot \chi_{U}\in J$.
\end{proof} 

\begin{lemma} 
If $\{z\}$ is clopen then $I(z)=(\chi_{z})$ is principal and thus $\chi_{U}\in I(z)$ for every clopen $U$ containing $z$.  
\end{lemma}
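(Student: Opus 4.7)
The lemma has two claims, and both should fall out immediately from results already established.

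For the first claim, the plan is to invoke the third Corollary to Definition 10, which states that whenever $1\in Y$ and $U$ is clopen, $I(U)=(\chi_U)$. Since $\{z\}$ is assumed clopen, that corollary applied to $U=\{z\}$ gives $I(z)=I(\{z\})=(\chi_z)$, which is by definition the principal ideal generated by $\chi_z$. So the first assertion is just an instance of a corollary already in hand; I would simply cite it.

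For the second claim, I would give two equivalent routes and present whichever reads more cleanly. The direct route: by the convention for $\chi_U$, $\chi_U(y)=0$ exactly when $y\in U$, so $z\in U$ forces $\chi_U(z)=0$, i.e.\ $\chi_U\in I(z)$. The more structural route, which fits the overall framework of the paper better, is to apply item 4 of Lemma 59 with $U_1=\{z\}\subseteq U=U_2$, giving $\chi_U=\chi_U\cdot\chi_z$; then since $\chi_z\in(\chi_z)$ and $(\chi_z)$ is closed under multiplication by elements of $C(Z,Y)$, we conclude $\chi_U\in(\chi_z)=I(z)$ using the first part.

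There is really no main obstacle here: both assertions are immediate consequences of already-proved results, and the only thing to be careful about is ensuring the ambient hypotheses (existence of a unit $1\in Y$, so that $\chi_z$ and $\chi_U$ make sense as characteristic functions taking values $0$ and $1$) are in force, which they are by the running convention established before Definition 11.
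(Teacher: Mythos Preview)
Your proposal is correct and follows essentially the same path as the paper's proof: the paper writes ``Use Lemma 38 and Lemma 59, No.\ 4,'' and you likewise establish the second claim via Lemma 59(4). For the first claim you cite the third Corollary to Definition 10 (equivalently Lemma 36), which in fact gives the full equality $I(z)=(\chi_z)$ directly, whereas the paper's cited Lemma 38 strictly speaking only yields the containment $(\chi_z)\subseteq I(z)$; so your citation is at least as clean.
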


\begin{proof}
Use Lemma 38 and Lemma 59, No.4.  Note that $\chi_{z}$ is continuous independent of whether $\{0\}\subseteq Y$ is open or not.
\end{proof}

\begin{note}
If $\chi$ is a characteristic function and $W\subset Y$ is clopen such that $0\in W$ and $1\notin W$ then $\chi^{_-1}(0)=\chi^{-1}(W)$ is clopen although $\{0\}\subset Z$ need not be open. Likewise $\chi^{-1}(1)$ is clopen.
\end{note}

\begin{lemma} 
If $(\chi_{u})$ is neither $(\Theta)$ or C(Z,Y) then neither is $(\chi_{u^{c}})$ and If $\cdot$ is commutative and associative then $(\chi_{u})\cap (\chi_{u^{c}})=(\Theta)$.  
\end{lemma}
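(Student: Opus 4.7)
The plan is to split the lemma into its two assertions and dispatch each by translating the properness conditions on principal ideals into set-theoretic conditions on $U$, and then computing pointwise for the intersection.

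For the first assertion, I would identify the two properness hypotheses concretely. Since $(\chi_u) = \{g \cdot \chi_u \mid g \in C(Z,Y)\}$, the standing null convention gives $(\chi_u) = (\Theta)$ iff $\chi_u = \Theta$ iff $U = Z$, while $(\chi_u) = C(Z,Y)$ iff $Id \in (\chi_u)$; the latter forces $1 = g(z) \cdot 0$ at every $z \in U$, which is impossible unless $U = \varnothing$. Hence the joint hypothesis amounts to $\varnothing \ne U \ne Z$, a condition symmetric in $U$ and $U^c$. Running the same analysis with $\chi_{u^c}$ in place of $\chi_u$ then yields the first assertion.

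For the intersection identity, I would take $f \in (\chi_u) \cap (\chi_{u^c})$ and write $f = g_1 \cdot \chi_u = g_2 \cdot \chi_{u^c}$ for some $g_1, g_2 \in C(Z,Y)$. Pointwise: for $z \in U$ the first representation gives $f(z) = g_1(z) \cdot 0$, and for $z \in U^c$ the second gives $f(z) = g_2(z) \cdot 0$. Here commutativity of $\cdot$ is decisive: the standing convention of the paper only asserts $0 \cdot a = 0$, but commutativity upgrades this to $a \cdot 0 = 0 \cdot a = 0$, so both pointwise values vanish and $f \equiv \Theta$. Since $\Theta$ lies in every ideal, this gives $(\chi_u) \cap (\chi_{u^c}) \subseteq (\Theta)$, and the reverse inclusion is immediate.

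The main obstacle is really just recognizing the role of each hypothesis. Commutativity is exactly what converts the paper's one-sided null into two-sided absorption of $0$, which is what forces the pointwise vanishing on both $U$ and $U^c$. Associativity is listed alongside presumably because it is what makes $(\chi_u)$ closed under multiplication by arbitrary elements of $C(Z,Y)$ via the rearrangement $h \cdot (g \cdot \chi_u) = (h \cdot g) \cdot \chi_u$, i.e.\ a genuine ideal rather than merely a set of left multiples. Once these two structural points are in hand, the rest of the argument is bookkeeping on characteristic functions.
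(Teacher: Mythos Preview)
Your argument is correct. For the first assertion you and the paper do the same thing: reduce the properness hypotheses on $(\chi_u)$ to the set-theoretic condition $\varnothing\ne U\ne Z$ and then use its symmetry under complementation; the paper phrases this as a contrapositive via $(\chi_{u^c})=(\chi_\varnothing)\Rightarrow(\chi_u)=(\chi_Z)$ and vice versa, but the content is identical.

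For the intersection identity the routes genuinely diverge. The paper takes an element of $(\chi_u)\cap(\chi_{u^c})$, writes it in both forms, and then multiplies the two representations together, using commutativity and associativity to regroup so that the factor $\chi_u\cdot\chi_{u^c}=\Theta$ appears; this is the ``composite product'' it refers to, and is where associativity is actually invoked in the paper's argument. Your pointwise computation bypasses this: you read off $f(z)=g_1(z)\cdot 0$ on $U$ and $f(z)=g_2(z)\cdot 0$ on $U^c$, using only commutativity to turn the paper's one-sided null $0\cdot a=0$ into $a\cdot 0=0$. This is more elementary, does not need associativity for the vanishing step, and transparently covers arbitrary $f$ in the intersection, whereas the paper's proof as written names the element $\chi_w$ and appeals to $\chi_w=\chi_w\cdot\chi_w$, which literally only treats characteristic functions (the idea extends, via idempotency of $\chi_u$ and $\chi_{u^c}$ rather than of the element, but the paper does not say so). Your remark that associativity is really there to make $(\chi_u)$ an honest ideal is a fair reading; in the paper it is doing double duty, also enabling the regrouping in their product argument.
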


\begin{proof}
If $(\chi_{u^{c}})=C(Z,Y)=(\chi_{\varnothing})=(\chi_{Z^{c}})$ then $(\chi_{u})=(\chi_{Z})=(\Theta)$.  \newline  If $(\chi_{u^{c}})=(\Theta)=(\chi_{Z})$ then $(\chi_{u})=(\chi_{Z^{c}})=(\chi_{\varnothing})=(Id)=C(Z,Y)$.  \newline  Suppose $\chi_{w}\in (\chi_{u})\cap (\chi_{u^{c}})$ and thus $\chi_{w}=\chi_{w_{1}}\cdot \chi_{u}=\chi_{w_{2}}\cdot \chi_{u^{c}}$.  As $\chi_{w}=\chi_{w}\cdot \chi_{w}$ form the composite product to show $\chi_{w}=\Theta$.
\end{proof}

\begin{theorem} 
If $\cdot$ is commutative and associative, if $J$ be a prime ideal, and if $J\neq C(Z,Y)$ or $(\Theta)$, then let $\chi_{1}\in J$ and set $U_{1}=\chi_{1}^{-1}(0)\neq Z$ which is clopen.  If $U$ is clopen and $U\cap U_{1}\ne \varnothing$ then $\chi_{u}\in J$.
\end{theorem}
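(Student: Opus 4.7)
The plan is to take $\chi_1$ as a characteristic function of its zero set $U_1$ (consistent with the paper's notation, and with Theorem 17 together with Lemma 35, which ensure such characteristic functions lie in any nontrivial prime ideal), and then to split $\chi_{U_1}$ across the clopen partition $\{U,U^c\}$ of $Z$.

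The key algebraic identity, which uses only commutativity, associativity, and idempotence of $\cdot$, is
\[
\chi_{U_1} \;=\; \chi_{U \cap U_1} \cdot \chi_{U^c \cap U_1},
\]
valid because $U\cap U_1$ and $U^c\cap U_1$ are clopen, partition $U_1$, and the product of two characteristic functions is the characteristic function of the union of their zero sets. Feeding this product into primality of $J$ and using $\chi_{U_1}\in J$, at least one of the two factors must lie in $J$.

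In the favourable case $\chi_{U\cap U_1}\in J$, I use the inclusion $U\cap U_1\subseteq U$ together with the idempotent identity $\chi_U = \chi_U\cdot \chi_{U\cap U_1}$ and the ideal property to conclude $\chi_U\in J$ directly. This is the upward-closure already recorded in Lemma 35 No.\ 1 and in item 9 of Lemma 59, so this half of the case analysis is essentially mechanical.

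The main obstacle is excluding the alternative $\chi_{U^c\cap U_1}\in J$, and this is where the hypothesis $U\cap U_1\neq\varnothing$ must enter in an essential way. The strategy is to invoke Theorem 47: since $J$ is a proper prime ideal, $V(J)$ is either empty or a single point $\{z\}$ with $J\subseteq I(z)$; then $\chi_{U_1}\in J$ forces $z\in U_1$, while $\chi_{U^c\cap U_1}\in J$ would additionally force $z\in U^c$. Combined with a normalization of the choice of $\chi_1$ so that $U_1$ witnesses $V(J)$ (which makes $U\cap U_1\neq\varnothing$ equivalent to $z\in U$, cf.\ the Corollary to Theorem 47), this yields the required contradiction and closes the second case. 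The delicate step on which I would spend the most care is precisely this normalization of $\chi_1$; once it is in place, the factorization/primality dichotomy from the previous paragraphs finishes the proof.
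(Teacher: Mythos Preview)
Your factorization $\chi_{U_1}=\chi_{U\cap U_1}\cdot\chi_{U^c\cap U_1}$ followed by primality is a clean setup, and your Case~1 is correct (this is the upward closure of Lemma~32(1) and Lemma~59(9), not ``Lemma~35''). The problem is entirely in Case~2. There is no ``Theorem~47'' in the paper; presumably you mean Theorem~19, but that result only applies when $V(J)\neq\varnothing$, which is not assumed here. More seriously, even if $V(J)=\{z\}$, the hypothesis $U\cap U_1\neq\varnothing$ does \emph{not} force $z\in U$: the function $\chi_1$ is given to you by the statement, so the ``normalization of the choice of $\chi_1$'' you propose is not available. Concretely, take $Z=\{a,b,c\}$ discrete, $Y=\mathbb{Z}_2$, $J=I(a)$, $\chi_1=\chi_{\{a,b\}}$ so $U_1=\{a,b\}$, and $U=\{b\}$. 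Then $U\cap U_1=\{b\}\neq\varnothing$, your Case~2 holds since $\chi_{U^c\cap U_1}=\chi_{\{a\}}\in J$, yet $\chi_U=\chi_{\{b\}}\notin J$ because $\chi_{\{b\}}(a)=1$. So Case~2 cannot be eliminated under the stated hypotheses; your ``delicate step'' is not a detail to be filled in but an actual obstruction.

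For comparison, the paper argues by a three-case split on the inclusion relations between $U$ and $U_1$: case~(a) $U_1\subseteq U$ is a specialization of your Case~1, and case~(c) reduces to (a) and (b) via the clopen set $U\cap U_1$. The crux is case~(b) $U\subseteq U_1$, where the paper shows $\chi_{U^c}\in(\chi_{U_1^c})$, deduces $\chi_U\notin(\chi_{U_1^c})$, and then asserts $\chi_U\in(\chi_{U_1})\subseteq J$. That last step is exactly the same gap you ran into: membership in $(\chi_{U_1})$ and in $(\chi_{U_1^c})$ are not complementary alternatives for an arbitrary $\chi_U$, and the example above (which satisfies $U=\{b\}\subseteq\{a,b\}=U_1$) lands in case~(b) and falsifies the conclusion. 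So neither route closes, and the difficulty is intrinsic to the statement as written rather than to your particular strategy.
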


\begin{proof}
Let $\chi_{u_{1}}=\chi_{1}\in (\chi_{u_{1}})$.  Cases: (a) $U_{1}\subseteq U$, (b) $U\subseteq U_{1}$, and (c) $U_{1}\bigcap U$ is in both. \newline  (a) Note, $\chi_{u}=\chi_{u}\cdot \chi_{u_{1}}\in (\chi_{u_{1}})\subseteq J$.  \newline  (b) If $U\subseteq U_{1}$ then $U_{1}^{c}\subseteq U^{c}$ which implies $\chi_{u^{c}}=\chi_{u^{c}}\cdot \chi_{u_{1}^{c}}\in(\chi_{u_{1}^{c}})$.  Thus $\chi_{u}\notin (\chi_{u_{1}^{c}})$ and thus $\chi_{u}\in (\chi_{u_{1}})\subseteq J$. 
 \newline  (c)  As $U\bigcap U_{1}$ is clopen and as $U\bigcap U_{1}\subseteq U_{1}$ by (b) $\chi_{u\bigcap u_{1}}\in J$.  Now as $U\bigcap U_{1}\subseteq U$ we have from (a), $\chi_{u}\in J$.
\end{proof}

\begin{lemma} 
Assuming that $\cdot$ is commutative and associative let  $J$ be a prime ideal, and $\chi_{u_{1}}\in J$.  If $U$ is clopen and if $z\in U\cap U_{1}$ then $\chi_{u}\in J\cap I(z)$.
\end{lemma}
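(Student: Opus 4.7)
The plan is to observe that this lemma is essentially an immediate consequence of the preceding Theorem~25 together with the definition of $I(z)$, so the task is really to verify two separate memberships for $\chi_u$.

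First I would check that $\chi_u \in I(z)$. By our standing convention, $\chi_u$ is the characteristic function of the clopen set $U$, i.e.\ $\chi_u(x) = 0$ for $x \in U$ and $\chi_u(x) = 1$ for $x \notin U$. Since the hypothesis gives $z \in U \cap U_1 \subseteq U$, we have $\chi_u(z) = 0$, which by Definition~10 means exactly that $\chi_u \in I(z)$.

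Next I would check that $\chi_u \in J$. The hypothesis $z \in U \cap U_1$ in particular gives $U \cap U_1 \ne \varnothing$. Since $\chi_{u_1} \in J$ and, by our convention, $U_1 = \chi_{u_1}^{-1}(0)$ is clopen and not equal to $Z$ (otherwise $\chi_{u_1} = \Theta$ would force nothing nontrivial, but here we are in the setting of Theorem~25 where $J$ is a nontrivial prime ideal), the hypotheses of Theorem~25 are met with the clopen set $U$ intersecting $U_1$ nontrivially. Applying Theorem~25 yields $\chi_u \in J$.

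Combining the two memberships gives $\chi_u \in J \cap I(z)$, which is the conclusion. There is no real obstacle here: the only subtle point is making sure one is entitled to invoke Theorem~25, which requires $J \ne C(Z,Y)$ and $J \ne (\Theta)$; the first is automatic since $J$ is a prime ideal (hence proper by Definition~12), and the second is guaranteed by the hypothesis $\chi_{u_1} \in J$ together with $U_1 \ne Z$ (implicit in the fact that $U \cap U_1$ is a nonempty subset of the clopen set $U_1$ and that the theorem only has content when $\chi_{u_1} \ne \Theta$).
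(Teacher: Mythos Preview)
Your proposal is correct and matches the paper's approach: the paper states this lemma without proof, evidently regarding it as an immediate consequence of the preceding theorem (numbered Theorem~26 in the paper, which you refer to as Theorem~25) together with the definition of $I(z)$; your two verifications are exactly what is needed. One small simplification: the side condition $J \neq (\Theta)$ is already handled by the Corollary following Definition~11, since $(\Theta)$ is never prime, so you need not tie it to $U_1 \neq Z$.
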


\begin{theorem} 
If $\cdot$ is commutative and associative, if J is a prime ideal,  and if $\{0\} \subseteq Y$ is open then there exist a $\{z\}$ such that $I(z)\subseteq J$.  Note $\{z\}\subset Z$ need not be open.
\end{theorem}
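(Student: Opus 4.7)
The plan is to locate a suitable $z$ as a point of the clopen zero set of a characteristic function known to lie in $J$, and then use Theorem~20 to show that every member of $I(z)$ is already in $J$. The overall pattern mirrors the proof of Theorem~19, with Theorem~20 doing the heavy lifting.

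First, I would produce a characteristic function $\chi_{U_{1}} \in J$ with $U_{1}$ clopen and $\varnothing \neq U_{1} \neq Z$. Since $J$ is prime it is proper, and by Theorem~12 we have $J \neq (\Theta)$, so $J$ is genuinely nontrivial. Because $\Vert Z\Vert \geq 2$ and any two quasi-components of $Z$ can be separated by clopen sets (Lemma~13), there exists a clopen $U$ with $\varnothing \neq U \neq Z$. By Lemma~35 applied to $J$ and this $U$, at least one of $\chi_{U}, \chi_{U^{c}}$ belongs to $J$; take $U_{1}$ to be whichever of $U, U^{c}$ works. Then $U_{1}$ is clopen, $\varnothing \neq U_{1} \neq Z$, and $\chi_{U_{1}} \in J$, putting us squarely in the hypotheses of Theorem~20.

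Next, fix any point $z \in U_{1}$ and claim $I(z) \subseteq J$. Let $f \in I(z)$, so $f(z) = 0$, and set $W = V(f) = f^{-1}(0)$. The hypothesis that $\{0\}$ is open in $Y$, combined with the fact that $Y$ is $T_{2}$ so $\{0\}$ is also closed, makes $\{0\}$ clopen in $Y$; continuity of $f$ then makes $W$ clopen in $Z$. Since $z \in W \cap U_{1}$, the intersection $W \cap U_{1}$ is nonempty, so Theorem~20 (with $\chi_{U_{1}}$ playing the role of $\chi_{1}$) gives $\chi_{W} \in J$. A pointwise check shows $f = f \cdot \chi_{W}$ (on $W$ both factors vanish; on $W^{c}$ we have $\chi_{W} = 1$), whence $f \in J$ by the ideal property. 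Therefore $I(z) \subseteq J$.

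The main obstacle is ensuring Theorem~20 actually applies, which amounts to guaranteeing a nontrivial characteristic function inside $J$; this is what Lemma~35 and Lemma~13 together deliver. Once that is in place, the only substantive remaining point is that $\{0\}$ open in $Y$ is precisely what promotes every $V(f)$ for $f \in I(z)$ to a clopen set, which is the hypothesis Theorem~20 needs on its input. Everything else is a routine rewriting $f = f \cdot \chi_{V(f)}$, analogous to the identity used in Theorem~19.
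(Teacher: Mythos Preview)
Your argument is correct and follows the paper's proof essentially step for step: pick a characteristic function $\chi_{U_1}\in J$, choose $z\in U_1$, observe that $V(f)$ is clopen for each $f\in I(z)$ because $\{0\}$ is open, invoke the result that $W\cap U_1\neq\varnothing$ forces $\chi_W\in J$, and conclude $f=f\cdot\chi_{V(f)}\in J$. The only discrepancy is in your cross-references: the key lemma you invoke as ``Theorem~20'' is Theorem~26 in the paper's numbering (and what you call Lemma~35 is Lemma~32, part~2), so double-check those labels before splicing this in.
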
 

\begin{proof}
Let $U$ be clopen.  As $\chi_{u}$ or $\chi_{u^{c}}$ is in $J$ with out loss of generality call it $\chi_{u_{1}}$.  Choose $z\in U_1$ and $f\in I(z)$.  Let $U_{f}=f^{-1}(0)=V(f)$ which is clopen.  Then $z\in U_{1}\cap U_{f}$.  Thus by Theorem 26, $\chi_{f}\in J$ and therefore $f=f\cdot \chi_{f}\in J$.  Consequently $I(z)\subseteq J$.
\end{proof}

\begin{theorem} 
If $\cdot$ is associative and commutative, if J is a prime ideal, if $V(J)\neq \varnothing$, and if \{0\} is open in Y then there exist a unique z such that $J=I(z)$.  In this case the I(z) are minimal prime ideals.   Hence if \{z\} is open the ideal is principal (Lemma 63). 
\end{theorem}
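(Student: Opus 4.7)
The plan is to assemble prior results in a sandwich. First, since $J$ is a prime ideal with $V(J)\neq\varnothing$, Theorem 24 produces a unique $z$ with $V(J)=\{z\}$ and the inclusion $J\subseteq I(z)$. Next, invoking Theorem 27, whose hypotheses (associativity and commutativity of $\cdot$, primality of $J$, and $\{0\}$ open in $Y$) are exactly those at our disposal, we obtain some $z'\in Z$ with $I(z')\subseteq J$. Chaining these inclusions yields
\[
I(z')\ \subseteq\ J\ \subseteq\ I(z).
\]

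The crux is then identifying $z'$ with $z$. The corollary to Definition 10 (derived from the fact that distinct quasi-components can be separated by clopen sets) states that if $z_1\neq z_2$ then $I(z_1)$ and $I(z_2)$ are incomparable; in particular $I(z')\subseteq I(z)$ forces $z'=z$. Hence $I(z)\subseteq J\subseteq I(z)$, so $J=I(z)$, with the uniqueness of $z$ inherited from Theorem 24.

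Minimality among prime ideals now follows from Theorem 23 applied with $J$ replaced by any prime ideal contained in $I(z)$: since $\{0\}$ is open in $Y$, every prime ideal contained in $I(z)$ must equal $I(z)$, so $I(z)$ is a minimal prime. Finally, if the quasi-component $\{z\}$ is open in $Z$, then Lemma 63 (together with Lemma 38) supplies the continuous characteristic function $\chi_z$ and gives $I(z)=(\chi_z)$, which is principal as required.

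The step I expect to demand the most care is harvesting the right $z'$ from Theorem 27 and then matching it against the $z$ from Theorem 24; the incomparability corollary makes this go through cleanly, but it is worth underscoring that without $V(J)\neq\varnothing$ one would have no $z$ to anchor the sandwich, and without $\{0\}$ open one would not have access to Theorem 27 to produce $z'$ at all. Both topological hypotheses are therefore essential, and the argument exhibits them as the two ends that pinch $J$ down to $I(z)$.
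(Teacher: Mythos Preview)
Your argument is correct and follows exactly the paper's approach: sandwich $J$ between $I(z')\subseteq J\subseteq I(z)$ and then invoke the incomparability corollary after Definition~10 to force $z'=z$. The only issue is bookkeeping: the result you describe as ``Theorem~24'' (producing a unique $z$ with $V(J)=\{z\}$ and $J\subseteq I(z)$) is actually Theorem~19, and the minimality statement you attribute to ``Theorem~23'' is Theorem~25; with those labels corrected your proof matches the paper's.
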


\begin{proof}
By Theorem 19 there exist a unique $z_{1}$ such that $J\subseteq I(z_{1})$ and by Theorem 27 there is a $\{z_{2}\}$ such that $I(z_{2})\subseteq J$.  Thus as $I(z_{2})\subseteq J\subseteq I(z_{1})$, $z_{1}=z_{2}$ by Corollary 5 of Definition 10.
\end{proof}

\begin{corollary}
If $\cdot$ is associative and commutative, J is prime, \{0\} is open in Y, and $J\neq I(z)$ for any z then $V(J)=\varnothing$ and $J\nsubseteqq I(z)$.
\end{corollary}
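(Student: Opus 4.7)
The plan is to prove both conclusions by contradiction, each time invoking one of the two immediately preceding results. The hypotheses of the corollary ($\cdot$ associative and commutative, $J$ prime, $\{0\}$ open in $Y$) are precisely those of Theorem 28 and Theorem 25 with the extra clauses ``$V(J)\neq\varnothing$'' and ``$J\subseteq I(z)$'' respectively dropped, so the corollary is essentially a packaged contrapositive of each.

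For the first conclusion, $V(J)=\varnothing$: suppose not, i.e.\ $V(J)\neq\varnothing$. Then Theorem 28 applies verbatim and produces a unique $z\in Z$ with $J=I(z)$. This contradicts the standing hypothesis that $J\neq I(z)$ for any $z$. Hence $V(J)=\varnothing$.

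For the second conclusion, $J\nsubseteq I(z)$ for every $z$: suppose for contradiction that $J\subseteq I(z)$ for some $z\in Z$. Since $Y$ is assumed totally separated, it is in particular $T_1$, so $\{0\}$ is closed in $Y$; combined with the hypothesis that $\{0\}$ is open, this makes $\{0\}$ clopen in $Y$. Thus the hypotheses of Theorem 25 are met and yield $J=I(z)$, contradicting the assumption that $J\neq I(z)$ for any $z$. Therefore no such containment holds.

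There is no genuine obstacle here; the statement is an organizational corollary assembling the contrapositives of Theorems 25 and 28. The only minor subtlety is passing from ``$\{0\}$ open in $Y$'' in the corollary's hypothesis to ``$\{0\}$ clopen in $Y$'' in the hypothesis of Theorem 25, which is immediate from the standing total separation (hence $T_1$) assumption on $Y$.
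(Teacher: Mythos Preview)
Your proof is correct. The paper states this as a corollary to Theorem~28 without giving an explicit argument, and your contrapositive of Theorem~28 for the first conclusion is exactly what is intended.

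For the second conclusion you route through Theorem~25 (checking that ``$\{0\}$ open'' upgrades to ``$\{0\}$ clopen'' via the standing total-separation hypothesis on $Y$). That is valid, but note that it is not needed: once you have $V(J)=\varnothing$, the inclusion $J\subseteq I(z)$ already forces $z\in V(J)$ (every $f\in J$ lies in $I(z)$, so $f(z)=0$), which is an immediate contradiction. This is presumably why the paper lists the statement as a corollary of Theorem~28 alone rather than of Theorems~25 and~28 jointly. Your approach buys nothing extra here, but it does no harm either.
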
 
  
\begin{note}
The functions of $C(Z,\mathbb{Z}_{2})$ are the characteristic functions and the constant functions $\Theta=\chi_{z}$ and $Id=\chi_{\varnothing}$.  The characteristic functions other than $\Theta$ and Id are surjective.  For $\chi\in C(Z,\mathbb{Z}_{2})$ let $U=\chi^{-1}(0)$ so that $\chi=\chi_{u}$.
\end{note}

\noindent  As a quasi-component of $X$ or $Z$ is defined as the intersection of the clopen sets containing the point x or $z=[x]$ and each quasi-component, $[x]$ or $[z]$, corresponds to $I(x)$ or I(z) and hence to the characteristic functions of the clopen sets containing $x$ or z, it is natural to check the relationship between $C(X,Y)$, $C(\Pi,Y)$, C(Z,Y), and $C(Z,\mathbb{Z}_{2})$ where $\mathbb{Z}_{2}$ has the discrete topology.  The classical example is $C(\mathbb{Z},\mathbb{Z}_{2})$ which easily generalizes to $C(Z,\mathbb{Z}_{2})$.  What is important (regardless of the topology of X) because of continuity, the functions of $C(X,\mathbb{Z}_{2})$ are of the form $\chi_u$ where U is a clopen set.  
In Theorems 29 and 30 assume Z has the discrete topology and Y has multiplication and addition with 0 and 1 with 0 open.  Consequently in C(Z,Y), $\chi_{u}+\chi_{u^{c}}=Id$. 

\begin{theorem} 
If I and J are proper prime ideals neither can be a subset of the other.
\end{theorem}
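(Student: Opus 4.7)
The plan is to argue by contradiction: assume $I \subsetneq J$ are two comparable proper prime ideals and produce a unit in $J$. The first step is to apply Theorem 18 to the inclusion $I \subseteq J$, with $I$ prime and $J$ proper. This gives $\chi_U \in I$ if and only if $\chi_U \in J$ for every clopen $U \subseteq Z$. The underlying reason is that if $\chi_U \in J \setminus I$, then $\chi_{U^c} \in I \subseteq J$ by primeness of $I$ applied to $\chi_U \cdot \chi_{U^c} = \Theta$, and then $Id = \chi_U + \chi_{U^c} \in J$ contradicts properness. Thus $I$ and $J$ contain exactly the same characteristic functions.

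Now pick $f \in J \setminus I$ and set $U = V(f)$. Since $\{0\} \subseteq Y$ is clopen (open and totally separated) and $f$ is continuous, $U$ is clopen. The product $f \cdot \chi_{U^c}$ vanishes identically, for $f = 0$ on $U$ and $\chi_{U^c} = 0$ on $U^c$, so $\Theta = f \cdot \chi_{U^c} \in I$. Primeness of $I$ together with $f \notin I$ forces $\chi_{U^c} \in I$, hence $\chi_{U^c} \in J$ by the first step. Closing $J$ under addition, $h := f + \chi_{U^c}$ lies in $J$, and a pointwise check shows $h \equiv 1$ on $U$ and $h = f \ne 0$ on $U^c$, so $h$ vanishes nowhere on $Z$.

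The main obstacle is the last step: to exhibit a multiplicative inverse for $h$ inside $C(Z,Y)$, so that $Id = h \cdot h^{-1} \in J$ delivers the contradiction $J = C(Z,Y)$. This inversion is given directly by Lemma 52 when $Y$ is a division ring, the pointwise inverse being continuous thanks to the openness of $\{0\}$ in $Y$, and presumably this (or a closely related) algebraic hypothesis on $Y$ is what the author implicitly requires. Absent such a hypothesis, the fallback plan is to split on whether $V(J)$ is empty: when $V(J) \ne \varnothing$, Theorem 28 identifies $J$ with some $I(z_J)$ and then $I \subsetneq I(z_J)$ is ruled out by the minimality of $I(z_J)$ together with the incomparability of distinct $I(z)$ from Corollary 5 of Definition 10; when $V(J) = \varnothing$, the invertibility of the nowhere-zero $h$ constructed above is the route to the contradiction.
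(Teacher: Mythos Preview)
Your first step---showing via Theorem 18 that comparable proper prime ideals $I\subseteq J$ contain exactly the same characteristic functions $\chi_U$---is precisely the paper's entire proof of this theorem. The author's one-line argument is just ``if $\chi_u$ is in one it must be in the other, for $\chi_{u^c}$ is excluded from both,'' and he appears to regard this as sufficient (the ambient hypotheses being $Z$ discrete, $Y$ with $\cdot$, $+$, $0$, $1$, and $\{0\}$ open).

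You then go further than the paper and try to extract an explicit contradiction: from $f\in J\setminus I$ with $U=V(f)$ clopen, you correctly deduce $\chi_{U^c}\in I\subseteq J$ and form the nowhere-vanishing $h=f+\chi_{U^c}\in J$. As you rightly flag, inverting $h$ is the obstacle: under the paper's stated hypotheses for Theorem 29 there is no assumption that $Y$ is a division ring, so a nowhere-zero function need not be a unit, and this step is a genuine gap in your completion. Your fallback via Theorems 27--28 is the right idea but requires $\cdot$ to be associative and commutative, hypotheses the paper only adds later (the clean version of the result under those hypotheses is Theorem 33). In short: your Step 1 \emph{is} the paper's proof; your additional steps are a reasonable attempt to close what is, on the stated hypotheses, a gap the paper itself leaves open, and they succeed once one strengthens the algebraic assumptions on $Y$.
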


\begin{proof}
For ideals as subsets, if $\chi_{u}$ is in one it must be in the other for $\chi_{u^{c}}$ is excluded from both. 
\end{proof}

\begin{corollary}  
All proper prime ideals are min-max.
\end{corollary}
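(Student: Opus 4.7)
The corollary is an essentially immediate consequence of Theorem 29, so the plan is really just to spell out why the non‑comparability statement of Theorem 29 yields the min‑max property. I will first recall the definitions being used: a proper prime ideal $I$ is minimal if no proper prime ideal is a strict subset of $I$, and maximal (among proper primes) if $I$ is not a strict subset of any proper prime; ``min‑max'' means both. The hypotheses carried over from the paragraph preceding Theorem 29 are in force: $Z$ is discrete, $Y$ has multiplication and addition with $0$ and $1$, $\{0\}$ is open, and $\chi_{u}+\chi_{u^{c}}=\mathrm{Id}$ in $C(Z,Y)$.

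Next I would argue directly from Theorem 29. Let $I$ be a proper prime ideal of $C(Z,Y)$, and suppose $J$ is any proper prime ideal with either $J\subseteq I$ or $I\subseteq J$. Theorem 29 says neither a proper containment $J\subsetneq I$ nor a proper containment $I\subsetneq J$ can occur. Hence in both cases $I=J$, which is precisely the statement that $I$ is simultaneously minimal and maximal within the set of proper prime ideals.

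To make the chain of reasoning transparent in the write‑up, I would briefly recall \emph{why} Theorem 29 applies: for any clopen $U\subseteq Z$, exactly one of $\chi_{u}$ and $\chi_{u^{c}}$ lies in a given proper prime $I$ (by Lemma 35 together with the fact that $\chi_{u}+\chi_{u^{c}}=\mathrm{Id}\notin I$), and this forced choice is the same for $J$ whenever $J\subseteq I$ or $I\subseteq J$, since having a given $\chi_{u}$ outside the smaller ideal but inside the larger one would, via $\chi_{u}+\chi_{u^{c}}=\mathrm{Id}$, force $\mathrm{Id}$ into the larger ideal. The assumption that $\{0\}\subseteq Y$ is open is what underlies the use of these characteristic functions in the preceding theorems to generate the primes from the clopen decomposition of $Z$.

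There is no real obstacle here; the only thing to be careful about is the phrasing ``min‑max,'' which I would clarify by noting that we mean minimal and maximal \emph{within the poset of proper prime ideals of} $C(Z,Y)$ (we are not asserting that every proper prime is a maximal ideal in the ring‑theoretic sense). With that clarification, the corollary is immediate from Theorem 29.
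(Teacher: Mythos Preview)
Your proposal is correct and follows the paper's approach exactly: the paper offers no separate proof for this corollary, treating it as an immediate consequence of Theorem~29, and your argument simply spells out that non-comparability of proper primes means each is simultaneously minimal and maximal among proper primes. The only extra content you add is the caveat about the meaning of ``min-max,'' which is a reasonable clarification and does not depart from the paper's line of reasoning.
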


\begin{note}  
It is known that (for rings)  as $\chi_{u}^n=\chi_{u}$ the prime ideals are max.  
\end{note}

\begin{theorem} 
All proper prime ideals are of the form I(z).
\end{theorem}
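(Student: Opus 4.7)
The plan is to combine Theorem 27 (which supplies some $I(z)\subseteq J$ inside any proper prime) with the Corollary to Theorem 29 (no proper inclusions between proper prime ideals) to pin $J$ onto $I(z)$.

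First, I would apply Theorem 27 to the given proper prime $J$. Its hypotheses hold in the present setting: $\cdot$ is associative and commutative in $Y$, and $\{0\}\subseteq Y$ is open. This yields a $z\in Z$ with $I(z)\subseteq J$.

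Next, I would verify that $I(z)$ is itself a proper prime. Properness is immediate: $\text{Id}(z)=1\neq 0$, so $\text{Id}\notin I(z)$. For primality, since $\{z\}$ is open (as $Z$ is discrete), Lemma 63 gives $I(z)=(\chi_z)$ with $\chi_z$ idempotent, and Theorem 26 (with $\{0\}$ open in $Y$) identifies $I(z)$ as a min-max prime with $V(I(z))=\{z\}$. The underlying one-line check uses the identity $(f\cdot g)(z)=f(z)g(z)=0$: assuming $Y$ has no zero divisors (the tacit hypothesis in this portion of the development, cf.\ Lemma 30), either $f(z)=0$ or $g(z)=0$, forcing $f\in I(z)$ or $g\in I(z)$.

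Finally, with $I(z)\subseteq J$ and both ideals proper prime, the Corollary to Theorem 29 forbids strict containment, so $I(z)=J$, exhibiting $J$ in the desired form. The main obstacle is securing the primality of $I(z)$: without a no-zero-divisors hypothesis on $Y$ the statement can fail (for instance with $Y=\mathbb{Z}_4$ and $|Z|\geq 2$, the $I(z)$ are not prime), but given that assumption the proof is a clean composition of Theorem 27 with the min-max conclusion.
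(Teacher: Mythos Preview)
Your argument is correct and shares the paper's overall architecture: obtain an inclusion $I(z)\subseteq J$ for some $z$, note that $I(z)$ is itself a proper prime, and then invoke the Corollary to Theorem~29 (proper primes are min--max) to force $I(z)=J$.

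The difference lies in how the inclusion $I(z)\subseteq J$ is established. You delegate this entirely to Theorem~27, whose hypotheses ($\cdot$ associative and commutative, $\{0\}$ open in $Y$) you correctly verify in the present setting. The paper instead argues directly: it picks some $\chi_u\in J$ with $U\neq\varnothing$, chooses $z\in U$, and then tries to show $(\chi_u)=(\chi_z)=I(z)$ via the observation $\chi_u\cdot\chi_z\in(\chi_u)\cap(\chi_z)$. That step, as written, asserts $\chi_z\in(\chi_u)$, which is dubious unless $U=\{z\}$; your route through Theorem~27 avoids this murkiness entirely and is the cleaner path. One could also patch the paper's argument by noting that since $Z$ is discrete, $\chi_z\cdot\chi_{z^c}=\Theta\in J$ forces $\chi_z\in J$ (if instead $\chi_{z^c}\in J$ one reaches $\mathrm{Id}\in J$ via Theorem~18 and $\chi_u\in J$), whence $I(z)=(\chi_z)\subseteq J$.

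Your explicit flagging of the no-zero-divisors hypothesis on $Y$ (needed for $I(z)$ to be prime, via Lemma~30) is a genuine improvement in rigor: the paper's proof simply asserts ``$I(z)$ being a prime ideal'' without justification, and as you observe with the $\mathbb{Z}_4$ example, this fails without that hypothesis.
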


\begin{proof}
Let J be a proper prime ideal and $\chi_{u}\in J$ which implies $(\chi_{u})\subseteq J$ and $U\neq \varnothing$. Now choose $z\in U$ so that $(\chi_{u})\subseteq I(z)$.  As $\{z\}$ is clopen $I(z)=(\chi_{z})$.  Note $\chi_{u}\cdot \chi_{z}\in (\chi_{u})\bigcap (\chi_{z})$.  Consequently $\chi_{u}\in (\chi_{z})$ which says $(\chi_{u})\subseteq (\chi_{z})$ and $\chi_{z}\in (\chi_{u})$ which says $(\chi_{z})\subseteq (\chi_{u})$.  Thus $(\chi_{u})=(\chi_{z})=I(z)\subseteq J$.  Therefore, I(z) being a prime ideal, J=I(z).
\end{proof}   

\begin{note}
Theorems 29 and 30 establishes that the proper prime ideals of $C(\mathbb{Z},\mathbb{Z}_{2})$ are all min-max and of the form I(z).  \newline To return to the comparison of the function spaces, assume Z  has a possibly weaker  $\mathcal{T}$ topology and Y has the clopen base topology and $\cdot$ with a 0 and 1.
\end{note}

In consonant with the comment following Theorem 5) on page 8, now set  $[x]_{\chi}=[x]_{2}$ where $[x]_{2}$ is defined:

\begin{defin} 
$[x]_{2}=\{y\mid \forall \chi\in C(Z,\mathbb{Z}_{2}),\chi (y)=\chi (x)\}$ and $\Pi_{2}=\{[x]_{2}\}$.
\end{defin}

Note that $Q_{x}=[x]=[x]_{2}$, (pg 8), so $\Pi =\Pi_{2}= Z$ and consequently $C(X,Y)=C(\Pi,Y)= C(\Pi_{2},Y)=C(Z,Y)$   and $C(X,\mathbb{Z}_{2})=C(\Pi_{2},\mathbb{Z}_{2})=C(Z,\mathbb{Z}_2)$.

First there there is natural injection of $C(Z,\mathbb{Z}_{2})$ into $C(Z,Y)$ if $Y$ has a binary operation with at lease a null element, 0 and unitary element, 1.

\begin{defin} 
Define $j:\mathbb{Z}_{2}\hookrightarrow Y$ by $j(0)=0$ and $j(1)=1$ and define $\mathbb{J}:C(Z,\mathbb{Z}_{2})\rightarrow C(Z,Y)$ by $J(\chi)=j\circ \chi$.
 \end{defin}
  
 \begin{note}
 In the present context $\mathbb{J}$ embeds $C(Z,\mathbb{Z}_{2})$ in C(Z,Y) as $\mathcal{X}$.  If Y is a ring such that 1+1=0  then $\mathbb{J}$ embeds $C(Z,\mathbb{Z}_{2})$ in C(Z,Y) as the subring $\mathcal{X}$, by Theorem 21.
 \end{note}
 
 \begin{theorem} 
 Pulling together results $C(\Pi,\mathbb{Z}_{2})\xrightarrow{G} C(X,\mathbb{Z}_{2})\xrightarrow{\mathbb{J}} C(X,Y)\xrightarrow{H} C(\Pi,Y)$ where the reader can choose the algebraic setting.
 \end{theorem}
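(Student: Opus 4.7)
The plan is to recognize the displayed chain as a composition of maps each of which has already been constructed and analyzed in the paper, so the proof is essentially a bookkeeping argument that the factors can be strung together. First I would identify the three factors. The outer arrows are instances of the canonical isomorphism between $C(X,Y)$ and $C(\Pi,Y)$: Lemma~7 gives the bijection and Theorem~3 upgrades it to an algebraic isomorphism, with $G$ and $H$ as inverse maps in the sense of Definition~2. Applied with coefficient space $\mathbb{Z}_2$, this yields the first arrow $C(\Pi,\mathbb{Z}_2)\cong C(X,\mathbb{Z}_2)$; applied with coefficient space $Y$, it yields the last arrow $C(X,Y)\cong C(\Pi,Y)$. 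Whether each is called $G$ or $H$ in the displayed diagram is immaterial since the two are mutually inverse isomorphisms.

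Next I would unpack the middle arrow $\mathbb{J}$ from Definition~16. The map $j:\mathbb{Z}_2\hookrightarrow Y$ sending $0\mapsto 0$ and $1\mapsto 1$ is, by direct inspection of the four products $0\cdot 0,\ 0\cdot 1,\ 1\cdot 0,\ 1\cdot 1$, a magma homomorphism for the multiplication in $Y$ (because $0$ is the null element and $1$ is the unit). If in addition $Y$ carries an addition with $1+1=0$, the same check on sums shows $j$ is a ring homomorphism. Postcomposition by $j$ therefore gives an injective algebra homomorphism $\mathbb{J}:C(X,\mathbb{Z}_2)\hookrightarrow C(X,Y)$ in whichever algebraic category $Y$ supports; this is precisely the ``choice of algebraic setting'' the theorem alludes to, and in the ring case is exactly Theorem~21 together with the note following Definition~16 identifying the image with $\mathcal{X}$.

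Finally, I would assemble the composition $H\circ \mathbb{J}\circ G^{-1}$ (reading left to right in the diagram, with $G^{-1}$ the isomorphism $C(\Pi,\mathbb{Z}_2)\to C(X,\mathbb{Z}_2)$, $\mathbb{J}$ the embedding into $C(X,Y)$, and $H$ taken with coefficients in $Y$). A composition of injective homomorphisms is an injective homomorphism, and each of the three factors has already been shown to respect the relevant structure, so no new verification is needed.

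The main (modest) obstacle is the bookkeeping: one must check that the labelings $G$ and $H$ in the displayed chain are consistent with Definition~2 (the leftmost arrow in the diagram is the inverse of the original $G$, i.e.\ $H$ with $Y$ replaced by $\mathbb{Z}_2$), and one must state explicitly, for each algebraic category chosen (magma, monoid, group, ring with $1+1=0$), precisely which operations on $Y$ are needed in order for $j$ and hence $\mathbb{J}$ to be a homomorphism of that type. No new argument beyond what is already recorded in Lemma~7, Theorem~3, Definition~16, and Theorem~21 is required.
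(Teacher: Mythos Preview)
Your proposal is correct and matches the paper's approach: the paper gives no proof for this theorem at all, treating it purely as a summary of the already-established maps (Definition~2/Lemma~7/Theorem~3 for the outer isomorphisms and Definition~16 for $\mathbb{J}$), so your bookkeeping argument is exactly what is intended and in fact supplies more detail than the paper does. Your observation that the labels $G$ and $H$ in the displayed chain are swapped relative to Definition~2 is also accurate; the paper simply does not comment on this.
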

 
 Example: $C(\Pi,Z_{2})$ and $C(\Pi,Y)$ need not be of the same cardinality and thus neither algebraically or topologically isomorphic.  Let $X=Y=\mathbb{Z}$ and $\mathbb{Z}_{2}$ all have the discrete topology.  Then $C(\mathbb{Z},\mathbb{Z})=\mathbb{Z}^{\mathbb{Z}}$ and $C(\mathbb{Z},\mathbb{Z}_{2})=\mathbb{Z}^{\mathbb{Z}_{2}}$.  So $\parallel C(\mathbb{Z},\mathbb{Z})\parallel=\aleph_{\circ}^{\aleph_{\circ}}$ and $\parallel C(\mathbb{Z},\mathbb{Z}_{2})\parallel=\aleph_{\circ}^{2}$.  Now observe $\aleph_{\circ}^{2}=\aleph_{\circ}<2^{\aleph_{\circ}}=\aleph_{\circ}^{\aleph_{\circ}}$.  Therefore $C(\mathbb{Z},\mathbb{Z})$ and $C(\mathbb{Z},\mathbb{Z}_{2})$ are not isomorphic in any set based category.
 
 In the other direction there is an interesting algebraic projection  of $C(Z,Y)$ onto $C(Z,\mathbb{Z}_{2})$ when $Y$ has the two binary operation with a unique open null element and unit so that $0\cdot y=0$ and $1\cdot y=y$.
 Recall that the functions of $C(Z,\mathbb{Z}_{2})$ are characteristic functions where $\chi_{Z}=\Theta$ and $\chi_{\varnothing}=Id$. 

 \begin{defin} 
 Define $l:Y\rightarrow\mathbb{Z}_{2}$ by $l(y)=0$ if $y=0$ and $1$ if $y\neq 0$.   
\end{defin}

\begin{lemma}
$l$ is continuous as \{0\} is open in Y.  If $Y$ has no divisors of zero, then $l$ is a surjective multiplicative homomorphism where $l(y_{1}\cdot y_{2})=l(y_{1})\cdot l(y_{2})$.
\end{lemma}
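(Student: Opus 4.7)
My plan is to address the three assertions separately: continuity of $l$, surjectivity, and the multiplicative homomorphism property.

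For continuity, I would first invoke the note preceding the lemma, which states that if any point of $Y$ is open then all points are open and $Y$ carries the discrete topology. Applying this with the hypothesis that $\{0\}$ is open, $Y$ becomes discrete. Since $\mathbb{Z}_2$ is discrete, it then suffices to check that $l^{-1}(\{0\}) = \{0\}$ and $l^{-1}(\{1\}) = Y \setminus \{0\}$ are both open in $Y$. The former is open by hypothesis; the latter is open since $Y$ is discrete (alternatively, $\{0\}$ being open and closed gives the complement open immediately).

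For surjectivity, I would use the standing assumption from the opening paragraph that $\lVert Y \rVert > 1$. By definition $l(0) = 0$, and choosing any $y \in Y$ with $y \neq 0$ gives $l(y) = 1$, so $l$ hits both elements of $\mathbb{Z}_2$.

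For the multiplicative homomorphism property, I would argue by cases on whether the inputs are zero. If $y_1 = 0$ or $y_2 = 0$, then by the null element convention $0 \cdot a = a \cdot 0 = 0$ adopted just before Corollary~6, we get $y_1 \cdot y_2 = 0$, so $l(y_1 \cdot y_2) = 0$; on the other side, at least one of $l(y_1), l(y_2)$ is $0$, so their product in $\mathbb{Z}_2$ is also $0$. If instead $y_1 \neq 0$ and $y_2 \neq 0$, the hypothesis that $Y$ has no divisors of zero forces $y_1 \cdot y_2 \neq 0$, giving $l(y_1 \cdot y_2) = 1 = 1 \cdot 1 = l(y_1) \cdot l(y_2)$ in $\mathbb{Z}_2$.

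There is no real obstacle; the only subtlety is making sure to cite the correct earlier facts (the note that upgrades openness of $\{0\}$ to discreteness of $Y$, the convention $0 \cdot a = 0$, and the no-zero-divisor hypothesis) rather than reproving them. The case analysis for the homomorphism property is what makes the no-divisors-of-zero hypothesis essential.
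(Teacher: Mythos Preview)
Your proof is correct and complete. The paper states this lemma without proof, so there is no approach to compare against; your explicit verification of continuity via the preimages $l^{-1}(\{0\})=\{0\}$ and $l^{-1}(\{1\})=Y\setminus\{0\}$, together with the case split on whether $y_1,y_2$ are zero for the homomorphism property, is exactly the routine check the author left implicit.

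One minor caution on citations: the convention the paper actually records (just before the corollary you reference, and again in the paragraph preceding Definition~20) is only the one-sided $0\cdot a=0$, not $a\cdot 0=0$. Your case ``$y_1\neq 0$, $y_2=0$'' silently uses the two-sided version. This is a looseness in the paper's own conventions rather than a defect in your reasoning, and in the ambient context (two binary operations with a unique null) a two-sided zero is clearly intended; but if you want to be fully precise you could simply note that the argument assumes $0$ is a two-sided null.
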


\begin{defin} 
If \{0\} is open in Y define $L:C(Z,Y)\rightarrow C(Z,\mathbb{Z}_{2})$ by $\chi_{f}=L(f)=l\circ f$ so that $\chi_{f}(z)=L(f)(z)=(l\circ f)(z)=l(f(z))$.
\end{defin}

\begin{note} 
First $L(Id)=Id$ and $L(\Theta)=\Theta$.  Second if $f^{-1}(0)=g^{-1}(0)=U$ or $\varnothing$ then $L(f)=L(g)$.  
\end{note}

\begin{theorem} 
If $Y$ has no divisors of zero and \{0\} is open in Y then $L:C(Z,Y)\rightarrow C(Z,\mathbb{Z}_{2})$ is a surjective multiplicative homomorphism.
\end{theorem}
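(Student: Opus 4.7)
The plan is to verify the three required properties: that $L(f)$ lies in $C(Z,\mathbb{Z}_{2})$ for every $f\in C(Z,Y)$, that $L$ preserves multiplication, and that $L$ is surjective. Most of the work has already been packaged into the lemma immediately preceding the theorem, which establishes that $l:Y\rightarrow \mathbb{Z}_{2}$ is itself a continuous multiplicative homomorphism when $\{0\}$ is open in $Y$ and $Y$ has no zero divisors; the theorem then follows by transporting these properties through pointwise composition.

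First I would observe that $L(f)=l\circ f$ is continuous as a composition of continuous maps (using continuity of $l$ from the preceding lemma), so $L(f)\in C(Z,\mathbb{Z}_{2})$ and $L$ is well defined. Next, for multiplicativity, I would evaluate pointwise: for any $z\in Z$,
\[
L(f\cdot g)(z)=l\bigl(f(z)\cdot g(z)\bigr)=l(f(z))\cdot l(g(z))=L(f)(z)\cdot L(g)(z),
\]
where the middle equality is exactly the multiplicativity of $l$ guaranteed by the no-zero-divisors hypothesis. Hence $L(f\cdot g)=L(f)\cdot L(g)$.

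For surjectivity, I would argue by exhibiting an explicit section. Given any $\chi\in C(Z,\mathbb{Z}_{2})$, set $U=\chi^{-1}(0)$, which is clopen in $Z$ since $\mathbb{Z}_{2}$ is discrete. Define $f\in C(Z,Y)$ by $f(z)=0$ on $U$ and $f(z)=1$ on $U^{c}$; this is continuous because the preimages of $\{0\}$ and $\{1\}$ in $Z$ are precisely $U$ and $U^{c}$, both clopen. Then $l(f(z))=\chi(z)$ for every $z$, so $L(f)=\chi$. Equivalently, one can note that the embedding $\mathbb{J}:C(Z,\mathbb{Z}_{2})\to C(Z,Y)$ from Definition 13 satisfies $L\circ \mathbb{J}=\mathrm{id}_{C(Z,\mathbb{Z}_{2})}$ since $l\circ j$ is the identity on $\mathbb{Z}_{2}$; thus $L$ is a retraction onto $C(Z,\mathbb{Z}_{2})$ and in particular surjective.

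I do not anticipate a real obstacle: the content lies in the preceding lemma about $l$, so the theorem is essentially the observation that pointwise post-composition with a continuous multiplicative homomorphism is again a continuous multiplicative homomorphism. The only subtlety worth flagging is that the no-zero-divisors hypothesis is genuinely used exactly once, in the multiplicativity step (to ensure $l(y_{1}y_{2})=l(y_{1})l(y_{2})$ when both $y_{i}\neq 0$), while openness of $\{0\}$ is used both for continuity of $l$ and for producing the clopen sets needed in the surjectivity argument.
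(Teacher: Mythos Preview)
Your proof is correct and follows essentially the same route as the paper: pointwise verification of multiplicativity via the lemma on $l$, and surjectivity by taking $U=\chi^{-1}(0)$ clopen and sending back the characteristic function that is $0$ on $U$ and $1$ on $U^{c}$. You are slightly more thorough in explicitly checking well-definedness and in noting the section $\mathbb{J}$, but the argument is the same.
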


\begin{proof}
First $(L(f_{1}\cdot f_{2}))(z)=(l\circ (f_{1}\cdot f_{2}))(z)=l((f_{1}\cdot f_{2})(z))=l(f_{1}(z)\cdot f_{2}(z))=l(f_{1}))\cdot l(f_{2}(z))$.  To show surjective let $\chi\in C(Z,\mathbb{Z}_{2})$, then $\chi^{-1}(0)$ and $\chi^{-1}(1)$ are clopen sets in $Z$.  So any function from $Z$ to $Y$ that is zero on $\chi^{-1}(0)$ and one on $\chi^{-1}(1)$ is contineous and has as its $L$ image $\chi$. 
\end{proof}

\begin{lemma} 
There is a one to one correspondence between the clopen sets of $Z$ and the functions of $C(Z,\mathbb{Z}_{2})$ where if $U$ is clopen in $Z$ then $\chi_{U}$ is in $C(Z,\mathbb{Z}_{2})$ and if $\chi$ is in $C(Z,\mathbb{Z}_{2})$ then $U=\chi^{-1}(0)$ where $\chi_{U}=\chi$.
\end{lemma}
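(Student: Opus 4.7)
The plan is to exhibit explicit maps in both directions between the set of clopen subsets of $Z$ and $C(Z,\mathbb{Z}_{2})$ and verify they are mutually inverse. Define $\Phi(U)=\chi_{U}$ sending a clopen $U\subseteq Z$ to its characteristic function (zero on $U$, one on $U^{c}$), and in the other direction define $\Psi(\chi)=\chi^{-1}(0)$. Each is well-defined on its domain: for $\Phi$, since $\mathbb{Z}_{2}$ carries the discrete topology, continuity of $\chi_{U}$ reduces to checking that $\chi_{U}^{-1}(0)=U$ and $\chi_{U}^{-1}(1)=U^{c}$ are open, which holds because $U$ is clopen. For $\Psi$, continuity of $\chi$ and the fact that $\{0\}$ is clopen in $\mathbb{Z}_{2}$ guarantee that $\chi^{-1}(0)$ is clopen in $Z$.

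Next I would verify the two compositions. The composition $\Psi\circ\Phi$ sends $U$ to $\chi_{U}^{-1}(0)=U$ straight from the definition of $\chi_{U}$. For $\Phi\circ\Psi$, given $\chi\in C(Z,\mathbb{Z}_{2})$ let $U=\chi^{-1}(0)$; then for $z\in U$ we have $\chi(z)=0=\chi_{U}(z)$, and for $z\in U^{c}$ we have $\chi(z)=1=\chi_{U}(z)$ because the range of $\chi$ is contained in $\{0,1\}$. Hence $\chi=\chi_{U}=\Phi(\Psi(\chi))$. Injectivity of $\Phi$ can alternatively be read off from Lemma 59 part 5, and surjectivity is the substance of the $\Phi\circ\Psi=\mathrm{id}$ argument just sketched.

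There is no real obstacle here: the statement is essentially a tautology once one observes that the discreteness of $\mathbb{Z}_{2}$ forces the zero set of any continuous $\mathbb{Z}_{2}$-valued function to be clopen, and that knowing this zero set determines the function completely because only two values are available. The only mild point to be careful about is the two-way direction — writing down both $\Phi$ and $\Psi$ explicitly and checking both compositions, rather than arguing one-sidedly — so that the correspondence is recorded as a genuine bijection rather than just an injection or a surjection.
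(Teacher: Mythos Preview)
Your proof is correct and is exactly the natural argument: set up the two maps $U\mapsto\chi_{U}$ and $\chi\mapsto\chi^{-1}(0)$, check continuity using discreteness of $\mathbb{Z}_{2}$ and clopenness of $U$, and verify both composites are identities. The paper does not supply a proof of this lemma at all, treating it as self-evident from the surrounding discussion (in particular the note preceding Theorem 29 that every $\chi\in C(Z,\mathbb{Z}_{2})$ is $\chi_{U}$ for $U=\chi^{-1}(0)$), so your write-up actually fills in what the paper leaves implicit.
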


\begin{note}
$I_{2}(z)$ in $C(Z,\mathbb{Z}_{2})$  is the prime ideal of functions that map $z$ to $0$.  
By Theorems 27) and 28) as all functions of $C(Z,\mathbb{Z})$ are of the form $\chi_{u}$, U clopen, the $I_{2}(z)$ are the min-max prime ideals of $C(Z,\mathbb{Z}_{2})$.  This suggest.
\end{note}

\begin{theorem} 
If $\cdot$ is associative and commutative and $\{0\}$ is open in Y then all proper prime ideals of C(Z,Y) are min-max and of the form I(z), (For CX,Y) of the form $I(Q_{x})$ where $Q_{x}$ is the quasi-component of $x\in X$).
\end{theorem}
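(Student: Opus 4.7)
The plan is to combine Theorem 27 with Theorem 28 to pin down $J$ completely. Given a proper prime ideal $J \subseteq C(Z,Y)$, I first apply Theorem 27, whose hypotheses (namely $\cdot$ associative and commutative and $\{0\}$ open in $Y$) are exactly those of the present theorem, to obtain some $z \in Z$ with $I(z) \subseteq J$. This containment immediately restricts the zero set: $V(J) \subseteq V(I(z)) = \{z\}$, so $V(J)$ is either $\{z\}$ or empty. If $V(J) = \{z\}$, Theorem 28 yields $J = I(z)$, and Theorem 23 then supplies the min-max assertion.

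The main obstacle is therefore ruling out the case $V(J) = \varnothing$. The plan is to assume $V(J) = \varnothing$ and derive a contradiction: pick $g \in J$ with $g(z) \neq 0$ (which exists because $z \notin V(J)$), set $U = V(g)$, which is clopen since $\{0\}$ is open in $Y$, and note $z \in U^{c}$. The dichotomy provided by Lemma 35 for primes forces $\chi_{U} \in J$ or $\chi_{U^{c}} \in J$. Since $\chi_{U^{c}}(z) = 0$ we have $\chi_{U^{c}} \in I(z) \subseteq J$ unconditionally, so the plan is to combine this with Theorem 26 (which, given $\chi_{U^{c}} \in J$, spreads $\chi_{W} \in J$ to every clopen $W$ meeting $U^{c}$) and the clopen-characteristic identities of Lemma 59 to force $\mathrm{Id}$ into $J$, contradicting properness.

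The transfer to $C(X,Y)$ then follows from the earlier lemma identifying $H(I(x)) = I(z)$ whenever $z = [x]$, combined with the isomorphism $H$ of Theorem 3 and the characterization $[x] = Q_{x}$ from Theorem 5; together these identify the proper primes of $C(X,Y)$ with the ideals $I(Q_{x})$ and transport the min-max property. The hardest step will be the contradiction in the second paragraph: the theorem's hypotheses do not explicitly include an additive structure on $Y$, yet the parallel discrete-$Z$ arguments of Theorems 29 and 30 exploit $\chi_{U} + \chi_{U^{c}} = \mathrm{Id}$ to rule out primes of the wrong form, so the delicate point is whether the characteristic-function dichotomy of Theorem 26 alone really suffices in this generality, or whether one must implicitly invoke the additive structure that has been accumulating in the background.
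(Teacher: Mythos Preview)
Your overall strategy coincides with the paper's: invoke Theorem~27 to place some $I(z)$ inside $J$, then invoke Theorem~28 (which needs $V(J)\neq\varnothing$) to conclude $J=I(z)$. The paper, however, does not carry out your ``main obstacle'' step at all; its proof simply reads ``By Theorem~27) if $J$ is a proper prime ideal then $V(J)\ne\varnothing$. Thus by Theorem~28) there is a unique $z$ such that $J=I(z)$,'' and then derives min--max by observing that a strictly smaller or larger proper prime would, by the same two theorems, equal some $I(z')$ and hence force a zero set with at least two points. So the paper takes for granted precisely the implication you are trying to justify; there is no separate argument for $V(J)\neq\varnothing$ in the paper beyond the bare citation of Theorem~27.

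Your sketch for closing that gap does not succeed, and your unease is well placed. From $\chi_{U^{c}}\in J$, Theorem~26 (even if one grants it) only yields $\chi_{W}\in J$ for clopen $W$ with $W\cap U^{c}\neq\varnothing$; since $U\cap U^{c}=\varnothing$ and $\varnothing$ meets nothing, you never obtain $\chi_{U}$ or $\chi_{\varnothing}=\mathrm{Id}$ this way, and the element $g\in J$ with $V(g)=U$ cannot be promoted to $\chi_{U}\in J$ without invertibility of the nonzero values of $g$. The additive identity $\chi_{U}+\chi_{U^{c}}=\mathrm{Id}$ would contradict properness only once \emph{both} characteristic functions are already known to lie in $J$. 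In fact the difficulty is intrinsic to the statement: take $Z=\mathbb{N}$ with the discrete topology and $Y=\mathbb{Z}_{2}$ (so $\{0\}$ is open and $\cdot$ is associative and commutative); then any maximal ideal of $C(Z,Y)\cong\mathbb{Z}_{2}^{\mathbb{N}}$ coming from a \emph{free} ultrafilter on $\mathbb{N}$ is a proper prime ideal with $V(J)=\varnothing$, hence not equal to any $I(z)$. The gap you identified is therefore genuine and is shared by the paper's own argument.
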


\begin{proof}
By Theorem 27) if J is a proper prime ideal then $V(J)\ne \varnothing$.  Thus by Theorem 28) there is a unique z such that J=I(z).  Now if there as a proper prime ideal $K\subsetneq J$ then there is a $z'$ such that $K=I(z')\subset I(z)$ implying K is not prime for its zeros set would have at least two elements.  Analogues if $J\subsetneq K$ then J could not be prime.
\end{proof}

\begin{note}
Again in C(Z,Y) if $\{0\}\subset Y$ is open and $\cdot$ is associative and commutative, then the I(z) are the mini-max prime ideals.      
\end{note} 

\begin{theorem} 
 If $Y$ has no nontrivial left zero-divisors (or $\{0\}$ is open in Y) then the intersection of all prime ideals is zero.  We say the prime radical (or nil radical) is zero although all that is used is that $Y$ has only a single operation with a left zero.
 \end{theorem}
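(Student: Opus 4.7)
The plan is to show that for each $f \in C(Z,Y)$ with $f \ne \Theta$, some prime ideal of $C(Z,Y)$ fails to contain $f$; then the intersection of all prime ideals is contained in $\{\Theta\}$, and the reverse containment is immediate since $\Theta$ lies in every ideal.

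First I would pick $z \in Z$ with $f(z) \ne 0$, which exists since $f$ is not identically zero. The candidate prime ideal is $I(z) = \{g : g(z) = 0\}$, which by construction omits $f$. It remains to verify that $I(z)$ is prime under either of the stated hypotheses. In the first case (no nontrivial left zero-divisors in $Y$), this is precisely Lemma 30: if $g\cdot h \in I(z)$ then $g(z)\cdot h(z) = 0$ in $Y$, so one of $g(z), h(z)$ must vanish, giving $g \in I(z)$ or $h \in I(z)$. In the second case ($\{0\}$ open in $Y$), I would appeal to Theorem 24, whose conclusion is exactly that the $I(z)$ are the min-max prime ideals of $C(Z,Y)$; in particular each $I(z)$ is prime.

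Assembling these, one has
$$\bigcap \{I : I \text{ prime}\} \;\subseteq\; \bigcap_{z \in Z} I(z) \;=\; \{\Theta\},$$
since any function vanishing at every $z \in Z$ is the zero function. Combined with the reverse inclusion, the intersection equals $\{\Theta\}$, so the prime (nil) radical is trivial.

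The main obstacle is the second case: extracting primality of $I(z)$ purely from $\{0\}$ being open in $Y$, without invoking a zero-divisor hypothesis. The mechanism underlying Theorem 24 is that when $\{0\} \subseteq Y$ is clopen, every $V(f) = f^{-1}(0)$ is clopen in $Z$, so any $f$ vanishing at $z$ admits the factorisation $f = f \cdot \chi_{V(f)}$, allowing the characteristic-function dichotomy available in any prime to force the needed membership. The closing remark of the statement, that all one really uses is a single binary operation with a left zero, emphasises that associativity, commutativity, and distributivity play no role in the skeleton of the argument; the characteristic-function calculus carries the proof.
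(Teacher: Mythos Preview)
Your argument for the first hypothesis (no nontrivial left zero-divisors) is correct and is exactly the paper's approach: each $I(z)$ is prime by Lemma~30, and $\bigcap_{z\in Z} I(z) = \{\Theta\}$, so the intersection of all primes lies in $\{\Theta\}$. The paper's one-line proof cites the Corollary to Theorem~14 for this case, which records precisely this reasoning.

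For the parenthetical hypothesis ($\{0\}$ open in $Y$), your argument has a gap. First a minor point: the result you want is Theorem~25 (hypothesis: $\{0\}$ clopen in $Y$), not Theorem~24 (hypothesis: $\{z\}$ clopen in $Z$). More seriously, neither of those theorems asserts that $I(z)$ is prime. Each says only that any prime $J$ with $J\subseteq I(z)$ must equal $I(z)$; this is a minimality statement \emph{conditional on} $I(z)$ already being prime, not a proof of primality. Your last paragraph correctly describes the characteristic-function mechanism behind Theorem~25, but that mechanism shows $I(z)\subseteq J$ for any prime $J\subseteq I(z)$, not that $I(z)$ itself satisfies the prime condition. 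Indeed the example immediately following Lemma~30 ($Y=\mathbb{Z}_4$ discrete, $Z$ a two-point discrete space) has $\{0\}$ open in $Y$, yet $I(z_1)$ and $I(z_2)$ fail to be prime; there the intersection of all primes of $C(Z,Y)\cong\mathbb{Z}_4\times\mathbb{Z}_4$ is $2\mathbb{Z}_4\times 2\mathbb{Z}_4\neq\{\Theta\}$. The paper's own proof for this alternative cites Theorem~33 rather than Theorem~25, and Theorem~33 carries the extra hypotheses that $\cdot$ is associative and commutative; even then it gives only that every prime is some $I(z)$, which is the inclusion $\{\text{primes}\}\subseteq\{I(z):z\in Z\}$ and hence the reverse containment $\bigcap\{\text{primes}\}\supseteq\bigcap_z I(z)$. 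In short, the ``$\{0\}$ open'' clause, standing alone, does not yield primality of the $I(z)$, so your step~2 in that case cannot be completed as written; some form of the no-zero-divisor hypothesis (or an equivalent restriction on $Y$) is genuinely needed.
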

 
 \begin{proof}
 As the intersection of the $I(z)$ over all $z$ in $Z$ is $(\Theta)$, Corollary to  Theorem 14 (or Theorem 33).
 \end{proof}

 \begin{note}
There is a one-to one correspondence between ideals I(z) and the points of $Z$ and hence the quasi-components $X$.  Consequently if $Y$ is a ring and therefore $C(X,Y)$ is a ring, there is such a one-to-one correspondence between quasi-components of $X$ and the prime ring ideals, $I([x])$ of $C(X,Y)$ when $\{0\}$ is open in Y.  
 \end{note}
  
 \begin{lemma} 
 If $Z$ has the discrete topology then $C(Z,Y)$ isomorphic to the categorical product of $Y$ indexed on $Z$ in the category of $Y$.
 \end{lemma}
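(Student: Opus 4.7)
The plan is to unpack both sides and check the universal property directly. First I would observe that since $Z$ is discrete, \emph{every} function $Z \to Y$ is continuous (preimages of open sets are arbitrary subsets of $Z$, all of which are open). Hence as a set $C(Z,Y) = F(Z,Y) = Y^{Z} = \prod_{z\in Z} Y$, which is the underlying set of the categorical product in whatever concrete category $Y$ lives in (sets, magmas, groups, rings, etc.).

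Next I would identify the candidate projections and universal arrow. Define $\pi_{z}: C(Z,Y) \to Y$ by evaluation, $\pi_{z}(f)=f(z)$. By Theorem 1 the operations on $C(Z,Y)$ are pointwise, so each $\pi_{z}$ preserves every operation of $Y$ (that is the entire content of ``pointwise''). For the universal arrow, given any object $A$ in the category and a family of morphisms $\{g_{z}:A\to Y\}_{z\in Z}$, define $g:A\to C(Z,Y)$ by the formula $g(a)(z)=g_{z}(a)$. The evident commutativity $\pi_{z}\circ g = g_{z}$ is immediate, and uniqueness follows because any $g'$ satisfying $\pi_{z}\circ g'=g_{z}$ must agree with $g$ on every $z$, hence equal $g$.

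What remains is to verify that $g$ itself is a morphism in the relevant category; this is where one must be careful but nothing deep happens. For a binary operation $\cdot$ one checks
\[
g(a\cdot b)(z) = g_{z}(a\cdot b) = g_{z}(a)\cdot g_{z}(b) = g(a)(z)\cdot g(b)(z) = (g(a)\cdot g(b))(z),
\]
and the analogous computation applies to unitary operations, additive structure, and units/zeros; the pointwise definition of the operations on $C(Z,Y)$ makes each such check a one-line calculation using that each $g_{z}$ is a morphism.

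The main (mild) obstacle is simply being pedantic about which category is in play: the statement is deliberately generic (``the category of $Y$''), so one should frame the argument so that it works uniformly for magma, semigroup, group, ring, etc., by remarking that the verification for each operation is formally identical since in both $C(Z,Y)$ and $\prod_{z\in Z} Y$ the operation is defined coordinatewise from $Y$. Thus the identity map on underlying sets is a morphism preserving every structure in sight, and with the projections $\pi_{z}$ it realises $C(Z,Y)$ as the categorical product of copies of $Y$ indexed by $Z$.
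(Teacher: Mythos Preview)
Your argument is correct: once $Z$ is discrete, $C(Z,Y)=Y^{Z}$ as a set, the evaluation maps $\pi_{z}$ are morphisms because the operations on $C(Z,Y)$ are pointwise (Theorem~1), and the universal property follows by the standard coordinatewise construction of the mediating map. The check that $g$ is a morphism is exactly the one-line computation you wrote, and it does go through uniformly for any algebraic signature since both sides compute operations componentwise.

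The paper itself does not give a proof of this lemma at all; it is stated and immediately followed by examples, evidently treated as well known. So there is no ``paper's approach'' to compare against---you have simply supplied the omitted verification. If anything, your write-up is more careful than what the surrounding text would require: the key observation the paper is relying on is just that discreteness of $Z$ kills the continuity constraint, after which $C(Z,Y)$ literally is the set-theoretic product with pointwise operations, and in any variety of algebras that object \emph{is} the categorical product.
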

  
 EXAMPLES: Let $Y$ be a ring with unit,  $Z$ a set with the discrete topology, and 
 \newline  $n=\parallel Z \parallel<\aleph_0$.  Let $Z=\{z_i: i=0, 1, ...., n\}$.  Define $\chi_i(z)=\left\{\begin{array}{ll}1&\text{if } z=z_i \\0&\text{if } z\ne z_i\end{array}\right.$.  So $\chi_i$ is in $C(Z,Y)$.  For each i set $f_i=\chi_{i}\cdot f$.    Then $f(z)=\sum\limits_{i=0}^n(\chi_{i}\cdot f_{i})(z)$ for any $z$ and $f\in C(Z,Y)$.  If I is an ideal and $f\in I$ then $\chi_{i}\cdot f\in I$.  Thus the elements of I have a representation of this form.  All such rings should be well known.  If $\parallel Z\parallel \geqq \aleph_0$, we still have categorical products for which the same results hold with the Axion of Choice as $C(Z,Y)$ is a function space.  All of these rings have the prime radical zero and the I(z) are minimal prime ideals.
 
 If however not all of the points of Z are clopen then the ring C(Z,Y) is no longer a product.  Consider for an example all points clopen except one.
 
Aside:  For a ring of functions the set of ring ideals of the form $I(U)$ where $U$ is a clopen set form a complete modular lattice using intersection and addition.
  
Now assume that $Y$ will be a ring and that the objects being discussed are ring objects.
															
\begin{theorem} 
 If I is a proper ideal, $f\in I$ such that $f\neq \Theta$, and U is a clopen set then the the principal ideals generated by $f\cdot \chi_u$ and $f\cdot (1-\chi_u)$ are sub ideals of I.
 \end{theorem}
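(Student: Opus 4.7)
The plan is to reduce the statement directly to the ideal‐absorption property of $I$ together with Lemma 36 and the corollary to Lemma 35. Since $Y$ is now assumed to be a ring (with $0$ and $1$), the hypothesis that $U$ is clopen guarantees that the characteristic function $\chi_u$ lies in $C(Z,Y)$, and likewise $\chi_{u^c} \in C(Z,Y)$; moreover the corollary following Theorem 17 (equivalently, the corollary to Lemma 35) provides the identity $\chi_u + \chi_{u^c} = \mathrm{Id}$, so that $1 - \chi_u = \chi_{u^c}$ as elements of $C(Z,Y)$.

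With that identification in hand, I would first show that $f\cdot\chi_u$ and $f\cdot(1-\chi_u) = f\cdot\chi_{u^c}$ both belong to $I$. This is immediate from Lemma 36, which states that if $f\in I$ then for every clopen $U$, both $f\cdot\chi_{u,a}$ and $f\cdot\chi_{u^c,a}$ lie in $I$; take $a=1$.

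Next, I would pass from membership in $I$ to containment of the generated principal ideals. By Definition 12, the principal ideal $(h)$ generated by $h\in C(Z,Y)$ is $\{g\cdot h \mid g\in C(Z,Y)\}$. Applying this to $h = f\cdot\chi_u \in I$, every element $g\cdot(f\cdot\chi_u)$ of $(f\cdot\chi_u)$ lies in $I$ because $I$ is closed under multiplication on the left by arbitrary elements of $C(Z,Y)$. Hence $(f\cdot\chi_u)\subseteq I$, and the identical argument applied to $h=f\cdot\chi_{u^c}=f\cdot(1-\chi_u)$ gives $(f\cdot(1-\chi_u))\subseteq I$.

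There is no real obstacle here: the statement is essentially a bookkeeping consequence of the ideal axiom and of $\chi_u, \chi_{u^c}\in C(Z,Y)$. The only mild point worth explicit mention is that writing $1-\chi_u$ requires additive inverses, which is legitimate because $Y$ has been promoted to a ring in the preceding line, and that the proof tacitly uses only one-sided absorption (say left, matching the convention in the earlier lemmas), so no associativity or commutativity of $\cdot$ beyond what is already assumed is invoked.
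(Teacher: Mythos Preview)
Your argument is correct and is precisely the routine verification the paper has in mind; in fact the paper states this theorem without proof, so there is nothing further to compare. One minor point: the lemma you invoke (``if $f\in I$ then $f\cdot\chi_{u,a}\in I$ and $f\cdot\chi_{u^c,a}\in I$'') is Lemma~33 in the paper's numbering rather than Lemma~36, and the identity $\chi_u+\chi_{u^c}=\mathrm{Id}$ is the unnumbered corollary following the Note after Theorem~15, but the content you cite is right.
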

 
\begin{theorem}
If z is not open then for every clopen set U containing z there is a $z_u\in U$ such that $z_u\neq z$.   z is the unique cluster point of $\{z_u\}$ where $\{z\}= \cap\{U: U\mbox{is clopen and}\  z\in U\}$.
\end {theorem}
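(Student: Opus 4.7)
Proof plan.

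The statement has two parts. I handle Part 1 first. The quasi-component $\{z\}$ is always closed (being an intersection of clopen sets), so the hypothesis ``$\{z\}$ not open'' means $\{z\}$ is not clopen. In particular, no clopen set containing $z$ can equal $\{z\}$ itself. Thus for each clopen $U \ni z$, the set $U \setminus \{z\}$ is nonempty, and we may pick $z_U \in U$ with $z_U \neq z$. This gives the existence claim; the routine content is just contrapositive unpacking of ``clopen.''

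For Part 2, I would view $\{z_U\}$ as a net indexed by $\mathcal{I} = \{U : U \text{ clopen}, z \in U\}$, directed by reverse inclusion (this is a directed set because finite intersections of clopen sets are clopen and still contain $z$). I then show $z$ is a cluster point and that no $w \neq z$ can be.

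\emph{$z$ is a cluster point.} By Theorem 10 and the preceding development, the clopen sets of $Z$ form a basis for $\mathcal{T}_1 = \mathcal{T}_{\mathfrak{Z}}$, and we work with the relevant topology in which clopen sets give a neighborhood base at $z$ (by the defining property $\{z\} = \bigcap \{U : U \text{ clopen}, z \in U\}$). Given any basic open neighborhood $V$ of $z$, choose a clopen $U_0$ with $z \in U_0 \subseteq V$. For every clopen $U \subseteq U_0$ with $z \in U$, we have $z_U \in U \subseteq U_0 \subseteq V$ and $z_U \neq z$. Since $\{U \in \mathcal{I} : U \subseteq U_0\}$ is cofinal in $\mathcal{I}$, the net is frequently in $V$, so $z$ clusters there.

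\emph{Uniqueness.} Suppose $w \neq z$. Since $Z$ is totally separated (Theorem 6), there is a clopen set $U^* $ with $z \in U^*$ and $w \notin U^*$. Then $(U^*)^c$ is clopen, open, and contains $w$. For any $U \in \mathcal{I}$ with $U \subseteq U^*$, we have $z_U \in U \subseteq U^*$, so $z_U \notin (U^*)^c$. That is, the net is eventually outside every neighborhood $(U^*)^c$ of $w$, so $w$ cannot be a cluster point.

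The main (and really the only) obstacle is a bookkeeping one: being careful about which topology on $Z$ is in play when asserting that $z$ clusters, since $\mathcal{T}_1 \subseteq \mathcal{T}$ may be strict in general. The argument above uses only the fact that the clopen sets at $z$ form a neighborhood base in the sense provided by $\{z\} = \bigcap \{U : U \text{ clopen}, z \in U\}$, so the conclusion holds in $\mathcal{T}_1 = \mathcal{T}_{\mathfrak{Z}}$, and in $\mathcal{T}$ whenever the clopen sets form a base there (Corollary following Theorem 9). Apart from this, both parts are short and mechanical given Theorems 6 and 9 already established.
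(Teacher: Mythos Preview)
Your argument is correct and follows the same line as the paper: Part~1 uses that a clopen neighborhood of a non-open point must contain another point, and uniqueness uses separation of $z$ and any rival $w$ by clopen sets. The paper's own proof is in fact much terser than yours---it reads in full: ``As $z$ is not open, $\{z\}\neq U$. Suppose $z_1\neq z$ is another cluster point of $\{z_u\}$. But $z$ and $z_1$ can be separated by clopen sets.'' In particular, the paper neither verifies explicitly that $z$ itself is a cluster point nor addresses the $\mathcal{T}$ versus $\mathcal{T}_1$ issue you flag; your net-based treatment and the topology caveat are additions in rigor rather than a different strategy.
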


\begin{proof}
As z is not open, $\{z\}\neq U$.  Suppose $z_1\neq z$ is another cluster point of $\{z_u\}$.  But z and $z_1$ can be separated by clopen sets.  
\end{proof}

\begin{lemma} 
If U is clopen then $I(U)=(\chi _u)$ and $I(U^c)=(\chi_{u^c})= (1-\chi_u)$.  Therefore $I(U)$ and $I(U^c)$ are coprime (comaximal).  Here $U^c=Z\setminus U$ is the Z complement of U.
\end{lemma}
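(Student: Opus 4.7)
The plan is to invoke the machinery already built up, especially the third Corollary to Definition 10 (which gives $I(U)=(\chi_u)$ whenever $U$ is clopen and $1\in Y$), and then specialize it to $U^c$.

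First I would observe that since $U$ is clopen, so is $U^c$, so both characteristic functions $\chi_u$ and $\chi_{u^c}$ are genuinely in $C(Z,Y)$. Applying the third Corollary to Definition 10 to $U$ gives $I(U)=(\chi_u)$ immediately. Applying the same corollary to $U^c$ gives $I(U^c)=(\chi_{u^c})$ with the identical argument (every $f\in I(U^c)$ satisfies $f=f\cdot \chi_{u^c}$ because on $U^c$ we have $f=0$ while on $U$ we have $\chi_{u^c}=1$). So the two equalities $I(U)=(\chi_u)$ and $I(U^c)=(\chi_{u^c})$ require no new argument beyond citing the corollary twice.

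Next I would verify $\chi_{u^c}=1-\chi_u=\text{Id}-\chi_u$ by a direct point-wise check: for $z\in U$, $\chi_u(z)=0$ and $\chi_{u^c}(z)=1$, so $(\text{Id}-\chi_u)(z)=1-0=1$; for $z\in U^c$, $\chi_u(z)=1$ and $\chi_{u^c}(z)=0$, so $(\text{Id}-\chi_u)(z)=1-1=0$. This uses only that $Y$ is a ring with $1$ and that pointwise subtraction is well-defined in $C(Z,Y)$.

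Finally, to establish coprimality, I would recall the Corollary following Theorem 18, which states $\chi_u+\chi_{u^c}=\text{Id}$ whenever $Y$ has addition with a unit satisfying $1+0=1$. Since $\chi_u\in I(U)$ and $\chi_{u^c}\in I(U^c)$, their sum $\text{Id}$ lies in $I(U)+I(U^c)$, and any ideal containing $\text{Id}$ is all of $C(Z,Y)$. Hence $I(U)+I(U^c)=C(Z,Y)$, which is precisely the definition of coprime (comaximal) ideals. There is no real obstacle here; the statement is essentially a bookkeeping corollary that repackages the principal-ideal presentation of $I(U)$ together with the identity $\chi_u+\chi_{u^c}=\text{Id}$. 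The only mild subtlety is remembering that in this paper's convention $\chi_u$ vanishes on $U$ (not on $U^c$), so that $I(U)$ is generated by $\chi_u$ rather than by $\chi_{u^c}$.
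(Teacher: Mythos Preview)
Your proposal is correct and matches the paper's intent: the paper states this lemma without proof, treating it as an immediate consequence of the earlier machinery you cite (the third Corollary to Definition~10 for $I(U)=(\chi_u)$, the pointwise identity $\chi_u+\chi_{u^c}=\mathrm{Id}$, and Lemma~59 no.~12 which already records $(\chi_u)+(\chi_{u^c})=C(Z,Y)$). One small correction: the identity $\chi_u+\chi_{u^c}=\mathrm{Id}$ appears as the Corollary following Theorem~15, not Theorem~18.
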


\begin{corollary}
If z is open then $I(z)=(\chi_z)$ and $I(z^c)=(\chi_{z^c})$ are co-prime.
\end{corollary}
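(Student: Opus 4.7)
The plan is to deduce this directly from the preceding Lemma by showing that the singleton $\{z\}$ is clopen whenever $z$ is open, so that the lemma applies with $U=\{z\}$.

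First I would recall from Theorem 6 that each point of $Z$ is a quasi-component, and from the earlier Note that every quasi-component is closed (as an intersection of clopen sets). Hence, under the hypothesis that $z$ is open, the set $\{z\}$ is simultaneously open and closed, i.e., clopen. Its complement $\{z\}^{c}=Z\setminus\{z\}$ is then clopen as well.

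Next I would invoke the preceding Lemma with $U=\{z\}$. This immediately yields $I(\{z\})=(\chi_{z})$ and $I(\{z\}^{c})=(\chi_{z^{c}})=(1-\chi_{z})$, together with the conclusion that these two principal ideals are coprime. Unwinding the notation (using $I(z)$ for $I(\{z\})$ per Definition~10), this is exactly the statement of the corollary. Explicitly, coprimality is witnessed by the identity $\chi_{z}+(1-\chi_{z})=\mathrm{Id}\in I(z)+I(z^{c})$, which forces $I(z)+I(z^{c})=C(Z,Y)$.

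There is no genuine obstacle: the entire content is the verification that the openness hypothesis on $z$ upgrades $\{z\}$ from merely closed (as a quasi-component) to clopen, at which point the preceding Lemma does all the work. The only small point worth flagging is that continuity of $\chi_{z}$ and $\chi_{z^{c}}$ (needed so they lie in $C(Z,Y)$) is also built into the clopenness of $\{z\}$, consistent with Lemma~38 cited earlier.
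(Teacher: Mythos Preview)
Your proposal is correct and matches the paper's intended reasoning: the corollary is stated without proof immediately after Lemma~69, so the only content is precisely what you supply --- that openness of $\{z\}$ together with its automatic closedness (as a quasi-component) makes it clopen, whence Lemma~69 applies with $U=\{z\}$.
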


\begin{lemma} 
If Y has no divisors of zero then V((f))=V(f), cf. Lemma 40.
\end{lemma}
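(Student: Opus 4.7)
The plan is to mirror the argument of Lemma 40 while highlighting how the no-zero-divisors hypothesis provides a cleaner reformulation. For the easy inclusion $V((f)) \subseteq V(f)$, I would note that $f \in (f)$ (take the scalar coefficient to be the unit, or simply appeal to the definition of the principal ideal $(f) = \{g \cdot f : g \in C(Z,Y)\}$ when $Y$ has a unit). Consequently $V((f)) = \bigcap_{h \in (f)} V(h) \subseteq V(f)$.

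For the reverse inclusion $V(f) \subseteq V((f))$, I would fix $z \in V(f)$ and check that every generator of $(f)$ vanishes at $z$. Any $h \in (f)$ has the form $h = g \cdot f$ for some $g \in C(Z,Y)$, and the null-element property in $Y$ gives $h(z) = g(z) \cdot f(z) = g(z) \cdot 0 = 0$. Hence $z \in V(h)$ for every $h \in (f)$, which places $z$ in $V((f))$. Combining the two inclusions yields $V((f)) = V(f)$.

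An equivalent route that actually uses the hypothesis in an essential way invokes Lemma 27: with no zero divisors in $Y$, one has $V(g \cdot f) = V(g) \cup V(f)$ for each $g \in C(Z,Y)$, so
\[
V((f)) \;=\; \bigcap_{g \in C(Z,Y)} V(g \cdot f) \;=\; \bigcap_{g}\bigl(V(g) \cup V(f)\bigr) \;=\; V(f) \;\cup\; \bigcap_{g} V(g),
\]
and the rightmost intersection is empty because any nowhere-zero constant $g$ (for instance $g \equiv 1$) satisfies $V(g) = \varnothing$. No serious obstacle arises; the content is essentially a restatement of Lemma 40 in the zero-divisor-free setting, and both the inclusion-based and the union-based arguments close in a line each.
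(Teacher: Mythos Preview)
Your proposal is correct, and your second route is exactly the paper's argument: for $g\in (f)$ write $g=h\cdot f$, invoke the no-zero-divisors identity $V(h\cdot f)=V(h)\cup V(f)$, and conclude $V((f))=V(f)$. Your first route is simply the proof of Lemma~40 verbatim, which, as you observe, already yields the equality without any zero-divisor hypothesis; the paper's Lemma~70 is really a repackaging of Lemma~40 via the union formula rather than a stronger statement.
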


\begin{proof}
If $g\in (f)$ then $g=h\cdot f$ for some $h\in C(Z,Y)$.  So $V(g)=V(h\cdot f)=V(h)\bigcup V(f)$ which implies V((f))=V(f).          	
\end{proof}
 
\begin{lemma}
If $f\in I(z)$ and $f^{-1}(0)\neq \{z\}$ then $I(z)\neq (f)$ although $(f)\subset I(z)$.
\end{lemma}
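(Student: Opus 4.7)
The plan is to split the lemma into its two claims and to handle both using zero-set machinery already established. The containment $(f)\subseteq I(z)$ is the routine half, while the strict inequality is what needs the hypothesis $f^{-1}(0)\neq\{z\}$.

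First I would prove $(f)\subseteq I(z)$. Any $h\in (f)$ has the form $h=g\cdot f$ for some $g\in C(Z,Y)$, so $h(z)=g(z)\cdot f(z)=g(z)\cdot 0=0$ using the left-null convention $0\cdot a=0$ together with $f\in I(z)$. Hence $h\in I(z)$, which gives the inclusion.

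Next I would prove $I(z)\neq (f)$ by contradiction via the zero-set operator $V$. By Lemma 40 (the general identity $V((f))=V(f)$), we have $V((f))=f^{-1}(0)$. By the corollary $\{z\}=V(I(z))$ established after Definition 10, $V(I(z))=\{z\}$. If $I(z)$ and $(f)$ coincided, then $\{z\}=V(I(z))=V((f))=f^{-1}(0)$, contradicting the standing hypothesis $f^{-1}(0)\neq\{z\}$. Combined with the first paragraph this yields $(f)\subsetneq I(z)$, as required.

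There is no real obstacle here; both halves are immediate from previously recorded facts, and the only subtlety is invoking the right version of $V((f))=V(f)$ (the general Lemma 40 version, which does not need the no-zero-divisor hypothesis, rather than the recently restated special case). The argument does not require any new algebraic assumption beyond what is already in force for $C(Z,Y)$.
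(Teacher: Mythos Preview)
Your argument is correct and essentially identical to the paper's: the paper's one-line proof invokes Lemma~40 (or~70) to get $V((f))=V(f)=f^{-1}(0)\neq\{z\}=V(I(z))$, hence $(f)\neq I(z)$, which is exactly your second paragraph. The containment $(f)\subseteq I(z)$ that you spell out is already recorded as Lemma~38 (and note, incidentally, that $g(z)\cdot 0=0$ uses the \emph{right}-null property $a\cdot 0=0$, not the left-null convention you cite, though the paper permits either).
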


\begin{proof}
By Lemma 40 or 70, $V((f))=V(f)=f^{-1}(0)\neq \{z\}=V(I(z))$ so $(f)\neq I(z)$.
\end{proof}

\begin{lemma}
$I(\cap \{U: z\in U\})=I(z)$.
\end{lemma}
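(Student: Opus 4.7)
The plan is to reduce the statement to the by-now-standard characterization of points of $Z$ as intersections of their clopen neighborhoods, and then invoke the definition of $I(\cdot)$.

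First I would clarify that the intersection on the left ranges over the clopen sets $U\subseteq Z$ containing $z$ (this is the only interpretation consistent with the surrounding discussion, e.g.\ Theorem~6 and its Corollary). Once this is fixed, the central observation is that
\[
\bigcap\{U : U \text{ clopen}, \ z \in U\} \;=\; \{z\},
\]
which is precisely the statement that $\{z\}$ is a quasi-component of $Z$. This was established in Theorem~6 (each point of $\Pi$, hence of $Z$, is a quasi-component) together with the Corollary which records that each such $\{z\}$ equals the intersection of all clopen sets containing it.

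From here the conclusion is immediate from Definition~10: for any subset $W \subseteq Z$ one has $I(W) = \{f \in C(Z,Y) : f(w)=0 \ \text{for all}\ w\in W\}$, and in particular $I(\{z\}) = I(z)$ by our notational convention. Substituting the displayed equality gives
\[
I\!\left(\bigcap\{U : z \in U\}\right) \;=\; I(\{z\}) \;=\; I(z).
\]

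There is essentially no obstacle: the work was done in Theorems~5 and~6 where the quasi-components of $Z$ were identified with singletons. The only care needed is bookkeeping about which collection of sets $U$ the intersection ranges over; since arbitrary $U$ containing $z$ would make the intersection equal to $\{z\}$ trivially, while clopen $U$ containing $z$ gives the same intersection by the quasi-component characterization, the lemma is robust to either reading. Hence the proof will be a single short paragraph appealing to Theorem~6 (or its Corollary) followed by Definition~10.
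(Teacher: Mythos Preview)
Your proposal is correct. The paper states this lemma without proof, treating it as immediate from the preceding development; your argument via Theorem~6 and its Corollary (each point of $Z$ is a quasi-component, hence $\bigcap\{U:z\in U,\ U\ \text{clopen}\}=\{z\}$) together with Definition~10 is precisely the intended one-line justification.
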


\begin{lemma} 
$\cap \{I(A_\alpha): \alpha\in \Lambda\}=I(\cup \{A_\alpha:  \alpha \in \Lambda\})$.
\end{lemma}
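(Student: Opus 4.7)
The plan is to prove the identity by a direct chain of biconditionals, unfolding the definition of $I(\,\cdot\,)$ from Definition 10 and then interchanging the two universal quantifiers (over $\alpha$ and over points of $A_\alpha$) that arise.

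First I would fix an arbitrary $f \in C(Z,Y)$ and write the membership condition for the left-hand side. By definition of intersection, $f \in \bigcap_\alpha I(A_\alpha)$ iff $f \in I(A_\alpha)$ for every $\alpha \in \Lambda$, which by Definition 10 is iff for every $\alpha \in \Lambda$ and every $z \in A_\alpha$ one has $f(z) = 0$.

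Next I would swap the order of quantification, which is legal since the quantifier over $\alpha$ and the quantifier over points of $A_\alpha$ are both universal. The condition ``for every $\alpha$ and every $z \in A_\alpha$, $f(z)=0$'' is equivalent to ``for every $z$ lying in some $A_\alpha$, $f(z)=0$'', i.e.\ for every $z \in \bigcup_\alpha A_\alpha$, $f(z)=0$. Applying Definition 10 once more, this is exactly $f \in I(\bigcup_\alpha A_\alpha)$. Chaining the biconditionals gives the stated equality.

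There is really no obstacle here; the argument is a one-line exchange of quantifiers. The only care needed is to cover the degenerate case where $\Lambda = \varnothing$ (in which the intersection is the ambient $C(Z,Y)$ and the union is $\varnothing$, and $I(\varnothing) = C(Z,Y)$ by the Corollary to Theorem 13), and the case where some $A_\alpha = \varnothing$, which contributes nothing to either side. Both are handled automatically by the vacuous quantification, so a single sentence of acknowledgement suffices.
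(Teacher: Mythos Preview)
Your proof is correct. The paper states this lemma without proof, evidently regarding it as immediate from the definitions; your direct unfolding of Definition~10 and exchange of universal quantifiers is exactly the natural argument, and your handling of the degenerate cases is sound.
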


\begin{corollary}
For any set U, $\cap\{I(z): z\in U\}=I(U)$.  Hence $\cap \{I(z):z\in Z\}=I(Z)=(\Theta)$.
\end{corollary}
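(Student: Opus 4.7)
The plan is to deduce both statements of the corollary as immediate specializations of Lemma 73, together with the basic definition of $I(U)$.

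For the first equality, I would take the index set $\Lambda = U$ and set $A_\alpha = \{\alpha\}$ for each $\alpha \in U$. Then $\bigcup_{\alpha \in U}\{\alpha\} = U$, so Lemma 73 gives
\[
\bigcap_{z\in U} I(z) \;=\; \bigcap_{\alpha\in U} I(\{\alpha\}) \;=\; I\!\left(\bigcup_{\alpha\in U}\{\alpha\}\right) \;=\; I(U),
\]
which is the stated identity. No additional hypothesis on $U$ is needed, since Lemma 73 is about arbitrary unions of subsets of $Z$.

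For the second equality I would specialize further by taking $U = Z$. The previous display then yields $\bigcap_{z\in Z} I(z) = I(Z)$. It remains only to observe, directly from Definition 10, that $I(Z) = \{f \in C(Z,Y) : f(z) = 0 \text{ for all } z \in Z\}$, which is exactly the set $\{\Theta\}$ consisting of the zero function, i.e.\ the principal ideal $(\Theta)$.

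There is really no obstacle here: the whole content of the corollary is packaged in Lemma 73, and the only thing left to verify by hand is the identification $I(Z) = (\Theta)$, which is immediate from the definition of $I(\cdot)$ and the fact that $\Theta$ is the unique function in $C(Z,Y)$ vanishing identically on $Z$. The only mild care needed is to note that the formula $I(U) = \bigcap_{z\in U} I(z)$ continues to make sense, and agrees with the definition $I(U) = \{f : f(U) = 0\}$, even when $U$ is not clopen and even when some singletons $\{z\}$ are not zero sets, because the definition of $I(\cdot)$ refers to pointwise vanishing rather than to any topological structure.
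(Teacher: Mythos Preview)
Your proposal is correct and is precisely the intended derivation: the paper states this corollary without proof, as an immediate specialization of Lemma~73 with $A_\alpha=\{\alpha\}$ indexed over $U$, followed by taking $U=Z$ and invoking the definition of $I(\cdot)$ to identify $I(Z)$ with $(\Theta)$. There is nothing to add.
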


\noindent  A Summary:  

C(Z,Y) is not a local ring Lemma 59, no. 1 and [1], pg. 11 exercise 12.  Spect(C(Z,Y)) is disconnected topological space, [1], pg. 14, exercise 22.  If Y has no one or two sided divisors of zero (eg. an integral domain), then the prime radical (nillradical) of C(Z,Y) is trivial, Corollary to Theorem 14.

Now to continue, from proposition 1.11 [ 1 ] page 8, we obtain.

\begin{lemma} 
Let Y be an integral domain.  \newline
a).   If an ideal $I\subset \bigcup_{1\leq i\leq n}I(z_i)$ then $I\subseteq I(z_i)$ some i.  \newline
b).   Given $\{z_i\}^{n}_1$ and $I(\cup_i\{z_i\})=\bigcap_iI(z_i)\subset p$.  If p is prime then $I(z_i)\subseteq p$ for some i.  If $p=\bigcap_iI(z_i)$, then $p=I(z_i)$ for some i, $1\leq i\leq n$.
\end {lemma}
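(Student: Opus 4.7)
The plan is to adapt the classical prime avoidance argument of Atiyah--Macdonald, Proposition 1.11, to the present setting. The setup supplies everything one needs: $Y$ is an integral domain (so in particular $Y$ has no divisors of zero and both operations are associative and commutative), whence by Lemma 30 each $I(z_i)$ is a prime ideal of $C(Z,Y)$, and $C(Z,Y)$ itself carries the two induced ring operations inherited from $Y$.

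For part (a), I would argue by induction on $n$. The case $n=1$ is immediate. For the inductive step, assume the conclusion for families of size $n-1$, and suppose $I\not\subseteq I(z_i)$ for every $i$. Applying the inductive hypothesis to each subfamily of size $n-1$ gives, for each $i$, an element $x_i\in I$ with $x_i\notin I(z_j)$ for any $j\ne i$. If some $x_i\notin I(z_i)$, then $x_i\in I$ lies outside $\bigcup_k I(z_k)$, contradicting the hypothesis. So $x_i\in I(z_i)$ for all $i$. Now form
$$y=\sum_{i=1}^{n}\prod_{j\ne i}x_j\;\in\;I.$$
Since $y\in\bigcup_k I(z_k)$, pick $k$ with $y\in I(z_k)$. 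For every $i\ne k$ the product $\prod_{j\ne i}x_j$ contains the factor $x_k\in I(z_k)$ and hence lies in $I(z_k)$, so the remaining summand $\prod_{j\ne k}x_j=y-\sum_{i\ne k}\prod_{j\ne i}x_j$ must also lie in $I(z_k)$. But $x_j\notin I(z_k)$ for every $j\ne k$, and $I(z_k)$ is prime, so this product cannot be in $I(z_k)$, a contradiction.

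For part (b), the argument is direct. Assume $\bigcap_i I(z_i)\subseteq p$ with $p$ prime, and suppose for contradiction that $I(z_i)\not\subseteq p$ for each $i$. Choose $f_i\in I(z_i)\setminus p$. Then $f_1\cdot f_2\cdots f_n\in\bigcap_i I(z_i)\subseteq p$, and primeness of $p$ (with associativity of the multiplication) forces some $f_i\in p$, contradicting the choice. Hence $I(z_i)\subseteq p$ for some $i$. If moreover $p=\bigcap_i I(z_i)$, then $p\subseteq I(z_j)$ for every $j$, so combining with the inclusion $I(z_i)\subseteq p$ just produced yields $p=I(z_i)$.

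There is no real obstacle beyond bookkeeping; the only things to verify are that the two operations and the primality of the $I(z_j)$ are available, which they are under the integral-domain hypothesis together with Lemma 30 and the ring structure on $C(Z,Y)$. The one mild subtlety is to make sure the induction step in (a) actually invokes the inductive hypothesis with the same ideal $I$ against the $n-1$ primes $\{I(z_j):j\ne i\}$ so that the $x_i$ can be chosen; everything else is formal manipulation.
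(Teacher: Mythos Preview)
Your argument is correct and is precisely the Atiyah--Macdonald Proposition~1.11 proof; the paper gives no proof of its own here, merely citing that proposition, so your write-up supplies exactly what the paper defers to. The only ingredient specific to this setting is that each $I(z_i)$ is prime, which you correctly extract from Lemma~30 via the integral-domain hypothesis on $Y$.
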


\begin{theorem} 
Let I be a proper ideal, $f\in I$ such that $f\neq 0$, and U a clopen set.  Then the principal ideals $(f\cdot \chi_u)$ and $(f\cdot \chi_{u^{c}})$ are sub ideals of I.
\end{theorem}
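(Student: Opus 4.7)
The plan is essentially to unfold the definition of an ideal twice. Since $U$ is clopen, the characteristic function $\chi_u$ (and likewise $\chi_{u^{c}}$) belongs to $C(Z,Y)$. By hypothesis $f\in I$, and since $I$ is a (multiplicative) ideal closed under multiplication by elements of $C(Z,Y)$, both $f\cdot\chi_u$ and $f\cdot\chi_{u^{c}}$ lie in $I$. This is the only ``ideal property'' move needed at the level of producing the generators of the two candidate sub-ideals.

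Next I would appeal to Definition 12 of the paper, which sets $(h)=\{g\cdot h : g\in C(Z,Y)\}$. Applied to $h=f\cdot\chi_u\in I$, every element of the principal ideal $(f\cdot\chi_u)$ has the form $g\cdot(f\cdot\chi_u)$ for some $g\in C(Z,Y)$; but this is again an element of $I$ by the ideal property applied to $f\cdot\chi_u\in I$. Hence $(f\cdot\chi_u)\subseteq I$. The same argument verbatim, with $\chi_{u^{c}}$ in place of $\chi_u$, gives $(f\cdot\chi_{u^{c}})\subseteq I$.

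There is essentially no obstacle here beyond bookkeeping, and in particular the proof uses neither associativity nor commutativity of $\cdot$, nor addition: the claim is a direct consequence of the definition of a multiplicative ideal together with the fact that $\chi_u,\chi_{u^{c}}\in C(Z,Y)$ whenever $U$ is clopen. The hypothesis $f\neq\Theta$ is not used in the containment itself; its role is only to guarantee that the two principal sub-ideals are potentially nontrivial, so that the theorem conveys genuine information (for instance, combined with Lemma 59 nos.~1 and 4, it yields a concrete mechanism for decomposing any nonzero element of a proper ideal along a clopen partition $U\cup U^{c}=Z$).
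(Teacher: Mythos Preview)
Your argument is correct and matches what the paper has in mind. In fact, the paper states this theorem (and the essentially identical earlier Theorem~35) without any proof, treating it as immediate from the definition of an ideal; your write-up simply spells out those two applications of the absorption property, which is exactly the implicit reasoning. One small remark: the two absorption steps you use --- first $f\cdot\chi_u\in I$, then $g\cdot(f\cdot\chi_u)\in I$ --- require $I$ to absorb on both sides, so strictly speaking your claim that ``neither associativity nor commutativity'' is needed relies on $I$ being a two-sided ideal; this is legitimate here since the theorem sits in the portion of the paper where $Y$ is assumed to be a ring and the ideals are ring ideals.
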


\begin{lemma}
If f is a function, U a clopen set, I an ideal, and if $f\cdot \chi_{u}$ and $f\cdot \chi_{u^{c}}$ are both elements of I, then $f=f\cdot \chi_{u} +f\cdot \chi_{u^{c}}$ is an element of I.
\end{lemma}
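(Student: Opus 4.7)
The plan is to reduce the claim to two facts already in the paper: the identity $\chi_u + \chi_{u^c} = \mathrm{Id}$ (established as the corollary following Theorem 18, using only that $1+0=1$ in $Y$), and the fact that under the ring hypotheses now in force (the paper states just before the relevant sequence that $Y$ is now taken to be a ring), an ideal $I$ is closed under addition. With these, the equation $f = f\cdot\chi_u + f\cdot\chi_{u^c}$ that is asserted in the statement is immediate from distributivity, and the conclusion follows.

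First I would verify the asserted equality. Using distributivity of $\cdot$ over $+$ in $C(Z,Y)$ (inherited pointwise from $Y$), compute
\[
f\cdot\chi_u + f\cdot\chi_{u^c} \;=\; f\cdot(\chi_u + \chi_{u^c}) \;=\; f\cdot \mathrm{Id} \;=\; f,
\]
where the middle equality is the corollary to Theorem 18 and the final equality uses that $\mathrm{Id}$ is the unit of $C(Z,Y)$. This is exactly the equation that the statement displays. If one wishes to avoid distributivity, a pointwise check suffices: for each $z\in Z$ either $z\in U$, in which case $\chi_u(z)=0$ and $\chi_{u^c}(z)=1$, so $f(z)\cdot\chi_u(z)+f(z)\cdot\chi_{u^c}(z)=0+f(z)=f(z)$, or $z\notin U$ and the symmetric computation gives $f(z)$.

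Second, since $f\cdot\chi_u\in I$ and $f\cdot\chi_{u^c}\in I$ by hypothesis and $I$ is closed under addition, the sum $f\cdot\chi_u + f\cdot\chi_{u^c}$ lies in $I$; by the displayed equation this element is $f$, so $f\in I$.

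There is no real obstacle here: the lemma is essentially a one-line consequence of $\chi_u+\chi_{u^c}=\mathrm{Id}$ together with additive closure of ideals. The only point worth flagging in the write-up is that both operations on $Y$ (and hence on $C(Z,Y)$) are being used, so the ambient hypothesis ``$Y$ is a ring'' from the paragraph preceding Theorem 34 must be in force; otherwise one should explicitly restate that $Y$ has addition, a unit, and satisfies $1+0=1$, which is all that the pointwise proof actually needs.
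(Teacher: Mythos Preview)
Your proof is correct. The paper actually gives no proof for this lemma at all---the equation $f=f\cdot\chi_u+f\cdot\chi_{u^c}$ is already written into the statement, and with the standing assumption (made just before Theorem~35) that $Y$ is a ring, closure under addition is immediate; your write-up is precisely the natural expansion of what the paper leaves implicit.
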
 

\begin{theorem} 
If I is a prime ideal, $f\notin I$, and U a clopen set, then one and at most one of $(f\cdot \chi_{u})$ and $(f\cdot \chi_{u^{c}})$ is a principal sub ideal of I.
\end{theorem}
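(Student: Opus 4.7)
The plan is to reduce the claim to the preceding lemma plus primality via the identity $\chi_u \cdot \chi_{u^c} = \Theta$.

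First I would verify, by a pointwise computation, that $(f\cdot \chi_u)\cdot (f\cdot \chi_{u^c}) = \Theta$. Since we are now in the ring setting, multiplication is associative and commutative, so the product rearranges to $f\cdot f \cdot (\chi_u\cdot \chi_{u^c})$. By definition $\chi_u$ is zero on $U$ and $1$ on $U^c$, while $\chi_{u^c}$ is zero on $U^c$ and $1$ on $U$; hence $\chi_u \cdot \chi_{u^c}$ vanishes at every point of $Z$ and equals $\Theta$. Thus $(f\cdot \chi_u)\cdot (f\cdot \chi_{u^c}) = \Theta \in I$.

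Next I would apply primality of $I$: from the inclusion $(f\cdot \chi_u)\cdot (f\cdot \chi_{u^c}) \in I$ it follows that at least one of $f\cdot \chi_u$ or $f\cdot \chi_{u^c}$ lies in $I$. Whichever one lies in $I$, its principal ideal is a sub ideal of $I$ by the ideal property, giving the existence assertion.

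For the uniqueness (the ``at most one'' clause), I would argue by contradiction: suppose both $f\cdot \chi_u$ and $f\cdot \chi_{u^c}$ belonged to $I$. Then the lemma immediately preceding this theorem yields $f = f\cdot \chi_u + f\cdot \chi_{u^c} \in I$, contradicting the hypothesis $f \notin I$. Therefore exactly one of the two products lies in $I$, and correspondingly exactly one of $(f\cdot \chi_u)$, $(f\cdot \chi_{u^c})$ is a principal sub ideal of $I$.

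I do not anticipate a serious obstacle here; the only subtle point is confirming that the pointwise argument for $\chi_u\cdot \chi_{u^c}=\Theta$ is unaffected by the sign conventions used for $\chi_u$ (namely that $\chi_u$ is zero on $U$, not on $U^c$), which is already established in the convention preceding Definition~8 and the note following it.
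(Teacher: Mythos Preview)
Your argument is correct and follows essentially the same strategy as the paper. The only difference is cosmetic: for the ``at least one'' direction the paper applies primality directly to $\chi_u$ and $\chi_{u^c}$ (since $\chi_u\cdot\chi_{u^c}=\Theta\in I$) and then multiplies the resulting factor by $f$ via the ideal property, whereas you first form $(f\chi_u)(f\chi_{u^c})=\Theta$ and apply primality to $f\chi_u$ and $f\chi_{u^c}$; the ``at most one'' direction via the preceding lemma is identical.
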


\begin{proof}
At least one of $\chi_u$ and $\chi_{u^{c}}$ is in I.
\end{proof}

\begin{lemma}
  Let $I_1, I_2, \cdots I_n$ be prime ideals and $I$ an ideal such that $I\not\subset I_i$ and $I\neq I_i$ for 
  $i=1,2,\cdots,n$, then there exists $a\in I$ such that $a\notin I_i$ for $i=1,2,\cdots,n$. (This is well known to ring theorists)
\end{lemma}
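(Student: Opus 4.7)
This is the classical prime avoidance lemma, so the plan is to mimic the standard commutative--algebra argument, induction on $n$, but using the ring structure now available on $C(Z,Y)$. I will read the hypothesis ``$I\not\subset I_i$ and $I\neq I_i$'' as ``$I\not\subseteq I_i$'' for every $i$.

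First I would handle the base case $n=1$: since $I\not\subseteq I_1$, pick any $a\in I\setminus I_1$ and we are done. Assume now $n\geq 2$ and the lemma proved for $n-1$ ideals. For each $i$, the $n-1$ prime ideals $\{I_j:j\neq i\}$ and $I$ still satisfy $I\not\subseteq I_j$ for $j\neq i$, so by the inductive hypothesis there exists $a_i\in I$ with $a_i\notin I_j$ for every $j\neq i$. If for some index $i_0$ we also have $a_{i_0}\notin I_{i_0}$, then $a_{i_0}$ is the desired element and we stop. Thus we may assume the non-trivial case
\[
a_i\in I_i,\qquad a_i\notin I_j\ \text{for}\ j\neq i,\qquad i=1,\ldots,n.
\]

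Now form the candidate
\[
a\ =\ a_1\ +\ a_2\cdot a_3\cdots a_n,
\]
which lies in $I$ since $I$ is closed under both addition and (two--sided) multiplication by elements of $C(Z,Y)\supseteq I$. I would then check that $a\notin I_i$ for every $i$. If $a\in I_1$, then from $a_1\in I_1$ we would get $a_2\cdots a_n\in I_1$, and primality of $I_1$ would force some $a_j\in I_1$ with $j\geq 2$, contradicting $a_j\notin I_1$ for $j\neq 1$. If $a\in I_i$ for some $i\geq 2$, then because the factor $a_i$ sits in $I_i$ we have $a_2\cdots a_n\in I_i$, so $a_1\in I_i$, contradicting $a_1\notin I_i$. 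Hence $a$ avoids every $I_i$.

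The main (only) obstacle is making sure the algebraic structure in which we work actually supports the construction: the sum $a_1+a_2\cdots a_n$ requires addition, associative multiplication, and the ideal $I$ being closed under both, and the case analysis requires each $I_i$ to be prime in the usual sense $fg\in I_i\Rightarrow f\in I_i\vee g\in I_i$. Both are in force under the standing assumption that $Y$ is a ring and the objects are ring objects (flagged just before Theorem 39 and echoed in the lemma statement), so the argument goes through verbatim. Hence by induction on $n$ the element $a$ exists, completing the proof.
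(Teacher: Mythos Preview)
Your argument is the standard prime avoidance proof and is correct. The paper, however, does not supply a proof of this lemma at all: it simply states the result with the parenthetical remark ``This is well known to ring theorists'' and moves on to the next theorem. So there is nothing to compare against beyond noting that your inductive construction with $a=a_1+a_2\cdots a_n$ is exactly the classical argument the paper is gesturing at (cf.\ Atiyah--Macdonald, Proposition~1.11, which the paper already cites a few lines earlier as Lemma~74).
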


\begin{theorem} 
  If $Y$ is a division ring and $Z=\bigcup\limits_{i=1}^n\{z_i\}$ then the $I(z_i)$ are the maximal ideals.  Hence C(Z,Y) is semi-local, definition on pg. 4 of [1].
\end{theorem}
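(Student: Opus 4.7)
The plan is to reduce the statement to the well-understood structure of a finite product of a division ring, and then use the prime-avoidance lemma stated just above the theorem. First, $Z$ is finite and totally separated (Theorem 7), so for each index $i$ the finitely many clopen sets that separate $z_i$ from the other $z_j$'s can be intersected to show that $\{z_i\}$ is itself clopen; equivalently, $Z$ carries the discrete topology. The earlier lemma identifying $C(Z,Y)$ with the categorical product for discrete $Z$ then gives a ring isomorphism
\[
\mathrm{ev}:C(Z,Y)\xrightarrow{\ \sim\ }Y^n,\qquad f\mapsto(f(z_1),\ldots,f(z_n)).
\]

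Under $\mathrm{ev}$ the ideal $I(z_i)$ is the kernel of projection onto the $i$-th factor, so $C(Z,Y)/I(z_i)\cong Y$. Since a division ring has no nonzero proper two-sided ideal, each $I(z_i)$ is a maximal two-sided ideal of $C(Z,Y)$.

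For the converse, let $M$ be any maximal ideal. Because $Y$ has no zero divisors, the earlier lemma showing $I(z)$ is prime when $Y$ has no zero divisors tells us each $I(z_i)$ is prime. Suppose for contradiction that $M\not\subseteq I(z_i)$ for every $i$ (in particular $M\neq I(z_i)$ for every $i$). Then the prime-avoidance lemma stated just above the theorem applies to the finite family $\{I(z_1),\ldots,I(z_n)\}$ and produces some $a\in M$ with $a\notin I(z_i)$ for every $i$, i.e. $a(z_i)\neq 0$ for all $i$. By the earlier lemma on pointwise inversion over a division ring, $a$ is a unit in $C(Z,Y)$, forcing $M=C(Z,Y)$ and contradicting properness. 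Hence $M\subseteq I(z_i)$ for some $i$, and maximality of $M$ together with $I(z_i)\neq C(Z,Y)$ forces $M=I(z_i)$. Having exhibited exactly $n$ maximal ideals, $C(Z,Y)$ is semi-local in the sense of the cited definition.

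The step that deserves care rather than being a genuine obstacle is the noncommutativity of $Y$: the argument uses ``division ring'' only to know that every nonzero element is two-sided invertible and that $Y$ has no nonzero proper two-sided ideal, so both the quotient computation $C(Z,Y)/I(z_i)\cong Y$ being simple and the construction of $a^{-1}$ in $C(Z,Y)$ go through without commutativity. Everything else is a routine assembly of results already proved earlier in the paper.
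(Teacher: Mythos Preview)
Your proof is correct, and its overall architecture matches the paper's: show that any maximal ideal not among the $I(z_i)$ must contain a function that is nowhere zero, hence a unit, which is a contradiction. The difference is in how that nowhere-zero element is obtained. You invoke the prime-avoidance lemma (stated just before the theorem) to produce a single $a\in M$ with $a(z_i)\neq 0$ for all $i$. The paper instead builds the element by hand: from each $f_i\in I$ with $f_i(z_i)\neq 0$ it forms $\hat f_i=f_i\cdot\chi_{\{z_i\}^c}$ (zero off $z_i$, equal to $f_i(z_i)$ at $z_i$) and then takes $f=\sum_i \hat f_i$, which is nonzero everywhere. Your route is cleaner and uses the machinery already set up (Lemma~68 for the product identification, Lemma~56 for pointwise inversion, and the avoidance lemma), while the paper's route is more explicit and avoids citing avoidance at all. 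You also add something the paper leaves implicit: by computing $C(Z,Y)/I(z_i)\cong Y$ you verify directly that each $I(z_i)$ is maximal, whereas the paper only argues that every maximal ideal equals some $I(z_i)$ and tacitly relies on the incomparability of the $I(z_i)$ (Corollary~5 to Definition~10) to conclude that each one is itself maximal.
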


\begin{proof}
Note every {z} is clopen.  Let I be a maximal ideal such that $I\not= I(z_i)$ for any $i=1, 2,...,n$.  Therefore for each i there exist $f_i$ in I such that $f_i(z_i)\not= 0$.  Thus $\hat{f_i}(\xi)=\left\{\begin{array}{ll}f_i(\xi)&\text{if }\xi=z_i \\0&\text{if } \xi\not= z_i\end{array}\right.$ is continuous and in I as it is $f_i$ times a characteristic function one on $z_i$.  Therefore $f(z)=\sum\limits_{i=1}^n \hat{f_i}(z)\not= 0$ is continuous in I and  not zero for all $z\in Z$.  Hence I is the ring C(Z,Y).
 \end{proof}
 
\begin{note}
By Theorems 24 and 25, if I(z) is not an minimal prime ideal then \{z\} is not open in Z and \{0\} is not open in Y.  Significantly if \{0\} is open in Y then all of the I(z) are minimal.  This suggest interesting examples.  Let Z be any set supporting a topology where all 
points are clopen except a distinguished point denoted d which is closed but not open.  Denote this topological space by $Z_c$.  Let $Z_d$ denote the topological space consisting of the same set with the discrete topological.  Let $\iota:Z_d\rightarrow Z_c$ be the set identity function, $\iota (z)=z$, which is continuous.  Define $\mathfrak{I}:C(Z_c,Y)\rightarrow C(Z_d,Y)$ by $\mathfrak{I}(f)=f\circ \iota$.  $\mathfrak{I}$ embeds $C(Z_c,Y)$ algebraic isomorphically into $C(Z_d,Y)$ but not onto as $\chi_{d}\in C(Z_d,Y)$ and $\chi_{d}\notin C(Z_c,Y)$.  The ideals 
of $C(Z_{c},Y)$ do not map to ideals in $C(Z_{d},Y)$.  Let $I_c$ denote an ideal in $C(Z_{c},Y)$ and $I_d$ an ideal in $C(Z_{d},Y)$.  Note $\mathfrak{I}(I_{c}(z))\subset I_{d}(z)$ for some $I_{d}(z)$ and $\mathfrak{I}(I_{c}(d))\neq I_{d}(d)=(\chi_{d})$ as $\chi_{d}\notin I_{c}(d)$.  $\mathfrak{I}(I_{d}(d)$ is not prime although $\{d\}$ is open in $Z_{d}$ and $I_{d}(d)$ is a minimal prime ideal.  All ideals of both rings of functions that vanish at a point are minimal prime except $I_{c}(d)$ which is prime in $C(Z_c,Y)$ but may not be minimal unless \{0\} is open in Y.  Note $\mathfrak{I}(I_{c}(d)$ is not prime which 
is  a sub ideal of the minimal prime ideal $I_{d}(d)$ in $C(Z_d,Y)$.  \newline  For example if $Y=\mathbb{Z}_2$, then $I_{c}(d)$ is a minimal prime ideal in $C(Z_{c},\mathbb{Z}_{2})$.  But in this example there is no function f such that $V(f)=f^{-1}(0)=d$.  See 6) below.
\end{note}

\begin{note} 
Considerations to use for examples:  \newline  Let Z be a topological space whose points are quasi-components.  See Theorem 5 and the second paragraph of the Note following the Comment after Theorem 6. \newline  Let Y be a $T_0$ topological space with a basis of clopen sets. . See paragraph before the second Note following Definition 5.  \newline  Consider C(Z,Y).  In the text the algebraic conditions on Y are successively stronger.  For the examples assume that Y is a ring although this can be weakened. \newline 1)  A point $z\in Z$ is isolated iff there exists an open point $\{y\}\subset Y$ and a function $f\in C(Z,Y)$ such that $f^{-1}(y)=x$.  \newline 2)  By Theorem 14 if Y has no divisors of zero the I(z) are prime ideals.  \newline  3)  By Theorem 16 or 24, if  \{z\} is open then I(z) is a minimal prime ideal.  \newline  4)  By Theorems 14, 27, and 28 if \{0\} is open in Y then for every z, I(z) is a minimal prime ideal and there are no other minimal prime ideals.  \newline  5)  By Lemma 68, if Z has the discrete topology then C(Z,Y) is algebraically isomorphic to the categorical product of Y indexed on Z in the category of Y.  \newline 6)  To complement 4) above consider C(Z,Y) where $Z=\{0\}\bigcup \{1/n\vert n=1,2,...\}$ has the topology such that all points of Z are open except $\{0\}$ which is closed but not open.  Choose the rationales for Y with the topology such that $\{0\}$ is open.  Then the function$f=\{(0,0)\}\bigcup \{(y,y) \vert y=1/n, n=1,2,...\}$ is not continuous.  The functions of I(0) must be eventually zero.
\end{note}

The following observation and question indicates that our results are not the strongest possible, perhaps foolish, or that a critical example supporting this development is difficult to construct.  

Let $\{z_0\}$ be a quasi-component of Z possibly not open.  Assume \{0\} in Y is not open and let $\mathfrak{U}=\{U:z_0\in U \and\ U open \}$, so $\bigcap \mathfrak{U}=\{z_0\}$.  If $f\in I(z_0)$ then $f=\bigcup \{f\cdot \chi_U:U\in\mathfrak{U}\}$.  Suppose $J\subseteq I(z_0)$ is a prime ideal.  Then $f\cdot \chi_U\in J$ but V(f) need not be copen.  Can this imply that $f\in J$, that is $J=I(z_0)$.  If $\{z_0\}$ is clopen, $J=I(z_0)$ but if $\{z_0\}$ is not open $I(z_0)$ may not be a minimal prime ideal.

\noindent {\bf Examples:}\\
As a first example consider $X=\{(0,0)\}\cup\{(0,1)\}\cup\{(1/n,y):0\leq y\leq 1$ and $n\in \mathbb{N}\}$, $\mathbb{N}$ 
the positive integers, as a subset of the plane.  Then the union of the components $\{(0,0)\}$ and $\{(0,1)\}$ is the quasi-component $\{(0,0),(0,1)\}$.

Now in general each point of a space lies in a component, the maximal connected set containing the point, and lies in its quasi-component, an intersection of clopen sets, which need not be connected.   Thus quasi-components are a union of components.

Open components are quasi-components which are thus clopen.  But quasi-components need not be open.  If components are open then they are quasi-components of $X$.

Let the components of $X$ be open.  Thus $X/\Pi$ has the discrete topology.  If the set $\{[x]\}$ has the discrete topology then $C(X,Y)$, $C(Z,Y)$, $C(X/\Pi,Y)$, and $C(\{[x]\},Y)$ are isomorphic.  Assuming Well Ordering, if $\kappa$ is the cardinality of $\{[x]\}$ and $\kappa$ has the discrete topology then these rings are isomorphic with $C(\kappa, Y)$ which is a product.

If $X$ has only one quasi-component, $[x]$, that is not a component then the components in the complement of $[x]$ are clopen quasi-components.  If $C(\kappa,Y)$ is as above then there is an injective homorphism of $C(X,Y)$ strictly embedding it into $C(\kappa, Y)$.  As $[x]$ is the intersection of clopen sets and is not itself open, each of these clopen sets defining [x] must intersect a quasi-component other than $[x]$.  Using these observations identify $C(\kappa,Y)$ with all sequences from $\kappa$ to $Y$ and $C(X,Y)$ with those that are continuous at $[x]$.  As above consider again $X=\{(0,0)\}\cup\{(0,1)\}\cup\{(1/n,y): 0\leq y\leq 1$ and $n\in \mathbb{N}\}$, $\mathbb{N}$ the positive integers, as a subset of the plane and $Y=\mathbb{Z}_2$, the ring of 0 and 1.  Here $C(\kappa,\mathbb{Z}_2)$ is equinumerous with the set of functions from $\mathbb{N}$ to $\mathbb{Z}_2$ of cardinality $2^{\aleph_0}$ which is larger than $\aleph_0$.  As each functions in $C(X,\mathbb{Z}_2)$ is eventually constant $C(X,\mathbb{Z}_2)$ injects into the set of all finite sequences of $\mathbb{N}$ into $\mathbb{Z}_2$ which is of cardinality less than or equal to $\aleph_0$.  Thus the cardinality of $C(\kappa, \mathbb{Z}_2)$ is strictly larger than that of $C(X,\mathbb{Z}_2)$.  Consequently they can not be algebraically isomorphic. 

REFERENCES

[1]  M. F. Atiyah and I. G. McDonald, Introduction to Commutative Algebra,  Addison-Wesley, 1969.

[2]  N. Bourbaki, Algebra, Addison-Wesley, 1974.

[3]  N. Bourbaki, Commutative Algebra, Addison-Wesley, 1972.

[4]  Eduard $\hat{C}$ech, Topological Spaces, Interscience Pub., 1966.

[5]  R.  Engelking, Outline of General Topology, North Holland Pub. Co., 1968.

[6]  W.  Fulton, Algebraic Curves, W. A. Benjamin Inc., 1969.

[7]  John L. Kelley, General Topology, D. Van Nostrand Co., 1955.

[8]  Joachim Lambek, Lectures on Rings and Modules, Blaisdell Pub. Co.,1966.

[9]  Karen Smith, et all, An Invitation to Algebraic Topology, Springer-Verlog, 2000

[10]  Lynn A. Steen and J. Arthur Seebach, Jr.,Counterexamples in Topology, Holt, 
      Rinehart \indent \quad and Winston, Inc., 1970.

[11]  Stephen Willard, General Topology, Addison-Wesley Pub. Co., 1968.

\end{document}